\theoremstyle{plain}
\newtheorem{theorem}{Theorem}
\newtheorem{proposition}{Proposition}
\newtheorem{lemma}{Lemma}
\newtheorem{corollary}{Corollary}
\theoremstyle{remark}
\newtheorem{definition}{Definition}
\newtheorem{remark}{Remark}
\newtheorem{example}{Example}
\newtheorem*{question}{Question}
\newcommand{\SO}{\mathrm{SO}}
\newcommand{\rmO}{\mathrm{O}}
\newcommand{\SU}{\mathrm{SU}}
\newcommand{\so}{\mathfrak{so}}
\newcommand{\su}{\mathfrak{su}}
\newcommand{\tr}{\mathfrak{tr}}
\newcommand{\Sym}{\mathrm{Sym}}
\newcommand{\Menge}[2]{\{\,#1\,|\,#2\,\}}
\newcommand{\g}[2]{<#1,#2>}
\newcommand{\R}{\mathrm{I\!R}}
\newcommand{\C}{\mathbb{C}}
\newcommand{\Z}{\mathbb{Z}}
\newcommand{\frakC}{\mathfrak{C}}
\newcommand{\frakg}{\mathfrak{g}}
\newcommand{\frakh}{\mathfrak{h}}
\newcommand{\fraki}{\mathfrak{i}}
\newcommand{\frakk}{\mathfrak{k}}
\newcommand{\frakp}{\mathfrak{p}}
\newcommand{\osc}{\mathcal{O}}
\newcommand{\Id}{\mathrm{Id}}
\newcommand{\Iso}{\mathrm{I}}
\newcommand{\End}{\mathrm{End}}
\newcommand{\trace}{\mathrm{trace}}
\renewcommand{\i}{\mathrm{i}}
\newcommand{\bbH}{\mathbb{H}}
\newcommand{\Kern}{\mathrm{Kern}}
\newcommand{\Spann}[2]{\{#1\big|#2\}_{\scriptstyle\R} }\,
\newcommand{\fetth}{\boldsymbol{h}}
\newcommand{\Hom}{\mathrm{Hom}}
\newcommand{\rank}{\mathrm{rank}}
\newcommand{\rmG}{\mathrm{G}}
\newcommand{\rmH}{\mathrm{H}}
\newcommand{\rmS}{\mathrm{S}}
\newcommand{\rmU}{\mathrm{U}}
\newcommand{\rmP}{\mathrm{P}}
\newcommand{\bbK}{\mathbb{K}}
\newcommand{\e}{\mathrm{e}}
\newcommand{\scrR}{\mathcal{R}}
\newcommand{\scrU}{\mathcal{U}}
\newcommand{\fraku}{\mathfrak{u}}
\title{Parallel submanifolds of the real 2-Grassmannian}
\author{Tillmann Jentsch}
\begin{document}
\sloppy

\maketitle
\begin{abstract}
A submanifold of a Riemannian symmetric space 
is called parallel if its second fundamental form is a parallel section of the appropriate tensor bundle.
We classify parallel submanifolds of the Grassmannian $\rmG^+_2(\R^{n+2})$ 
which parameterizes the oriented 2-planes of the Euclidean space $\R^{n+2}$\,. Our main result states that
every complete parallel submanifold of $\rmG^+_2(\R^{n+2})$\,, which is not a
curve, is contained in some totally geodesic submanifold as a symmetric submanifold. This result holds also
if the ambient space is the non-compact dual of $\rmG^+_2(\R^{n+2})$\,. 
\end{abstract}

\section{Introduction}
Let $N$ be a Riemannian symmetric space. A submanifold of $N$ is called {\em parallel} if the second fundamental form is parallel.
D.~Ferus~\cite{Fe1} has shown that every compact parallel submanifold of a Euclidean space is a special orbit of some
s-representation, called a {\em symmetric R-space}. In particular, 
such a submanifold is invariant under the reflections in its affine 
normal spaces which means that it is {\em (extrinsically) symmetric}. More generally, every 
complete parallel submanifold of a space form has this property (see~\cite{BR,Fe2,Str,Ta}).
Note, this should be seen as an extrinsic analog of the following well known fact: every complete and simply
connected Riemannian manifold with parallel curvature tensor is already a symmetric
space.

More generally, symmetric submanifolds of Riemannian symmetric spaces
were studied and classified by H.~Naitoh and others, see~\cite[Ch.~9.3]{BCO}. 
These submanifolds are parallel and intrinsically symmetric (in particular, the induced Riemannian metric is complete), 
but not every complete parallel submanifold is extrinsically symmetric unless
the ambient space is a space form. Nevertheless, in the other simply connected rank-one spaces 
(i.e.\ the projective spaces over the complex numbers or the quaternions, the Cayley plane, and their non-compact duals),
there is still a close correspondence between parallel and symmetric submanifolds. Namely, it turns out 
that every complete parallel submanifold, which is not a curve, is contained in some totally geodesic
submanifold as a symmetric submanifold (see~\cite[Ch.~9.4]{BCO}). Further, recall that a submanifold is called {\em full} if
it is not contained in any proper totally geodesic submanifold. In particular,
in a simply connected rank-one space, the previous result implies that every full complete parallel submanifold, which is not a
curve, is a symmetric submanifold. 

However, in symmetric spaces of higher rank, parallel submanifolds are not well understood yet. 
Note, here the situation becomes more involved, since already the classification 
of the totally geodesic submanifolds is a non-trivial problem. Hence, it is an interesting 
fact that at least for the rank-two symmetric spaces the
totally geodesic submanifolds are well known due to B.-Y.~Chen/T.~Nagano~\cite{chen-naganoI,chen-naganoII}\footnote{However, the claimed
classification of totally geodesic submanifolds of $\rmG^+_2(\R^{n+2})$
from~\cite{chen-naganoI} is incomplete.} 
and S.~Klein~\cite{sebastian1,sebastian2,sebastian3,sebastian4,sebastian5} using different methods.  
Thus, it is natural to ask, more generally, for the classification of parallel submanifolds in
these ambient spaces. 

In this article, we consider parallel submanifolds of the Grassmannian $\rmG^+_2(\R^{n+2})$ -- 
which parameterizes the oriented 2-planes of the Euclidean space
$\R^{n+2}$ -- and its non-compact dual, the symmetric space $\rmG^+_2(\R^{n+2})^*$\,, i.e.\
the Grassmannian of time-like 2-planes in the pseudo Euclidean space $\R^{n,2}$ equipped with
the indefinite metric $d x_1^2+\cdots +d x_n^2-d x_{n+1}^2-d x_{n+2}^2$\,. Note,
these are simply connected symmetric spaces of rank two if $n\geq 2$\,.

\bigskip
\begin{theorem}[Main Theorem]\label{th:main}
If $M$ is a complete parallel submanifold of the Grassmannian
$\rmG^+_2(\R^{n+2})$ with $\dim(M)\geq 2$\,, then there exists a totally geodesic submanifold $\bar M\subset \rmG^+_2(\R^{n+2})$ 
such that $M$ is a symmetric submanifold of $\bar M$\,. 
In particular, every full complete parallel submanifold of
$\rmG_2^+(\R^{n+2})$\,, which is not a curve, is a symmetric submanifold. The analogous result holds for
ambient space $\rmG^+_2(\R^{n+2})^*$\,.
\end{theorem}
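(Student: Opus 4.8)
The plan is to reduce the assertion to a single curvature-algebraic closure property and to verify it from the explicit form of the curvature tensor. After a transvection we may assume that the point $p\in M$ is the base point $o$ of $N:=\rmG^+_2(\R^{n+2})$, and we identify $T_oN$ with the Cartan subspace $\frakp$, so that $\bar R(X,Y)Z=-[[X,Y],Z]$ for $X,Y,Z\in\frakp$. There is a smallest totally geodesic submanifold $\bar M\subseteq N$ containing $M$, and $M$ is full in $\bar M$; this reduction is general and uses only that, for a parallel submanifold, the osculating bundle is $\bar\nabla$-parallel. Writing $\osc_p:=T_pM\oplus\mathcal N_p$ for the first osculating space, where $\mathcal N_p:=\mathrm{span}_{\R}\{\alpha_p(X,Y)\}$ is the first normal space of the second fundamental form $\alpha$, one has $T_p\bar M=\langle\osc_p\rangle$, the Lie triple system generated by $\osc_p$. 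Since a symmetric submanifold necessarily has a Lie triple osculating space, it suffices to establish: \emph{(I)} that $\osc_p$ is \emph{already} a Lie triple system, so that $T_p\bar M=\osc_p$; and \emph{(II)} the existence of the extrinsic symmetries of $M$ in $\bar M$. The decisive input is that $N$ is the complex quadric $Q_n$, a rank-two Hermitian symmetric space carrying the K\"ahler structure $J$ (from $\SO(2)$) and the parallel circle subbundle $\mathfrak{A}\subset\End(TN)$ of real structures, for which the curvature, for a suitable local section $A$ of $\mathfrak{A}$ with $A^2=\Id$ and $AJ=-JA$, reads
\begin{equation}\label{eq:curv}
\begin{split}
\bar R(X,Y)Z={}&\langle Y,Z\rangle X-\langle X,Z\rangle Y+\langle JY,Z\rangle JX-\langle JX,Z\rangle JY-2\langle JX,Y\rangle JZ\\
&+\langle AY,Z\rangle AX-\langle AX,Z\rangle AY+\langle JAY,Z\rangle JAX-\langle JAX,Z\rangle JAY.
\end{split}
\end{equation}

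Toward \emph{(I)}, I would first combine $\bar\nabla\alpha=0$ with the equation of Codazzi to get $(\bar R(X,Y)Z)^{\perp}=0$ for all $X,Y,Z\in T_pM$, i.e.\ $T_pM$ is curvature invariant. Substituting \eqref{eq:curv}, the two constant-curvature terms are automatically tangential, so this condition controls exactly the normal components of $JX$, $AX$ and $JAX$ for $X\in T_pM$; together with the parallelism of $J$, $A$ and $\alpha$ it forces the K\"ahler angle of $M$ and the type of $T_pM$ relative to $\mathfrak{A}$ to be constant, and turns the tangential and normal projections of $J$ and $A$ into parallel bundle maps. These are precisely the invariants that organise the classification. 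The brackets in $[[\osc_p,\osc_p],\osc_p]$ that are not yet controlled are those with at least one entry in $\mathcal N_p$; for these I would differentiate the Gauss equation and use the equation of Ricci, again invoking $\bar\nabla\alpha=0$, so as to express $\bar R(\,\cdot\,,\,\cdot\,)\,\alpha(\,\cdot\,,\,\cdot\,)$ through \eqref{eq:curv} and the shape operators. The aim is to show that $\mathrm{span}_{\R}\{\alpha_p(X,Y),\,JX,\,AX\}$ is curvature invariant, which is \emph{(I)}.

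\textbf{The main obstacle} is exactly this closure. In \eqref{eq:curv} the terms $\langle AY,Z\rangle AX$ and $\langle JAY,Z\rangle JAX$ can in principle map $\osc_p$ out of itself, and in a rank-two ambient space the osculating space of a parallel submanifold need not be a Lie triple system without genuine additional input; indeed this is the point at which a higher-rank ambient space could produce parallel submanifolds that are \emph{not} symmetric, and the content of the theorem is that this does not happen for $Q_n$. I expect to organise the argument by the type of $M$ in the quadric -- complex, totally real (Lagrangian), or a CR submanifold of constant K\"ahler angle -- using the constraints on $J(T_pM)$ and $A(T_pM)$ from the previous step, and to identify $\bar M$ in each case by S.~Klein's classification of the totally geodesic submanifolds of $Q_n$ (\cite{sebastian1,sebastian2,sebastian3,sebastian4,sebastian5}; compare \cite{chen-naganoI,chen-naganoII}). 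Since every proper totally geodesic submanifold of $Q_n$ is, up to isometry, a complex quadric $Q_k$ with $k<n$, a complex projective space, a real space form, or built from such factors, the cases involving a lower quadric close by induction on $n$, while those assembled from rank-one and space-form pieces -- including both the Lie triple property \emph{(I)} and the construction of the symmetries \emph{(II)} -- are covered by Naitoh's theory together with the space-form and rank-one results recalled in the introduction (\cite{Fe1,BR,Str,Ta} and \cite[Ch.~9.3--9.4]{BCO}).

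Granting \emph{(I)} and \emph{(II)}, $M$ is a symmetric submanifold of $\bar M$, and the ``in particular'' statement follows since for full $M$ one has $\bar M=N$. The hypothesis $\dim M\geq 2$ enters in steps \emph{(I)} and \emph{(II)}: for a curve the first normal space is at most one-dimensional and does not generate enough of the curvature algebra \eqref{eq:curv} to force the Lie triple property, which is why curves must be excluded here just as in the rank-one case. Finally, the non-compact dual $\rmG^+_2(\R^{n+2})^{*}$ is treated verbatim: replacing $\frakp$ by $\i\,\frakp$ only reverses the sign of $\bar R$ in \eqref{eq:curv}, which affects neither the bracket relations defining Lie triple systems nor the structures $J$ and $A$, so both \emph{(I)} and \emph{(II)} persist and the conclusion carries over.
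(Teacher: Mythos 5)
Your overall architecture coincides with the paper's: reduce to showing that the second osculating space $\osc_pM=T_pM\oplus\bot^1_pM$ is curvature invariant, apply Dombrowski's reduction of codimension to get $\bar M=\exp^N(\osc_pM)$ with $M\subset\bar M$ and $\bot^1_qM=\bot_qM$ in $\bar M$, and conclude by the fact that a $1$-full complete parallel submanifold of a simply connected symmetric space is symmetric. (Your identification of $\rmG^+_2(\R^{n+2})$ with the complex quadric and its circle of real structures is also the same model the paper uses, just in different notation.) So the frame is right.

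The gap is that your step \emph{(I)} -- the curvature invariance of $\osc_pM$ -- is exactly the content of the theorem, and the tools you propose for it are demonstrably insufficient. You plan to derive closure from the Codazzi, Gauss and Ricci equations together with the explicit curvature tensor, organized by the K\"ahler-angle type of $T_pM$. But those identities only yield the first-order information: $T_pM$ and $\bot^1_pM$ are each curvature invariant and form an \emph{orthogonal curvature invariant pair} in the sense of the paper. The paper classifies all such pairs (Table~\ref{table}) and explicitly points out that, even with $\dim(W)\geq 2$, there exist orthogonal curvature invariant pairs $(W,U)$ for which $W\oplus U$ is \emph{not} curvature invariant (e.g.\ various pairs of Types $(tr_{i,j},tr_{k,\ell})$, $(c_k,c_\ell')$, $(tr_{j,k},tr'_\ell)$, $(tr_k',e^{-\i\theta}\bar W)$). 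Excluding these requires showing they are not \emph{integrable}, i.e.\ not realized by any actual parallel submanifold, and this is where the bulk of the work lies: the higher-order integrability conditions \eqref{eq:cond2} for $k=1,\dots,4$, the obstruction coming from the linearized isotropy representation (Theorem~\ref{th:dec} and Corollary~\ref{co:dec}, forcing $h(x,\cdot)\in\Hom_\frakh(W,U)$ when $\rho(\frakk_V)|_V\cap\so(V)_-=\{0\}$), the splitting theorem for parallel curved flats (Corollary~\ref{co:circles}), and Naitoh's product theorem to kill the $(tr_k',tr_k')$ cases. None of this is recoverable from ``differentiating the Gauss equation,'' and your proposed induction on lower quadrics does not engage with it either, since the problematic pairs do not sit inside a proper totally geodesic $Q_k$.

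A second, independent flaw is your treatment of the non-compact dual. Reversing the sign of $R^N$ preserves Lie triple systems and curvature invariant pairs, but it does \emph{not} preserve the class of parallel submanifolds: the semi-parallelity condition \eqref{eq:sp} is not invariant under $R^N\mapsto -R^N$ (the paper's footnote gives the example that complex parallel submanifolds of $\C\rmH^n$ are totally geodesic while those of $\C\rmP^n$ need not be). So ``(I) persists verbatim'' cannot be inferred from the sign flip; one must check that each non-integrability argument still goes through for $N^*$, which is what the paper actually does.
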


We also obtain the classification of higher-dimensional parallel submanifolds in a product of two
Euclidean spheres or two real hyperbolic spaces of equal curvature (see
Corollary~\ref{co:product}). Further, we conclude that every  
higher-dimensional complete parallel submanifold of $\rmG^+_2(\R^{n+2})$ is extrinsically homogeneous 
(see Corollary~\ref{co:homogeneous}). 

Here, we focus our attention on the real Grassmannian $\rmG^+_2(\R^{n+2})$ and
its non-compact dual. But we will also 
develop some general theory on the existence of 
parallel submanifolds in arbitrary Riemannian symmetric spaces. 
Amongst others, we will establish a splitting theorem for parallel submanifolds with curvature isotropic tangent spaces 
of maximal possible dimension in any symmetric space (of compact or non-compact type), see Corollary~\ref{co:circles}.

Hence, one may hope that it is also possible to classify the parallel submanifolds of the other rank-two symmetric spaces
(e.g.\ the Grassmannians of complex or quaternionic 2-planes). However, for the proof of Theorem~\ref{th:main} 
we use a ``case by case'' strategy and  it would be an interesting question whether some analogue of Theorem~\ref{th:main} remains true then.

\subsection{Overview}\label{se:overview}
We give an overview on the results presented in this article, an outline of the proof of Theorem~\ref{th:main} included. 
For a Riemannian symmetric space $N$ and a submanifold\footnote{\label{fn:notion}
We are implicitly dealing with isometric immersions defined from a connected Riemannian manifold $M$ into $N$\,. In particular, a ``submanifold'' needs not necessarily be regularly embedded. For example, it may have self-intersections.}
$M\subset N$\,, let $TM$\,, $\bot M$\,, $h:TM\times TM\to\bot M$ and $S:TM\times\bot M\to TM$
denote the tangent bundle, the normal bundle, 
the second fundamental form and the shape operator of $M$\,, respectively. 
Let $\nabla^M$ and $\nabla^N$ denote the Levi Civita connection of $M$ and $N$\,, respectively, 
and $\nabla^\bot$ be the usual connection on $\bot M$ (obtained by orthogonal projection of $\nabla^N\xi$ 
along $TM$ for every section $\xi$ of $\bot M$). Let $\Sym^2(TM,\bot M)$ 
denote the vector bundle whose sections are $\bot M$-valued symmetric bilinear maps on $TM$\,. 
Then there is a linear connection on $\Sym^2(TM,\bot M)$ induced by
$\nabla^M$ and $\nabla^\bot$ in a natural way, often called {\em Van der Waerden-Bortolotti connection}.

\bigskip
\begin{definition}\label{de:parallel} A submanifold $M\subset N$ is called
 \emph{parallel} if $h$ is a parallel section of $\Sym^2(TM,\bot M)$\,.
\end{definition}

\bigskip
\begin{example}\label{ex:circle}
A unit speed curve $c:J\to N$ is parallel if and only if it satisfies the equation
\begin{equation}
    \label{eq:circles}
\nabla^N_\partial\nabla^N_\partial\dot c=-\kappa^2 \dot c
\end{equation}
for some constant $\kappa\in\R$\,. For $\kappa=0$ these curves are geodesics;
otherwise, due to K.~Nomizu and K.~Yano~\cite{NY}, $c$ is called an {\em extrinsic circle}.
\end{example}
Recall that for every unit vector $x\in T_pN$ and every $\eta\in T_pN$ with $\eta\bot x$ 
there exists a unique unit speed curve $c$ satisfying~\eqref{eq:circles} with $c(0)=p$\,, $\dot c(0)=x$ and $\nabla_\partial \dot c(0)=\eta$\,.

\bigskip
\begin{example}\label{ex:factor}
Let $\bar M$ be a totally geodesic submanifold of $N$ (i.e. $h^{\bar M}=0$). A submanifold  
of $\bar M$ is parallel if and only if it is parallel in $N$\,.
\end{example}

\bigskip
\begin{definition}\label{de:extrinsically_symmetric}
A submanifold $M\subset N$ is called {\em (extrinsically) symmetric}\/ if $M$ is a symmetric space (whose
geodesic symmetries are denoted by $\sigma^M_p$\,, where $p$ ranges over $M$) and for every point $p\in M$ there
exists an involutive isometry $\sigma^\bot_p$ of $N$ such that
\begin{itemize}
\item $\sigma^\bot_p(M)=M$\,,
\item $\sigma^\bot_p|M=\sigma_p^M$\,,
\item the differential $T_p\sigma^\bot_p$ is the reflection
 in the normal space $\bot_pM$\,.
\end{itemize}
\end{definition}

As mentioned already before, every symmetric submanifold is parallel.
However, in the situation of Example~\ref{ex:factor}, we do not necessarily obtain a symmetric submanifold of
$N$ even if $M$ is symmetric in $\bar M$\,.

Let $M$ be a parallel submanifold of the symmetric space $N$ and consider the 
linear space $\bot^1_pM:=\{h(x,y)|x,y\in W\}_\R$ called the {\em first normal space} at $p$\,.

\bigskip
\begin{question}\label{question}
Given a pair of linear spaces $(W,U)$
both contained in $T_pN$ and such that $W\bot U$\,, does there exist some parallel submanifold $M$ through $p$ with 
$W=T_pM$ and $U=\bot^1_pM$? In particular, are there natural obstructions
against the existence of such a submanifold?
\end{question}

Let $R^N$ denote the curvature tensor of $N$ and recall that a
linear subspace $V\subset T_pN$ is called {\em curvature invariant} if $R^N(V\times V\times V)\subset V$ holds. 
It is well known that $T_pM$ is a curvature invariant subspace of $T_pN$ for
every parallel submanifold $M$\,. In Section~\ref{se:definition_of_cip}, we will show that also $\bot^1_pM$ is
curvature invariant. Moreover, the curvature endomorphisms of 
$T_pN$ generated by $T_pM$ leave $\bot^1_pM$ invariant and
vice versa. This means that $(T_pM,\bot^1_pM)$ is an {\em orthogonal curvature
  invariant pair}, see Definition~\ref{de:cip} and Proposition~\ref{p:cip}. As a first illustration of this concept,
we classify the orthogonal curvature invariant pairs $(W,U)$  of the complex
projective space $\C\rmP^n$\,, see Example~\ref{ex:CPm}.
We observe that here the linear space $W\oplus U$ is complex or totally real (in
particular, curvature invariant) unless $\dim(W)=1$\,. Hence, following the
proof of Theorem~\ref{th:main} given below, we obtain the well known result 
that the analogue of Theorem~\ref{th:main} is true for ambient space $\C\rmP^n$\,.

In Section~\ref{se:cip}, we will determine the orthogonal curvature invariant
pairs of $N:=\rmG^+_2(\R^{n+2})$\,. Our result is summarized in Table~\ref{table}.
Note, even if we assume additionally that $\dim(W)\geq 2$\,, there do exist certain
orthogonal curvature invariant pairs $(W,U)$ for which the linear space $W\oplus U$ is not curvature
invariant (in contrast to the situation where the ambient space is $\C\rmP^n$\,, 
see above). Hence, at least at the level of curvature invariant pairs, we can not yet give the proof of Theorem~\ref{th:main}.

Therefore, it still remains to decide whether there actually exists some
parallel submanifold $M\subset N$ such that $(W,U)=(T_pM,\bot^1_pM)$ in which case 
the orthogonal curvature invariant pair $(W,U)$ will be called {\em
  integrable}. In Section~\ref{se:integrability}, by means of a case by case analysis, we
will show that if $(W,U)$ is integrable and $\dim(W)\geq 2$\,, then 
the linear space $W\oplus U$ is curvature invariant. For this, we will
need some more intrinsic properties of the second fundamental 
form of a parallel submanifold of a symmetric space which are derived in Section~\ref{se:general_results}.

Further, note that (orthogonal) curvature invariant pairs of $N$ and $N^*$\,, respectively, are the same.\footnote{However, there is no duality between parallel submanifolds of $N$ and $N^*$\,, respectively.  This is due to the semi-parallelity condition on the second fundamental form (see~\eqref{eq:sp} with $R=R^N$) 
which is not preserved if one changes the sign of $R^N$\,. For example, any 
complex parallel submanifold of the complex hyperbolic space $\C\rmH^n$ is totally geodesic (see~\cite[Theorem~9.4.3]{BCO}) whereas this is not true for ambient space $\C\rmP^n$\,, the  complex projective 
space (see~\cite[Theorem~9.3.5]{BCO}). Since both $\C\rmP^n$ and $\C\rmH^n$ are totally geodesically embedded in $N$ and $N^*$\,, respectively, this gives counter-examples also in our case.}
Moreover, it turns out that all arguments from Section~\ref{se:integrability} remain valid for ambient space $N^*$\,.

\begin{proof}[Proof of Theorem~\ref{th:main}]
We can assume that $n\geq 2$\,. Fix some $p\in M$\,. Then $(T_pM,\bot^1_pM)$ is an integrable curvature invariant pair.
Using the results  mentioned before, we conclude that the {\em second osculating space} 
$\osc_pM:=T_pM\oplus \bot^1_pM$ is a curvature invariant subspace of $T_pN$\,. 
Let $\exp^N:TN\to N$ denote the exponential spray. It follows from a
result of P.~Dombrowski~\cite{D} that $\bar M:=\exp^N(\osc_pM)$ is a totally geodesic submanifold 
of $N$ such that $M\subset\bar M$ (``reduction of the codimension''). 
By construction, $\bot^1_qM=\bot_qM$ for all $q\in M$ where the normal spaces
are taken in $T\bar M$\,, i.e. $M$ is a {\em 1-full} complete parallel submanifold of $\bar M$\,. Thus we conclude
from Corollary~\ref{co:1-full} (see below) that $M$ is even a symmetric submanifold of $\bar
M$\,. The same arguments apply to ambient space $N^*$\,.\end{proof}
\paragraph{}

We consider the Riemannian product $\rmS^k\times\rmS^\ell$ of two Euclidean unit-spheres with $k+\ell=n\geq 2$
and $k\leq \ell$\,. Set $0_k:=(0,\cdots,0)\in\R^k$ for $k\geq 0$\,.
The map $\tau:\rmS^k\times\rmS^\ell\to \rmG^+_2(\R^{n+2}), (p,q)\mapsto
\{(p,0_{\ell+1}),(0_{k+1},q)\}_\R$ defines a 2-fold isometric covering onto a totally 
geodesic submanifold of $\rmG^+_2(\R^{n+2})$\,,
see~\cite{sebastian1},\cite{sebastian4}. 
Hence every parallel submanifold of $\rmS^k\times\rmS^\ell$ is also parallel in $\rmG^+_2(\R^{n+2})$\,.
Further, the totally geodesic  embedding $\iota_{k,\ell}:\rmS^k\to \rmS^k\times\rmS^\ell, p\mapsto (p,p)$ 
is a homothety onto its image by a factor $\sqrt{2}$\,. 

\bigskip
\begin{corollary}[Parallel submanifolds of $\rmS^k\times\rmS^\ell$]\label{co:product}
Every complete parallel submanifold $M\subset \rmS^k\times\rmS^\ell$ 
with $\dim(M)\geq 2$ is a product, $M=M_1\times M_2$\,, 
of two symmetric submanifolds $M_1\subset\rmS^k$ and $M_2\subset\rmS^\ell$\,, 
or is conjugate to a symmetric submanifold of $\iota_{k,\ell}(\rmS^k)$ via some isometry of $\rmS^k\times\rmS^\ell$\,.
In the first case, $M$ is a symmetric submanifold of $\rmS^k\times\rmS^\ell$\,. 
In the second case, $M$ is not symmetric in $\rmS^k\times\rmS^\ell$ unless $k=\ell$ and $M\cong \iota_{k,\ell}(\rmS^k)$\,. 
The analogous result holds for complete parallel submanifolds of $\rmH^k\times\rmH^\ell$\,,
the Riemannian product of two hyperbolic spaces 
of sectional curvature $-1$ (for $2\leq k\leq \ell$), or of $\R\times\rmH^\ell$\,, respectively.
\end{corollary}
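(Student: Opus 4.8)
The plan is to reduce everything to Theorem~\ref{th:main} via the totally geodesic covering $\tau$, and then to read off the two alternatives from the list of curvature invariant subspaces of a product of two round spheres of equal curvature. First I would note that, by Example~\ref{ex:factor} and since $\tau$ is a local isometry onto the totally geodesic submanifold $T:=\tau(\rmS^k\times\rmS^\ell)$, the immersion $M\to\rmS^k\times\rmS^\ell$ is parallel if and only if the composed immersion $M\to\rmG^+_2(\R^{n+2})$ is. Applying Theorem~\ref{th:main} to the latter yields a totally geodesic $\bar M=\exp^N(\osc_pM)$ with $M$ symmetric in $\bar M$. Because $T$ is totally geodesic, the second fundamental form of $M$ in $N$ already takes values in $T_pT$, so $\osc_pM\subset T_pT$ and hence $\bar M\subseteq T$. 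Lifting through $\tau$, we may regard $\bar M$ as a totally geodesic submanifold of $\rmS^k\times\rmS^\ell$ that contains $M$ as a symmetric submanifold.

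Next I would classify the curvature invariant subspaces $V\subset T_p\rmS^k\oplus T_q\rmS^\ell$. Writing the product curvature operator as $R_{XY}=R^1_{u_1u_2}\oplus R^2_{v_1v_2}$ for $X=(u_1,v_1)$, $Y=(u_2,v_2)$, a direct computation shows that $V$ is either of product type $A_1\oplus A_2$ with $A_1\subset T_p\rmS^k,\ A_2\subset T_q\rmS^\ell$, or a graph $\{(u,\phi u):u\in B\}$ of a linear isometry $\phi$; a proper mixture of the two is excluded, since applying $R_{XY}$ to a diagonal vector and a product vector produces a ``half-diagonal'' vector that cannot lie in $V$. The equality of the two curvatures forces $\phi$ to be an isometry rather than a mere homothety, which is exactly why the diagonal embedding $\iota_{k,\ell}$ is a homothety by $\sqrt2$. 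Consequently $\bar M$ is, up to an isometry of $\rmS^k\times\rmS^\ell$, either a product $\rmS^a\times\rmS^b$ of great subspheres or a diagonal sphere $D$ contained in $\iota_{k,\ell}(\rmS^k)$, and the same dichotomy applies to $T_pM\subseteq T_p\bar M$.

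If $T_pM$ is of product type, then the two factor distributions are parallel on $M$, so a de~Rham type argument splits $M=M_1\times M_2$ with $M_i$ complete parallel, hence symmetric, submanifolds of the respective spheres; a symmetric submanifold of a totally geodesic subsphere is symmetric in the full sphere, and a Riemannian product of extrinsically symmetric submanifolds is again symmetric (with $\sigma^\bot_{(p_1,p_2)}=\sigma^{\bot,1}_{p_1}\times\sigma^{\bot,2}_{p_2}$), which gives the first alternative together with the assertion that $M$ is symmetric in $\rmS^k\times\rmS^\ell$. If instead $T_pM$ is of diagonal type, then $M$ is contained in a diagonal sphere and is a symmetric submanifold of $\iota_{k,\ell}(\rmS^k)$ up to an ambient isometry, giving the second alternative.

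The main obstacle is the final dichotomy in the diagonal case, namely deciding exactly when such an $M$ fails to be symmetric in $\rmS^k\times\rmS^\ell$. Here I would argue directly with the isometry group $\rmO(k+1)\times\rmO(\ell+1)$, enlarged by the factor-swap only when $k=\ell$. At a point of $D$ the normal space of $M$ contains the anti-diagonal $\{(u,-u)\}$, so a reflection $\sigma^\bot_p$ would have to restrict to $+\Id$ there while restricting on $T_pD$ to the reflection $\rho$ in the normal space of $M$ inside $D$; writing the induced linear map in the diagonal/anti-diagonal splitting, one checks that it has the required block form (respectively swap-composed block form) only when $\rho=-\Id$, i.e.\ when $M=D$ is the full diagonal, and that for $k<\ell$ no factor-swap exists at all. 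This yields precisely the exceptional case $k=\ell$, $M\cong\iota_{k,\ell}(\rmS^k)$. Finally, since $\rmH^k\times\rmH^\ell$ (for $2\le k\le\ell$) and $\R\times\rmH^\ell$ are totally geodesically embedded in $N^*=\rmG^+_2(\R^{n+2})^*$, for which Theorem~\ref{th:main} equally holds, the hyperbolic statements follow by the same reasoning.
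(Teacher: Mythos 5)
Your overall architecture matches the paper's: push $M$ through $\tau$ into $\rmG^+_2(\R^{n+2})$, apply Theorem~\ref{th:main} to get a totally geodesic $\bar M=\exp^N(\osc_pM)$ containing $M$ as a symmetric submanifold, observe that the relevant curvature invariant subspaces of $T(\rmS^k\times\rmS^\ell)$ are exactly the product-type spaces and the graphs of linear isometries (the paper reads this off from Theorem~\ref{th:ci}, Types $(tr_{i,j})$ and $(tr_i')$; your direct computation, including the exclusion of mixed spaces via the ``half-diagonal'' vector, is a fine substitute), and then split into the two alternatives. Your treatment of the diagonal case is also acceptable and genuinely different from the paper's: where the paper invokes Naitoh's Theorem~\ref{th:products} to conclude that a symmetric graph-type submanifold must be totally geodesic and then tests curvature invariance of the normal spaces, you argue directly with the block structure of $T_p\sigma^\bot_p$ inside $\rmO(k+1)\times\rmO(\ell+1)$ (enlarged by the swap when $k=\ell$); adding the $-\Id$ condition on the diagonal to the $+\Id$ condition on the anti-diagonal does force the factor swap and then forces $M$ to be the full diagonal, so this elementary route works.

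The genuine gap is in the product case. You assert that ``the two factor distributions are parallel on $M$, so a de~Rham type argument splits $M=M_1\times M_2$.'' But the factor distributions of $\bar M=\rmS^a\times\rmS^b$ do not restrict to distributions on a submanifold $M$ at all in general: knowing $T_pM=A_1\oplus A_2$ at the single point $p$ does not give $T_xM=(T_xM\cap T\rmS^a)\oplus(T_xM\cap T\rmS^b)$ at other points, nor constancy of the ranks of these intersections, and the product structure $P=\Id\oplus(-\Id)$, while $\nabla^{\bar M}$-parallel, is not parallel for the Van der Waerden--Bortolotti connection on $M$ unless one first shows that $h$ and $S$ commute with $P$ -- which is essentially equivalent to what you are trying to prove. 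This propagation of the pointwise splitting is precisely the nontrivial content of Naitoh's structure theorem for symmetric submanifolds of products (Theorem~\ref{th:products}, i.e.\ \cite[Theorem~2.2]{N2}), which is how the paper closes this step after noting that $M$ is $1$-full, hence symmetric, in $\bar M$. As written, your argument replaces that theorem by an unjustified assertion; you should either cite the structure theorem or supply a genuine proof that the splitting of $T_pM$ forces $h_p(A_1,A_2)=0$ and $h_p(A_i,A_i)\subset T_p\rmS^{a}$ resp.\ $T_p\rmS^{b}$, after which uniqueness of the parallel submanifold with a given $2$-jet would yield $M=M_1\times M_2$.
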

\begin{proof}
Let $M$ be a parallel submanifold of $\tilde N:=\rmS^k\times\rmS^\ell$ through $(p,q)$\,. 
Then $\tau(M)$ is parallel in $N:=\rmG^+_2(\R^{n+2})$\,. 
Hence, according to Theorem~\ref{th:main} and its proof,
the second osculating space $V:=T_{(p,q)}M\oplus\bot^1_{(p,q)}M$ is a curvature invariant 
subspace of both $T_{\tau(p,q)}N$ and $T_{(p,q)}\tilde N$ such that $M$ is contained in 
the totally geodesic submanifold $\exp^N(V)$ as a symmetric submanifold. 
Further, using the classification of curvature invariant subspaces of
$T_{(p,q)}N$ (see Theorem~\ref{th:ci} below), we obtain that there are only 
two possibilities:

\begin{itemize}
\item  we have $V=W_1\oplus W_2$ 
where $W_1$ and $W_2$ are $i$- and $j$-dimensional subspaces of $T_{p}\rmS^k$ and $T_{q}\rmS^\ell$\,, respectively (Type $(tr_{i,j})$).
Hence, $M$ is contained in the totally geodesic submanifold $\bar M:=\exp^N(W_1)\times\exp^N(W_2)$ 
-- where, of course, the
factors $\exp^N(W_1)$ and $\exp^N(W_2)$ are Euclidean unit-spheres, too.
If $\bar M$ is the product of two great circles in 
$\rmS^k$ and $\rmS^\ell$\,, respectively, then $\dim(\bar M)=2$ and $M=\bar M$\,. Otherwise, at least one of the factors
of $\bar M$ is a higher-dimensional Euclidean sphere. It follows from a result of Naitoh 
(see Theorem~\ref{th:products} below) that $M=M'\times M''$ where $M'\subset \exp^N(W_1)$ 
and $M''\subset \exp^N(W_2)$ are symmetric submanifolds. 
Anyway, we obtain that $M=M'\times M''$ where $M'\subset\rmS^k$ and $M''\subset \rmS^\ell$ 
are symmetric submanifolds. Therefore, the product $M'\times M''$ is symmetric in $\tilde N$\,.

\item there exists an $i$-dimensional linear space $W_0'\subset T_{p}\rmS^k$ and some linear isometry $I'$ defined from $W_0'$ onto its image $I'(W_0')\subset T_{q}\rmS^\ell$ such that 
$V=\Menge{(v,I'\,v)}{v\in W_0'}$ (Type $(tr_i')$). Then, up to an isometry of $\tilde N$\,, we can assume that 
$M$ is a complete parallel submanifold of the space form $\iota_{k,\ell}(\rmS^k)$\,, i.e.\ a symmetric submanifold. 
It follows from Theorem~\ref{th:products} that $M$ is not symmetric in $\tilde N$ unless $M$ is totally geodesic.
Moreover, a totally geodesic submanifold of $\iota_{k,\ell}(\rmS^k)$ is symmetric in $\tilde N$  
if and only if  the normal spaces of $\iota_{k,\ell}(\rmS^k)$ are curvature invariant (cf.~\cite[Ch.~9.3]{BCO})  
which is given only for $M\cong\iota_{k,\ell}(\rmS^k)$ and $k=\ell$\,.
\end{itemize}
The hyperbolic case is handled in a similar way. Our result follows. 
\end{proof}

\paragraph{}
Recall that a submanifold $M\subset N$ is called {\em extrinsically homogeneous} 
if a suitable subgroup of the isometry group $\Iso(N)$ acts transitively on $M$\,.
In~\cite{J2,J3}, we dealt with the question whether a complete parallel submanifold of a 
symmetric space of compact or non-compact type is automatically extrinsically homogeneous.
It follows a priori from~\cite[Corollary~1.4]{J3} that every complete parallel 
submanifold $M$ of a simply connected compact or non-compact rank-two symmetric 
space $N$ without Euclidean factor (e.g. $N=\rmG^+_2(\R^{n+2})$ or $N=\rmG^+_2(\R^{n+2})^*$) 
is extrinsically homogeneous provided that the
Riemannian space $M$ does not split of (not even locally) a factor of dimension one or
two (e.g. $M$ is locally irreducible and $\dim(M)\geq 3$).
Moreover, then $M$ has even {\em extrinsically homogeneous holonomy bundle}. 
The latter means the following: there exists a subgroup $G\subset \Iso(N)$ such that $g(M)=M$ for every $g\in G$ and $G|_M$ is  
the group which is generated by the {\em transvections} of $M$\,. Using
Theorem~\ref{th:main}, we can now prove a stronger result for $N=\rmG_2^+(\R^{n+2})$\,.

\bigskip
\begin{corollary}[Homogeneity of parallel submanifolds]\label{co:homogeneous}
Every complete parallel submanifold of $\rmG^+_2(\R^{n+2})$\,, which is not a
curve, has extrinsically homogeneous holonomy bundle. In particular, every such
submanifold is extrinsically homogeneous in $\rmG^+_2(\R^{n+2})$\,. This result holds also for ambient space $\rmG^+_2(\R^{n+2})^*$\,.
\end{corollary}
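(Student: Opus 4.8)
The plan is to combine the Main Theorem with the homogeneity results already cited from~\cite{J3}. The statement to prove (Corollary~\ref{co:homogeneous}) asserts that every complete parallel submanifold $M$ of $N=\rmG^+_2(\R^{n+2})$ which is not a curve --- equivalently $\dim(M)\geq 2$ --- has extrinsically homogeneous holonomy bundle, and in particular is extrinsically homogeneous. The key point is that by Theorem~\ref{th:main} such an $M$ is a \emph{symmetric} submanifold of some totally geodesic $\bar M\subset N$, and symmetric submanifolds are, by their very definition via the reflections $\sigma^\bot_p$, highly rigid objects whose transvection group acts transitively.

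First I would invoke Theorem~\ref{th:main}: fixing $p\in M$, there is a totally geodesic submanifold $\bar M=\exp^N(\osc_pM)\subset N$ containing $M$ as a symmetric submanifold. Since $\bar M$ is totally geodesic, it is itself a symmetric space, and the isometries of $\bar M$ extend to (or are restrictions of) isometries of $N$; more precisely the reflections $\sigma^\bot_q$ furnished by Definition~\ref{de:extrinsically_symmetric} are involutive isometries of $N$ preserving $M$. The second step is to extract the transvection group from these reflections. For a symmetric submanifold, the compositions $\sigma^\bot_q\circ\sigma^\bot_{q'}$ of two such reflections act on $M$ as the transvections of the symmetric space $M$ (this is the standard mechanism by which a symmetric submanifold acquires a transitive symmetry group), and they form a subgroup $G\subset\Iso(N)$ with $g(M)=M$ for all $g\in G$ and $G|_M$ equal to the transvection group of $M$. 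This is exactly the definition of extrinsically homogeneous holonomy bundle quoted in the paragraph preceding the corollary, and since transvections act transitively on a symmetric space, extrinsic homogeneity of $M$ follows immediately.

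The main subtlety --- and the step I expect to require the most care --- is bookkeeping the ambient group. The reflections $\sigma^\bot_q$ are isometries of the \emph{full} ambient $N$, not merely of $\bar M$, so one must check that the group $G$ they generate is genuinely a subgroup of $\Iso(N)$ acting on $M$ with the claimed restriction; here the fact (used implicitly in the proof of Theorem~\ref{th:main}) that $\bar M$ is totally geodesic and hence its geodesic symmetries and transvections are induced by those of $N$ does the work. One should also note that the a priori result from~\cite[Corollary~1.4]{J3} only covers $M$ that do not split off a one- or two-dimensional factor, whereas the present corollary removes that hypothesis entirely (requiring merely $\dim(M)\geq 2$); the improvement comes precisely from Theorem~\ref{th:main}, which, unlike the general theory of~\cite{J3}, identifies $M$ concretely as a symmetric submanifold and thereby supplies the transvection group directly rather than through the general homogeneity criterion.

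Finally, the assertion for the non-compact dual $\rmG^+_2(\R^{n+2})^*$ requires no new argument: Theorem~\ref{th:main} already holds verbatim for $N^*$, and the extraction of the transvection group from the reflections $\sigma^\bot_q$ is purely formal and insensitive to the sign of the curvature, so the identical reasoning applies. Thus the proof reduces to citing Theorem~\ref{th:main} together with the standard structure of symmetric submanifolds, and the only genuine content beyond these is the verification that the reflection group realizes the transvection group of $M$ inside $\Iso(N)$.
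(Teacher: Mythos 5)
Your proposal correctly reduces the statement to Theorem~\ref{th:main}, but the step you yourself flag as the ``main subtlety'' is exactly where the argument breaks, and the way you propose to handle it does not work. Theorem~\ref{th:main} only makes $M$ a symmetric submanifold \emph{of $\bar M$}: the reflections $\sigma^\bot_q$ of Definition~\ref{de:extrinsically_symmetric} are involutive isometries of $\bar M$, not of $N$. Your assertion that they ``are isometries of the full ambient $N$'' is unjustified and in general false --- indeed Corollary~\ref{co:product} exhibits parallel submanifolds that are symmetric in $\bar M=\iota_{k,\ell}(\rmS^k)$ but \emph{not} symmetric in $\rmS^k\times\rmS^\ell$, so the reflections certainly do not extend to $N$ as reflections in the normal spaces of $M$ in $N$. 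The ``fact'' you invoke to close the gap --- that geodesic symmetries and transvections of a totally geodesic $\bar M$ are induced from $N$ --- is about transvections of $\bar M$ (i.e.\ the subgroup with Lie algebra $\bar\frakp\subset\frakp$), whereas the $\sigma^\bot_q$ and their pairwise products are a priori arbitrary elements of $\Iso(\bar M)$, not transvections of $\bar M$. So nothing in your argument shows that the group they generate is the restriction of a subgroup of $\Iso(N)$.

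The paper's proof is devoted precisely to this extension problem, and it genuinely requires a case analysis on the intrinsic structure of $\bar M$ (possible because $\rank(\bar M)\leq 2$). When $\bar M$ is semisimple, or locally $\R\times\tilde M$ with $\tilde M$ irreducible, one shows $\bar\fraki=[\bar\frakp,\bar\frakp]\oplus\bar\frakp$ (non-degeneracy of the Killing form, plus the product structure in the $\R\times\tilde M$ case); since $\bar\frakp\subset\frakp$ by total geodesy, every Killing field of $\bar M$ is then the restriction of one of $N$, so the connected component of $\Iso(\bar M)$ --- and with it the transvection group of $M$ generated by the products $\sigma^\bot_q\circ\sigma^\bot_{q'}$ --- lifts to $\Iso(N)$. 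When $\bar M$ is a flat torus this identity fails, but then $M=\bar M$ and one only needs the transvection group of the flat, whose Lie algebra is $\bar\frakp\subset\frakp$. Your proposal contains neither the Killing-form argument nor the flat-torus case, and without them the conclusion does not follow.
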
  
\begin{proof}
Let $M$ be a complete parallel submanifold of $N:=\rmG_2^+(\R^{n+2})$ with $\dim(M)\geq 2$\,.
Then there exists a totally geodesic submanifold $\bar M\subset N$ such that $M$ is a symmetric submanifold of $\bar M$\,.
In particular, $\bar M$ is intrinsically a symmetric space. Furthermore, since the rank of $N$ is two, 
the rank of $\bar M$ is less than or equal to two. It follows immediately that there are no more 
than the following possibilities:

\begin{itemize}
\item the totally geodesic submanifold $\bar M$ is the 2-dimensional flat torus. 
Then we automatically have $M=\bar M$ (since $\dim(M)\geq 2$). Hence, we
have to show that the totally geodesic flat  $\bar M$ 
has extrinsically homogeneous holonomy bundle: let $\bar\fraki=\bar \frakk\oplus\bar \frakp$ 
and $\fraki=\frakk\oplus\frakp$ denote the Cartan decompositions of the 
Lie algebras of $\Iso(\bar M)$ and $\Iso(N)$\,, respectively. Then $[\bar \frakp,\bar \frakp]=\{0\}$\,, 
since $\bar M$ is flat.
Let $\bar G\subset \Iso(\bar M)$ denote the connected subgroup whose Lie algebra is 
$\bar \frakp$\,. Then $\bar G$ is the transvection group of $\bar M$\,.
Moreover, $\bar\frakp\subset\frakp$\,, because $\bar M$ is totally geodesic. 
Hence, we may take $G$ as the connected subgroup of $\Iso(N)$ whose  Lie algebra is $\bar\frakp$\,.

\item the totally geodesic submanifold $\bar M$ is locally the Riemannian product $\R\times\tilde M$ where $\tilde M$ 
is a locally irreducible symmetric space with $\dim(\tilde M)\geq 2$\,. Since $M\subset \bar M$ is symmetric, 
there exists a distinguished reflection $\sigma^\bot_p$ of $\bar M$ whose restriction to $M$ is the geodesic 
reflection in $p$ for every $p\in M$\,, see Definition~\ref{de:extrinsically_symmetric}. 
Therefore, these reflections generate a subgroup of $\Iso(\bar M)$  whose connected component acts transitively on $M$ and 
gives the full transvection group of $M$\,. 
Thus, it suffices to show that there exists a suitable subgroup of $\Iso(N)$ whose 
restriction to $\bar M$ is the connected component of $\Iso(\bar M)$\,:

let $\bar\fraki=\bar \frakk\oplus\bar \frakp$\,,  $\tilde\fraki=\tilde\frakk\oplus\tilde \frakp$ 
and  $\fraki=\frakk\oplus\frakp$ denote the Cartan decompositions of the 
Lie algebras of $\Iso(\bar M)$\,, $\Iso(\tilde M)$ and $\Iso(N)$\,, respectively.
Then $\bar\frakk=\tilde\frakk=[\tilde\frakp,\tilde\frakp]=[\bar\frakp,\bar\frakp]$\,, where the first and the last equality 
are related to the special product structure of $\bar M$ and the second one uses the fact 
that the Killing form of $\tilde \fraki$ is non-degenerate.
It follows that $\bar\fraki=[\bar\frakp,\bar\frakp]\oplus \bar\frakp$\,.
Moreover, we have $\bar\frakp\subset\frakp$\,, see above. Hence, every Killing 
vector field of $\bar M$ is the restriction of some Killing vector field of $N$\,.

\item the totally geodesic submanifold $\bar M$ is locally irreducible or locally the Riemannian product of two higher dimensional locally irreducible symmetric spaces:
then we have $\bar\fraki=[\bar\frakp,\bar\frakp]\oplus \bar\frakp$ because the Killing form of $\bar \fraki$ is non-degenerate. 
Hence we can use arguments given in the previous case.
\end{itemize}
\end{proof}
Note, in the previous theorems, the condition $\dim(M)\geq 2$ can not be ignored: 

consider the ambient space $\rmG_2^+(\R^4)$ which is isometric to $\rmS^2_{1/\sqrt{2}}\times\rmS^2_{1/\sqrt{2}}$\,. 
Here, a ``generic'' extrinsic circle is full but not extrinsically
homogeneous (e.g.\ not a symmetric submanifold), see~\cite{J2}, Example~1.9.

\section{Parallel submanifolds of symmetric spaces}
\label{se:general_results}
First, we solve the existence problem for parallel submanifolds of symmetric spaces by means of giving 
necessary and sufficient tensorial ``integrability conditions'' on the 2-jet (see Theorem~\ref{th:integrable}).\footnote{Note, such conditions were already claimed in~\cite{JR}.
However, the tensorial conditions stated in~\cite[Theorem~2]{JR} are not very strong.}
From this, we derive the fact (already mentioned before) that $(T_pM,\bot^1_pM)$ is a curvature invariant pair 
for every parallel submanifold $M$\,. Then we establish a necessary condition on the 2-jet of a parallel submanifold
which relates its integrability to the linearized isotropy representation of the ambient space 
(see Theorem~\ref{th:dec} and Corollary~\ref{co:dec}).
Some of the results mentioned so far were already obtained in~\cite{J1,J2}, however, for readers convenience, 
here we will derive them directly from the integrability conditions mentioned before.

Further, we give two results on the reduction of the codimension: 
for certain parallel submanifolds with one dimensional first normal spaces 
(see Proposition~\ref{p:sph}) and for parallel submanifolds with
curvature isotropic tangent spaces of maximal possible dimension 
(see Proposition~\ref{p:cfl} and Corollary~\ref{co:circles}). 
Note, whereas the first of these results is a straightforward 
generalization of a well known result on extrinsic spheres, the second one is apparently new.

We will also state a result of H.~Naitoh on symmetric submanifolds of product spaces (see Theorem~\ref{th:products}). 
This result was already used in the proof of Corollary~\ref{co:product}. Moreover,
we will need it again in order to show that certain curvature invariant pairs 
of Type $(tr_k',tr_k')$ are not integrable (cf.\ the proof of Corollary~\ref{co:kgeq3}).

\subsection{Existence of parallel submanifolds in symmetric spaces}
\label{se:existence}
It was first shown by W.~Str\"ubing~\cite{Str} that a parallel submanifold $M$ of
an arbitrary Riemannian manifold is uniquely
determined by its {\em 2-jet} $(T_pM,h_p)$ at some point $p\in M$\,. 
 Conversely, let a prescribed 2-jet $(W,h)$ at $p$
be given (i.e. $W\subset T_pN$ is a subspace and
$h:W\times W\to W^\bot$ is a symmetric bilinear map).
If there exists a parallel submanifold $M\subset N$ through $p$ such that
$(W,h)$ is the 2-jet of $M$\,, then $(W,h)$ will be called {\em integrable}.
Note, according to~\cite[Theorem 7]{JR}, for every integrable 2-jet, 
the corresponding parallel submanifold can be assumed to be complete.

Let $U$ be the subspace of $W^\bot$ which
is spanned by the image of $h$  and set $V:=W\oplus U$\,, i.e. $U$ and $V$ play the roles of the ``first normal
space'' and the ``second osculating space'', respectively.
Then the orthogonal splitting $V:=W\oplus U$ turns $\so(V)$ into a naturally
$\Z_2$-graded algebra $\so(V)=\so(V)_+\oplus\so(V)_-$
where $A\in\so(V)_+$ or $A\in\so(V)_-$ according to
whether $A$ respects the splitting $V=W\oplus U$ or $A(W)\subset U$ and $A(U)\subset W$\,.
Further, consider the linear map $\fetth:W\to\so(T_pN)$ given by
\begin{equation}\label{eq:fettb1}
\forall x,y\in W,\xi\in W^\bot:\fetth_x(y+\xi)=-S_\xi x+h(x,y)
\end{equation}
(where $S_\xi$ denotes the shape operator associated with $h$ for every
$\xi\in U$ in the usual way). Since $S_\xi=0$ holds for every $\xi\in W^\bot$ which is orthogonal to $U$\,,
we actually have
\begin{equation}\label{eq:fettb2}
\forall x\in W:\fetth_x\in\so(V)_-.
\end{equation}

\bigskip
\begin{definition}\label{de:sp}
Let a curvature like tensor $R$ on $T_pN$ and an $R$-invariant subspace $W$
of $T_pN$  (i.e. $R(W\times W\times W)\subset W$)  be given. A symmetric bilinear map $h:W\times W\to
W^\bot$ will be called {\em $R$-semi-parallel} if
\begin{equation}\label{eq:sp}
\fetth_{R_{x,y}z-[\fetth_x,\fetth_y]\,z}\,v=[R_{x,y}-[\fetth_x,\fetth_y],\fetth_z]\,v\
\end{equation}
holds for all $x,y,z\in W$ and $v\in T_pN$\,. 
Here $R_{u,v}:T_pN\to T_pN$ denotes the {\em curvature endomorphism} 
$R(u,v,\cdot)$ for all $u,v\in T_pN$\,. If $W$ is a
curvature invariant subspace of $T_pN$ and~\eqref{eq:sp} holds for $R=R^N_p$\,,
then $h$ is simply called {\em semi-parallel}.
\end{definition}
In the situation of Definition~\ref{de:sp}, it is easy to see that $h$ is $R$-semi-parallel 
if and only if~\eqref{eq:sp} holds for all $x,y,z\in W$ and $v\in V$\,.

Clearly, each linear map $A$ on $V$ induces an endomorphism $A\cdot$ on $\Lambda^2V$ 
by means of the usual rule of derivation, i.e.
$A\cdot u\wedge v=A\,u\wedge v+u\wedge A\,v$\,. Let $(A\cdot)^k$
denotes the $k$-th power of $A\cdot$ on $\Lambda^2V$\,. Similarly, $[A,\cdot]$ defines an endomorphism on $\so(V)$ whose
$k$-th power will be denoted by $[A,\cdot]^k$\,.
Furthermore, every curvature like tensor $R:T_pN\times T_pN\times T_pN\to T_pN$
can be seen as a linear map $R:\Lambda^2 T_pN\to\so(V)$ characterized by
$R(u\wedge v)= R_{u,v}$\,. The following theorem states the necessary and
sufficient ``integrability conditions'':\footnote{This result was also
  obtained in an unpublished paper by E.~Heintze.}

\bigskip
\begin{theorem}\label{th:integrable}
Let $N$ be a symmetric space. The 2-jet
$(W,h)$ is integrable if and only if the following conditions together hold:
\begin{itemize}
\item $W$ is a curvature invariant subspace of $T_pN$\,,
\item $h$ is semi-parallel,
\item we have
\begin{equation}\label{eq:cond2}
[\fetth_x,\cdot]^kR^N_{y,z}v=R^N((\fetth_x\cdot)^k y\wedge z)v
\end{equation}
for all $x,y,z\in W$\,, $k=1,2,3,4$ and each $v\in V$\,.
\end{itemize}
\end{theorem}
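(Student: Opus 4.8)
The plan is to characterize integrability of the 2-jet $(W,h)$ as the existence-and-well-definedness condition for a certain parallel submanifold, reducing the geometric problem to an algebraic one about the structure constants encoded by $\fetth$. The key conceptual point is that a parallel submanifold of a symmetric space $N$ is determined by its 2-jet (Str\"ubing's result, quoted above), so integrability must be equivalent to the consistency of the infinitesimal data $(W,h)$ with the curvature of $N$. Concretely, I would first set up the ambient algebraic framework: write the Cartan decomposition $\fraki=\frakk\oplus\frakp$ with $\frakp\cong T_pN$, so that the curvature is $R^N_{x,y}=-\ad([x,y])$ for $x,y\in\frakp$, and note that the second osculating data should generate a Lie subalgebra whose integral submanifold is the candidate $M$. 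The map $\fetth:W\to\so(V)_-$ from~\eqref{eq:fettb1} is the infinitesimal model of how the tangent/normal frame rotates along $M$, so the submanifold exists precisely when this infinitesimal rotation data closes up into a genuine Lie-algebraic object inside $\fraki$.

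First I would establish necessity of the three conditions for an actual parallel submanifold $M$. That $W=T_pM$ is curvature invariant is standard (Codazzi plus parallelism of $h$). For semi-parallelity, I would differentiate the parallelism equation $\nabla h=0$ once more and feed in the Gauss and Codazzi equations, isolating the curvature term $R^N_{x,y}$ against the composite $[\fetth_x,\fetth_y]$ — the combination $R^N_{x,y}-[\fetth_x,\fetth_y]$ is exactly the intrinsic curvature $R^M$ transported to $V$, and the Codazzi-type identity for a parallel submanifold forces~\eqref{eq:sp}. The conditions~\eqref{eq:cond2} for $k=1,\dots,4$ should arise by repeatedly applying the structure equations: each application of $[\fetth_x,\cdot]$ on the ambient side must match the corresponding derivation $(\fetth_x\cdot)$ acting inside $\Lambda^2 V$ through $R^N$, and the process terminates at $k=4$ because $\dim$-considerations or the $\Z_2$-grading make higher brackets redundant (an iterated-bracket identity that closes after finitely many steps).

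For sufficiency I would run the converse construction. Assuming the three conditions, I would build a candidate subalgebra of $\fraki$ generated by $V$ together with the image of $\fetth$ and the requisite $\frakk$-part, then verify the bracket relations close using precisely~\eqref{eq:sp} and~\eqref{eq:cond2}; the semi-parallelity identity governs the $[\frakp,\frakp]\subset\frakk$ brackets restricted to the model, while~\eqref{eq:cond2} controls how the $\frakk$-action propagates the curvature consistently at each order. Once the subalgebra closes, I would integrate it to a subgroup $G\subset\Iso(N)$ and take the orbit through $p$ (or an appropriate orbit in the associated bundle), then check that this orbit is a submanifold with second fundamental form $h$ and that $\nabla h=0$, i.e. that it realizes the prescribed 2-jet. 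The completeness assertion is then inherited from the group-orbit description, matching~\cite[Theorem~7]{JR}.

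\emph{The hard part will be} verifying that conditions~\eqref{eq:cond2} for $k=1,2,3,4$ are both necessary and \emph{sufficient} — that four orders of the iterated bracket identity exactly capture the closure of the generated algebra with no further constraints hidden at higher order. I expect the necessity of each order to follow cleanly from iterating the structure equations, but pinning down \emph{why four is enough} is the delicate step: one must show that $(\fetth_x\cdot)^k$ and $[\fetth_x,\cdot]^k$ stabilize or become linearly dependent on lower-order terms once $k\geq 5$, which presumably rests on the symmetric-space identity $\nabla R^N=0$ (so that $R^N$ is itself $\ad$-invariant) together with the fact that $\fetth_x$ lands in the odd part $\so(V)_-$, forcing an even/odd alternation in $\Lambda^2 V$ that truncates the recursion. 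Getting this truncation argument exactly right, and confirming it uses only the stated hypotheses, is where the real content of the theorem lies.
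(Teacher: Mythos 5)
Your necessity argument is broadly in the right spirit, but the sufficiency direction has a genuine gap: you propose to build a Lie subalgebra of $\fraki$ from $V$ and the image of $\fetth$, integrate it to a subgroup $G\subset\Iso(N)$, and realize $M$ as an orbit. This cannot work in general, because a parallel submanifold of a symmetric space need not be extrinsically homogeneous --- the paper itself notes that a generic extrinsic circle in $\rmG_2^+(\R^4)$ is full but \emph{not} extrinsically homogeneous --- so no orbit construction can realize every integrable 2-jet (and the theorem is stated for arbitrary $(W,h)$, including $\dim W=1$). The paper's actual proof sidesteps any explicit construction by quoting the criterion of~\cite[Theorem~1 and Remark~2]{JR}: $(W,h)$ is integrable if and only if the one-parameter family of curvature like tensors $R_x(t)$, defined by $\exp(t\,\fetth_x)R_x(t)(u,v,w)=R^N(\exp(t\,\fetth_x)u,\exp(t\,\fetth_x)v,\exp(t\,\fetth_x)w)$, stays in the affine space $\tilde\frakC$ of tensors $R$ for which $W$ is $R$-invariant and $h$ is $R$-semi-parallel. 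The whole content of the proof is then to show that the three listed conditions are equivalent to $R_x(t)\in\tilde\frakC$ for all $t$, which reduces to showing that $t\mapsto R_x(t)(y,z,v)$ is constant for $x,y,z\in W$ and $v\in V$. Without this (or an equivalent) reduction, your plan has no bridge from the algebraic conditions to the existence of the submanifold.

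Moreover, the step you flag as the hard part --- why $k=4$ suffices --- is exactly the piece you would need and do not supply, and your guess that it rests on $\nabla R^N=0$ misses the actual mechanism. Since $\fetth_x\in\so(V)_-$, one has $\fetth_x^2(W)\subset W$, and expanding $(\fetth_x\cdot)^4(y\wedge z)$ binomially shows $(\fetth_x\cdot)^4(\Lambda^2W)\subset\Lambda^2W+(\fetth_x\cdot)^2(\Lambda^2W)$. Hence $\Sigma:=\sum_{i=0}^3(\fetth_x\cdot)^i(\Lambda^2W)$ is invariant under $\fetth_x\cdot$, and conditions~\eqref{eq:cond2} for $k=1,2,3,4$ say precisely that $\omega\mapsto R^N(\omega)|_V$ is equivariant on $\Sigma$ for the two $\R$-actions generated by $\fetth_x\cdot$ and $[\fetth_x,\cdot]$; exponentiating then gives the required constancy of $R_x(t)(y,z,v)$. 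This truncation is a purely algebraic consequence of the $\Z_2$-grading of $\so(V)$, not of the parallelism of $R^N$ (the latter only enters in making the criterion of~\cite{JR} applicable in the first place).
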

\begin{proof}
In order to apply the main result of~\cite{JR}, consider the space $\frakC$ of all curvature like tensors
on $T_pN$ and the affine subspace $\tilde\frakC\subset \frakC$ which consists, by definition,
of all curvature like tensors $R$ on $T_pN$ such that $W$ is $R$-invariant and $h$ is $R$-semi-parallel. 
Then we define the one-parameter subgroup $R_x(t)$ of curvature like tensor on $T_pN$ characterized by
\begin{equation}\label{eq:one-parameter subgroup}
\exp(t\,\fetth_x) R_x(t)(u,v,w)=R^N(\exp(t\,\fetth_x)u,\exp(t\,\fetth_x)v,\exp(t\,\fetth_x)w)
\end{equation}
for all $u,v,w\in T_pN$ and $x\in W$\,.
According to~\cite[Theorem~1 and Remark~2]{JR}, $(W,h)$ is integrable if and only if $R_x(t)\in\tilde\frakC$ 
for all $x\in W$ and $t\in\R$ (since $R^N$ is a parallel tensor). Moreover, if
$(W,h)$ is integrable, then one
can show that the function $t\mapsto R_x(t)(y,z,v)$ is constant for all
$x,y,z\in W$ and $v\in V$ (see~\cite[Example~3.7~(a) and
Lemma~3.8]{J1}). Conversely, if $R^N_p\in\tilde\frakC$ and $R_x(t)(y,z,v)$
is constant in $t$ for all $x,y,z\in W$ and $v\in V$\,, then $R_x(t)$ in
$\tilde\frakC$ for all $t$ by straightforward arguments. 

Let us assume that $(W,h)$ is integrable. Then the previous implies that  
\begin{equation}\label{eq:invariance}
\exp(t\,\fetth_x)R^N_{y,z}\exp(-t\,\fetth_x)v=R^N_{\exp(t\,\fetth_x)y,\exp(t\,\fetth_x)z}v
\end{equation}
Taking the derivatives of~\eqref{eq:invariance} with respect to $t$\,,
we now see that~\eqref{eq:cond2} holds for all $k\geq 1$\,.

Conversely, suppose that $R^N_p\in\tilde\frakC$ holds.
It suffices to show that~\eqref{eq:cond2} implies that the function $t\mapsto R_x(t)(y,z,v)$
is constant for all $x,y,z\in W$ and $v\in V$\,:

Put $A:=\fetth_x$\,, set $\Sigma:=\sum_{i=0}^3(A\cdot)^i (\Lambda^2W)$ and note that
\begin{align}
&A\cdot y\wedge z=Ay\wedge z+y\wedge Az\;,\\
&(A\cdot)^2 y\wedge z=A^2y\wedge z+2Ay\wedge Az+y\wedge A^2z\;,\\
&(A\cdot)^3 y\wedge z=A^3y\wedge z+3A^2y\wedge Az+3Ay\wedge A^2z+y\wedge A^3z\;,\\
&(A\cdot)^4 y\wedge z=A^4y\wedge z+4A^3y\wedge Az+6A^2y\wedge A^2z+4Ay\wedge A^3z+y\wedge A^4z
\end{align}
%$A^3y\wedge z+3A^2y\wedge Az+3Ay\wedge A^2z+y\wedge A^3z=(A^3y\wedge z+ A²y\wedge Az)+2A^2y\wedge Az+2Ay\wedge A^2z+(Ay\wedge A^2z+y\wedge A^3z)$
for all $y,z\in W$\,. Since $A^2W\subset W$\,, we hence see that 
$(A\cdot)^4 (\Lambda^2W)\subset \Lambda^2W+(A\cdot)^2(\Lambda^2W)$\,.
Therefore, $A\cdot \Sigma\subset \Sigma$ and, furthermore, since~\eqref{eq:cond2} holds for $k=1,2,3,4$\,,
the natural map $\Lambda^2 T_pN\to \so(T_pN), u\wedge v\mapsto R^N_{u,v}$ induces a linear map $\Sigma\to\so(V), \omega \mapsto R^N(\omega)|_V$ 
which is equivariant with respect to the linear actions
of the 1-dimensional Lie algebra $\R$ induced by $A\cdot $ and $[A,\cdot]$ on $\Sigma$ and $\so(V)$\,, respectively.
Switching to the level of one-parameter subgroups, we obtain that $R_x(t)(\omega) v$
is constant in $t$ for all $\omega\in\Sigma$ and $v\in V$\,, in particular $R_x(t)(y,z,v)$ is constant in $t$
for all $x,y,z\in W$\,, $v\in V$\,.
\end{proof}

\bigskip
\begin{remark}
In the situation of Theorem~\ref{th:integrable}, suppose that $(W,h)$ is integrable. Then we have
\begin{equation}\label{eq:cond3}
[\fetth_{x_1},\ldots[\fetth_{x_k},R^N_{y,z}]\ldots]|_V=R^N_{\fetth_{x_1}\cdot\, \cdots\,\cdot \fetth_{x_k}\cdot y\wedge z}|_V
\end{equation}
for all ${x_1},\ldots, x_k,y,z\in W$ with $k=1,2,\ldots$\,.
Note, here $x_i\neq x_j$ is possible.
\end{remark}
\begin{proof}
For Equation~\eqref{eq:cond3} with $k=1,2$ see~\cite[Lemma 3.9]{J1}. 
The proof for $k\geq 3$ is done in a similar fashion.
\end{proof}

\subsection{Curvature invariant pairs}\label{se:definition_of_cip}
Suppose that $(W,h)$ is an integrable 2-jet at $p$\,, set $U:=\Spann{h(x,y)}{x,y\in W}$ and $V:=W\oplus U$\,.
Then $W$ is a curvature invariant subspace of $T_pN$ and 
$h:W\times W\to W^\bot$ is a semi-parallel symmetric bilinear map, hence
\begin{equation}\label{eq:cip1}
R^N(W\times W\times W)\subset W\ \text{and}\ R^N(W\times W\times U)\subset U\;.
\end{equation}
In other words, $R^N_{x,y}(V)\subset V$ and $R^N_{x,y}|_V\in\so(V)_+$ for all $x,y\in W$\,.

Moreover, using~\eqref{eq:cond3} with $k=2$\,,
we obtain that
\begin{equation}\label{eq:ci}
R^N_{h(x,x),h(y,y)}|_V=[\fetth_{x},[\fetth_{y},R^N_{x,y}]]|_V+R^N_{S_{h(x,y)}x,y}|_V+R^N_{x,S_{h(y,y)}x}|_V
\end{equation}
for all $x,y\in W$\,.
Since r.h.s.\ of~\eqref{eq:ci} leaves $V$ invariant, the same is true for l.h.s.\ of~\eqref{eq:ci}.
Furthermore, using that $R^N_{x,y}|_V\in\so(V)_+$\,, Eq.~\ref{eq:fettb2} and the rules for $\Z_2$-graded Lie algebras,
we see that r.h.s.\ of~\eqref{eq:ci} defines an element of $\so(V)_+$\,. 
Hence the same is true for l.h.s of~\eqref{eq:ci}, too.
Finally, because $h$ is symmetric, $\Lambda^2(U)=\Spann{h(x,x)\wedge h(y,y)}{x,y\in W}$ holds. 
We conclude that~\eqref{eq:cip1} 
holds also with the roles of $W$ and $U$ interchanged, i.e.\ we have
\begin{equation}\label{eq:cip2}
R^N(U\times U\times U)\subset U\ \text{and}\ 
R^N(U\times U\times W)\subset W\;.
\end{equation}

\bigskip
\begin{definition}\label{de:cip}
Let subspaces $W, U$ of $T_pN$ be given.
We will call $(W,U)$ a curvature invariant pair if both~\eqref{eq:cip1} and~\eqref{eq:cip2} hold.
In particular, then $W$ and $U$ both are curvature invariant subspaces of $T_pN$\,.
If additionally $W\bot U$\,, then $(W,U)$ is called an orthogonal curvature invariant pair. 
\end{definition}
We obtain the first criterion matching on the question posed in Section~\ref{se:overview} (cf.~\cite[Corollary~13]{J1}):

\bigskip
\begin{proposition}\label{p:cip}
Let  $(W,h)$ be an integrable 2-jet. Set $U:=\Menge{h(x,y)}{x,y\in W}_\R$\,. 
Then $(W,U)$ is an orthogonal curvature invariant pair.
\end{proposition}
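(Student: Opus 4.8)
The plan is to prove Proposition~\ref{p:cip} by observing that essentially all of the work has already been carried out in the discussion immediately preceding Definition~\ref{de:cip}, so the proof amounts to organizing that computation into a clean deduction. The goal is to verify that the pair $(W,U)$ satisfies both~\eqref{eq:cip1} and~\eqref{eq:cip2}, together with orthogonality $W\bot U$\,. Orthogonality is immediate from the construction: $U$ is spanned by values $h(x,y)$ which lie in $W^\bot$ by the very definition of the second fundamental form, so $U\subset W^\bot$ and hence $W\bot U$\,.

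Next I would establish~\eqref{eq:cip1}. Since $(W,h)$ is integrable, Theorem~\ref{th:integrable} tells us that $W$ is curvature invariant and $h$ is semi-parallel. The first condition gives $R^N(W\times W\times W)\subset W$ directly. For the second inclusion $R^N(W\times W\times U)\subset U$\,, I would argue as in the text: semi-parallelity of $h$ (equivalently, Equation~\eqref{eq:cond3} with $k=1$) expresses the commutator $[\fetth_x,R^N_{y,z}]|_V$ as a curvature endomorphism, and combining this with $\fetth_x\in\so(V)_-$ from~\eqref{eq:fettb2} and the $\Z_2$-grading of $\so(V)$ yields that $R^N_{x,y}|_V\in\so(V)_+$ for all $x,y\in W$\,. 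By definition of the grading, $R^N_{x,y}|_V$ respects the splitting $V=W\oplus U$\,, which is exactly the statement that $R^N_{x,y}$ maps $W$ into $W$ and $U$ into $U$\,; the latter is~\eqref{eq:cip1}.

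The main step, and the only one requiring genuine input, is deriving~\eqref{eq:cip2}, i.e.\ the symmetric statement with the roles of $W$ and $U$ exchanged. Here I would reproduce the argument surrounding~\eqref{eq:ci}: apply Equation~\eqref{eq:cond3} with $k=2$ to rewrite $R^N_{h(x,x),h(y,y)}|_V$ in terms of nested commutators $[\fetth_x,[\fetth_y,R^N_{x,y}]]|_V$ and lower-order curvature terms. Each summand on the right-hand side manifestly preserves $V$ and, by the bracket rules in the $\Z_2$-graded algebra $\so(V)$ (two factors from $\so(V)_-$ composed with an element of $\so(V)_+$ land in $\so(V)_+$), defines an element of $\so(V)_+$\,. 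Consequently $R^N_{h(x,x),h(y,y)}|_V\in\so(V)_+$ and preserves $V$\,. This is where I expect the real subtlety to lie, since one must be careful that the manipulation is valid for the \emph{off-diagonal} wedge products generating $\Lambda^2 U$\,.

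Finally, to pass from the special endomorphisms $R^N_{h(x,x),h(y,y)}$ to all of $R^N_{u,v}$ with $u,v\in U$\,, I would invoke the polarization identity $\Lambda^2(U)=\{h(x,x)\wedge h(y,y)\mid x,y\in W\}_\R$\,, which holds because $h$ is symmetric and $U$ is spanned by the diagonal values $h(x,x)$\,. This shows that every $R^N_{u,v}|_V$ with $u,v\in U$ lies in $\so(V)_+$ and preserves $V$\,, which unwinds to precisely~\eqref{eq:cip2}. Having verified~\eqref{eq:cip1}, \eqref{eq:cip2}, and $W\bot U$\,, the pair $(W,U)$ is an orthogonal curvature invariant pair in the sense of Definition~\ref{de:cip}, completing the proof.
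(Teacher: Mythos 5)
Your proposal is correct and follows essentially the same route as the paper: the paper proves Proposition~\ref{p:cip} in the discussion preceding Definition~\ref{de:cip}, deriving~\eqref{eq:cip1} from curvature invariance of $W$ together with semi-parallelity of $h$, then obtaining~\eqref{eq:ci} from~\eqref{eq:cond3} with $k=2$, using the $\Z_2$-grading of $\so(V)$ to see that $R^N_{h(x,x),h(y,y)}|_V\in\so(V)_+$, and finally invoking $\Lambda^2(U)=\Spann{h(x,x)\wedge h(y,y)}{x,y\in W}$ to conclude~\eqref{eq:cip2}. No substantive differences.
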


An (orthogonal) curvature invariant pair $(W,U)$ which is induced by an integrable 2-jet
as in Proposition~\ref{p:cip} will be called {\em integrable}. 

Furthermore, it is known that every complete parallel submanifold of a
simply connected symmetric space whose normal spaces are curvature invariant is even a
symmetric submanifold (cf.~\cite[Proposition~9.3]{BCO}). Hence we obtain a result, which was already proved in~\cite{J1}:

\bigskip
\begin{corollary}\label{co:1-full}
Every 1-full complete parallel submanifold of a
simply connected symmetric space is a symmetric submanifold.
\end{corollary}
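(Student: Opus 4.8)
The plan is to reduce the statement to the characterization already quoted just before the corollary, namely that a complete parallel submanifold of a simply connected symmetric space whose normal spaces are all curvature invariant is automatically a symmetric submanifold (cf.~\cite[Proposition~9.3]{BCO}). Granting this, it suffices to verify that the hypothesis \emph{1-full} forces every normal space $\bot_qM$ to be a curvature invariant subspace of $T_qN$\,.

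First I would fix an arbitrary point $q\in M$\,. Since $M$ is parallel, its 2-jet $(T_qM,h_q)$ at $q$ is integrable---indeed $M$ itself is a parallel submanifold realizing this 2-jet. Hence Proposition~\ref{p:cip} applies at $q$ and shows that $(T_qM,\bot^1_qM)$ is an orthogonal curvature invariant pair; in particular the first normal space $\bot^1_qM$ is curvature invariant. By the definition of 1-fullness we have $\bot^1_qM=\bot_qM$\,, so the full normal space $\bot_qM$ is curvature invariant as well. As $q$ was arbitrary, all normal spaces of $M$ are curvature invariant, and the quoted result then yields immediately that $M$ is a symmetric submanifold.

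I expect no genuine obstacle here, as the corollary is essentially a repackaging of Proposition~\ref{p:cip} together with the cited characterization of symmetric submanifolds via curvature invariant normal spaces. The only point requiring (minor) care is the pointwise application of Proposition~\ref{p:cip}: one must observe that a parallel submanifold has an integrable 2-jet not merely at one distinguished point but at every point of $M$\,, so that the first normal spaces---and hence, by 1-fullness, the full normal spaces---are curvature invariant everywhere. The completeness and simple-connectivity hypotheses enter only through the quoted Proposition~9.3 and are not touched by the curvature-invariance verification itself.
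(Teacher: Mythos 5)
Your proposal is correct and follows exactly the route the paper intends: the sentence immediately preceding the corollary quotes the characterization from~\cite[Proposition~9.3]{BCO}, and the corollary is obtained by combining it with Proposition~\ref{p:cip} (applied at every point) together with the identity $\bot^1_qM=\bot_qM$ coming from 1-fullness. Nothing is missing.
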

If $W$ is a curvature invariant subspace of $T_pN$\,, then
\begin{equation}\label{eq:La1}\frakh_W:=\Spann{R^N_{x,y}}{x,y\in W}\;.
\end{equation}
is a Lie subalgebra of $\so(T_pN)$\,. Further, there exist natural 
representations of $\frakh_W$ on both $W$ and $W^\bot$ (obtained by restriction, respectively). 
We are interested in the $\frakh_W$-invariant subspaces of $W^\bot$\,. For this, 
we recall the following result, which is a simple consequence of  Schur's Lemma.

Let $W^\bot=U_1\oplus\cdots\oplus U_k$ be a decomposition into
$\frakh_W$-irreducible subspaces. 
After a permutation of the indices, 
there exists some $r\geq 1$ and a sequence $1=k_1<k_2<\cdots <k_{r+1}=k+1$ 
such that $U_{k_i}\cong U_{k_i+1}\cong \cdots\cong U_{k_{i+1}-1}$ for $i=1,\ldots, r$ 
but $U_{k_i}$ is not isomorphic to $U_{k_j}$ for $i\neq j$\,.
Hence, there is also the decomposition $W^\bot=\oplus_{i=1}^r{\bf U}_{i}$ 
with ${\bf U}_{i}:=U_{k_i}+ U_{k_i+1}\cdots+ U_{k_{i+1}-1}$\,. 
Then every irreducible  $\frakh_W$-invariant subspace $U$ of $W^\bot$ is contained
in some ${\bf U}_{i}$\,. Furthermore,  the irreducible  $\frakh_W$-invariant subspaces of ${\bf U}_{i}$ 
are parameterized by the real projective space $\R\rmP^{k_{i+1}-k_i-1}$ (if $U_{k_i}$ 
is irreducible even over $\C$), the complex projective space
$\C\rmP^{k_{i+1}-k_i-1}$ (if $U_{k_i}\otimes\C$ decomposes into two non-isomorphic $\frakh_W$-modules) or 
the quaternionic projective space $\bbH\rmP^{k_{i+1}-k_i-1}$for $i=1,\ldots, r$ (otherwise). More precisely, 
let $\lambda_j:U_{k_i}\to U_{k_i+j}$ be an $\frakh_W$-isomorphism
($j=1,\ldots,k_{i+1}-k_i-1$). Further, set $\lambda_0:=\Id_{U_{k_i}}$
and $\lambda_c:=\sum_{j=0}^{k_{i+1}-k_i-1}c_j\lambda_j$ for every $c=(c_0,\ldots,c_{k_{i+1}-k_i-1})\in\R^{k_{i+1}-k_i}$\,.
Then $U:=\lambda_c({U_{k_i}})$ is an irreducible $\frakh_W$-invariant subspace
of ${\bf U}_{i}$\,. This gives the claimed parameterization in case  $U_{k_i}$ 
is irreducible even over $\C$\,. The other cases are handled similarly.

\bigskip
\begin{example}[Curvature invariant pairs of $\C\rmP^n$]\label{ex:CPm}
Consider the complex projective space $N:=\C\rmP^n$\,. 
Its curvature tensor is given by $R^N_{u,v}=-u\wedge v-J u\wedge J v-2\,\omega(u,v)J $ 
for all $u,v\in T_p\C\rmP^n$ (where $J$ denotes the complex structure of $T_pN$ and 
$\omega (u,v):=\g{J u}{v}$ is the  K\"ahler form).
The curvature invariant subspaces of $T_pN$ are known to be precisely the totally real and the complex subspaces. 
Let us determine the orthogonal curvature invariant pairs $(W,U)$\,: 

if $W$ is totally real, then $R^N_{x,y}=-x\wedge y-J\, x\wedge J\, y$ for all $x,y\in W$\,.
Hence the Lie algebra $\frakh_W$ (see~\eqref{eq:La1}) is given by the linear space $\Spann{x\wedge y+J\, x\wedge J\, y}{x,y\in W}$\,. 
In the following, we assume that $\dim(W)\geq 2$\,. Consider the decomposition $W^\bot=J W\oplus (\C\,W)^\bot$ 
(here $(\C\,W)^\bot$ means the orthogonal complement of $\C\,W$ in $T_pN$). Then  $\frakh_W$ acts irreducibly on $J(W)$ and trivially on $(\C\,W)^\bot$\,.
Further, Eq.~\ref{eq:cip1} shows that $U$ is $\frakh_W$-invariant.  
It follows that either $J(W)\subset U$ or $U\subset (\C\,W)^\bot$ (cf.~\cite[Proposition 2.3]{N3}). 
In the first case, we claim that actually $U=J(W)$ (and hence $V:=W\oplus U$ is a complex subspace of $T_pN$\,, cf.~\cite[Lemma 4.1]{N3}): 

let $\tilde U\subset (\C\,W)^\bot $ be chosen such that $U=JW\oplus \tilde U$\,.
Clearly, $U$ is not complex, thus $U$ is necessarily totally real, because $U$ is curvature invariant. 
Moreover, we have $\dim(U)\geq 2$\,, thus $\frakh_U$ (defined as above) acts irreducibly on 
$J(U)=W\oplus J(\tilde U)$\,. Since $W$ is $\frakh_U$-invariant (see~\eqref{eq:cip2}), we see that this is not possible unless $J(\tilde U)=\{0\}$\,. 
The claim follows.

In the second case, we claim that $U$ is totally real (and thus $V$ is totally real, too, cf.~\cite[Lemma 3.2]{N3})): 

in fact, otherwise $U$ would be a complex subspace of $(\C\,W)^\bot$\,. 
Then the Lie algebra $\frakh_U$ is given by $\R\, J\oplus \Spann{\xi\wedge \eta+J\, \xi\wedge J\, \eta}{\xi,\eta\in U}$\,. 
Thus $\frakh_U$ acts on $U^\bot$ via $\R\, J$\,. Further, $W$ is invariant under the action of $\frakh_U$ according  
to~\eqref{eq:cip2} implying that $W$ is complex, 
a contradiction. The claim follows.

Anyway, the linear space $V$ is curvature invariant unless $\dim(W)=1$\,. 
Therefore, by means of arguments given in the proof of Theorem~\ref{th:main}, we see that every higher dimensional totally real 
parallel submanifold of $\C\rmP^n$ is a Lagrangian symmetric submanifold of some 
totally geodesically embedded $\C\rmP^k$ or a symmetric submanifold of some totally geodesically embedded $\R\rmP^k$\,.

If $W$ is a complex subspace of $T_p\C\rmP^n$\,, then $\frakh_W|_{W^\bot}=\R
J|_{W^\bot}$\,. Hence, if $(W,U)$ is an orthogonal curvature invariant pair, 
then both $U$ and $V:=W\oplus U$ are complex subspaces, too. 
This shows that every complex parallel submanifold of $\C\rmP^n$ is a complex symmetric submanifold 
of some totally geodesically embedded $\C\rmP^k$\,. 
\end{example}

\subsection{Further necessary integrability conditions}
\label{se:necessary}
Let $N$ be a symmetric space, $K\subset \Iso(N)$ denote the isotropy subgroup
at $p$\,, $\frakk$ denote its Lie algebra and
$\rho:\frakk\to\so(T_pN)$ be the linearized isotropy representation. 
Recall that 
\begin{equation}\label{eq:symmetric_space}
R^N_{u,v}\in\rho(\frakk)
\end{equation}
for all $u,v\in T_pN$ (since $N$ is a symmetric space).

Given a 2-jet $(W,h)$ at $p$\,, we set $U:=\Spann{h(x,y)}{x,y\in W}$\,, $V:=W\oplus U$ and
\begin{equation}\label{eq:frakk_V}
\frakk_V:=\Menge{X\in\frakk}{\rho(X)(V)\subset V}\;.
\end{equation} 
Then there is an induced representation of $\frakk_V$ on $V$\,. 
Further, consider the endomorphisms of $T_pN$ given by
\begin{equation}
\label{eq:generators}
[\fetth_{x_1},\ldots[\fetth_{x_k},R^N_{y,z}]\ldots]
\end{equation}
with $x_1,\ldots,x_k,y,z\in W$ and $k\geq 0$\,.  
Furthermore, recall that the centralizer of a subalgebra $\frakg\subset \so(V)$ is given by
\begin{equation}\label{eq:centralizer}
Z(\frakg):=\Menge{A\in\so(V)}{\forall B\in\frakg:[A,B]=0}\;.
\end{equation}

\bigskip
\begin{theorem}\label{th:dec}
Let an integrable 2-jet $(W,h)$ be given, set $U:=\Spann{h(x,y)}{x,y\in W}$ and $V:=W\oplus U$\,.
Then, the endomorphisms of $T_pN$ given by~\eqref{eq:generators} leave $V$ invariant and hence they generate 
a subalgebra $\frakg\subset \so(V)$ (by restriction to $V$) with the following property:
for each $x\in W$ there exist $A_x\in\rho(\frakk_V)|_V\cap\so(V)_-$\,, $B_x\in Z(\frakg)\cap\so(V)_-$ such that $\fetth_x=A_x+ B_x$\,.
\end{theorem}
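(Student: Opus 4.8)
The plan is to realize the two summands $A_x$ and $B_x$ as the orthogonal projections of $\fetth_x$ onto $\frakg$ and onto its orthogonal complement with respect to an invariant inner product on $\so(V)$, and to extract from the integrability conditions exactly the two facts that make this work: that $\frakg$ sits inside $\rho(\frakk_V)|_V$, and that $\mathrm{ad}(\fetth_x)$ normalizes $\frakg$.

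First I would settle the invariance statement and set up $\frakg$. By~\eqref{eq:cip1} and~\eqref{eq:cip2} the endomorphism $R^N_{y,z}$ with $y,z\in W$ maps $W$ into $W$ and $U$ into $U$, hence preserves $V$; and by~\eqref{eq:fettb2} together with the vanishing of $S_\xi$ for $\xi\bot U$, each $\fetth_x$ preserves $V$, acting as an element of $\so(V)_-$ and annihilating $V^\bot$. Since both building blocks preserve $V$, every iterated bracket in~\eqref{eq:generators} preserves $V$, so their restrictions generate a Lie subalgebra $\frakg\subset\so(V)$. As the generator of~\eqref{eq:generators} with $k$ factors $\fetth$ lies in $\so(V)_+$ for $k$ even and in $\so(V)_-$ for $k$ odd, $\frakg$ is $\Z_2$-graded, $\frakg=\frakg_+\oplus\frakg_-$ with $\frakg_\pm=\frakg\cap\so(V)_\pm$.

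Second I would show $\frakg\subseteq\rho(\frakk_V)|_V$. Applying~\eqref{eq:cond3}, the restriction to $V$ of each generator equals $R^N_\omega|_V$ for some $\omega\in\Lambda^2 V$, hence is a sum of curvature endomorphisms $R^N_{u,v}$ with $u,v\in V$. By~\eqref{eq:symmetric_space} each such $R^N_{u,v}$ lies in $\rho(\frakk)$, so $R^N_\omega\in\rho(\frakk)$; and since $R^N_\omega$ agrees on $V$ with the bracket, which preserves $V$, the endomorphism $R^N_\omega$ preserves $V$, whence $R^N_\omega\in\rho(\frakk_V)$ by~\eqref{eq:frakk_V}. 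Because $\rho(\frakk_V)|_V$ is a subalgebra of $\so(V)$, it contains the Lie algebra generated by these restrictions, so $\frakg\subseteq\rho(\frakk_V)|_V$, and in particular $\frakg_-\subseteq\rho(\frakk_V)|_V\cap\so(V)_-$, which is where $A_x$ will live. Next, bracketing a generator with $\fetth_x$ produces a generator with one additional factor, so $\mathrm{ad}(\fetth_x)$ maps the generating set into $\frakg$; being a derivation, it therefore maps all of $\frakg$ into $\frakg$. Now I would equip $\so(V)$ with the $\mathrm{Ad}$-invariant, positive-definite inner product $\langle A,B\rangle:=-\trace(AB)$; since $\so(V)_+\bot\so(V)_-$ for this product and $\frakg$ is graded, the orthogonal projection $A_x$ of $\fetth_x\in\so(V)_-$ onto $\frakg$ lands in $\frakg_-$, and $B_x:=\fetth_x-A_x\in\so(V)_-$. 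By invariance of the form one has $[\frakg,\frakg^\bot]\subseteq\frakg^\bot$, so $[B_x,\frakg]\subseteq\frakg^\bot$; on the other hand $[B_x,\frakg]=[\fetth_x,\frakg]-[A_x,\frakg]\subseteq\frakg$ because both $\fetth_x$ (just shown) and $A_x\in\frakg$ normalize $\frakg$. Hence $[B_x,\frakg]\subseteq\frakg\cap\frakg^\bot=0$, i.e.\ $B_x\in Z(\frakg)\cap\so(V)_-$, while $A_x\in\frakg_-\subseteq\rho(\frakk_V)|_V\cap\so(V)_-$, as required.

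The main obstacle is the second step: one must be sure that the generators, after restriction to $V$, are not merely restrictions of arbitrary isotropy transformations but of ones preserving $V$, so that they genuinely lie in $\rho(\frakk_V)|_V$ and not just in $\rho(\frakk)|_V$. This rests delicately on combining~\eqref{eq:cond3}, which rewrites the iterated brackets as honest curvature terms $R^N_\omega$ with $\omega\in\Lambda^2 V$, with the $V$-invariance established in the first step, which promotes membership in $\rho(\frakk)$ to membership in $\rho(\frakk_V)$. Once these are in place, the decomposition is forced by the elementary fact that orthogonal projection onto a subalgebra, relative to an invariant inner product, sends a normalizing element to the sum of a member of the subalgebra and a central element of it; the grading is what guarantees that both projections remain in $\so(V)_-$.
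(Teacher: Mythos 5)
Your proposal is correct and follows essentially the same route as the paper: both establish the invariance of $V$ and the $\Z_2$-grading of $\frakg$ from~\eqref{eq:cip1},~\eqref{eq:fettb2}, identify $\frakg\subseteq\rho(\frakk_V)|_V$ by combining~\eqref{eq:cond3} with~\eqref{eq:symmetric_space}, and then define $A_x$ as the orthogonal projection of $\fetth_x$ onto $\frakg$ with respect to the trace form, using $[\fetth_x,\frakg]\subset\frakg$ and the invariance of the form to conclude $B_x\in Z(\frakg)\cap\so(V)_-$. Your explicit verification that the curvature terms $R^N_\omega$ actually preserve $V$ (so that they lie in $\rho(\frakk_V)$ and not merely in $\rho(\frakk)$) makes precise a point the paper passes over quickly, but the argument is the same.
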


\begin{proof}
Since $(W,U)$ is a curvature invariant pair, we have $R^N_{x,y}(V)\subset V$ for all $x,y\in W$ according to~\eqref{eq:cip1}.
Thus~\eqref{eq:generators} leaves $V$ invariant also for $k>0$\,, see~\eqref{eq:fettb1}.
Further, note that applying $[\fetth_x,\,\cdot\,]$ to~\eqref{eq:generators} leaves the form of~\eqref{eq:generators} 
invariant with the natural number $k$ increased by one for every $x\in W$\,. 
Hence $[\fetth_x,\frakg]\subset\frakg$\,. Furthermore, the restriction of~\eqref{eq:generators} to $V$
belongs to $\so(V)_+$ or $\so(V)_-$ according to whether $k$ is even or odd, 
see~\eqref{eq:fettb2} and~\eqref{eq:cip1}.
Therefore, $\frakg$ is a graded Lie subalgebra of $\so(V)$\,, i.e. $\frakg=\frakg_+\oplus\frakg_-$ 
with $\frakg_+:=\frakg\cap\so(V)_+$ and $\frakg_-:=\frakg\cap\so(V)_-$\,.

Let $A_x$ denote the orthogonal projection of $\fetth_x$ onto $\frakg$ with
respect to the positive definite symmetric bilinear form on $\so(V)$ which
is given by $-\trace(A\circ B)$ for all $A,B\in\so(V)$\,. Since there is the orthogonal splitting $\frakg=\frakg_+\oplus\frakg_-$
and $\fetth_x\in\so(V)_-$ holds, we immediately see that $A_x\in\so(V)_-$
(cf.~\cite[Lemma~4.19]{J2}). Furthermore, using the invariance
property of the trace form (i.e. $\trace([A,B]\circ C)=\trace(A\circ [B,C])$), we
conclude from $[\fetth_x,\frakg]\subset\frakg$ that $B_x:=\fetth_x-A_x$
centralizes $\frakg$\,. Further, we have $B_x\in\so(V)_-$\,. It remains to show that $\frakg\subset\rho(\frakk_V)|_V$\,:

because of~\eqref{eq:symmetric_space}, r.h.s.\ of~\eqref{eq:cond3} belongs to  $\rho(\frakk_V)|_V$ and so does l.h.s. Thus, 
the restriction to $V$ of~\eqref{eq:generators} belongs to  $\rho(\frakk_V)|_V$ for every $k$\,, which gives our claim.

This proves the theorem. 
\end{proof}

Given an orthogonal curvature invariant pair $(W,U)$\,, we set $V:=W\oplus U$\,. Then
\begin{equation}\label{eq:La2}
\frakh:=\frakh_W|_V+\frakh_U|_V
\end{equation}
is a Lie subalgebra of $\so(V)_+$\,. Therefore, restricting the elements of $\frakh$ to $W$ or 
$U$ defines representations of $\frakh$ on $W$ and $U$\,, respectively. Hence, we introduce the linear spaces of homomorphisms
\begin{align}\label{eq:hom_1}
&\Hom(W,U):=\Menge{\lambda:W\to U}{\lambda\ \text{is linear}};\\
\label{eq:hom_2}
&\Hom_\frakh(W,U):=\Menge{\lambda\in\Hom(W,U)}{\forall A\in\frakh:\lambda\circ A|_W=A|_U\circ\lambda}\;.
\end{align}
Note that the natural map
\begin{equation}\label{eq:iso1}
\so(V)_-\to \Hom(W,U), A\mapsto A|_W
\end{equation}
is actually a linear isomorphism inducing an equivalence
\begin{equation}\label{eq:iso2}
Z(\frakh)\cap\so(V)_-\cong \Hom_{\frakh}(W,U),
\end{equation}
where $Z(\frakh)$ denotes the centralizer of $\frakh$ in $\so(V)$\,. 
Further, mapping $\lambda$ to its adjoint $\lambda^*$ defines an isomorphism 
\begin{equation}\label{eq:iso3}
\Hom_\frakh(W,U)\cong\Hom_\frakh(U,W)\;.
\end{equation}

As a corollary of Theorem~\ref{th:dec}, we derive the following obstruction against integrability:

\bigskip
\begin{corollary}\label{co:dec}
Let an integrable 2-jet $(W,h)$ be given. Set $U:=\Spann{h(x,y)}{x,y\in W}$\,, 
$V:=W\oplus U$ and suppose additionally that $\rho(\frakk_V)|_V\cap\so(V)_-=\{0\}$\,.
Let $\frakh$ be the Lie algebra~\eqref{eq:La2}. Then $\frakh=\frakh_W|_V$ and
\begin{equation}\forall x\in W: h(x,\cdot)\in\Hom_\frakh(W,U).
\label{eq:dec2}
\end{equation}
\end{corollary}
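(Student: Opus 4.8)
The plan is to extract everything from Theorem~\ref{th:dec}. Write $\frakg\subset\so(V)$ for the subalgebra generated by the restrictions to $V$ of the endomorphisms~\eqref{eq:generators}; its degree-zero generators are precisely the $R^N_{y,z}|_V$ with $y,z\in W$, so $\frakh_W|_V\subset\frakg$. Theorem~\ref{th:dec} writes each $\fetth_x=A_x+B_x$ with $A_x\in\rho(\frakk_V)|_V\cap\so(V)_-$ and $B_x\in Z(\frakg)\cap\so(V)_-$. First I would invoke the standing hypothesis $\rho(\frakk_V)|_V\cap\so(V)_-=\{0\}$ to force $A_x=0$, so that $\fetth_x=B_x\in Z(\frakg)$ for every $x\in W$. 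In particular $\fetth_x$ commutes on $V$ with every curvature endomorphism $R^N_{y,z}|_V$, $y,z\in W$; this single observation drives the whole argument.

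Next I would prove $\frakh=\frakh_W|_V$, that is, $\frakh_U|_V\subset\frakh_W|_V$. Since $h$ is symmetric, $\Lambda^2U$ is spanned by the elements $h(x,x)\wedge h(y,y)$, so it suffices to locate $R^N_{h(x,x),h(y,y)}|_V$. Here I would feed the previous observation into identity~\eqref{eq:ci}: because $R^N_{x,y}|_V\in\frakh_W|_V\subset\frakg$ and $\fetth_y\in Z(\frakg)$, the inner bracket $[\fetth_y,R^N_{x,y}]|_V$ vanishes, hence so does the leading term $[\fetth_x,[\fetth_y,R^N_{x,y}]]|_V$ of~\eqref{eq:ci}. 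The two surviving terms of~\eqref{eq:ci} have both of their arguments in $W$, because each shape operator $S_\xi$ ($\xi\in U$) maps $W$ into $W$; they therefore already lie in $\frakh_W|_V$. Thus $R^N_{h(x,x),h(y,y)}|_V\in\frakh_W|_V$, which yields $\frakh_U|_V\subset\frakh_W|_V$ and the first assertion.

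With $\frakh=\frakh_W|_V\subset\frakg$ established, the relation $\fetth_x\in Z(\frakg)$ upgrades to $\fetth_x\in Z(\frakh)$, and by~\eqref{eq:fettb2} we also have $\fetth_x\in\so(V)_-$, so $\fetth_x\in Z(\frakh)\cap\so(V)_-$. I would then read off~\eqref{eq:dec2} through the equivalence~\eqref{eq:iso2}: the map $A\mapsto A|_W$ of~\eqref{eq:iso1} identifies $Z(\frakh)\cap\so(V)_-$ with $\Hom_\frakh(W,U)$, while~\eqref{eq:fettb1} shows that $\fetth_x|_W$ is exactly $y\mapsto h(x,y)=h(x,\cdot)$. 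Hence $h(x,\cdot)\in\Hom_\frakh(W,U)$ for all $x\in W$. The one delicate point is the ordering: the cancellation in~\eqref{eq:ci} must be run with $\fetth_x,\fetth_y$ centralizing the full algebra $\frakg$ (available from the first step), not merely $\frakh$, since the identity $\frakh=\frakh_W|_V$ is only being proven at that stage and cannot be assumed.
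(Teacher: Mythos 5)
Your argument is correct and follows essentially the same route as the paper: both rest on the decomposition $\fetth_x=A_x+B_x$ from Theorem~\ref{th:dec}, the identity~\eqref{eq:ci} to pull $\frakh_U|_V$ into $\frakh_W|_V$, and the identification~\eqref{eq:iso2} of $Z(\frakh)\cap\so(V)_-$ with $\Hom_\frakh(W,U)$. The only (harmless) difference is that you kill $A_x$ outright from the hypothesis $\rho(\frakk_V)|_V\cap\so(V)_-=\{0\}$, whereas the paper first notes $[\fetth_x,\frakh_W|_V]=[A_x,\frakh_W|_V]\subset\rho(\frakk_V)|_V\cap\so(V)_-$ via the $\Z_2$-grading; your care about using $Z(\frakg)$ rather than $Z(\frakh)$ before $\frakh=\frakh_W|_V$ is established is exactly right.
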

\begin{proof}
Consider the subalgebra $\frakg\subset \so(V)$ described in Theorem~\ref{th:dec}. 
First, we claim that $\frakh$ is a subalgebra of $\frakg$ (this is actually true for every integrable 2-jet):

since~\eqref{eq:generators} with $k=0$ leaves $V$ invariant and its
restriction to $V$ belongs to $\frakg$\,, we have $A(V)\subset V$ and
$A|_V\in\frakg$ for all $A\in\frakh_W$\,. Further, we have seen in the proof of Theorem~\ref{th:dec} 
that $\frakg$ is normalized by $\fetth_x$ for every $x\in W$\,.
Furthermore, because $h$ is a symmetric bilinear map whose image spans $U$\,, the linear space $\Lambda^2U$
is spanned by the 2-wedges $h(x,x)\wedge h(y,y)$ with $x,y\in W$\,. 
Thus~\eqref{eq:ci} implies that also $A(V)\subset V$ and $A|_V\in\frakg$ for all $A\in\frakh_U$ holds.
The claim follows.

Second,  in the notation of Theorem~\ref{th:dec}, by means of~\eqref{eq:symmetric_space} and 
the usual rules for $\Z_2$-graded algebras we have $$[\fetth_x,\frakh_W|_V]=[A_x,\frakh_W|_V]\subset\rho(\frakk_V)|_V\cap\so(V)_-=\{0\}\;.$$ 
Hence~\eqref{eq:generators} vanishes for every $k\geq 1$\,.
In particular, $\frakh_W|_V=\frakh=\frakg$ and $h_x\in Z(\frakh)$\,, i.e. $h(x,\cdot)\in \Hom_\frakh(W,U)$  
for each $x\in W$ according to~\eqref{eq:fettb1},\eqref{eq:fettb2},\eqref{eq:iso1} and~\eqref{eq:iso2}\,.
\end{proof}

Further, set 
\begin{equation}\label{eq:Kernh}
\Kern(h):=\Menge{x\in W}{\forall y\in W:h(x,y)=0}\;.
\end{equation}
Using~\eqref{eq:fettb1},~\eqref{eq:fettb2} and~\eqref{eq:iso1}, we immediately see that
\begin{equation}\label{eq:Kernfetth}
\Kern(h)=\Menge{x\in W}{\fetth(x)=0}\;.
\end{equation}

\bigskip
\begin{proposition}\label{p:invariant}
Let an integrable 2-jet $(W,h)$ be given. Then
$\Kern(h)$ is invariant under the action of $\frakh_W$ on $W$.
\end{proposition}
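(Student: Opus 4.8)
The plan is to reduce the statement to the single inclusion $R^N_{x,y}(\Kern(h))\subset\Kern(h)$ for all $x,y\in W$\,, since by definition $\frakh_W=\Spann{R^N_{x,y}}{x,y\in W}$ is the linear span of these curvature endomorphisms, and a subspace is invariant under a spanning set of operators precisely when it is invariant under their span. So I would fix $x,y\in W$ and $z\in\Kern(h)$ and aim to show $R^N_{x,y}z\in\Kern(h)$\,. Because $W$ is curvature invariant we already have $R^N_{x,y}z\in W$\,, so by~\eqref{eq:Kernfetth} it is enough to prove $\fetth_{R^N_{x,y}z}=0$\,.

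The key tool is the semi-parallel equation~\eqref{eq:sp} with $R=R^N_p$ (available because an integrable 2-jet has $h$ semi-parallel, by Theorem~\ref{th:integrable}), specialized so that the distinguished slot carries the element $z\in\Kern(h)$\,. First I would record, via~\eqref{eq:Kernfetth}, that $z\in\Kern(h)$ is equivalent to $\fetth_z=0$\,. The only genuine computation is the evaluation of $[\fetth_x,\fetth_y]z$\,: applying~\eqref{eq:fettb1} twice (first $\fetth_y z=h(y,z)\in U$\,, then $\fetth_x(h(y,z))=-S_{h(y,z)}x\in W$\,, and symmetrically with $x$ and $y$ interchanged) yields $[\fetth_x,\fetth_y]z=-S_{h(y,z)}x+S_{h(x,z)}y$\,. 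Since $z\in\Kern(h)$ forces $h(y,z)=h(x,z)=0$\,, this commutator term vanishes.

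With these two vanishings in hand I would simply read off both sides of~\eqref{eq:sp}\,. The left-hand side $\fetth_{R^N_{x,y}z-[\fetth_x,\fetth_y]z}\,v$ collapses to $\fetth_{R^N_{x,y}z}\,v$\,, while the right-hand side $[R^N_{x,y}-[\fetth_x,\fetth_y],\fetth_z]\,v$ is identically zero because $\fetth_z=0$\,. Hence $\fetth_{R^N_{x,y}z}\,v=0$ for all relevant $v$; it suffices to let $v$ range over $V$\,, since $\fetth_{R^N_{x,y}z}\in\so(V)_-$ by~\eqref{eq:fettb2} and such an operator is determined by its restriction to $V$\,. Therefore $\fetth_{R^N_{x,y}z}=0$\,, and by~\eqref{eq:Kernfetth} we conclude $R^N_{x,y}z\in\Kern(h)$\,, as required.

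I do not expect a serious obstacle: the whole argument is a direct substitution into~\eqref{eq:sp}\,. The only point requiring care is the bookkeeping in the computation of $[\fetth_x,\fetth_y]z$ — tracking which vectors land in $W$ and which in $U$ under~\eqref{eq:fettb1} — together with the observation that both the commutator term and $\fetth_z$ drop out precisely because of the defining property of $\Kern(h)$\,. Notably, only semi-parallelity of $h$ and curvature invariance of $W$ enter, so the conclusion in fact holds for any semi-parallel 2-jet, not merely an integrable one.
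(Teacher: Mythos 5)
Your proof is correct and follows essentially the same route as the paper, which simply cites the curvature invariance of $W$\,, the symmetry of $h$ and the semi-parallel equation~\eqref{eq:sp} (referring to Naitoh for the computation); you have supplied exactly the substitution the paper leaves implicit, including the correct evaluation $[\fetth_x,\fetth_y]z=-S_{h(y,z)}x+S_{h(x,z)}y$ and its vanishing on $\Kern(h)$\,. Your closing observation that only semi-parallelity and curvature invariance are needed is also consistent with the paper's stated list of ingredients.
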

\begin{proof}
This follows from the curvature invariance of $W$\,, 
the symmetry of $h$ and~\eqref{eq:sp} (with $R=R^N$), cf.~\cite[Proof of Lemma 5.1]{N1}.
\end{proof}

\subsection{Parallel submanifolds with 1-dimensional first normal spaces}
\label{se:one-dimensional}

For a higher-dimensional {\em extrinsic sphere}, 
it is known that the second osculating spaces are curvature invariant, cf.~\cite[Theorem~9.2.2]{BCO}.
More generally, we have:

\bigskip
\begin{proposition}\label{p:sph}
Let $N$ be a symmetric space, $(W,h)$ be an integrable 2-jet and $U:=\Spann{h(x,y)}{x,y\in W}$\,. 
Assume that $\dim(U)=1$ and $\dim(W)\geq 2$\,. Choose a unit vector $\eta\in U$ and suppose 
that $\frakh_W$ acts irreducible on $W$\,. Then $V:=W\oplus U$ is a curvature
invariant subspace of $T_pN$\,.
\end{proposition}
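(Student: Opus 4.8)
The plan is to reduce the whole statement to a single application of the first integrability identity~\eqref{eq:cond3} with $k=1$, exploiting that $\dim(U)=1$ collapses almost every curvature term. First I would encode the data concretely: since $U=\R\eta$, the second fundamental form is $h(x,y)=\langle S_\eta x,y\rangle\,\eta$ for the self-adjoint shape operator $S_\eta$ on $W$, and~\eqref{eq:fettb1} gives $\fetth_x y=\langle S_\eta x,y\rangle\eta$ for $y\in W$ together with $\fetth_x\eta=-S_\eta x$. Two preliminary observations do the bookkeeping. Because $(W,U)$ is a curvature invariant pair, \eqref{eq:cip1} forces $R^N_{y,z}\eta\in U$ for $y,z\in W$; as $R^N_{y,z}$ is skew and $\dim(U)=1$, this yields $R^N_{y,z}\eta=0$. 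Moreover, by Proposition~\ref{p:invariant} the subspace $\Kern(h)=\ker S_\eta$ is $\frakh_W$-invariant, so irreducibility of $\frakh_W$ on $W$ and $h\neq 0$ force $\ker S_\eta=\{0\}$; hence $S_\eta$ is invertible.

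The heart of the argument is to feed $\fetth_x\cdot(y\wedge z)=\langle S_\eta x,y\rangle\,(\eta\wedge z)+\langle S_\eta x,z\rangle\,(y\wedge\eta)$ into~\eqref{eq:cond3}, producing on $V$ the identity
\[
[\fetth_x,R^N_{y,z}]|_V=\langle S_\eta x,y\rangle\,R^N_{\eta,z}|_V+\langle S_\eta x,z\rangle\,R^N_{y,\eta}|_V .
\]
Evaluating this on $\eta$ and using $R^N_{y,z}\eta=0$ gives $R^N_{y,z}(S_\eta x)=\langle S_\eta x,z\rangle R^N_{y,\eta}\eta-\langle S_\eta x,y\rangle R^N_{z,\eta}\eta$, an equation whose left side lies in $W$. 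Writing $u:=S_\eta x$, which ranges over all of $W$ since $S_\eta$ is invertible, and choosing $z\perp y$ with $|z|=1$ and then $u=z$ (here $\dim(W)\geq 2$ is used), I read off $R^N_{y,\eta}\eta\in W$. Evaluating the same identity on $w\in W$ instead, the left side $\langle S_\eta x,R^N_{y,z}w\rangle\,\eta$ lands in $U$, and the identical polarization $u=z\perp y$ then yields $R^N_{y,\eta}w\in U$. Combined with the known inclusion $R^N(W\times W\times W)\subset W$ and with $R^N_{y,z}\eta=0$, these two conclusions cover every combination of $W$- and $U$-slots, so $R^N(V\times V\times V)\subset V$, which is the claim.

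The main obstacle is that the genuinely mixed curvature operators $R^N_{x,\eta}$ (the ones sending a tangent–normal pair to its curvature) carry no information from the curvature-invariant-pair structure alone; they are precisely the terms a curvature invariant pair need not control, which is why Definition~\ref{de:cip} is not by itself enough. The decisive point is that the $k=1$ integrability identity expresses $[\fetth_x,R^N_{y,z}]$ exactly through these mixed operators, and the two preliminary facts — $R^N_{y,z}\eta=0$ and invertibility of $S_\eta$ — are precisely what is needed to invert that relation and solve for $R^N_{x,\eta}\eta$ and $R^N_{x,\eta}w$. I would emphasize that both of those facts lean essentially on $\dim(U)=1$ and on the irreducibility hypothesis, so the mechanism does not transparently survive when the first normal space is higher-dimensional.
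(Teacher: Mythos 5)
Your proof is correct and follows essentially the same route as the paper's: both arguments deduce the invertibility of $S_\eta$ (equivalently, the non-degeneracy of $\tilde h(x,y):=\g{h(x,y)}{\eta}$) from Proposition~\ref{p:invariant} together with the irreducibility hypothesis, and then use the $k=1$ integrability identity~\eqref{eq:cond2}/\eqref{eq:cond3} to express the mixed operators $R^N_{\cdot\,,\eta}|_V$ through operators already known to preserve $V$. The only difference is packaging: the paper chooses $y,z$ adapted to $\tilde h$ so that $R^N_{x,\eta}|_V$ appears directly as the commutator $[\fetth_z,R^N_{x,y}]|_V$ of two $V$-preserving maps, whereas you evaluate the same bracket identity on $\eta$ and on $W$ separately and solve for the mixed terms using $R^N_{y,z}\eta=0$ and the invertibility of $S_\eta$ — the mechanism is identical.
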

\begin{proof}
Using Proposition~\ref{p:invariant}, we obtain that $\Kern(h)=\{0\}$\,. Thus  $\tilde h(x,y):=\g{h(x,y)}{\eta}$
defines a non-degenerate bilinear form on $W$\,. Further, in view of Proposition~\ref{p:cip}, 
it remains to show that $R^N_{x,\eta}(V)\subset V$ holds.
For this, we may proceed as in the proof of~\cite[Theorem~9.2.2]{BCO}:

we can assume that $x\neq 0$ in which case there exist $y,z\in W$
with $h(x,z)=\eta$ and $h(y,z)=0$ (since $\tilde h$ is non-degenerate and $\dim(W)\geq 2$).
Hence, using~\eqref{eq:cond2} with $k=1$,
we see that $R^N_{x,\eta}=[\fetth_z,R^N_{x,y}]$ holds on $V$. The result follows by means of~\eqref{eq:fettb1} and the curvature invariance of $W$\,.
\end{proof}

\subsection{Parallel submanifolds with curvature isotropic tangent spaces}
\label{se:flat}
Let $N$ be a symmetric space, $\Iso(N)$ denote the isometry group, $\fraki$ 
be its Lie algebra and $\fraki(N)=\frakk\oplus\frakp$ be the Cartan decomposition.
Recall that a Cartan algebra is a maximal Abelian subalgebra of $\frakp$ whose elements are semi-simple
(cf.~\cite[Remark~1]{FP}) and that any two Cartan algebras are
conjugate in $\frakp$ via some isometry from the connected component of $\Iso(N)$\,. The rank of $N$ is, by definition,
the dimension of a Cartan subalgebra of $\frakp$\,. If $N$ is of compact or non-compact type,
then every maximal Abelian subalgebra of $\frakp$ is already a Cartan subalgebra.
The following is well known:

\bigskip
\begin{lemma}\label{le:cfl}
Suppose that $N$ is of compact or non-compact type.
Let a linear subspace $W\subset T_pN$ be given. The following is equivalent:
\begin{enumerate}
\item the linear space $W$ is a curvature isotropic subspace of $T_pN$\,, 
i.e.\ the curvature endomorphism $R^N_{u,v}$ vanishes for all $u,v\in W$\,;
\item the totally geodesic submanifold $\exp^N(W)$ is a flat of $N$\,;
\item $W$ is contained in a Cartan subalgebra of $\frakp$\,;
\item the sectional curvature of $N$ vanishes on every 2-plane of $W$\,, i.e.
$\g{R^N(u,v,v)}{u}=0$ for all $u,v\in W$\,.
\end{enumerate}
\end{lemma}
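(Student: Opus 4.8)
The plan is to reduce all four conditions to the single statement that $W$ is an \emph{abelian} subspace of $\frakp$, i.e.\ that $[u,v]=0$ for all $u,v\in W$. I identify $T_pN$ with $\frakp$, so that the curvature endomorphism becomes $R^N_{u,v}=-\ad([u,v])|_\frakp$; recall also that $[u,v]\in\frakk$ since $[\frakp,\frakp]\subset\frakk$. Let $B$ denote the Killing form of $\fraki$, and recall that the Riemannian metric on $\frakp$ equals $\mp\,B|_\frakp$, where throughout the upper (resp.\ lower) sign refers to the compact (resp.\ non-compact) case. First I would establish, by a one-line computation using the $\ad$-invariance of $B$, the identity
\begin{equation*}
\g{R^N(u,v,v)}{u}=\mp\,B([u,v],[u,v])\qquad(u,v\in\frakp).
\end{equation*}
Since $[u,v]\in\frakk$ and the Killing form is negative definite on $\frakk$ in both the compact and the non-compact case, the right-hand side vanishes if and only if $[u,v]=0$. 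Consequently condition~(d) holds precisely when $[u,v]=0$ for all $u,v\in W$, i.e.\ when $W$ is abelian.

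It then remains to match the other three conditions against ``$W$ abelian''. Condition~(a) trivially implies~(d); conversely, if $W$ is abelian then $R^N_{u,v}=-\ad([u,v])|_\frakp=0$ for all $u,v\in W$, which is~(a). For~(c), if $W$ is abelian I enlarge it, by finite dimensionality, to a maximal abelian subalgebra $\fraka\subset\frakp$; by the remark preceding the lemma, every maximal abelian subalgebra of $\frakp$ is a Cartan subalgebra when $N$ is of compact or non-compact type, so $W\subset\fraka$ yields~(c), while the converse is immediate since a Cartan subalgebra is abelian by definition.

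Finally, for~(b): if $W$ is abelian then $[[W,W],W]=\{0\}\subset W$, so $W$ is a Lie triple system and $\exp^N(W)$ is totally geodesic; since $R^N_{u,v}=0$ for $u,v\in W$, its induced curvature is zero and $\exp^N(W)$ is a flat. Conversely, if $\exp^N(W)$ is a flat, then it is totally geodesic with vanishing second fundamental form, so the Gauss equation identifies its (zero) intrinsic curvature with the restriction of $R^N$ to $W$; in particular $\g{R^N(u,v,v)}{u}=0$ on $W$, which is~(d). I expect no genuine obstacle here, the lemma being classical; the only points requiring care are the sign bookkeeping in the Killing-form identity and the two places where the compact/non-compact type hypothesis is essential, namely the negative definiteness of $B|_\frakk$ and the identification of maximal abelian subalgebras of $\frakp$ with Cartan subalgebras.
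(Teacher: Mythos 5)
Your proof is correct; the sign bookkeeping in the identity $\g{R^N(u,v,v)}{u}=\mp B([u,v],[u,v])$ checks out against the paper's conventions ($R^N(u,v,w)=-[[u,v],w]$ and metric $\mp B|_{\frakp}$), and the reduction of all four conditions to ``$W$ is an abelian subspace of $\frakp$'' is exactly the standard argument. The paper itself states this lemma without proof as a well-known fact (relying on the remark that maximal abelian subalgebras of $\frakp$ are Cartan subalgebras in the compact and non-compact cases), so there is nothing to compare beyond noting that your write-up supplies the omitted details correctly.
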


\bigskip
\begin{proposition}\label{p:cfl}
Let $N$ be a symmetric space of compact or non-compact type and 
let an integrable 2-jet $(W,h)$ at $p$ be given. Suppose that
$d:=\dim(W)$ is equal to the rank of $N$ and that the 
sectional curvature of $N$ vanishes on $W$\,. Then there
exists an orthonormal basis $\{x_1,\ldots,x_d\}$ of $W$  such that
\begin{align}
\label{eq:cfl0}
& h(x_i,x_j)=0\ \text{whenever}\ i\neq j\,,\\
\label{eq:cfl1}
& \eta_i:=h(x_i,x_i)\ \text{satisfy}\ \g{\eta_i}{\eta_j}=0\ \text{whenever}\ i\neq j\,,\\
\label{eq:cfl2}
&R^N_{x_i,x_j}=R^N_{x_j,\eta_i}=R^N_{\eta_i,\eta_j}=R^N_{\eta_j,x_i}=0\ \text{for all}\ i\neq j\;.
\end{align}
In particular, both $W$ and $U$ are curvature isotropic.
\end{proposition}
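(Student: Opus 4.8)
The plan is to turn the rank hypothesis into the statement that $W$ is a maximal flat, and then to reduce everything to showing that the operators $\fetth_x$, $x\in W$, commute.

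First I would extract the consequences of the hypotheses. Since the sectional curvature of $N$ vanishes on $W$, Lemma~\ref{le:cfl} gives that $W$ is curvature isotropic, i.e.\ $R^N_{x,y}=0$ for all $x,y\in W$, and as $\dim(W)=\rank(N)$ the space $W$ is a maximal flat. Feeding $R^N_{x,y}=0$ into~\eqref{eq:ci} kills every term on its right hand side (each curvature endomorphism there has both arguments in $W$), so $R^N_{h(x,x),h(y,y)}|_V=0$; because $\Lambda^2 U$ is spanned by the wedges $h(x,x)\wedge h(y,y)$, this yields $R^N_{\eta,\zeta}|_V=0$ for all $\eta,\zeta\in U$, hence the sectional curvature of $N$ vanishes on $U$ as well, and a second application of Lemma~\ref{le:cfl} shows that $U$ is curvature isotropic. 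This already establishes $R^N_{x_i,x_j}=0$ and $R^N_{\eta_i,\eta_j}=0$ in~\eqref{eq:cfl2} and the final assertion that $W$ and $U$ are curvature isotropic. Next, since $R^N_{y,z}=0$ for $y,z\in W$, the case $k=1$ of~\eqref{eq:cond2} collapses to
\[
R^N_{h(x,y),z}|_V=R^N_{h(x,z),y}|_V\qquad(x,y,z\in W).
\]
Once the desired basis is at hand this immediately gives the mixed terms of~\eqref{eq:cfl2}: for $i\neq j$ one has $R^N_{\eta_i,x_j}|_V=R^N_{h(x_i,x_i),x_j}|_V=R^N_{h(x_i,x_j),x_i}|_V=0$. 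Moreover these vanish on all of $T_pN$, because for $i\neq j$ the $d$-dimensional space $\mathrm{span}\{x_k:k\neq i\}\oplus\R\eta_i$ carries vanishing $R^N$ on $V$ (by the boxed identity, as $h(x_i,x_k)=0$ for $k\neq i$), hence is curvature isotropic by Lemma~\ref{le:cfl}, and it contains both $\eta_i$ and $x_j$.

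The heart of the proof is thus to produce an orthonormal basis realizing~\eqref{eq:cfl0} and~\eqref{eq:cfl1}; equivalently, to show that $\mathfrak{m}:=\fetth(W)\subset\so(V)_-$ is abelian. Because $R^N_{x,y}=0$ on $W$, the semi-parallelity relation~\eqref{eq:sp} reduces to the closed bracket identity $\fetth_{[\fetth_x,\fetth_y]z}=[[\fetth_x,\fetth_y],\fetth_z]$ on $V$, so that $\mathfrak{m}\oplus[\mathfrak{m},\mathfrak{m}]$ is a graded (compact, symmetric) subalgebra of $\so(V)$. Commutativity of the shape operators, i.e.\ $[S_\xi,S_\zeta]=0$, is precisely the vanishing of $[\fetth_x,\fetth_y]$ on $U$; granting it, the symmetric operators $S_\xi$ are simultaneously diagonalized by an orthonormal basis $\{x_1,\dots,x_d\}$, and then $\langle h(x_i,x_j),\xi\rangle=\langle S_\xi x_i,x_j\rangle=0$ for $i\neq j$ and all $\xi$, which is~\eqref{eq:cfl0}. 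In this basis $[\fetth_{x_i},\fetth_{x_j}]$ sends $x_i$ to $\langle\eta_i,\eta_j\rangle\,x_j$, so evaluating the bracket identity with $z=x_i$ on the vector $x_j$ yields $\langle\eta_i,\eta_j\rangle\,\eta_i=\langle\eta_i,\eta_j\rangle\,\eta_j$; hence for $i\neq j$ either $\langle\eta_i,\eta_j\rangle=0$ or $\eta_i=\eta_j$.

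The main obstacle, and the only place where maximality of the flat is essential, is to discard the ``equal normals'' alternative and, beforehand, to secure the commutativity of the shape operators. For the former, suppose $\eta_i=\eta_j=:\eta\neq0$ with $i\neq j$; using the boxed identity once through the index $i$ and once through $j$ gives $R^N_{\eta,x_k}|_V=0$ for every $k$ (off the diagonal $h(x_i,x_k)=h(x_j,x_k)=0$), so $R^N_{a,b}|_V=0$ for all $a,b\in W\oplus\R\eta$, and Lemma~\ref{le:cfl} turns the $(d+1)$-dimensional space $W\oplus\R\eta$ into a curvature isotropic subspace, contradicting $\dim(W)=\rank(N)$. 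Therefore $\langle\eta_i,\eta_j\rangle=0$, which is~\eqref{eq:cfl1}. I expect the commutativity $[S_\xi,S_\zeta]=0$ to be the genuinely hard point, to be settled by the same rank‑exclusion mechanism: a nonzero $[\fetth_x,\fetth_y]|_U$ (equivalently a nonzero normal‑curvature term) must, via the boxed identity and the bracket identity, enlarge $W$ to a curvature isotropic subspace of dimension exceeding $\rank(N)$, which Lemma~\ref{le:cfl} forbids. Assembling the three families of relations then yields~\eqref{eq:cfl0}--\eqref{eq:cfl2}, and the curvature isotropy of $W$ and $U$ already proved is the concluding ``in particular''.
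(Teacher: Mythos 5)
Your reductions are sound up to one point: the curvature isotropy of $W$ and $U$, the identity $R^N_{h(x,y),z}|_V=R^N_{h(x,z),y}|_V$ from~\eqref{eq:cond2} with $k=1$, the derivation of~\eqref{eq:cfl2} once the basis exists, the simultaneous diagonalization giving~\eqref{eq:cfl0} once the shape operators commute, and the rank-exclusion argument ruling out $\eta_i=\eta_j\neq 0$ (a nice alternative to the paper, where orthogonality of the $\eta_i$ falls out automatically from the normal form $\fetth_x=\sum\lambda_i(x)\,x_i\wedge\xi_i$ of a commuting family). But the step you explicitly defer --- the commutativity $[\fetth_x,\fetth_y]=0$ --- is the entire content of the proposition, and your proposed ``rank-exclusion mechanism'' for it is not carried out and is unlikely to work as stated. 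The bracket identity $\fetth_{[\fetth_x,\fetth_y]z}=[[\fetth_x,\fetth_y],\fetth_z]$ only says that $\mathfrak{m}:=\fetth(W)$ is the odd part of an orthogonal symmetric subalgebra of $\so(V)$, and such subalgebras with non-abelian $\mathfrak{m}$ exist in abundance (e.g.\ $\su(2)\subset\so(4)$ with $W\cong U\cong\R^2$); so commutativity cannot be squeezed out of the bracket identity plus a dimension count on curvature isotropic subspaces --- a nonzero $[\fetth_x,\fetth_y]$ is an endomorphism in $\so(V)_+$ and does not hand you an extra vector with which to enlarge the flat.

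The paper closes this gap by a genuinely geometric argument, and this is where the hypothesis $d=\rank(N)$ actually enters. One passes to the parallel submanifold $M$ realizing $(W,h)$: by \cite[Proposition~3.14]{J1} the sectional curvature of $N$ vanishes along all of $M$, so $M$ is a curved flat; by the Ferus--Petit theorem a curved flat of dimension equal to $\rank(N)$ is \emph{intrinsically} flat, i.e.\ $R^M=0$; semi-parallelity~\eqref{eq:sp} (with $R^N_{x,y}=0$ and $R^M_{x,y}=0$ fed into the Gau\ss{}--Ricci identity~\eqref{eq:Gauss_Ricci}) then forces $R^\bot_{x,y}|_U=0$; and~\eqref{eq:Gauss_Ricci} finally yields $[\fetth_x,\fetth_y]=R^N_{x,y}-R^M_{x,y}\oplus R^\bot_{x,y}=0$. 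Without an input of this kind (intrinsic flatness of maximal-dimensional curved flats plus flatness of the normal connection) your argument does not get off the ground, since everything downstream --- the diagonalizing basis,~\eqref{eq:cfl0},~\eqref{eq:cfl1}, and the mixed vanishings in~\eqref{eq:cfl2} --- is conditional on it.
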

\begin{proof}
Let a parallel submanifold $M\subset N$ be given such that $T_pM=W$ and $h_p=h$\,.
It is known that in this situation the sectional curvature of $N$
vanishes identically along the parallel submanifold $M$ (see~\cite[Proposition~3.14]{J1}). 
It follows that $R^N_{x,y}=0$ for all $x,y\in T_pM$ and all $p\in M$\,, i.e. $M$ is a ``curved flat'' 
in the sense of Ferus/Petit~\cite{FP}. Therefore, since we assume that $\dim(M)= \rank(N)$\,, 
the Riemannian space $M$ is intrinsically flat according to a result of~\cite{FP}. 
Furthermore, Equation~\eqref{eq:sp} shows that $R^\bot_{x,y}\xi=0$ for all $\xi\in U$\,.
Using the Equations of Gau\ss, Codazzi and Ricci for a parallel submanifold, i.e.
\begin{equation}\label{eq:Gauss_Ricci}
\forall x,y\in T_pM:R^N_{x,y}=R^M_{x,y}\oplus R^\bot_{x,y}+[\fetth_x,\fetth_y]\;,
\end{equation}
we obtain that $[\fetth_x,\fetth_y]=0$ for all $x,y\in W$\,. We claim that there exists an orthonormal basis 
$\{x_1,\ldots, x_d\}$ of $W$ such that~\eqref{eq:cfl0},\eqref{eq:cfl1} hold:

since $\Menge{h_x}{x\in W}$ is a set of pairwise commuting, skew-symmetric operators which map $W$ to $U$ and vice versa, 
there exist an orthonormal basis $\{x_1,\ldots, x_d\}$ of $W$ such that $\Kern(h)=\{x_1,\ldots, x_k\}_\R$ (see~\eqref{eq:Kernh},~\eqref{eq:Kernfetth}), 
an orthonormal basis $\{\xi_{k+1},\ldots, \xi_d\}$ of $U$ 
and linear maps $\lambda_i:W\to \R$ such that $\fetth_x=\sum_{i=k+1}^d\lambda_i(x)\, x_i\wedge\xi_i$\,. 
Using the symmetry of $h$\,, \[\lambda_j(x_i)\,\xi_j=\sum_{l=k+1}^d\lambda_l(x_i)\, x_l\wedge\xi_l\,(x_j)=h(x_i,x_j)=h(x_j,x_i)=\lambda_i(x_j)\,\xi_i\;.\]
It follows that $\lambda_i(x_j)=0$ for $i\neq j$\,. This gives our claim.

Moreover, by means of~\eqref{eq:cfl0}, we have
\[
\forall i\neq j:\, R^N_{x_i,\eta_j}|_V=R^N_{x_i,h(x_j,x_j)}|_V=-R^N_{h(x_j,x_i),x_j}|_V=0
\] where the second equality uses~\eqref{eq:cond2} (with $k=1$), i.e.\ the curvature endomorphism
$R^N_{x_i,\eta_j}$ vanishes on $V$ whenever $i\neq j$\,. Furthermore,~\eqref{eq:ci} implies that then 
also $R^N_{\eta_i,\eta_j}$ vanishes on $V$\,. Using Lemma~\ref{le:cfl} once more, $R^N_{x_i,\eta_j}$ and $R^N_{\eta_i,\eta_j}$ 
both vanish on $T_pN$ unless $i=j$\,. The result now follows.
\end{proof}

In the notation of Proposition~\ref{p:cfl},  set $V_i:=\{x_i,\eta_i\}_\R$ for $i=1,\ldots, d$\,.
The linear spaces $V_i$ are pairwise orthogonal 
and $R^N(u,v)=0$ whenever $(u,v)\in V_i\times V_j$ with $i\neq j$\,.

\bigskip
\begin{lemma}\label{le:V}
Let $\{V_i\}_{i=1,\ldots, d}$  be a collection of pairwise orthogonal subspaces of $T_pN$ such that $R^N_{u,v}=0$ whenever
$(u,v)\in V_i\times V_j$ with  $i\neq j$\,. Then there exist pairwise orthogonal 
curvature invariant subspaces of $T_pN$\,, denoted by $\bar V_i$\,, such that
\begin{align} 
\label{eq:property1}
&V_i\subset \bar V_i\text{ for }i=1,\ldots,d\;,\\
\label{eq:property2}
&R^N_{u,v}=0\ \text{whenever}\ (u,v)\in \bar V_i\times \bar V_j\ \text{with}\ i\neq j\;.
\end{align}
Moreover, then also the linear space 
\begin{equation}\label{eq:barV}
\bar V:=\bigoplus_{i=1}^d\bar V_i
\end{equation} 
is a curvature invariant subspace of $T_pN$\,.
\end{lemma}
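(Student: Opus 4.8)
The plan is to take each $\bar V_i$ to be the \emph{curvature invariant hull} of $V_i$, i.e.\ the smallest curvature invariant subspace of $T_pN$ containing $V_i$; this exists because an intersection of curvature invariant subspaces is again curvature invariant. Let $\frakh_i$ be the Lie subalgebra of $\so(T_pN)$ generated by the curvature endomorphisms $\Menge{R^N_{u,v}}{u,v\in V_i}$. Concretely, $\bar V_i=V_i+\frakh_i\,V_i+\frakh_i^2\,V_i+\cdots$ is the smallest $\frakh_i$-invariant subspace containing $V_i$; it is the $\frakp$-part of the Lie subalgebra of $\fraki$ generated by $V_i\subset\frakp=T_pN$, hence a Lie triple system and therefore curvature invariant. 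With this choice~\eqref{eq:property1} is automatic, and it remains to establish the orthogonality of the $\bar V_i$ together with~\eqref{eq:property2}; then~\eqref{eq:barV} will follow formally.

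The engine of the argument consists of two identities: the first Bianchi identity, and the fact that in a symmetric space $R^N$ is parallel, so that each $R^N_{u,v}\in\rho(\frakk)$ (see~\eqref{eq:symmetric_space}) acts as a derivation of the curvature tensor,
\begin{equation*}
R^N_{R^N_{u,v}a,\,b}+R^N_{a,\,R^N_{u,v}b}=[R^N_{u,v},R^N_{a,b}]\,.
\end{equation*}
From the hypothesis $R^N_{u,v}=0$ for $(u,v)\in V_i\times V_j$ with $i\neq j$ I would first extract three purely algebraic facts. (i) The first Bianchi identity gives $R^N_{u,v}x=0$ for $u,v\in V_i$, $x\in V_j$; since the operators killing $V_j$ form a subalgebra of $\so(T_pN)$, the whole algebra $\frakh_i$ annihilates $V_j$. (ii) The derivation identity applied to two generators yields $[R^N_{u,v},R^N_{u',v'}]=0$ for $u,v\in V_i$, $u',v'\in V_j$, whence $[\frakh_i,\frakh_j]=0$ by a Jacobi-identity induction. (iii) Inserting (i) and (ii) into $A(Bc)=B(Ac)+[A,B]c$ with $A\in\frakh_j$, $B\in\frakh_i$, and inducting over the filtration $\bar V_i=\sum_l\frakh_i^l\,V_i$, I obtain that $\frakh_j$ annihilates all of $\bar V_i$ (and, symmetrically, $\frakh_i$ annihilates $\bar V_j$).

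Orthogonality then follows from skew-symmetry: as $\frakh_i$ kills $V_j$ and consists of skew operators, the subspace $V_j^\bot$ is $\frakh_i$-invariant and contains $V_i$, so $\bar V_i\subset V_j^\bot$, i.e.\ $\bar V_i\bot V_j$; feeding this back and using that $\bar V_i^\bot$ is $\frakh_j$-invariant by (iii) gives $\bar V_j\subset\bar V_i^\bot$, that is $\bar V_i\bot\bar V_j$. For~\eqref{eq:property2} I would argue in two stages through the derivation identity: first that $R^N_{a,w}=0$ for $a\in\bar V_i$, $w\in V_j$, by induction on the filtration level of $a$ (using $\frakh_iV_j=\{0\}$ and $R^N_{a',Xw}=0$), then that $R^N_{a,b}=0$ for $a\in\bar V_i$, $b\in\bar V_j$, by induction on the level of $b$ (using $\frakh_j\bar V_i=\{0\}$). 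Finally, for~\eqref{eq:barV}, decomposing $a,b,c\in\bar V$ along the $\bar V_i$, multilinearity together with~\eqref{eq:property2} annihilates every term with two distinct indices, the first Bianchi identity annihilates $R^N(a_i,b_i,c_l)$ for $l\neq i$, and the surviving terms lie in the curvature invariant $\bar V_i\subset\bar V$. The main obstacle is precisely step (iii) and the two-stage induction: a naive induction on the generation of the hull fails because the derivation identity trades filtration degree between its two slots, and it is exactly the commuting and annihilation properties of the $\frakh_i$ that make this bookkeeping close.
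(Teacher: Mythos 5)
Your strategy does lead to a proof, but it is organized quite differently from the paper's. You build each $\bar V_i$ from below as the curvature invariant hull of $V_i$ and then propagate orthogonality and~\eqref{eq:property2} up a filtration by induction; the paper instead takes a collection $\{\bar V_i\}$ that is \emph{maximal} among those satisfying~\eqref{eq:property1} and~\eqref{eq:property2} (such collections exist for dimension reasons) and shows that maximality already forces each $\bar V_i$ to be curvature invariant: for $u_i,v_i,w_i\in\bar V_i$ and $w_j\in\bar V_j$, the curvature symmetry $\g{R^N(u_i,v_i,w_i)}{w_j}=\g{R^N(w_i,w_j,u_i)}{v_i}=0$ shows that $\tilde V_i:=\bar V_i+\R\,R^N(u_i,v_i,w_i)$ is still orthogonal to $\bar V_j$, while the first Bianchi identity together with the Jacobi identity, $R^N_{w_j,R^N(u_i,v_i,w_i)}=-R^N_{R^N(u_i,v_i,w_j),w_i}+[R^N_{u_i,v_i},R^N_{w_j,w_i}]=0$, shows that the enlarged collection still satisfies~\eqref{eq:property2}; maximality then gives $\tilde V_i=\bar V_i$. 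This is exactly your inductive step, run once against a maximal object instead of infinitely often up a filtration, so all the degree bookkeeping you (rightly) identify as the main obstacle simply disappears. Your treatment of the final assertion about $\bar V$ coincides with the paper's.

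The one genuine soft spot in your write-up is the ``concrete'' identification $\bar V_i=\sum_l\frakh_i^l\,V_i$ with $\frakh_i$ generated only by $\Menge{R^N_{u,v}}{u,v\in V_i}$. The smallest curvature invariant subspace containing $V_i$ is obtained by iterating $W\mapsto W+R^N(W\times W\times W)$, i.e.\ one must also apply curvature endomorphisms of pairs taken from the \emph{growing} space; it is not immediate (and you do not prove) that the module generated by $V_i$ under the single Lie algebra $\frakh_i$ already exhausts this -- the derivation identity only yields $R^N_{Au,v}\equiv -R^N_{u,Av}$ modulo $\frakh_i$, not that $R^N_{Au,v}\in\frakh_i$. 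Since your steps (i)--(iii) and the two-stage induction for~\eqref{eq:property2} are phrased against this possibly too coarse filtration, as written they need not reach all of $\bar V_i$. The repair is routine: run the same Bianchi-plus-derivation computation as an induction on the filtration $W^{(n+1)}:=W^{(n)}+R^N(W^{(n)}\times W^{(n)}\times W^{(n)})$, where at each level the hypothesis $R^N_{a,w}=0$ for lower-level $a$ makes both correction terms vanish; but either do that, or justify the Lie-algebraic description before leaning on it.
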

\begin{proof}
Consider collections $\{\bar V_i\}_{i=1,\ldots, d}$ of pairwise orthogonal subspaces of $T_pN$ with 
Properties~\eqref{eq:property1},\eqref{eq:property2}\,.
Such collections exist, since at least one is given by $\bar V_i:=V_i$\,. 
Hence, for obvious reasons, there exists $\{\bar V_i\}_{i=1,\ldots, d}$ 
which is maximal in the following sense:

if $\{\tilde V_i\}_{i=1,\ldots,d}$ is another collection of pairwise orthogonal subspaces of $T_pN$ 
with properties~\eqref{eq:property1},\eqref{eq:property2} 
and such that $\bar V_i\subset \tilde V_i$ for $i=1,\ldots, d$\,, then $\bar V_i=\tilde V_i$ holds for all $i$\,.

Suppose that $\{\bar V_i\}_{i=1,\ldots, d}$ is maximal. We claim that each linear space $\bar V_i$ is curvature invariant in $T_pN$\,:

let $u_i,v_i,w_i\in \bar V_i$ and $w_j\in \bar V_j$ with $i\neq j$\,. 
Then, using a symmetry of $R^N$\,, $$\g{R^N(u_i,v_i,w_i)}{w_j}= \g{R^N(w_i,w_j,u_i)}{v_i}\stackrel{\eqref{eq:property2}}{=}0\;.$$ 
Therefore, the linear space $\tilde V_i:=\bar V_i+\R\, R^N(u_i,v_i,w_i)$ is contained in the orthogonal complement of $\bar V_j$\,, too. 
Further, note that  
\begin{equation}
R^N(u_i,v_i,w_j)=-R^N(w_j,u_i,v_i)-R^N(v_i,w_j,u_i)\stackrel{\eqref{eq:property2}}{=}0+0=0
\end{equation}
by the first Bianchi identity. Thus, the Jacobi-Identity for the Lie bracket on $\fraki(N)$ shows that
\[R^N_{w_j,R^N(u_i,v_i,w_i)}=-R^N_{R^N(u_i,v_i,w_j),w_i}+[R^N_{u_i,v_i},R^N_{w_j,w_i}]=0+0=0\;.\]
Therefore, the curvature endomorphism  $R^N_{u,v}$  vanishes whenever $(u,v)\in \tilde V_i\times \bar V_j$ with  $i\neq j$\,. 
Hence, $\tilde V_i=\bar V_i$ by maximality of $\{\tilde V_i\}$ and we conclude that $\bar V_i$ is curvature invariant for $i=1,\ldots, d$\,.

Further, we claim that then also $\bar V$ is curvature invariant:

let $u,v,w\in \bar V$ be given. We have to show that $R^N(u,v,w)\in \bar V$\,. For this, we can assume, 
by multilinearity of $R^N$\,, that each of these three vectors belongs to some $\bar V_i$\,.
If $(u_i,v_j,w_k)\in \bar V_i\times \bar V_j\times \bar V_k$ with $i\not\in\{j,k\}$\,, 
then $R^N(u_i,v_j,w_k)=R^N(w_k,u_i,v_j)=0$ by means of~\eqref{eq:property2}
and hence also $R^N(v_j,w_k,u_i)=0$ because of the first Bianchi identity. 
Therefore, $R^N(u,v,w)=0$ unless all three vectors $u,v,w$ belong to the same $\bar V_i$ in which case 
$R^N(u,v,w)\in\bar V_i\subset\bar V$ by the curvature invariance of $\bar V_i$\,.
\end{proof}

\bigskip
\begin{corollary}\label{co:circles}
In the situation of Proposition~\ref{p:cfl}, let $M$ be the complete parallel submanifold through $p$ whose 2-jet is given by $(W,h)$\,. 
Then there exists a simply connected totally 
geodesic submanifold $\bar M\subset N$ which splits as a 
Riemannian product $M_1\times\cdots \times M_d$ and there exist extrinsic circles $C_i\subset M_i$ 
such that $M$ is the Riemannian product $C_1\times\cdots \times C_d$\,. In particular, 
the parallel curved flat $M^d$ with $d=\rank(N)\geq 2$ is never full if $N$ is simply connected and irreducible.
\end{corollary}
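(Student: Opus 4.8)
The plan is to build the totally geodesic submanifold $\bar M$ directly out of the orthogonal curvature-invariant data produced by Proposition~\ref{p:cfl}, and then to recognize $M$ inside $\bar M$ as a product of extrinsic circles by examining its $2$-jet factor by factor. First I would invoke Proposition~\ref{p:cfl} to obtain the orthonormal basis $\{x_1,\dots,x_d\}$ of $W$ together with the normal vectors $\eta_i=h(x_i,x_i)$, satisfying~\eqref{eq:cfl0},~\eqref{eq:cfl1} and~\eqref{eq:cfl2}. Setting $V_i:=\{x_i,\eta_i\}_\R$ as in the paragraph following that proposition, the spaces $V_i$ are pairwise orthogonal and $R^N_{u,v}=0$ whenever $(u,v)\in V_i\times V_j$ with $i\neq j$. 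Next I would apply Lemma~\ref{le:V} to enlarge each $V_i$ to a curvature-invariant subspace $\bar V_i$, keeping the $\bar V_i$ pairwise orthogonal and cross-curvature-free~\eqref{eq:property2}, so that $\bar V:=\bigoplus_{i=1}^d\bar V_i$ is itself curvature invariant.

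With $\bar V$ in hand, I would set $\bar M:=\exp^N(\bar V)$. By the result of Dombrowski cited in the proof of Theorem~\ref{th:main}, a curvature-invariant subspace of $T_pN$ exponentiates to a totally geodesic submanifold, so $\bar M$ is totally geodesic in $N$; after passing to its universal cover I may assume $\bar M$ is simply connected. Because the $\bar V_i$ are pairwise orthogonal and mutually curvature-free, the curvature tensor of $\bar M$ at $p$ respects the splitting $\bar V=\bigoplus_i\bar V_i$ and has no cross terms; hence $\bar M$ splits isometrically as a Riemannian product $M_1\times\cdots\times M_d$ with $T_pM_i=\bar V_i$. This is the step where I would need the de~Rham decomposition for the simply connected totally geodesic submanifold, together with the fact that the holonomy of $\bar M$ is generated by its curvature operators, which by construction preserve each $\bar V_i$.

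It remains to identify $M$ inside $\bar M$. Since $V=W\oplus U\subset\bar V$ and $V$ is the second osculating space of $M$, the reduction-of-codimension argument from the proof of Theorem~\ref{th:main} shows $M\subset\bar M$. Now I would read off the $2$-jet factorwise: by~\eqref{eq:cfl0} the tangent space $T_pM=W=\bigoplus_i\R x_i$ splits along the product, and by~\eqref{eq:cfl0},~\eqref{eq:cfl1} the second fundamental form $h$ is ``diagonal'', i.e.\ $h(x_i,x_j)=\delta_{ij}\eta_i$ with $x_i,\eta_i\in\bar V_i$. Thus the $2$-jet of $M$ is the product of the one-dimensional $2$-jets $(\R x_i,\,h|_{V_i})$ in each $M_i$; by the uniqueness of a parallel submanifold determined by its $2$-jet (Str\"ubing's theorem, recalled in Section~\ref{se:existence}) and Example~\ref{ex:factor}, the corresponding parallel submanifold is the Riemannian product $C_1\times\cdots\times C_d$ where each $C_i\subset M_i$ is the parallel curve with $2$-jet $(\R x_i,\eta_i)$. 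By Example~\ref{ex:circle} each $C_i$ is a geodesic or an extrinsic circle, and since $\Kern(h)=\{0\}$ (established in the proof of Proposition~\ref{p:cfl}) every $\eta_i\neq 0$, so each $C_i$ is a genuine extrinsic circle.

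The main obstacle I anticipate is the middle step: justifying cleanly that $\bar M$ splits as the claimed Riemannian product with the factors tangent to the $\bar V_i$. The curvature-invariance and mutual curvature-vanishing of the $\bar V_i$ give the infinitesimal splitting at $p$, but promoting this to a global de~Rham product requires that the curvature-vanishing between distinct blocks propagates along $\bar M$; this follows because $\bar M$ is totally geodesic and symmetric (so its curvature is parallel and its holonomy algebra is $[\bar V,\bar V]$-generated), yet it is the place where the argument must be made carefully. The final sentence on fullness is then immediate: if $N$ is simply connected and irreducible while $d=\rank(N)\geq 2$, then $\bar V=\bigoplus_i\bar V_i$ is a proper curvature-invariant subspace of $T_pN$ (a single $\bar V_i$ cannot be all of $T_pN$, and $\bar V$ decomposes nontrivially, contradicting irreducibility if $\bar V=T_pN$), so $M\subset\bar M=\exp^N(\bar V)$ is contained in a proper totally geodesic submanifold and hence is not full.
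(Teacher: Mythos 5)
Your proposal follows essentially the same route as the paper: Proposition~\ref{p:cfl} gives the blocks $V_i=\{x_i,\eta_i\}_\R$, Lemma~\ref{le:V} enlarges them to the curvature invariant $\bar V_i$ and $\bar V$, one exponentiates to get the totally geodesic product $\bar M=M_1\times\cdots\times M_d$, reduction of the codimension puts $M$ inside $\bar M$, and uniqueness of a complete parallel submanifold with prescribed $2$-jet identifies $M$ with the product of the curves $C_i$. Your worry about the splitting step is easily discharged exactly as you indicate: $\bar M$ is itself a symmetric space, its curvature is determined by its value at $p$, and its holonomy algebra is generated by the curvature endomorphisms, which by \eqref{eq:property2} preserve each $\bar V_i$ and vanish across distinct blocks, so the de~Rham decomposition of the simply connected $\bar M$ refines the splitting $\bar V=\bigoplus_i\bar V_i$.

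The one genuine error is your final claim that $\Kern(h)=\{0\}$ is ``established in the proof of Proposition~\ref{p:cfl}'' and hence that every $\eta_i\neq 0$. That proof explicitly allows $\Kern(h)=\{x_1,\ldots,x_k\}_\R$ with $k\geq 1$ (the vanishing of the kernel is proved only in Proposition~\ref{p:sph}, under an irreducibility hypothesis that is absent here), and it can genuinely fail: for $M$ a totally geodesic flat one has $h=0$ and all $\eta_i=0$. The paper's own proof accordingly takes $c_i$ to be a geodesic line tangent to $x_i$ when $\eta_i=0$ and an extrinsic circle otherwise; you should do the same rather than assert that each $C_i$ is a ``genuine'' extrinsic circle. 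This does not affect the structure of your argument, only the description of the factors $C_i$.
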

\begin{proof}
Following the notation of Proposition~\ref{p:cfl}, set $V_i:=\{x_i,\eta_i\}_\R$ for $i=1,\ldots, d$\,.
By means of Lemma~\ref{le:V}, there exist curvature invariant spaces $\bar V_i\subset T_pN$ 
which satisfy~\eqref{eq:property1},\eqref{eq:property2}. Consider the corresponding totally geodesic submanifolds $M_i:=\exp^N(\bar V_i)$ and 
recall that our notion of submanifolds includes isometric immersions as well (see Footnote~\ref{fn:notion}). 
Hence we can assume that $M_i$ is simply connected (by means of passing to the universal covering space).
Further, the linear space $\bar V$ defined by~\eqref{eq:barV} is curvature invariant and we  can assume that the totally geodesic submanifold 
$\bar M:=\exp^N(\bar V)$ is simply connected, too. According to~\eqref{eq:property2},\eqref{eq:barV}, 
we thus obtain the Riemannian splitting $\bar M=M_1\times\cdots \times M_d$\,. Therefore, on the one hand, by means of~\eqref{eq:cfl0},~\eqref{eq:cfl1} 
we have $\osc_pM=\oplus_{i=1}^dV_i\subset \bar V$ which implies that $M\subset \bar M$ (reduction of the codimension)\,.  
Further, let $c_i:\R\to M_i$ be the geodesic line tangent to $x_i$ (in case $\eta_i=0$) 
or the extrinsic circle with $\dot c_i(0)=x_i$ and $\nabla^{M_i}_\partial \dot c_i(0)=\eta_i$ (otherwise) 
and set $C_i:=c_i(\R)$\,. Then, on the other hand, also the Riemannian product $\tilde M:=C_1\times\cdots\times C_d$ 
is a parallel submanifold of $\bar M$\,. Further, the 2-jets at $p$ of $M$ and $\tilde M$\,, 
respectively, are the same according to~\eqref{eq:cfl0},\eqref{eq:cfl1}. 
Therefore, $M=\tilde M$ since a complete parallel submanifold of $\bar M$ 
is uniquely determined by its 2-jet at one point.
\end{proof}

\subsection{Symmetric submanifolds of product spaces}
\label{se:naitoh}
We recall the following special case of~\cite[Theorem 2.2]{N2}:

\bigskip
\begin{theorem}[H.~Naitoh]\label{th:products}
Suppose that $N$ is a simply connected symmetric space
and that the de~Rham decomposition of $N$ has precisely two factors, $N=N_1\times N_2$\,. 
If $M\subset N$ is  a symmetric submanifold, 
then either $N_1=N_2$ and $M=\Menge{(p,g(p))}{p\in N_1}$  where $g$ is an isometry of $N_1$
(in particular, then $M$ is totally geodesic) or $M$ is a product $M_1\times M_2$ of symmetric
submanifolds $M_i\subset N_i$ for $i=1,2$\,. 
\end{theorem}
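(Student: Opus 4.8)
The plan is to recall Naitoh's general classification machinery for symmetric submanifolds and then specialize it to the two-factor situation. The statement to be proved is Theorem~\ref{th:products}, which asserts a rigid dichotomy for a symmetric submanifold $M$ sitting inside a product $N=N_1\times N_2$ of a simply connected symmetric space: either $M$ is the graph of an isometry (hence totally geodesic, and this can only happen when $N_1=N_2$), or $M$ splits as a product $M_1\times M_2$ compatibly with the de~Rham factors. Since the excerpt explicitly flags this as ``\cite[Theorem 2.2]{N2}'', the intended proof is a citation plus an argument that reduces our hypotheses to Naitoh's. So the first thing I would do is verify that a symmetric submanifold in the sense of Definition~\ref{de:extrinsically_symmetric} matches Naitoh's hypotheses (intrinsically symmetric, complete induced metric, and the existence of the normal reflections $\sigma^\bot_p$), which is immediate from the definition.

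The substantive geometric content I would organize as follows. First, let $\pi_i:N\to N_i$ be the projections and consider the second fundamental form $h$ of $M$; because $M$ is parallel (indeed symmetric) and $N$ is a product, the curvature invariance of the tangent spaces $T_pM$ together with the product structure of $R^N=R^{N_1}\oplus R^{N_2}$ forces the tangent distribution to interact rigidly with the factor distributions. The key dichotomy comes from analyzing the projections $P_1,P_2$ of $T_pM$ onto $T_pN_1,T_pN_2$: either the tangent space splits as $(T_pM\cap T_pN_1)\oplus(T_pM\cap T_pN_2)$ everywhere, or it is a ``diagonal'' graph of a linear isometry between the two factors. The first alternative, propagated along $M$ by parallelism of $h$ and completeness, yields the product decomposition $M=M_1\times M_2$ with each $M_i$ symmetric in $N_i$ (one checks the normal reflections restrict to each factor). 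The second alternative forces $N_1\cong N_2$ and, since the graph of an isometry between two copies of a symmetric space is totally geodesic, gives $M=\{(p,g(p))\}$.

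For the plan proper, I would first establish at a single point $p$ that $T_pM$ is either ``split'' or ``diagonal'' with respect to $T_pN_1\oplus T_pN_2$, using the $\frakh$-invariance of the relevant subspaces (in the spirit of the curvature-invariant-pair analysis of Example~\ref{ex:CPm}) and the fact that the two curvature operators $R^{N_1},R^{N_2}$ have disjoint supports. Second, I would use the symmetry of $M$---that is, the transvection group acting transitively---to transport this pointwise alternative to all of $M$, so the alternative holds globally. Third, in the split case I would integrate the two orthogonal parallel distributions to get the factors $M_1,M_2$ (invoking completeness and the de~Rham-type splitting for submanifolds), and in the diagonal case I would identify the isometry $g$ from the linear isometry at $p$ and verify totally geodesicness via vanishing of $h$ on the diagonal.

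The main obstacle I expect is the second alternative: showing that a diagonal symmetric submanifold must in fact be the \emph{full} graph of a \emph{global} isometry (not merely a local diagonal immersion) and that it is genuinely totally geodesic. This requires controlling the interplay between the extrinsic symmetry (the reflections $\sigma^\bot_p$) and the factor exchange, and ruling out partial or twisted diagonals where only a subspace of $T_pN_1$ is matched to $T_pN_2$. In practice, since this is Naitoh's theorem, I would not reprove the hard classification but instead cite \cite{N2} directly for the dichotomy and merely record the two conclusions in the form needed for Corollary~\ref{co:product}; the honest ``proof'' here is the reduction to Naitoh's setup rather than an independent derivation.
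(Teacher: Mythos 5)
Your overall strategy --- cite Naitoh's \cite[Theorem 2.2]{N2} and reduce the present hypotheses to his --- is exactly what the paper does, and your sketch of the underlying split-versus-diagonal dichotomy is a reasonable account of what Naitoh's classification delivers. However, you have misidentified where the actual work in the reduction lies. You claim that matching the hypotheses is ``immediate from the definition'' of a symmetric submanifold, but the obstruction is not the definition of extrinsic symmetry: it is that \cite{N2} (``Grassmann geometries on \emph{compact} symmetric spaces\dots'') proves Theorem~2.2 only when the ambient space is of compact type. The paper needs the statement for a general simply connected symmetric space --- in particular for $\rmH^k\times\rmH^\ell$ and $\R\times\rmH^\ell$ in Corollary~\ref{co:product}, and for $\rmG_2^+(\R^{n+2})^*$ --- so the proof must (i) pass to the non-compact case by duality, checking that the key ingredient \cite[Lemma~3.1]{N2} survives the sign change of the curvature, and (ii) in the mixed case decompose $N\cong N_c\times N_{nc}\times N_e$ into its compact, non-compact and Euclidean parts and rerun Naitoh's splitting argument from \cite[p.~562/563]{N2} to get $M=M_c\times M_{nc}\times M_e$ before the two-factor dichotomy can be applied. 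None of this appears in your proposal, and without it the citation simply does not cover the cases the paper uses. The first half of your write-up (reproving the dichotomy via curvature-invariant pairs and parallel distributions) is not what the paper does and is not needed once the reduction to \cite{N2} is in place; the part you dismissed as routine is the entire content of the paper's proof.
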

\begin{proof}
In fact, in case both factors of $N$ are of compact type, 
we can immediately apply~\cite[Theorem 2.2]{N2}.
In case both factors of $N$ are of non-compact type, we use the duality between
compact and non-compact spaces to pass to the previous case (note that the results of~\cite{N2} are mainly 
based on~\cite[Lemma~3.1]{N2} which is preserved under duality).
In the general case, we decompose $N\cong  N_c\times N_{nc}\times N_e$ 
into its compact, non-compact and Euclidean factor (where one or more factors may be trivial) and show as
in~\cite[p.562/563]{N2} that $M$ splits as the Riemannian product $M=M_c\times M_{nc}\times M_e$  
of symmetric submanifolds $M_c\subset N _c$\,, $M_{nc}\subset N_{nc}$ and
$M_e\subset N_e$\,, which finally establishes  Theorem~\ref{th:products}.
\end{proof}

\section{Parallel submanifolds of $\rmG^+_2(\R^{n+2})$}
\label{se:sebastian}
Let $n\geq 2$ and consider the simply connected compact Hermitian symmetric space $N:=\rmG^+_2(\R^{n+2})$ of rank two
which is given by the oriented 2-planes of $\R^{n+2}$\,. In standard notation,
we have $N\cong \SO(n+2)/\SO(2)\times\SO(n)$\,. Let $\{e_1,\ldots,
e_{n+2}\}$ be the standard orthonormal basis of $\R^{n+2}$ and set
$p:=\{e_{n+1},e_{n+2}\}_\R$\,. Then $p$ is an oriented 2-plane in $\R^{n+2}$ and
$T_pN=\Hom(\R^2,\R^{n})$ (here and in the following we identify $\R^2\cong
\{e_{n+1},e_{n+2}\}_\R$ and $\R^n\cong \{e_1,\ldots,e_n\}_\R$). 

The Hermitian structure on $T_pN$ is given by
\begin{equation}
J^N(\lambda):=\lambda\circ e_{n+1}\wedge e_{n+2}
\end{equation}
for all $\lambda\in \Hom(\R^2,\R^{n})$ (here we use the
natural isomorphism $\Lambda^2(\R^2)\cong\so(2)$ such that $e_{n+1}\wedge e_{n+2}$ is the rotation
in the positive sense by an angle of 90 degree in $\R^2$). Thus $T_pN$ is also an $n$-dimensional complex vector space 
where the multiplication with the imaginary unit $\i$ is given by $J^N$\,. Further, for every $\varphi\in\R$ set 
\begin{equation}
\Re(\varphi):=\Menge{\lambda\in \Hom(\R^2,\R^{n})}{\cos(\varphi)\lambda(e_{n+1})=-\sin(\varphi)\lambda(e_{n+2})}\;.
\end{equation} 
Then $\scrU:=\Menge{\Re(\varphi)}{\varphi\in\R}$ is a family of {\em real forms} of $T_pN$ (i.e.\ maximal totally real subspaces of
$T_pN$) and $\scrU=\Menge{\e^{\i\varphi}\,\Re}{\varphi\in\R}$ for every
$\Re\in\scrU$\,. Following the notation from~\cite{sebastian1}, 
we thus see that $\scrU$ is a ``circle'' of real forms.

Let $\so(n+2)=\frakk\oplus\frakp$ be the Cartan decomposition of $\so(n+2)$\,,
i.e. $\frakk=\so(2)\oplus\so(n)$ and $\frakp$ is the orthogonal complement of
$\frakk$ with respect to the positive definite invariant form defined by
$-\trace(A\circ B)$ for all $A,B\in \so(n+2)$\,. Then
$\frakp=\Menge{A\in\so(n+2)}{A(\R^2)\subset \R^{n},\ A(\R^n)\subset
  \R^{2}}$ and $\frakk=[\frakp,\frakp]$\,. Using the natural isomorphism $\frakp\to
T_pN, A\mapsto A|_{\R^2}$\,, the linearized isotropy representation $\rho:\frakk\to \so(T_pN)$
is given by $\rho(A)B= [A,B]$ for all $A\in\frakk$ and $B\in\frakp$\,. Further,
then we have $R^N(A,B,C)=-[[A,B],C]$ for all $A,B,C\in\frakp$ (since $N$ is a symmetric
space). Thus, we obtain that $\rho(\frakk)=\R\, J^N \oplus \so(\Re)$ and
\begin{equation}
\label{eq:crg}
\forall u,v\in T_pN:R^N_{u,v}=(\g{\Re(v) }{\Im(u)}-\g{\Re(u)}{\Im(v) }) J^N-\Re(u)\wedge
\Re(v)-\Im(u) \wedge \Im(v)
\end{equation}
for every $\Re\in\scrU$ if the scalar product $\g{A}{B}$ is chosen as $-1/2\,\trace(A\circ B)$ for all $A,B\in\frakp$\,.
Here $v=\Re (v)+\i\,\Im(v)$ denotes the splitting of $v$ with respect to the decomposition
$T_pN=\Re\oplus\i\,\Re$\,, and the Lie algebra $\so(\Re)$ acts on $T_pN$ via $A\,v=A\,\Re (v)+\i\, A\,\Im(v)$ for
all $A\in \so(\Re)$ and $v\in T_pN$\,. For an equivalent description of $R^N$\,, see~\cite[p.84, Eq.~(16)]{sebastian1} (note that 
there our metric gets scaled by a factor $1/2$).\footnote{Clearly, the curvature tensor itself does not change if one scales the metric by a constant factor, but r.h.s.\ of~\eqref{eq:crg} depends on the chosen scaling.} 

Recall that a subspace $W\subset T_pN$ is called curvature invariant if 
$R^N(x,y,z)\in W$ for all $x,y,z\in W$\,. Describing $R^N$ via the Lie triple bracket on $\frakp$\,, 
we see that there is a one-one correspondence between curvature invariant subspaces
of $T_pN$ and {\em Lie triple systems} in $\frakp$\,, i.e.\ linear spaces $W\subset \frakp$ satisfying $[[W,W],W]\subset W$\,. 
For the following result see~\cite[Theorem~4.1]{sebastian1}:

\bigskip
\begin{theorem}[S.~Klein]\label{th:ci}
For $N:=\rmG_2^+(\R^{n+2})$ with $n\geq 2$\,, there are precisely the following curvature invariant subspaces of $T_pN$\,:
\begin{itemize}
\item Type $(c_k)$\,: let $\Re\in\scrU$ and a $k$-dimensional  subspace $W_0\subset \Re$ be given. 
Then  $W:=\C\,W_0$ is curvature invariant. Here we assume that $k\geq 1$\,.
\item Type $(tr_{k,\ell})$\,: let $\Re\in\scrU$ and an orthogonal  pair of subspaces 
$W_1,W_2$ of $\Re$ be given. Then $W:=W_1\oplus \i W_2$ is curvature invariant.
Here the dimensions $k$ and $\ell$ of $W_1$ and $W_2$\,, respectively, are supposed to satisfy $k+\ell\geq 2$\,. 

\item Type $(c_k')$\,: let $\Re\in\scrU$ and a subspace  $W'\subset \Re$ 
equipped  with a Hermitian structure $I'$ be given. Then $W:=\Menge{v-\i\,I' v)}{v\in W'}$ is curvature invariant.
Here $k\geq 1$ denotes the complex dimension of $W'$\,.
\item Type $(tr_k')$\,: let $\Re\in\scrU$\,, a  subspace 
$W'\subset \Re$ equipped with  a Hermitian structure $I'$ and a real form 
$W_0'$ of the complex vector space $(W',I')$ be given. Then $W:=\Menge{v-\i\,I' v)}{v\in
    W_0'}$  is curvature invariant. 
Here $k\geq 2$ denotes the dimension of $W_0'$\,.
\item Type $(ex_3)$\,: let $\Re\in\scrU$ 
and an orthonormal system $\{e_1,e_2\}\subset\Re$ be given. 
The 3-dimensional linear space $W:=\{e_1-\i\,e_2,e_2+\i\,e_1,e_1+\i\,e_2\}_\R$ is curvature invariant.
\item Type $(ex_2)$ (only for $n\geq 3$)\,: let $\Re\in\scrU$ and an
  orthonormal system $\{e_1,e_2,e_3\}\subset\Re$ be given. 
The 2-dimensional linear space $W:=\{2\,e_1+\i\,e_2, e_2+\i(e_1+\sqrt{3}\,e_3)\}_\R$ is
  curvature invariant.
\item Type $(tr_1)$\,: let $u$ be a unit vector of $T_pN$\,. The 1-dimensional space $\R u$  is curvature invariant.
\end{itemize}
\end{theorem}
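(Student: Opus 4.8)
The plan is to use the correspondence stated just above the theorem between curvature invariant subspaces $W\subset T_pN$ and Lie triple systems in $\frakp\cong\Hom(\R^2,\R^n)$, combined with the explicit curvature formula~\eqref{eq:crg}, and to classify such $W$ up to the action of the identity component $K_0=\SO(2)\times\SO(n)$ of the isotropy group. Writing $a\in\frakp$ as a matrix $a=(a_1\mid a_2)\in M_{n\times 2}(\R)$, a direct computation of $R^N(a,b,c)=-[[a,b],c]$ gives the triple product $R^N(a,b,c)=(ab^T-ba^T)c-c(a^Tb-b^Ta)$, so that $W$ is curvature invariant precisely when $[[W,W],W]\subset W$. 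Since both the list of types and the defining condition are $K_0$-equivariant — the linearized action being $\rho(\frakk)=\R\,J^N\oplus\so(\Re)$, with $\SO(2)$ rotating inside the circle $\scrU=\Menge{\e^{\i\varphi}\Re}{\varphi\in\R}$ of real forms and $\SO(n)$ acting on $\Re\cong\R^n$ — it suffices to produce one representative in each orbit and to prove the list exhaustive. I would carry the formulation via~\eqref{eq:crg} along in parallel, since there the Kähler term $\omega_0(x,y)\,J^N$ and the two $\so(n)$-wedge terms $\Re x\wedge\Re y$ and $\Im x\wedge\Im y$ can be tracked separately.

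The organizing invariant is the Kähler-angle spectrum of $W$. For curvature invariant $W$ I would form the skew-symmetric endomorphism $A_W:=P_W\circ J^N|_W\in\so(W)$, where $P_W$ is the orthogonal projection onto $W$, and decompose $W$ into the eigenspaces of the self-adjoint operator $-A_W^2\geq 0$; its eigenvalues are the squared cosines $\cos^2\theta_j\in[0,1]$ of the Kähler angles. This produces a $J^N$-adapted orthogonal splitting of $W$ into a complex part ($\theta=0$), a totally real part ($\theta=\pi/2$), and generic blocks ($0<\theta<\pi/2$). The crux is to show that curvature invariance quantizes this spectrum: evaluating $R^N(x,y,z)\in W$ on an orthonormal frame adapted to a fixed angle $\theta$ and tracking the component of $R^N(x,y,z)$ that leaves $W$ forces a polynomial identity in $\cos^2\theta$ whose only admissible roots are $0$, $1$ and the sporadic value $\tfrac{1}{25}$, the latter realized by Type $(ex_2)$ with $\cos\theta=1/5$.

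With the angle spectrum controlled I would treat the pieces in turn, noting that each carries a second quantized invariant — a ``twist'' recording its position relative to the real form $\Re$. For complex $W$ the Kähler term is automatic and only the $\so(n)$-wedge part of~\eqref{eq:crg} constrains $W$; analyzing the action of $\frakh_W$ one finds the twist is forced to be trivial or maximal ($\pi/4$-balanced between $\Re$ and $\i\Re$), giving exactly the untwisted family $W=\C\,W_0$ (Type $(c_k)$) or the twisted family built from a Hermitian structure $I'$ with $W=\Menge{v-\i\,I'v}{v\in W'}$ (Type $(c_k')$). For totally real $W$ one has $\omega_0|_W=0$, the Kähler term drops, and the coupling of the real and imaginary parts of $W$ inside $\R^n$ under the wedge terms again forces the twist to be trivial or maximal, yielding after a rotation in $\scrU$ either the product form $W_1\oplus\i\,W_2$ (Type $(tr_{k,\ell})$) or the twisted real form (Type $(tr_k')$); a one-dimensional $W$ is unconstrained, which is Type $(tr_1)$. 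The genuinely mixed configurations survive only in low dimension: the constant intermediate angle with $\cos^2\theta=\tfrac{1}{25}$ forces $\dim W=2$ and, after normalization, exactly Type $(ex_2)$ (whence the hypothesis $n\geq 3$), while a block in which the angles $0$ and $\pi/2$ are linked by curvature forces $\dim W=3$ and Type $(ex_3)$.

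I expect the main obstacle to be the two quantization steps together with the exclusion of mixtures: first, that the Kähler angles take only the values $0,\pi/2$ or the single sporadic value; second, that within the complex and totally real strata the twist is likewise quantized, so that no continuum of types appears (already for a single complex line one checks that $\C\,u$ is curvature invariant only when $u$ lies in $\Re$ or is $\pi/4$-balanced). Equally delicate is showing that a complex part and a totally real part cannot be coupled by curvature beyond the rigid three-dimensional configuration $(ex_3)$, and that intermediate Kähler angles occur only in the sporadic two-dimensional block $(ex_2)$. This is exactly where the precise interplay of the Kähler term and the two $\so(n)$-wedge terms in~\eqref{eq:crg} must be exploited, and it is the reason the classification rests on a careful, largely computational case analysis rather than on a purely representation-theoretic argument.
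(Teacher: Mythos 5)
You should first be aware that the paper does not prove this statement at all: Theorem~\ref{th:ci} is imported verbatim from S.~Klein's classification of totally geodesic submanifolds of the complex quadric (the paper says ``For the following result see [Klein, Theorem~4.1]'' and moves on). So there is no internal proof to measure your argument against; you are attempting to reprove a cited result. Your outline is structurally sensible and your concrete checkpoints are correct: for Type $(ex_2)$ the unit vectors $u_1=\tfrac{1}{\sqrt5}(2e_1+\i e_2)$, $u_2=\tfrac{1}{\sqrt5}(e_2+\i(e_1+\sqrt3 e_3))$ do satisfy $\g{J^Nu_1}{u_2}=1/5$, so the sporadic Kähler angle $\cos^2\theta=1/25$ is real, and Type $(ex_3)$ does split as a complex plane $\C(e_1-\i e_2)$ plus the totally real line $\R(e_1+\i e_2)$, exactly the ``coupled'' configuration you describe. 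The reduction to Lie triple systems, the $K_0$-equivariance, and the identification of the two invariants (Kähler angle relative to $J^N$, ``twist'' relative to the circle $\scrU$ of real forms) are all the right scaffolding.

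The gap is that the scaffolding is all you have: the three claims on which exhaustiveness rests --- (i) that curvature invariance forces the spectrum of $-A_W^2$ into $\{0,1,1/25\}$, (ii) that within the complex and totally real strata the twist is rigidly either trivial or $\pi/4$-balanced (no continuum of intermediate positions relative to $\Re$), and (iii) that a complex block and a totally real block can only be curvature-coupled in the single three-dimensional configuration $(ex_3)$, and an intermediate-angle block only in the two-dimensional configuration $(ex_2)$ --- are asserted, flagged as ``the main obstacle,'' and never established. These are not technical details to be filled in later; they \emph{are} the theorem. In particular the ``polynomial identity in $\cos^2\theta$ whose only admissible roots are $0$, $1$ and $1/25$'' is never written down, and it is not obvious a priori that the obstruction to curvature invariance of a generic-angle plane takes the form of a single polynomial condition rather than a system that might admit other solutions for $n$ large (where more room is available in $(\C W)^\bot$). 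Likewise, nothing in the proposal rules out, say, a curvature invariant subspace with two distinct intermediate angles, or a higher-dimensional analogue of $(ex_2)$. A second, lesser point: Klein's actual proof organizes the classification around the characteristic angle relative to the circle of conjugations of the quadric together with root-space decompositions of the Lie triple systems, rather than around the $J^N$-Kähler angle; your two-invariant scheme may well work, but since the decisive computations are absent one cannot tell whether it closes. As it stands the proposal is a plausible programme, not a proof.
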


Our notation emphasizes that spaces of Types $(c_k)$ and $(c_k')$ both are
complex of dimension $k$ over $\C$ and those of
Types  $(tr_{k,\ell})$ and $(tr_k')$ are totally real of
dimensions $k+\ell$ and $k$\,, respectively. The spaces of Types  $(ex_3)$
and $(ex_2)$ are ``exceptional'' (in the sense that they do not 
occur in a series). 

As was mentioned at the beginning of
this paper, the totally geodesic submanifolds of $N$ were already classified in~\cite{chen-naganoI}.
However, there the totally geodesic submanifolds which are
associated with curvature invariant subspaces of Types $(ex_3)$ and $(ex_2)$
do not occur. For an explicit description of these submanifolds, see~\cite{sebastian4}.

\subsection{Curvature invariant pairs of $\rmG^+_2(\R^{n+2})$}
\label{se:cip}

\begin{table}[t]\caption{Orthogonal curvature invariant pairs of $\rmG^+_2(\R^{n+2})$} \label{table}    
\begin{tabular}{ | l | l | l | l|}    \hline\noalign{\smallskip}   Type  & Data & Conditions \\ \hline 
$(c_{k},c_{\ell})$     & $(\Re, W_0;\Re^*,U_0)$ & $\Re=\Re^*$\,, $W_0\bot U_0$ \\ \hline 
$(tr_{i,j},tr_{k,\ell})$  & $(\Re, W_1,W_2;\Re^*,U_1,U_2)$ & $\Re=e^{\i\varphi}\,\Re^*$\,, 
$W_1\oplus W_2\bot e^{\i\varphi} (U_1\oplus  U_2)$ \\ \hline 
%$(tr_{i,j},tr_{k,\ell})$  & $(\Re, W_1,W_2;\Re^*,U_1,U_2)$ & $\Re=\Re^*$\,, $W_1\oplus W_2\bot U_1\oplus  U_2$ \\ \hline 
$(tr_{j,k},tr_{\ell,j})$ &  $(\Re, W_1,W_2;\Re^*,U_1,U_2)$ & $\Re=\Re^*$\,, $W_1=U_2$\,, $W_2\bot U_1$ \\ \hline 
$(tr_{k,\ell},tr_{\ell,k})$ &  $(\Re, W_1,W_2;\Re^*,U_1,U_2)$ & $\Re=\Re^*$\,, $W_1=U_2$\,, $W_2=U_1$ \\ \hline 
$(tr_{1,k},tr_{\ell,1})$ &  $(\Re, W_1,W_2;\Re^*,U_1,U_2)$ & $\Re=\Re^*$\,, $W_1\bot U_1$\,, $W_2\bot U_2$\,, $W_2\bot U_1$ \\ \hline 
$(tr_{1,k},tr_{\ell,1})$ &  $(\Re, W_1,W_2;\Re^*,U_1,U_2)$ & $\Re=\Re^*$\,, $W_1\bot U_1$\,,  $W_2\bot U_2$\,, $W_2=U_1$ \\ \hline 
$(tr_{1,1},tr_{1,1})$ &  $(\Re, W_1,W_2;\Re^*,U_2,U_2)$ & $\Re=\Re^*$\,, $W_1\bot U_1$\,, $W_2\bot U_2$ \\ \hline 
$(tr_{k,\ell},tr_1)$ &  $(\Re, W_1,W_2;u)$ & $u\in (\C\,W_1\oplus \C\,W_2)^\bot$\\ \hline 
$(tr_{1,k},tr_1)$ &  $(\Re, W_1,W_2;u)$ & $\Re(u)\in W_1^\bot$\,, $u\in (\C\,W_2)^\bot$ \\ \hline 
$(tr_{1,1},tr_1)$ &  $(\Re, W_1,W_2;u)$ & $\Re(u)\in W_1^\bot$\,, $\Im(u)\in W_2^\bot$ \\ \hline 
$(c_k,c_\ell')$ &  $(\Re,W_0;\Re^*,U',I')$ & $\Re=\Re^*$\,, $W_0\bot U'$ \\ \hline 
$(c_k',c_\ell')$ &  $(\Re,W',I';\Re^*,U',J')$ & $\Re=\Re^*$\,, $W'\bot U'$ \\ \hline 
$(c_k',c_k')$ &  $(\Re,W',I';\Re^*,U',J')$ & $\Re=\Re^*$\,, $W'=U'$\,, $I'=-J'$ \\ \hline 
$(c_1',tr_1)$ &  $(\Re,W',I';u)$ & $u\in\bar W$ \\ \hline 
$(tr_j',tr_{k,\ell})$ & $(\Re,W',I',W_0';\Re^*, U_1,U_2)$ & $\Re=\Re^*$\,, $W'\bot U_1\oplus U_2$ \\ \hline 
$(tr_{k}',tr_{\ell}')$ & $(\Re,W',I',W_0';\Re^*,U',J',U_0')$ & $\Re=\Re^*$\,, $W'\bot U'$ \\ \hline  
$(tr_{k}',tr_{k}')$ & $(\Re,W',I',W_0';\Re^*,U',J',U_0')$ & $\Re=\Re^*$\,, $W'=U'$\,, $U_0'=I'(W_0')$\,, $J'=I'$ \\ \hline 
$(tr_{k}',tr_{k}')$ & $(\Re,W',I',W_0';\Re^*,U',J',U_0')$ & $\Re=\Re^*$\,, 
$W'=U'$\,, \\ & & $U_0'=\exp(\theta\,I')(W_0')$\,, $J'=-I'$ \\ \hline 
$(tr_{2}',tr_{2}')$ & $(\Re,W',I',W_0';\Re^*,U',J',U_0')$ & $\Re=\Re^*$\,, $W'=U'$\ \text{and there exists}\\ 
& & $\tilde J\in\SU(W',\tilde I)\cap \so(W')$\ \text{such that}\\ 
& &$U_0'=\tilde J(W_0')$ and $J'=\tilde J\circ I'\circ\tilde J^{-1}$\ \ \ \ \;$(*)$ \\ \hline
$(tr_{k}',tr_1)$ & $(\Re,W',I',W_0';u)$ & $u\in (\C\,W')^\bot$ \\\hline 
$(ex_3,tr_1)$ & $(\Re,\{e_1,e_2\};u)$ & $u=\pm\,1/\sqrt 2\,(\e_2-\i\,e_1)$ \\\hline 
$(tr_1,tr_1)$ & $(u;v)$ & $u\,\bot\, v$ \\\hline       
\end{tabular}\\ %We use the notation from Theorem~\ref{th:ci}. 
$(*)$\ If $W$ is of Type $tr_{2}'$ defined by $(\Re,W',I',W_0')$\,, then a second Hermitian structure\\ on $W'$ is given by $\tilde I:=e_1\wedge
e_2+I' e_1\wedge I' e_2$ for some orthonormal basis $\{e_1,e_2\}$ of $W_0'$\,.
\end{table}

In this section, we determine the orthogonal curvature invariant pairs of $T_pN$\,.  
Note that $(W,U)$ is a curvature invariant pair if and only if $(U,W)$ has this property. 
Since Theorem~\ref{th:ci} provides seven types of
curvature invariant subspaces of $T_pN$\,, there are, roughly said, $7\cdot 8/2=28$ 
possibilities to consider. 

Our approach is briefly explained as follows: given a curvature invariant subspace
$W$ of $T_pN$\,, we will first determine the Lie algebra $\frakh_W$ (see~\eqref{eq:La1}) and
the  $\frakh_W$-invariant subspaces of $W^\bot$\,. Second, we will also determine those skew-symmetric endomorphisms of $T_pN$ which belong to 
$\rho(\frakk)$ and leave $W$ invariant, see~\eqref{eq:symmetric_space}. Once this information is available for linear
spaces of Types x and y, we will determine all curvature invariant pairs of Type (x,y), see Table~\ref{table}.

\bigskip
\begin{lemma}\label{le:type(c_k)}
Let $W$ be curvature invariant of Type $(c_k)$ defined by the data $(\Re, W_0)$\,.
\begin{enumerate}
\item We have  \begin{equation} \frakh_W=\R\, J^N\oplus\so(W_0)\;. 
\end{equation}
\item A subspace of $W^\bot$ is $\frakh_W$-invariant if and only if it is a complex subspace.
\item Let $A\in\so(\Re)$ and $a\in\R$ be given. The linear map $a\,J^N+A$ leaves $W$ invariant if and only if $A(W_0)\subset W_0$\,.  
\end{enumerate}
\end{lemma}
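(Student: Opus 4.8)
The plan is to reduce all three claims to direct computations with the explicit curvature formula~\eqref{eq:crg} together with the decomposition $T_pN=\Re\oplus\i\,\Re$ induced by the chosen real form. Throughout I would write an element of $W=\C\,W_0=W_0\oplus\i\,W_0$ as $w=u+\i\,v$ with $u,v\in W_0$, so that $\Re(w)=u$ and $\Im(w)=v$ both lie in $W_0$. I also record that $\Re=W_0\oplus W_0^\bot$ orthogonally inside $\Re$, whence $W^\bot=\C\,W_0^\bot$, and that $\so(W_0)$ sits inside $\so(\Re)$ as the skew endomorphisms supported on $W_0$ (acting by zero on $W_0^\bot$), extended $\C$-linearly to $T_pN$ via $A(u+\i v)=Au+\i\,Av$.

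For part (a) I would evaluate~\eqref{eq:crg} on $x=a_1+\i\,b_1$ and $y=a_2+\i\,b_2$ in $W$, with $a_i,b_i\in W_0$. Since all four vectors lie in $W_0$, the formula gives $R^N_{x,y}=(\g{a_2}{b_1}-\g{a_1}{b_2})\,J^N-a_1\wedge a_2-b_1\wedge b_2$, and each wedge $a_i\wedge a_j$, $b_i\wedge b_j$ is an element of $\so(W_0)$. This yields the inclusion $\frakh_W\subseteq\R\,J^N\oplus\so(W_0)$ (see~\eqref{eq:La1}). For the reverse inclusion, taking $x=a$, $y=\i\,a$ with $0\neq a\in W_0$ produces a nonzero multiple of $J^N$, while taking $x=a$, $y=b$ both real produces $-a\wedge b$; as $a,b$ range over $W_0$ these wedges span $\Lambda^2W_0\cong\so(W_0)$. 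The sum is direct because $J^N$ interchanges $\Re$ and $\i\,\Re$ whereas $\so(W_0)\subset\so(\Re)$ preserves $\Re$.

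Part (c) is an equally short computation: for $w=u+\i\,v\in W$ one finds $(a\,J^N+A)(u+\i v)=(-a v+Au)+\i\,(a u+Av)$. As $au,av\in W_0$ automatically, this vector lies in $W=W_0\oplus\i\,W_0$ for every $w$ if and only if $Au,Av\in W_0$ for all $u,v\in W_0$, i.e.\ $A(W_0)\subset W_0$; the $J^N$-summand is irrelevant since $W$ is complex. Finally, part (b) follows from part (a): a subspace of $W^\bot$ is $\frakh_W$-invariant iff it is invariant under both $J^N$ and $\so(W_0)$. Invariance under $J^N$ is exactly the condition of being a complex subspace, so one direction (using $J^N\in\frakh_W$) is immediate. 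For the converse, the only point that is not purely formal is the observation that $\so(W_0)$ acts trivially on $W^\bot=\C\,W_0^\bot$: every $A\in\so(W_0)$ annihilates $W_0^\bot$ and hence, acting $\C$-linearly, annihilates $\C\,W_0^\bot$. Thus every subspace of $W^\bot$ is automatically $\so(W_0)$-invariant, and $\frakh_W$-invariance reduces to being complex. I do not anticipate a genuine obstacle; the only care needed is in keeping track of the real-form splitting and the precise embedding $\so(W_0)\hookrightarrow\so(\Re)$, so that the triviality of the $\so(W_0)$-action on $W^\bot$ becomes transparent.
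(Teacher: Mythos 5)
Your proposal is correct and follows essentially the same route as the paper: both evaluate the explicit curvature formula~\eqref{eq:crg} on elements of $W=\C\,W_0$ to identify $\frakh_W=\R\,J^N\oplus\so(W_0)$, observe that $\frakh_W$ acts on $W^\bot=\C\,W_0^\bot$ only through $\R\,J^N$ (so invariant subspaces are exactly the complex ones), and settle (c) by a one-line computation. The paper merely records the three special evaluations $R^N_{\i x,x}=J^N$, $R^N_{x,y}=R^N_{\i x,\i y}=-x\wedge y$, $R^N_{x,\i y}=0$ instead of the general formula, but the content is identical.
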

\begin{proof}
By means of~\eqref{eq:crg}, the curvature endomorphism $R^N_{\i\,x,x}$ is given by 
$J^N$ for every unit vector $x\in W_0$\,.
Further, $R^N_{x,y}=R^N_{\i\, x,\i\,y}=-x\wedge y$ for all $x,y\in W_0$ and $R^N_{x,\i\,y}=0$ if 
$x,y\in W_0$ with $\g{x}{y}=0$\,. Part~(a) follows. For~(b), 
note that $\frakh_W|_{W^\bot}=\R\, J^N|_{W^\bot}$\,.  Part~(c) is obvious. 
\end{proof}

\bigskip
\begin{corollary}\label{co:type(c_k,c_l)}
Let $W$ and $U$ be curvature invariant of Types $(c_k)$ and $(c_\ell)$
defined by the data $(\Re,W_0)$ and $(\Re^*,U_0)$\,, respectively. If $\Re=\Re^*$ and $W_0\bot U_0$\,, 
then $(W,U)$ is an orthogonal curvature invariant pair. Moreover, every orthogonal curvature invariant 
pair of Type $(c_k,c_\ell)$ can be obtained in this way.
\end{corollary}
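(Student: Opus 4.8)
The plan is to prove the two implications separately, using Lemma~\ref{le:type(c_k)} throughout and exploiting the symmetry of the situation (both $W$ and $U$ are of Type $c$, so that lemma applies verbatim to each of them).

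First I would treat sufficiency, so assume $\Re=\Re^*$ and $W_0\bot U_0$. The first task is orthogonality: writing $W=W_0\oplus\i\,W_0$ and $U=U_0\oplus\i\,U_0$, and using that $\Re\bot\i\,\Re$ for a real form together with $W_0\bot U_0$ and the fact that $J^N$ is an isometry, one reads off $W\bot U$ directly. Next, since $U=\C\,U_0$ is a complex subspace and $U\subset W^\bot$, Lemma~\ref{le:type(c_k)}(b) shows that $U$ is $\frakh_W$-invariant, i.e. $R^N_{x,y}(U)\subset U$ for all $x,y\in W$; this is precisely $R^N(W\times W\times U)\subset U$. Interchanging the roles of $W$ and $U$ — which is legitimate because the lemma applies equally to the Type $(c_\ell)$ space $U$, and $W$ is complex with $W\subset U^\bot$ — yields that $W$ is $\frakh_U$-invariant, i.e. $R^N(U\times U\times W)\subset W$. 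Combining this with the curvature invariance of $W$ and of $U$ (guaranteed by Theorem~\ref{th:ci}), all of~\eqref{eq:cip1} and~\eqref{eq:cip2} hold, so $(W,U)$ is an orthogonal curvature invariant pair.

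For the converse, the key observation I would isolate is that the real form attached to a complex Type $(c_k)$ space is not intrinsic but can be rotated to any prescribed member of the circle $\scrU$. Indeed, $W=\C\,W_0$ with $W_0\subset\Re$, and since $\scrU=\{\e^{\i\varphi}\,\Re:\varphi\in\R\}$ while $\e^{\i\varphi}W=W$ (as $W$ is complex), for any other real form $\Re^*=\e^{\i\varphi}\,\Re$ the subspace $\tilde W_0:=\e^{\i\varphi}W_0\subset\Re^*$ satisfies $\C\,\tilde W_0=\e^{\i\varphi}W=W$; hence $(\Re^*,\tilde W_0)$ is equally valid data for $W$. Given an orthogonal curvature invariant pair $(W,U)$ of Type $(c_k,c_\ell)$, I would therefore fix one $\Re\in\scrU$ and re-describe both spaces as $W=\C\,W_0$, $U=\C\,U_0$ with $W_0,U_0\subset\Re$, so that $\Re=\Re^*$ by construction. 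Finally, $W_0\subset W$, $U_0\subset U$ and $W\bot U$ (orthogonality being part of the definition of an orthogonal curvature invariant pair) force $W_0\bot U_0$, so the pair is obtained from data meeting the stated conditions.

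The computation is essentially routine, and I expect the only genuinely non-obvious point — the mild obstacle — to be the observation in the necessity part that the real form may be normalized to a common member of $\scrU$. Once that freedom is recognized, matching the two real forms and deducing $W_0\bot U_0$ from $W\bot U$ is immediate, and the verification of the curvature invariant pair conditions reduces entirely to the complex-invariance criterion of Lemma~\ref{le:type(c_k)}(b).
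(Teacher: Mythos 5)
Your proposal is correct and follows essentially the same route as the paper: sufficiency is read off from Lemma~\ref{le:type(c_k)} (the complex-invariance criterion in part (b)), and necessity uses exactly the paper's observation that the data $(\Re,W_0)$ of a Type $(c_k)$ space can be rotated to $(e^{\i\varphi}\Re, e^{\i\varphi}W_0)$, allowing one to normalize $\Re=\Re^*$ and then deduce $W_0\bot U_0$ from $W\bot U$. The only difference is that you spell out the routine verifications that the paper compresses into ``obvious''.
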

\begin{proof}
Using Lemma~\ref{le:type(c_k)}, the first part of the corollary is obvious.
For the last assertion, since the linear space $W$ is  
determined also by the tuple $(e^{\i\varphi}\,\Re,e^{\i\varphi}\,W_0)$ for all
$\varphi\in\R$\,, we can assume that $\Re=\Re^*$\,. Thus the condition $W\bot U$
implies that $W_0\bot U_0$\,.
\end{proof}

\bigskip
\begin{corollary}\label{co:type(tr_ij,c_k)}
There are no orthogonal curvature invariant pairs of Types $(c_j,tr_{k,\ell})$\,,  
$(c_j,tr_k')$\,, $(c_j,ex_3)$\,, $(c_j,ex_2)$ and $(c_j,tr_1)$\,.
\end{corollary}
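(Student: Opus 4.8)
The plan is to deduce all five non-existence statements at once from Lemma~\ref{le:type(c_k)}(b). Suppose, towards a contradiction, that $(W,U)$ is an orthogonal curvature invariant pair whose first member $W$ is of Type $(c_j)$, defined by some data $(\Re,W_0)$, and whose second member $U$ is of one of the Types $(tr_{k,\ell})$, $(tr_k')$, $(ex_3)$, $(ex_2)$ or $(tr_1)$. First I would record the two structural facts that the pair provides: orthogonality gives $U\subset W^\bot$, and the defining relation~\eqref{eq:cip1} of a curvature invariant pair gives $R^N_{x,y}(U)\subset U$ for all $x,y\in W$, i.e.\ $U$ is invariant under the Lie algebra $\frakh_W=\Spann{R^N_{x,y}}{x,y\in W}$.

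Next I would invoke Lemma~\ref{le:type(c_k)}(b): since $U$ is an $\frakh_W$-invariant subspace of $W^\bot$, it must be a complex subspace of $T_pN$ (concretely, because $\frakh_W$ acts on $W^\bot$ through $\R\,J^N$, invariance under $\frakh_W$ is the same as invariance under $J^N$). This is the whole conceptual content of the corollary: for $W$ of Type $(c_j)$ the admissible partners $U$ are forced to be complex.

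It then remains to observe that none of the five listed types is complex, which produces the desired contradiction. For the totally real Types $(tr_{k,\ell})$ and $(tr_k')$ this is immediate, since a non-trivial totally real subspace $U$ satisfies $U\cap J^N U=\{0\}$ and hence $J^N U\neq U$. For Types $(ex_3)$ and $(tr_1)$ it is even more elementary: their real dimensions, $3$ and $1$, are odd, while every complex subspace has even real dimension. The only type requiring a genuine (but trivial) computation is $(ex_2)$: on the explicit generators $2e_1+\i\,e_2$ and $e_2+\i(e_1+\sqrt3\,e_3)$ one checks $J^N(2e_1+\i\,e_2)=-e_2+2\i\,e_1\notin W$, so this space is not $J^N$-invariant either. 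Thus in every case $U$ fails to be complex, contradicting the previous paragraph, and no orthogonal curvature invariant pair of the listed types can exist. I do not expect any real obstacle here; the argument is driven entirely by Lemma~\ref{le:type(c_k)}(b) and the invariance~\eqref{eq:cip1}, with the $(ex_2)$ check being the only place a short explicit calculation is needed.
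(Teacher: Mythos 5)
Your proposal is correct and follows exactly the paper's own argument: by Lemma~\ref{le:type(c_k)}(b) any $\frakh_W$-invariant subspace of $W^\bot$ for $W$ of Type $(c_j)$ must be complex, while none of the five listed types is complex. The extra verifications you supply (odd real dimension for $(ex_3)$ and $(tr_1)$, total reality for $(tr_{k,\ell})$ and $(tr_k')$, and the explicit $J^N$-check for $(ex_2)$) are all accurate and merely make explicit what the paper leaves to the reader.
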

\begin{proof}
If $W$ is of Type $(c_j)$\,, then any $\frakh_W$-invariant subspace of $W^\bot$
is complex according to Lemma~\ref{le:type(c_k)}~(b). Since spaces of Types $(tr_{k,\ell})$\,,  
$(tr_k')$\,, $(ex_3)$\,, $(ex_2)$ and $(tr_1)$ are not complex, 
this proves the result. \end{proof}

\bigskip
\begin{lemma}\label{le:type(tr_kl)}
Let $W$ be of Type $(tr_{k,\ell})$ defined by the data $(\Re,W_1,W_2)$\,.
\begin{enumerate}
\item We have \begin{equation}\label{eq:frakhW2}
\frakh_W=\so(W_1)\oplus\so(W_2)\;.
\end{equation}
\item If $k,\ell\neq 1$\,, then a subspace of $W^\bot$ is $\frakh_W$-invariant if and only if it is
  equal to $\i W_1$\,, $W_2$\,, a subspace of the orthogonal complement of $\C\,W_1\oplus \C\,W_2$\,, or a sum of
  such spaces.
 If $k=1$ and  $\ell\geq 2$\,, then a subspace of $W^\bot$ is $\frakh_W$-invariant if and only if it is
  equal to $W_2$\,, a subspace of $\i W_1\oplus(\C\,W_1\oplus \C\,W_2)^\bot$ or a sum of
  such spaces. 
If $k=\ell=1$\,, then any subspace of $W^\bot$ is $\frakh_W$-invariant.
\item Let $A\in\so(\Re)$ and $a\in\R$ be given. The linear map $a\,J^N+A$ leaves $W$ 
invariant if and only if $a=0$ and $A(W_i)\subset W_i$ for $i=1,2$\,.
\end{enumerate}
\end{lemma}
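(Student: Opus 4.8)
The plan is to compute everything directly from the curvature formula~\eqref{eq:crg}, exactly as in the proof of Lemma~\ref{le:type(c_k)}. Write $\Re=W_1\oplus W_2\oplus W_3$ with $W_3:=(W_1\oplus W_2)^\bot$ taken inside $\Re$, so that $T_pN=\C\,W_1\oplus\C\,W_2\oplus\C\,W_3$ and $W^\bot=\i W_1\oplus W_2\oplus\C\,W_3$. Throughout I use that $J^N$ is multiplication by $\i$ and that $\so(\Re)$ acts complex-linearly on $T_pN$.

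For part~(a), I would take $u=x_1+\i y_1$ and $v=x_2+\i y_2$ with $x_1,x_2\in W_1$ and $y_1,y_2\in W_2$. Then $\Re(u)=x_1$, $\Im(u)=y_1$ and similarly for $v$, and since $W_1\bot W_2$ both scalar products in~\eqref{eq:crg} vanish, giving $R^N_{u,v}=-x_1\wedge x_2-y_1\wedge y_2$. Specializing $y_1=y_2=0$ shows that the wedges $x_1\wedge x_2$ span $\so(W_1)$, and $x_1=x_2=0$ shows that the $y_1\wedge y_2$ span $\so(W_2)$; since these two algebras are supported on the orthogonal complex subspaces $\C\,W_1$ and $\C\,W_2$, their sum is direct, so $\frakh_W=\so(W_1)\oplus\so(W_2)$.

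For part~(b), I would determine the $\frakh_W$-module structure of $W^\bot$. Because $\so(\Re)$ is complex-linear, $\so(W_1)$ acts on $\i W_1$ through its standard representation and annihilates $W_2\oplus\C\,W_3$, while $\so(W_2)$ acts on $W_2$ and annihilates $\i W_1\oplus\C\,W_3$; both annihilate $\C\,W_3$. When $k,\ell\neq 1$, each nonzero factor among $\i W_1,W_2$ carries the standard representation (irreducible over $\R$ even in dimension two), and the two are mutually non-isomorphic and non-isomorphic to the trivial summand $\C\,W_3$; hence the isotypic components are $\i W_1$, $W_2$ and $\C\,W_3$, the first two of multiplicity one, and the invariant subspaces are exactly the sums in the first list. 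The degenerate cases are where this bookkeeping changes: if $k=1$ then $\so(W_1)=\{0\}$, so $\i W_1$ is one-dimensional but becomes a trivial module absorbed into $\C\,W_3$, leaving $W_2$ as the only nontrivial irreducible and producing the second list; if $k=\ell=1$ then $\frakh_W=\{0\}$ and every subspace is invariant.

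For part~(c), I would compute $(a\,J^N+A)(x+\i y)=(A\,x-a\,y)+\i(a\,x+A\,y)$ for $x\in W_1$, $y\in W_2$. Invariance of $W=W_1\oplus\i W_2$ forces $A\,x-a\,y\in W_1$ and $a\,x+A\,y\in W_2$ for all such $x,y$. Setting $y=0$ gives $A(W_1)\subset W_1$, whence $a\,y\in W_1\cap W_2=\{0\}$ for all $y\in W_2$; setting $x=0$ gives $A(W_2)\subset W_2$ together with $a\,x=0$ for all $x\in W_1$. As $k+\ell\geq 2$ forces at least one factor to be nonzero, this yields $a=0$, and the converse is immediate. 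The only step demanding genuine care is the case distinction in~(b), where the vanishing of $\so(W_1)$ for $k=1$ (symmetrically $\so(W_2)$ for $\ell=1$) reorganizes which summands are trivial and hence alters the list of invariant subspaces; I expect this reshuffling of the isotypic components to be the main point to get right.
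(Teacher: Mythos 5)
Your proposal is correct and follows essentially the same route as the paper: part (a) by direct evaluation of the curvature formula~\eqref{eq:crg} on $W=W_1\oplus\i W_2$ (the paper simply refers back to the analogous computation for Lemma~\ref{le:type(c_k)}), part (b) by identifying the isotypic decomposition $W^\bot=\i W_1\oplus W_2\oplus(\C\,W_1\oplus\C\,W_2)^\bot$ and tracking how the trivial summands reorganize when $k=1$ or $k=\ell=1$, and part (c) by the straightforward real/imaginary-part computation. The only cosmetic quibble is in (c), where the conditions $a\,x=0$ and $a\,y=0$ are attributed to slightly the wrong specializations, but all the needed facts do follow from your displayed identity, so the argument stands.
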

\begin{proof}
For~(a), see the proof of Lemma~\ref{le:type(c_k)}. For~(b), consider the 
decomposition $W^\bot= \i W_1\oplus W_2\oplus (\C\,W_1\oplus \C\,W_2)^\bot$ into $\frakh_W$-invariant
subspaces. Then $\frakh_W$ acts trivially on $(\C\,W_1\oplus \C\,W_2)^\bot$ and
irreducibly on both $\i W_1$ and $W_2$\,. In particular, the linear spaces $\i W_1$ and $W_2$ are trivial $\frakh_W$-modules only if $k=1$ or $\ell=1$\,, respectively. Moreover, they are
non-isomorphic $\frakh_W$-modules unless $k=\ell=1$\,. The result follows.
Part~(c) is straightforward.
\end{proof}

\bigskip
\begin{corollary}\label{co:type(tr_ij,tr_kl)}
Let $W$ and $U$ be curvature invariant of Types $(tr_{i,j})$ and $(tr_{k,\ell})$
defined by the data $(\Re,W_1,W_2)$ and  $(\Re^*,U_1,U_2)$\,, respectively. 
If one of the following conditions holds, then $(W,U)$ is an orthogonal curvature invariant pair: 

\begin{itemize}
\item the real number $\varphi$ is chosen such that  
$\Re=e^{\i\varphi}\,\Re^*$ and $e^{\i\varphi} (U_1\oplus U_2)$ belongs 
to the orthogonal complement of $W_1\oplus W_2$;
\item $\Re=\Re^*$\,, $W_2=U_1$ and $W_1=U_2$;
\item $\Re=\Re^*$\,, $W_2\bot U_1$ and $W_1=U_2$;
\item $i=\ell=1$\,, $\Re=\Re^*$\,, $W_1\bot U_1$\,, $W_2\bot U_2$  and  $W_2\bot U_1$\,;
\item $i=\ell=1$\,, $\Re=\Re^*$\,, $W_1\bot U_1$\,, $W_2\bot U_2$ and $W_2=U_1$;
\item $(i,j)=(k,\ell)=(1,1)$\,, $\Re=\Re^*$\,, $W_1\bot U_1$ and $W_2\bot U_2$\,.
\end{itemize}
Moreover, every orthogonal curvature invariant pair of Type $(tr_{i,j},tr_{k,\ell})$ can be obtained in this way.
\end{corollary}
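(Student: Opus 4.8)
The plan is to prove the stated equivalence in both directions, viewing the six conditions as a complete classification of the orthogonal curvature invariant pairs of Type $(tr_{i,j},tr_{k,\ell})$. For the sufficiency direction, recall that $(W,U)$ is an orthogonal curvature invariant pair exactly when $W\bot U$ and, by~\eqref{eq:cip1} and~\eqref{eq:cip2}, when in addition $U$ is $\frakh_W$-invariant and $W$ is $\frakh_U$-invariant (Definition~\ref{de:cip}). For each of the six listed conditions I would verify these requirements directly. The orthogonality $W\bot U$ is immediate from the hypotheses together with the splitting $T_pN=\Re\oplus\i\,\Re$ and the relation $\Re\bot\i\,\Re$. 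The two invariance statements follow from Lemma~\ref{le:type(tr_kl)}: using~\eqref{eq:frakhW2} and part~(b), one checks in each case that $U$ is assembled from the admissible $\frakh_W$-invariant blocks $\i\,W_1$, $W_2$ and subspaces of $(\C\,W_1\oplus\C\,W_2)^\bot$, and symmetrically for $W$ under $\frakh_U$. For instance, under the condition $\Re=\Re^*$, $W_1=U_2$, $W_2=U_1$ one has $U=W_2\oplus\i\,W_1$, which is manifestly invariant under $\frakh_W=\so(W_1)\oplus\so(W_2)$, while $W=W_1\oplus\i\,W_2=U_2\oplus\i\,U_1$ is $\frakh_U$-invariant for the same reason; the remaining conditions are checked analogously.

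For the necessity direction I would start from an arbitrary orthogonal curvature invariant pair $(W,U)$ of this type, so that $W\bot U$ while $U$ is $\frakh_W$-invariant and $W$ is $\frakh_U$-invariant. The strategy is to intersect $U$ with the complex subspace $\C\,W_1\oplus\C\,W_2$: this intersection is $\frakh_W$-invariant, and since $W^\bot\cap(\C\,W_1\oplus\C\,W_2)=\i\,W_1\oplus W_2$, Lemma~\ref{le:type(tr_kl)}(b) forces it, in the generic case $i,j\geq 2$, to be one of $\{0\}$, $\i\,W_1$, $W_2$ or $\i\,W_1\oplus W_2$. I would then branch according to which of the irreducible blocks $\i\,W_1$, $W_2$ actually occurs in $U$, running the symmetric argument with the roles of $W$ and $U$ exchanged. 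The case where neither block occurs --- so that $U\subset(\C\,W_1\oplus\C\,W_2)^\bot$ and, symmetrically, $W\subset(\C\,U_1\oplus\C\,U_2)^\bot$ --- forces $W\oplus U$ to be totally real and leads to the first condition. The cases where $\i\,W_1$ or $W_2$ does occur pin down the matchings $W_1=U_2$ or $W_2=U_1$, yielding the second and third conditions.

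The main obstacle, and the step requiring the most care, is reconciling the two a priori distinct real forms $\Re$ and $\Re^*$. Since both lie on the single circle $\scrU$, they automatically differ by a rotation $e^{\i\varphi}$; in the totally real case the orthogonality of $W\oplus U$ then yields exactly the tabulated relation $e^{\i\varphi}(U_1\oplus U_2)\bot W_1\oplus W_2$ with $\varphi$ free. By contrast, whenever a nonzero block $\i\,W_1$ or $W_2$ of $W$ appears inside $U$, that block lies both in $\C\,\Re$ and, through $U=U_1\oplus\i\,U_2$, in $\C\,\Re^*$, and I must show this overlap forces $\varphi=0$, i.e.\ $\Re=\Re^*$, so that the matchings are literal and not merely up to rotation. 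This dichotomy is clean only when all four dimensions $i,j,k,\ell$ are at least two. The genuinely delicate part is the borderline behaviour when one of them equals $1$: by the second and third alternatives of Lemma~\ref{le:type(tr_kl)}(b) the corresponding block then degenerates into a trivial one-dimensional $\frakh_W$-module which may mix with $(\C\,W_1\oplus\C\,W_2)^\bot$, so the intersection argument no longer isolates the blocks. Tracking these degenerations under the hypotheses $i=\ell=1$ (respectively $(i,j)=(k,\ell)=(1,1)$) is what produces the last three conditions, and verifying that no further pairs arise is the crux of the classification.
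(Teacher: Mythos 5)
Your outline reproduces the paper's strategy (sufficiency by direct verification with Lemma~\ref{le:type(tr_kl)}, necessity by decomposing $U$ into $\frakh_W$-irreducible blocks and symmetrizing), but as written it stops short of the steps that actually carry the classification, so there is a genuine gap rather than a complete proof. Concretely: (i) Even in the generic case $i,j,k,\ell\geq 2$ with $\Re=\Re^*$, intersecting $U$ with $\C\,W_1\oplus\C\,W_2$ only tells you \emph{which} of the blocks $\i\,W_1$, $W_2$ occur inside $U$; it does not rule out, say, $U_1=W_2\oplus\tilde U$ with $\{0\}\neq\tilde U\bot\C\,W_1\oplus\C\,W_2$, i.e.\ $W_2\subsetneq U_1$. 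Excluding this is where the paper's reciprocal argument is needed: if $U_1=W_2\oplus\tilde U$ then $k\geq j\geq 2$, so $\i\,W_2$ is a nontrivial irreducible $\frakh_U$-invariant subspace of $U^\bot$, whence $W_2\bot U_1\oplus U_2$ (contradiction) or $W_2=U_1\oplus\tilde W$, and combining the two inclusions forces $\tilde U=\tilde W=\{0\}$. Your phrase ``running the symmetric argument with the roles of $W$ and $U$ exchanged'' gestures at this but does not identify the mixed subspace that has to be killed, and without it you cannot conclude the exact equalities $W_1=U_2$, $W_2=U_1$ appearing in the second and third bullets. (ii) The borderline cases with an index equal to $1$, which produce the last three bullets, are explicitly deferred (``the crux of the classification'') and never executed; these are not a routine afterthought, since there the relevant block becomes a trivial one-dimensional $\frakh_W$-module that can mix arbitrarily with $(\C\,W_1\oplus\C\,W_2)^\bot$.

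On the treatment of the two real forms, your plan is also organized less efficiently than the paper's and has a small inaccuracy. The paper first writes out the four orthogonality relations coming from $W\bot U$ alone and observes that whenever $\varphi\notin\tfrac{\pi}{2}\Z$ both $\cos\varphi$ and $\sin\varphi$ are nonzero, which already forces $e^{\i\varphi}(U_1\oplus U_2)\bot W_1\oplus W_2$ (the first bullet) with no curvature input at all; only for $\varphi\in\tfrac{\pi}{2}\Z$ does one normalize to $\Re=\Re^*$, and this requires interchanging $U_1$ with $U_2$ when $\varphi$ is an odd multiple of $\pi/2$ (since then $\Re=\i\,\Re^*\neq\Re^*$), a case your ``forces $\varphi=0$'' formulation overlooks. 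Proving instead, as you propose, that a block overlap forces $\Re=\Re^*$ is workable but strictly harder, and you would still need the orthogonality computation to settle the totally real case.
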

\begin{proof}
Obviously, the pairs $(W,U)$ mentioned above satisfy $W\bot U$\,.
Further, the fact that they are curvature invariant pairs is verified by means
of Lemma~\ref{le:type(tr_kl)}. Conversely, let us see that these conditions are also necessary: 

we have 
\begin{align*}
&u_1=e^{-\i \varphi}e^{\i \varphi}\,u_1=\cos(\varphi)e^{\i \varphi}\,u_1-
\i\sin(\varphi)e^{\i \varphi}\,u_1\;,\\
&\i\,u_2=e^{-\i \varphi}e^{\i \varphi}\,\i\,u_2=\sin(\varphi)e^{\i \varphi}\,u_2+\i\,\cos(\varphi)e^{\i \varphi}\,u_2\;,
\end{align*}
with $e^{\i \varphi}\,u_1\in \Re$ and $\i\,e^{\i \varphi}\,u_2\in \i\,\Re$ 
for all $(u_1,u_2)\in U_1\times U_2$\,. Thus, the condition $U\bot W$ implies that
\begin{align*}
&0=\g{v_1}{u_1}=\cos(\varphi)\g{v_1}{e^{\i \varphi}\,u_1}\;,\\
&0=\g{v_1}{\i\,u_2}=\sin(\varphi)\g{v_1}{e^{\i \varphi}\,u_2}\;,\\
&0=\g{\i\,v_2}{u_1}=-\sin(\varphi)\g{v_2}{e^{\i \varphi}\,u_1}\;,\\
&0=\g{\i\,v_2}{\i\,u_2}=\cos(\varphi)\g{v_2}{e^{\i \varphi}\,u_2}
\end{align*}
for all $(v_1,v_2)\in W_1\times W_2$ and 
$(u_1,u_2)\in U_1\times U_2$\,. 
Hence, in case $\varphi\notin \pi/2\,\Z$\,, the condition $W\bot U$ necessarily 
implies that $e^{\i \varphi} (U_1\oplus U_2)\bot W_1\oplus W_2$\,.

In case $\varphi\in\Z\,\pi/2$\,, interchanging, 
if necessary, $U_1$ with $U_2$\,, we can assume that $\Re=\Re^*$\,. Further, suppose that $j\geq 2$\,. 
On the one hand, since $(W,U)$ is a curvature invariant pair and $\frakh_W\subset\so(\Re)$ 
by means of Lemma~\ref{le:type(tr_kl)}~(a), the linear space $U_1$ is an $\frakh_W$-invariant subspace of $W_1^\bot\cap\Re$\,. 
Using Lemma~\ref{le:type(tr_kl)}~(b), we conclude that
$U_1\bot W_1\oplus W_2$ or $U_1=W_2\oplus \tilde U$ for some $\tilde U\subset \Re$ 
which belongs to the orthogonal complement of $W_1\oplus W_2$\,. 
We claim that the latter is not possible unless $\tilde U=\{0\}$\,:

otherwise, since $(W,U)$ is a curvature invariant pair and $\frakh_U\subset\so(\Re)$\,, 
the linear space $\i W_2$ is an $\frakh_U$-invariant subspace of $U^\bot\cap\i\,\Re$\,. 
Moreover, the condition $U_1=W_2\oplus \tilde U$ implies that $k\geq j\geq 2$\,. Therefore, by means of 
Lemma~\ref{le:type(tr_kl)}~(b), we have $W_2\bot U_1\oplus U_2$ (which is clearly not given) or $W_2=U_1\oplus \tilde W$ 
for some $\tilde W\bot U_1\oplus U_2$\,. Hence $U_1=U_1\oplus \tilde W\oplus \tilde U$\,, thus $\tilde W=\tilde U=\{0\}$\,.

We conclude that $U_1\bot W_1\oplus W_2$ or $U_1=W_2$ unless $j=1$\,. Similarly, 
we can show that $U_2\bot W_1\oplus W_2$ or $U_2=W_1$ unless $i=1$\,.
Clearly, the same conclusions  hold with the roles of $W$ and $U$ interchanged. This finishes the proof.
\end{proof}

\bigskip
\begin{corollary}\label{co:type(tr_ij,line)}
Let $W$ and $U$ be curvature invariant of Types $(tr_{k,\ell})$ and $(tr_1)$ 
defined by the data $(\Re,W_1,W_2)$ and a unit vector $u\in T_pN$\,, respectively.
If one of the following conditions holds, then $(W,U)$ is an orthogonal curvature invariant pair:
\begin{itemize}
\item $k,\ell\neq 1$ and $u$ belongs to the orthogonal complement of $\C\,W_1\oplus \C\,W_2$;
\item $k=1$\,, $\ell\geq 2$\,, $\Re(u) \bot W_1$ and $u\,\bot \C\,W_2$;
\item $k=\ell=1$\,, $\Re(u)\bot W_1$ and $\Im(u) \bot W_2$\,.
\end{itemize}
Moreover, every orthogonal curvature invariant pair of Type $(tr_{k,\ell},tr_{1})$ can be obtained in this way.
\end{corollary}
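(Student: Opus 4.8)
The plan is to collapse the definition of an orthogonal curvature invariant pair down to a single transparent condition on $u$. First I would note that $W$ is curvature invariant by hypothesis and that $U=\R\,u$ is one-dimensional, so $R^N_{u,u}=0$; consequently both inclusions in~\eqref{eq:cip2} and the first inclusion in~\eqref{eq:cip1} hold automatically, and the whole pair condition reduces to $R^N_{x,y}\,u\in\R\,u$ for all $x,y\in W$. Since the endomorphisms $R^N_{x,y}$ with $x,y\in W$ span $\frakh_W$ (see~\eqref{eq:La1}) and act skew-symmetrically, $A\,u\in\R\,u$ together with $\g{A\,u}{u}=0$ forces $A\,u=0$ for every $A\in\frakh_W$. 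Hence, writing $(W^\bot)^{\frakh_W}$ for the subspace of $W^\bot$ fixed pointwise by $\frakh_W$, I would establish the clean equivalence: $(W,\R\,u)$ is an orthogonal curvature invariant pair if and only if $u\in(W^\bot)^{\frakh_W}$ (membership in this space already encodes $u\bot W$).

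Next I would identify this fixed-point space explicitly from Lemma~\ref{le:type(tr_kl)}~(b), using the decomposition $W^\bot=\i\,W_1\oplus W_2\oplus(\C\,W_1\oplus\C\,W_2)^\bot$, on which $\frakh_W=\so(W_1)\oplus\so(W_2)$ acts irreducibly on $\i\,W_1$ via $\so(W_1)$, irreducibly on $W_2$ via $\so(W_2)$, and trivially on $(\C\,W_1\oplus\C\,W_2)^\bot$. Thus $\i\,W_1$ survives in the fixed space exactly when $k=\dim(W_1)=1$, and $W_2$ exactly when $\ell=\dim(W_2)=1$, giving three regimes: $(W^\bot)^{\frakh_W}=(\C\,W_1\oplus\C\,W_2)^\bot$ when $k,\ell\geq 2$; $(W^\bot)^{\frakh_W}=\i\,W_1\oplus(\C\,W_1\oplus\C\,W_2)^\bot$ when $k=1$, $\ell\geq 2$; and $(W^\bot)^{\frakh_W}=W^\bot$ when $k=\ell=1$ (where $\frakh_W=\{0\}$). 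The remaining constellation $k\geq 2$, $\ell=1$ I would fold into the second regime by applying the isometry of $N$ with differential $J^N$, which carries a space of Type $(tr_{k,\ell})$ to one of Type $(tr_{\ell,k})$ and preserves orthogonal curvature invariant pairs.

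Finally I would translate each fixed space into the stated orthogonality relations by means of the real-form identities $\g{u}{w}=\g{\Re(u)}{w}$ and $\g{u}{\i\,w}=\g{\Im(u)}{w}$ for $w\in\Re$. For $k,\ell\geq 2$ the condition $u\in(\C\,W_1\oplus\C\,W_2)^\bot$ is literally the first listed condition and plainly gives $u\bot W$. For $k=1$, $\ell\geq 2$, membership in $\i\,W_1\oplus(\C\,W_1\oplus\C\,W_2)^\bot$ is the vanishing of the $W_1$-, $W_2$- and $\i\,W_2$-components of $u$, which reads $\Re(u)\bot W_1$ and $u\bot\C\,W_2$. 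For $k=\ell=1$ the bare condition $u\in W^\bot$ unwinds to $\Re(u)\bot W_1$ and $\Im(u)\bot W_2$. Reading the equivalence of the first paragraph in both directions then yields simultaneously the sufficiency of the three conditions and the fact that every orthogonal curvature invariant pair of Type $(tr_{k,\ell},tr_1)$ arises in this way. I expect the only real care to lie in the bookkeeping of the mixed regime $k=1$, $\ell\geq 2$, where one must correctly track which of the summands $\i\,W_1$ and $W_2$ remains a trivial $\frakh_W$-module; the other two regimes are immediate.
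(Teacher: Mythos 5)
Your proposal is correct and follows essentially the same route as the paper: both reduce the pair condition (via skew-symmetry of the curvature endomorphisms and $\dim U=1$) to the statement that $u\in W^\bot$ is annihilated by $\frakh_W$, and then read off the fixed-point space of $\frakh_W$ on $W^\bot$ from Lemma~\ref{le:type(tr_kl)}~(b) in the three regimes, handling $k\geq 2$, $\ell=1$ by the same symmetry swapping $W_1$ and $W_2$. Your write-up merely makes explicit a few steps the paper leaves implicit.
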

\begin{proof}
Note, the pair $(W,U)$ is an orthogonal curvature invariant pair if and only if $u\in
W^\bot$ and $\frakh_W$ annihilates the vector $u$\,. If $k,\ell\neq 1$\,, this is equivalent to $u\,\bot \C\,W_1\oplus
\C\,W_2$ according to Lemma~\ref{le:type(tr_kl)}~(b). 
If $k=1$ and $\ell\geq 2$\,, we use the
same argument as before; however, now it is
allowed that $\Im(u)$ has a component in $W_1$\,.  
The case  $k\geq 2$\,, $\ell=1$ also follows (by passing from $\Re$ to $\i\,\Re$). 
In case $k=\ell=1$\,, the Lie algebra $\frakh_W$ is trivial and the only condition
is $u\in W^\bot$\,.
\end{proof}

\bigskip
\begin{lemma}\label{le:type(c_k')}
Let $W$ be of Type $(c_k')$ determined by the data $(\Re,W',I')$\,. 
Further, let $\bar W$ denote its complex conjugate in $T_pN$ with respect 
to the real form $\Re$\,.
\begin{enumerate}
\item We have \begin{equation}
\label{eq:type(c_k')_0}
\frakh_W=\su(W')\oplus\R(I'+k\, J^N)\;.
\end{equation} 
\item In case $k\geq 2$\,, a subspace of $W^\bot$ is $\frakh_W$-invariant if and only if it is
  equal to $\bar W$\,, a complex subspace of $(\C\,W')^\bot$ or a sum of
  such spaces. In case $k=1$\,, the previous statement remains correct 
if we replace the phrase ``equal to $\bar W$'' by ``contained in $\bar W$''.
Anyway, the linear space $\bar W$ as well as any $\frakh_W$-invariant subspace of $(\C\,W')^\bot$ is complex.
\item Let  $a\in\R$ and $A\in\so(\Re)$\,. The linear map $a\,J^N+A$ leaves $W$ 
invariant if and only if $A(W')\subset W'$ and $A|_{W'}\in\fraku(W',I')$\,.
\end{enumerate}
\end{lemma}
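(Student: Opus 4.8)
The plan is to read off all three statements from the curvature formula~\eqref{eq:crg}. Writing a general element of $W$ as $w=v-\i\,I'v$ with $v\in W'$, we have $\Re(w)=v$ and $\Im(w)=-I'v$. Substituting $w_i=v_i-\i\,I'v_i$ ($i=1,2$) into~\eqref{eq:crg} and using the skew-symmetry $\g{a}{I'b}=-\g{I'a}{b}$, I expect to obtain
\begin{equation*}
R^N_{w_1,w_2}=2\,\g{v_1}{I'v_2}\,J^N-B(v_1,v_2),\qquad B(v_1,v_2):=v_1\wedge v_2+I'v_1\wedge I'v_2 .
\end{equation*}
First I would verify that $B(v_1,v_2)\in\fraku(W',I')$: from $(I'a)\wedge(I'b)=I'(a\wedge b)I'^{-1}$ one gets $I'\,B\,I'^{-1}=B$, so $B$ commutes with $I'$, and $B$ is skew-symmetric with image in $W'$. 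A short trace computation, based on $\trace(I'(a\wedge b))=2\,\g{a}{I'b}$, gives $\trace(I'\,B(v_1,v_2))=4\,\g{v_1}{I'v_2}$; hence, writing $B=B_0+c\,I'$ with $B_0\in\su(W')$ and using $\trace(I'^2|_{W'})=-2k$, we must have $c=-\tfrac{2}{k}\g{v_1}{I'v_2}$, and substituting back yields $R^N_{w_1,w_2}=-B_0+\tfrac{2}{k}\g{v_1}{I'v_2}(I'+k\,J^N)$. The clean way to finish~(a) is to note that the linear isomorphism $\Theta:\fraku(W',I')\to\so(T_pN)$ defined by $B_0+c\,I'\mapsto -B_0-c\,(I'+k\,J^N)$ maps each $B(v_1,v_2)$ to $R^N_{w_1,w_2}$; since the operators $B(v_1,v_2)$ span $\fraku(W',I')$ (as is checked on a unitary basis), applying $\Theta$ gives $\frakh_W=\su(W')\oplus\R(I'+k\,J^N)$.

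For part~(b) I would start from the orthogonal splitting $T_pN=W\oplus\bar W\oplus(\C\,W')^\bot$, so that $W^\bot=\bar W\oplus(\C\,W')^\bot$, where $\bar W=\Menge{v+\i\,I'v}{v\in W'}$ and $\C\,W'=W\oplus\bar W$. On $(\C\,W')^\bot$ both $\su(W')$ and $I'$ vanish, so $\frakh_W$ acts through $k\,J^N$, i.e.\ via $\R\,J^N$; its invariant subspaces there are exactly the complex ones. On $\bar W$, the map $\bar\varphi:W'\to\bar W$, $v\mapsto v+\i\,I'v$, intertwines the $\su(W')$-action (because $\su(W')$ commutes with both $I'$ and $J^N$), and a direct check gives $J^N\circ\bar\varphi=\bar\varphi\circ(-I')$ and $I'\circ\bar\varphi=\bar\varphi\circ I'$. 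Thus $\bar W$ is $\su(W')$-equivariantly the standard module $(W',-I')\cong\C^k$, and $I'+k\,J^N$ acts on it as $(k-1)J^N$. For $k\geq 2$ this module is real-irreducible of complex type and is inequivalent to the trivial $\su(W')$-modules filling $(\C\,W')^\bot$; the reduction via Schur's Lemma recalled in Section~\ref{se:definition_of_cip} then shows the invariant subspaces of $W^\bot$ are precisely $\bar W$, complex subspaces of $(\C\,W')^\bot$, and sums thereof. For $k=1$ one has $\su(W')=0$ and $I'+J^N$ acts on $\bar W$ as $(k-1)J^N=0$, so $\frakh_W$ is trivial on the $2$-dimensional space $\bar W$; hence every subspace contained in $\bar W$ is invariant, which is exactly the modified phrasing. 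In all cases $\bar W$ and the invariant subspaces of $(\C\,W')^\bot$ are complex.

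Part~(c) is a direct verification. Since $W$ is complex, $J^N$ preserves it, so only $A\in\so(\Re)$ must be analysed. As $A$ acts $\i$-linearly, $A(v-\i\,I'v)=Av-\i\,A(I'v)$, and membership of this vector in $W=\Menge{u-\i\,I'u}{u\in W'}$ is equivalent to $Av\in W'$ together with $A(I'v)=I'(Av)$. Letting $v$ range over $W'$, this says precisely $A(W')\subset W'$ and $[A,I']|_{W'}=0$, i.e.\ $A|_{W'}\in\fraku(W',I')$; adding $a\,J^N$ changes nothing, giving the claim.

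I expect the main obstacle to be part~(a): isolating the central direction $I'+k\,J^N$ correctly. This rests on the trace identity $\trace(I'\,B)=4\,\g{v_1}{I'v_2}$ and on the spanning statement that the operators $B(v_1,v_2)$ fill out $\fraku(W',I')$. Once these two facts are secured, the isomorphism $\Theta$ finishes~(a) at once, and parts~(b) and~(c) reduce to the module theory of the standard $\su(k)$-representation and to an elementary linear-algebra computation, respectively.
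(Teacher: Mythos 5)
Your proposal is correct and follows essentially the same route as the paper: both compute $R^N_{w_1,w_2}$ from~\eqref{eq:crg} to identify $\frakh_W$ with $\fraku(W',I')$ twisted into the $J^N$-direction (the paper evaluates on special unitary/orthonormal pairs where you project off the trace part, but the content is identical), then use $J^N|_{\bar W}=-I'|_{\bar W}$ to see that $\bar W$ is an irreducible $\fraku(W',I')$-module for $k\geq 2$ while $\frakh_W$ acts on $(\C\,W')^\bot$ through $\R\,J^N$, and finish~(c) by the same real/imaginary-part computation. The two facts you flag as potential obstacles (the trace identity and the spanning of $\fraku(W',I')$ by the operators $v_1\wedge v_2+I'v_1\wedge I'v_2$) both check out.
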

\begin{proof}
For~(a), note that 
\begin{equation}\label{eq:type(c_k')_1}
R^N_{u-\i\,I'u,v-\i\,I'v}=-2\,\g{I' u}{v}J^N-u\wedge v-I'u\wedge I'v
\end{equation} 
for all $u,v\in W'$ because of~\eqref{eq:crg}. In particular, for every unit vector $u\in W'$ and $v=I'u$
\begin{equation}\label{eq:type(c_k')_2}
R^N_{u-\i\,I'u,v-\i\, I'v}=-2\,J^N-2\,u\wedge I'u\;.
\end{equation}
Similarly, if $u,v\in W'$  are unit vectors with $\g{u}{I'v}=0$\,, then 
\begin{equation}\label{eq:type(c_k')_3}
R^N_{u-\i\,I'u,v-\i\,I'v}=-u\wedge v-I'u\wedge I' v\;.
\end{equation}
It follows from~\eqref{eq:type(c_k')_2},\eqref{eq:type(c_k')_3} that $A\in\frakh_W$ 
if and only if there exists some $B\in\fraku(W',I')$ 
such that $A=-\i\,\tr_\C(B)\, J^N+B$\,. Now~\eqref{eq:type(c_k')_0} is straightforward.

For~(b), note that \begin{equation}\label{eq:I'=pmi}
\forall v\in W':\;\i(v\mp \i\, I'v)=\pm I' v+i\, v=\pm (I'v\mp\i\, I'I'v)\;,
\end{equation} hence $J^N|_W=I'|_W$ and $J^N|_{\bar W}=-I'|_{\bar W}$\,. 
In particular, the linear space $\bar W$ is a complex subspace of $T_pN$\,.  
The fact that $I'=-J^N$ on $\bar W$ and Part~(a) together imply that $\frakh_W|_{\bar W}=\{0\}$ for $k=1$
and $\frakh_W|_{\bar W}=\fraku(W',I')$ for $k\geq 2$\,.
Further, we have $\fraku(W',I')=\fraku(W',-I')\cong\fraku(\bar W)$ where the second equality uses~\eqref{eq:I'=pmi}. 
Therefore, the linear space $\bar W$ is an irreducible $\frakh_W$-module of real dimension $2\,k$ for $k\geq 2$\,. 
Furthermore, the Lie algebra $\frakh_W$ acts on $(\C\,W')^\bot$ via $\R\, J^N$ and hence $(\C\,W')^\bot$ decomposes
as a direct sum of irreducible complex 1-dimensional subspaces. 
Part (b) follows.

For~(c), recall that $J^N|_W=I'|_W$ according to~\eqref{eq:I'=pmi}. 
Thus $W$ is actually complex and we can assume in the following that $a=0$\,. 
Since \begin{equation}
\label{eq:type(c_k')_4}
\forall v\in W':\;A(v+\i\, I'v)=A\,v+\i\,A\,I'v\;,
\end{equation}
for all $A\in\so(\Re)$\,, we see that $A(W)\subset W$ if and only if $A(W')\subset W'$ 
and $A|_{W'}\in\fraku(W',I')$\,. Part (c) follows.
\end{proof}

\bigskip
\begin{corollary}
Let $W$ and $U$ be of Types $(c_k)$ and $(c_\ell')$ 
determined by the data $(\Re,W_0)$ 
and $(\Re^*,U',I')$\,, respectively. If $\Re=\Re^*$ and $W_0\bot U'$\,, 
then $(W,U)$ is an orthogonal curvature invariant pair. 
Moreover, every orthogonal curvature invariant pair of Type $(c_k,c_\ell')$ 
can be obtained in this way.
\end{corollary}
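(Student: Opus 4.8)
The plan is to establish the two directions of the final corollary—sufficiency and necessity—by imitating the structure of Corollary~\ref{co:type(c_k,c_l)} and relying entirely on the structural data recorded in Lemma~\ref{le:type(c_k)} and Lemma~\ref{le:type(c_k')}. For the sufficiency direction, I would assume $\Re=\Re^*$ and $W_0\bot U'$ and verify directly that $(W,U)$ is an orthogonal curvature invariant pair. First I would note that $W\bot U$ is immediate: $W=\C\,W_0$ and $U=\{v-\i\,I'v\mid v\in U'\}_\R$ both live inside the same $T_pN$ with $W_0\bot U'$ inside the real form $\Re$, and a short computation using $W=\Re\text{-span of }W_0\oplus\i\,W_0$ against $U\subset \C\,U'$ shows orthogonality from $W_0\bot U'$. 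The curvature-invariance conditions \eqref{eq:cip1} and \eqref{eq:cip2} then reduce to two checks: that $U$ is $\frakh_W$-invariant and that $W$ is $\frakh_U$-invariant.

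For the first check I would invoke Lemma~\ref{le:type(c_k)}(b): every $\frakh_W$-invariant subspace of $W^\bot$ is exactly a complex subspace, and since $J^N|_U=I'|_U$ by \eqref{eq:I'=pmi}, the space $U$ is complex, hence $\frakh_W$-invariant. For the second check I would use Lemma~\ref{le:type(c_k')}(b): the $\frakh_U$-invariant subspaces of $U^\bot$ are built from $\bar U$ and complex subspaces of $(\C\,U')^\bot$, all of which are complex. Since $W=\C\,W_0$ is a complex subspace contained in $(\C\,U')^\bot$ (because $W_0\bot U'$ forces $\C\,W_0\bot\C\,U'$), it is $\frakh_U$-invariant. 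This establishes both inclusions of the curvature invariant pair definition, giving sufficiency.

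For the necessity direction I would argue as in the proof of Corollary~\ref{co:type(c_k,c_l)}. Since $W=\C\,W_0$ is complex, $\frakh_W$ acts on $W^\bot$ through $\R\,J^N$ by Lemma~\ref{le:type(c_k)}(b), so any orthogonal curvature invariant pair forces $U$ to be $\frakh_W$-invariant, hence complex. The freedom in the defining data of $U$ lets me replace $(\Re^*,U',I')$ by $(e^{\i\varphi}\,\Re^*,\dots)$, and using that a space of Type $(c_k)$ is likewise invariant under the substitution $(\Re,W_0)\mapsto(e^{\i\varphi}\,\Re,e^{\i\varphi}\,W_0)$, I can normalize so that $\Re=\Re^*$. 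Once the real forms agree, the condition $W\bot U$ translates—via the same orthogonality computation as in the sufficiency direction, now read backwards—into $W_0\bot U'$ inside $\Re$.

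The only genuinely delicate point, and the step I expect to require the most care, is the normalization $\Re=\Re^*$ together with the translation of abstract orthogonality $W\bot U$ into the concrete condition $W_0\bot U'$. The subtlety is that $U$ of Type $(c_\ell')$ is complex but its real form $U'$ sits inside $\Re^*$, not a priori inside $\Re$; one must use that both $W$ and $U$ are complex subspaces and that $W_0$ spans $W$ over $\C$ to conclude that the decomposition of $U$ relative to $\Re=\Re^*$ places $U'$ orthogonally to $W_0$. I would handle this by writing a generic element of $W$ as $x+\i\,y$ with $x,y\in W_0$ and a generic element of $U$ as $u-\i\,I'u$ with $u\in U'$, and reading off from $\g{x+\i\,y}{u-\i\,I'u}=0$ (for all such vectors) that $W_0\bot U'$ and $W_0\bot I'(U')=U'$, the last equality since $U'$ is $I'$-invariant. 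This mirrors the reasoning already used in Corollary~\ref{co:type(c_k,c_l)}, so no new machinery is needed beyond the two lemmas.
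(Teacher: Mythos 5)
Your proposal is correct and follows essentially the same route as the paper: normalize $\Re=\Re^*$ using the $e^{\i\varphi}$-freedom in the defining data, read off $W_0\bot U'$ from $0=\g{u-\i\,I'u}{v}=\g{u}{v}$, and verify curvature invariance via Lemmas~\ref{le:type(c_k)} and~\ref{le:type(c_k')}. The only (immaterial) difference is that you invoke Parts~(b) of those lemmas (classifying the invariant subspaces as complex ones), whereas the paper cites Parts~(a) and~(c) (the explicit $\frakh_W$, $\frakh_U$ and the invariance criteria); both yield the same conclusion.
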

\begin{proof}
Obviously, the pairs $(W,U)$ mentioned above satisfy $W\bot U$\,.
Further, the fact that these are curvature invariant pairs is verified by means
of Lemmas~\ref{le:type(c_k)} and~\ref{le:type(c_k')}, Parts~(a) and~(c).
Conversely, let us see that the conditions are also necessary:

here we can assume that $\Re=\Re^*$ (cf.\ the proof of
Corollary~\ref{co:type(c_k,c_l)}). Since $U\bot W_0$\,,
\begin{equation}\label{eq:g}
0=\g{u-\i\,I'u}{v}=\g{u}{v}
\end{equation} 
for all $u\in U'$ and $v\in W_0$\,, i.e. $W_0\bot U'$\,.
\end{proof}

\bigskip
\begin{corollary}\label{co:type(c_k',c_l')}
Let $W$ and  $U$ be of Types $(c_k')$ and $(c_\ell')$ 
determined by the data $(\Re,W',I')$ and $(\Re^*,U',J')$\,, respectively. 
If one of the following conditions holds, then $(W,U)$ is an orthogonal curvature invariant pair:
\begin{itemize}
\item $\Re=\Re^*$\,, $U'=W'$ and $I'=-J'$\;;
\item $\Re=\Re^*$ and $U'\bot W'$\;.
\end{itemize}
Moreover, every orthogonal curvature invariant pair of 
Type $(c_k',c_\ell')$ can be obtained in this way.
\end{corollary}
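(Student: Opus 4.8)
The plan is to reduce everything to the criterion that $(W,U)$ is a curvature invariant pair exactly when $U$ is $\frakh_W$-invariant and $W$ is $\frakh_U$-invariant. Indeed both $W$ and $U$ are curvature invariant (being of Type $(c')$), so by Definition~\ref{de:cip} only the cross-conditions in~\eqref{eq:cip1} and~\eqref{eq:cip2} remain, and, recalling $\frakh_W=\Spann{R^N_{x,y}}{x,y\in W}$ from~\eqref{eq:La1}, these read $\frakh_W(U)\subset U$ and $\frakh_U(W)\subset W$. The whole argument then runs through Lemma~\ref{le:type(c_k')}, parts~(a) and~(b), applied to $W$ and, by symmetry, to $U$; in particular I shall use the description of the $\frakh_W$-invariant subspaces of $W^\bot=\bar W\oplus(\C\,W')^\bot$ from part~(b).

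Before the case analysis I would normalise $\Re=\Re^*$. The key point is that a space of Type $(c_k')$ lies in totally complex position with respect to \emph{every} real form of the circle $\scrU$: writing $W=\Menge{v-\i\,I'v}{v\in W'}$ and expanding a putative element of $W\cap e^{\i\varphi}\Re$ in an adapted basis $\{u_j,I'u_j\}$ of $W'$, a short computation yields $W\cap e^{\i\varphi}\Re=\{0\}$ for all $\varphi$. Since $W$ is a complex curvature invariant subspace, Theorem~\ref{th:ci} leaves only Types $(c_k)$ and $(c_k')$, and a space of Type $(c_k)$ always meets each $e^{\i\varphi}\Re$ nontrivially; hence $W$ is of Type $(c_k')$ with respect to every member of $\scrU$. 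The same holds for $U$, so I may re-express the data of $U$ relative to $\Re$ and assume $\Re=\Re^*$ henceforth.

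For sufficiency the computations are short. If $\Re=\Re^*$ and $W'\bot U'$, then $W'\bot U'$ inside the totally real space $\Re$ forces $\C\,W'\bot\C\,U'$, whence $U\subset\C\,U'\subset(\C\,W')^\bot$ and, symmetrically, $W\subset(\C\,U')^\bot$; as $U$ and $W$ are complex, Lemma~\ref{le:type(c_k')}(b) makes $U$ $\frakh_W$-invariant and $W$ $\frakh_U$-invariant, while $\g{v-\i\,I'v}{w-\i\,J'w}=\g{v}{w}+\g{I'v}{J'w}=0$ gives $W\bot U$. If instead $U'=W'$ and $I'=-J'$, then $U=\Menge{v+\i\,I'v}{v\in W'}=\bar W$; now $\bar W$ is $\frakh_W$-invariant and $W=\overline{\bar W}$ is $\frakh_{\bar W}$-invariant directly from Lemma~\ref{le:type(c_k')}(b), and $\g{v-\i\,I'v}{u+\i\,I'u}=\g{v}{u}-\g{I'v}{I'u}=0$ gives $W\bot\bar W$. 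In both cases $(W,U)$ is an orthogonal curvature invariant pair.

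For necessity I would use Lemma~\ref{le:type(c_k')}(b) as a dichotomy: since $U$ is a complex $\frakh_W$-invariant subspace of $W^\bot=\bar W\oplus(\C\,W')^\bot$, either $U\subset(\C\,W')^\bot$ or $\bar W\subset U$. In the first case the real part of every $w-\i\,J'w\in U$ lies in $(\C\,W')^\bot\cap\Re=(W')^{\bot_\Re}$, so $U'\bot W'$, the second listed condition. In the second case I pass to complex conjugates with respect to $\Re$: from $\bar W\subset U$ I get $W\subset\bar U$, and applying Lemma~\ref{le:type(c_k')}(b) with the roles of $W$ and $U$ interchanged rules out $W\subset(\C\,U')^\bot$ (which would force $W\subset\C\,U'\cap(\C\,U')^\bot=\{0\}$), leaving $\bar U\subset W$; hence $W=\bar U$, i.e.\ $U=\bar W$, which unwinds to $U'=W'$ and $J'=-I'$, the first listed condition. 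I expect the main obstacle to be exactly this last step: isolating and eliminating the ``mixed'' possibility $U=\bar W\oplus U''$ with $U''\neq\{0\}$, which is precisely what the \emph{symmetric} use of the pair condition --- invariance read off simultaneously from $W$ and from $U$ --- accomplishes.
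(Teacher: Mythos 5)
Your argument is correct and rests on the same central tool as the paper's proof, namely Lemma~\ref{le:type(c_k')}~(b); the sufficiency half is essentially identical. The genuine difference lies in the necessity half. The paper assumes $k\geq\ell$ without loss of generality and eliminates the mixed possibility $U=\bar W\oplus\tilde U$ with $\tilde U\bot\C\,W'$ by a dimension count ($\dim(U)=2\ell\leq 2k=\dim(\bar W)$ forces $\tilde U=\{0\}$), and then treats $k=\ell=1$ by a separate element-wise computation. You instead use the pair condition symmetrically: conjugating $\bar W\subset U$ to $W\subset\bar U$, applying the dichotomy of Lemma~\ref{le:type(c_k')}~(b) to the $\frakh_U$-invariant space $W$, and excluding $W\subset(\C\,U')^\bot$ to conclude $W=\bar U$. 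This is uniform in $k$ and $\ell$ and avoids the case split, which is a modest gain in elegance. Two small points should be made explicit. First, for $k=1$ Lemma~\ref{le:type(c_k')}~(b) allows invariant subspaces meeting $\bar W$ in a proper \emph{real} subspace, so your dichotomy ``$U\subset(\C\,W')^\bot$ or $\bar W\subset U$'' additionally requires that $U\cap\bar W$ is a complex subspace of the complex line $\bar W$, hence $\{0\}$ or all of $\bar W$; your appeal to the complexity of $U$ covers this, but only implicitly. Second, your normalization $\Re=\Re^*$, argued via non-intersection with the real forms together with Theorem~\ref{th:ci}, as stated only yields that $W$ is of Type $(c_k')$ for \emph{some} admissible data, not for data based on the prescribed real form; what one actually needs is the paper's direct observation that $W$ is also realized by the data $(e^{\i\varphi}\,\Re,\, e^{\i\varphi}\,W',\, I'|_{e^{\i\varphi}\,W'})$, which follows from $J^N|_W=I'|_W$. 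Neither point invalidates the proof.
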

\begin{proof}
Note, if  $\Re=\Re^*$\,, $U'=W'$ and $I'=-J'$\,, then $U=\bar W$\,. 
Further, if $\Re=\Re^*$ and $U'\bot W'$\,, then $\C\, W'\bot \C\, U'$\,.
Thus the fact that these are orthogonal 
curvature invariant pairs follows by means of Lemma~\ref{le:type(c_k')}~(b). 

Conversely, the Hermitian structure  $I'$ extends to $W'\oplus\i W'$ 
(via complexification) and the linear space $W$ is  determined also by
the data $(e^{\i \varphi}\,\Re,e^{\i \varphi}\,W',I'|_{e^{\i \varphi}\,W'})$\,. 
Hence, we can assume that
$\Re=\Re^*$\,. In the following, we further suppose that $k\geq\ell$\,. 
If $k\geq 2$\,, then by means of Lemma~\ref{le:type(c_k')}, 
either $U \bot \C\, W'$ or $U=\bar W\oplus \tilde U$ with $\tilde U \bot \C\,W'$\,.
In the first case, obviously $W'\bot U'$\,. In the second case, we have $\tilde U=\{0\}$ 
(since $\ell\leq k$), i.e. $U=\bar W$\,.

In case $k=\ell=1$\,,  by means of Lemma~\ref{le:type(c_k')} we have $U=\tilde U\oplus U^\#$ 
with $\tilde U\subset \bar W$ and $U^\#\bot \C\,W'$\,.
Let $v\in W'$ and $\xi\in U'$ be given such that $v+\i\,I' v=\xi-\i\,J'\xi$\,. 
We obtain $v=\xi$ and $I'v=-J'v$\,. Thus, if $v\neq 0$\,, 
then  $W'=\{v, I'v\}_\R=\{\xi,J'\xi\}_\R=U'$ and $J'=-I'$ (since $k=\ell=1$). 
Therefore, the condition $\tilde U\neq \{0\}$ implies that $U=\bar W$\,.  This finishes the proof.
\end{proof}

\bigskip
\begin{corollary}\label{co:type(c_k')}
\begin{enumerate}
\item There are no orthogonal curvature invariant pairs of Type $(tr_{j,k},c_\ell')$
\item
There are no curvature invariant pairs of Type $(c_k',tr_1)$ for $k\geq 2$\,. 
\item
Let $W$ and $U$ be of Types $(c_1')$ and $(tr_1)$ determined by the data $(\scrR,W',I',W_0')$ 
and a unit vector $u\in \bar W$\,, respectively.
Then $(W,U)$ is a curvature-invariant pair. Moreover, 
any curvature invariant pair of Type $(c_1',tr_1)$ can be obtained in this way.
\end{enumerate}
\end{corollary}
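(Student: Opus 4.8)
The plan is to treat the three parts of Corollary~\ref{co:type(c_k')} separately, in each case combining the structural information about $\frakh_W$-invariant subspaces from Lemmas~\ref{le:type(c_k)}, \ref{le:type(tr_kl)} and~\ref{le:type(c_k')} with the basic criterion that $(W,U)$ is an orthogonal curvature invariant pair precisely when $U$ is an $\frakh_W$-invariant subspace of $W^\bot$ and, symmetrically, $W$ is an $\frakh_U$-invariant subspace of $U^\bot$ (this is just the content of \eqref{eq:cip1} and \eqref{eq:cip2} together with $W\bot U$, and it is the same reasoning already used in Corollaries~\ref{co:type(c_k,c_l)} and~\ref{co:type(tr_ij,tr_kl)}). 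The key recurring fact will be that a space of Type $(c_k')$ has a rather rigid family of $\frakh_W$-invariant subspaces in $W^\bot$: by Lemma~\ref{le:type(c_k')}~(b) every such subspace is \emph{complex}, and it is built from the complex conjugate $\bar W$ together with complex subspaces of $(\C\,W')^\bot$.

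For part~(a), I would argue by contradiction using the duality between the two roles. Suppose $(W,U)$ is an orthogonal curvature invariant pair with $W$ of Type $(tr_{j,k})$ and $U$ of Type $(c_\ell')$. Then $W$ must be an $\frakh_U$-invariant subspace of $U^\bot$; but $\frakh_U$-invariant subspaces are complex by Lemma~\ref{le:type(c_k')}~(b), whereas a space of Type $(tr_{j,k})$ is totally real (and genuinely not complex, by the classification in Theorem~\ref{th:ci}), a contradiction. This is the cleanest direction and should settle~(a) immediately; the only care needed is to confirm that "totally real of dimension $\geq 2$" really forbids the space from being complex, which is guaranteed since $j+k\geq 2$.

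For parts~(b) and~(c) I would fix $U$ of Type $(tr_1)$, i.e.\ $U=\R u$ for a unit vector $u$, and ask when $\R u$ is an $\frakh_W$-invariant subspace of $W^\bot$ with $W$ of Type $(c_k')$. By Lemma~\ref{le:type(c_k')}~(a) the Lie algebra $\frakh_W=\su(W')\oplus\R(I'+k\,J^N)$, and every $\frakh_W$-invariant subspace of $W^\bot$ is complex. A \emph{one-dimensional real} space $\R u$ can only be complex-invariant (in particular invariant under the central element $I'+k\,J^N$, which acts as $J^N$ on $(\C\,W')^\bot$ and as $\pm I'$ elsewhere) if it is actually a complex line that happens to be one-real-dimensional — impossible for $k\geq 2$ since then $\bar W$ is an \emph{irreducible} $\frakh_W$-module of real dimension $2k\geq 4$ and the complex lines in $(\C\,W')^\bot$ are two-real-dimensional. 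This gives~(b): no $\frakh_W$-invariant line exists, so there is no curvature invariant pair of Type $(c_k',tr_1)$ for $k\geq 2$. For~(c), when $k=1$ the module $\bar W$ is itself complex of real dimension $2$ and, by the $k=1$ refinement in Lemma~\ref{le:type(c_k')}~(b), every subspace \emph{contained in} $\bar W$ is $\frakh_W$-invariant (here $\frakh_W|_{\bar W}=\{0\}$); hence any unit vector $u\in\bar W$ yields an $\frakh_W$-invariant line, and conversely any invariant line must lie in $\bar W$ since $(\C\,W')^\bot$ contributes only genuinely complex (two-dimensional) invariant pieces on which the center acts by $J^N$.

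The main obstacle I anticipate is the bookkeeping in part~(c): one must verify both that $u\in\bar W$ suffices \emph{and} that it is necessary, and the necessity requires ruling out that $\R u$ sits inside $(\C\,W')^\bot$. The point is that on $(\C\,W')^\bot$ the central generator acts as a nonzero multiple of $J^N$, so no real line there is invariant, forcing $u\in\bar W$; I would make this explicit rather than leave it to the reader, since it is exactly the step distinguishing $k=1$ from $k\geq 2$. A minor subtlety is the asymmetry of the pair — for Type $(tr_1)$ the partner condition "$W$ is $\frakh_U$-invariant in $U^\bot$" is vacuous because $\frakh_U=\{0\}$ (a line carries no curvature), so in~(b) and~(c) the entire content lies in the single condition that $\R u$ be $\frakh_W$-invariant, which is why the analysis reduces to the module structure of $W^\bot$ described in Lemma~\ref{le:type(c_k')}~(b).
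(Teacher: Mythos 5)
Your parts (b) and (c) are correct and take essentially the paper's route: the paper simply reads off from Lemma~\ref{le:type(c_k')}~(b) that a $1$-dimensional subspace $\R u$ of $W^\bot$ is $\frakh_W$-invariant if and only if $k=1$ and $u\in\bar W$. Your explicit discussion of the central generator acting as a nonzero multiple of $J^N$ on $(\C\,W')^\bot$, and your remark that the partner condition ``$W$ is $\frakh_U$-invariant'' is vacuous because $\frakh_U=\{0\}$ for a line, are correct expansions of what the paper leaves implicit.

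Part~(a), however, has a gap in the case $\ell=1$. You derive the contradiction from the claim that ``every $\frakh_U$-invariant subspace is complex by Lemma~\ref{le:type(c_k')}~(b)'', but that is not what the lemma asserts when $\ell=1$: there an $\frakh_U$-invariant subspace is a sum of a subspace merely \emph{contained in} $\bar U$ (on which $\frakh_U$ acts trivially) and a complex subspace of $(\C\,U')^\bot$, and a real line inside $\bar U$ is invariant without being complex --- this is precisely the phenomenon your own part~(c) exploits, so your part~(a) as written is inconsistent with your part~(c). The conclusion of (a) still holds and the gap is short to fill: since $W$ is totally real it contains no nonzero complex summand, so the $(\C\,U')^\bot$-component vanishes and $W\subset\bar U$; then $\dim(W)=j+k\geq 2$ equals the real dimension of $\bar U$, forcing $W=\bar U$, which is complex, a contradiction. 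The paper instead avoids the issue from the start: it first uses $W\bot U$ to show that $W_1$, $W_2$ and $U'$ are mutually orthogonal in $\Re$, hence $W\subset(\C\,U')^\bot$, and only then invokes the (unconditionally valid) final sentence of Lemma~\ref{le:type(c_k')}~(b) that any $\frakh_U$-invariant subspace of $(\C\,U')^\bot$ is complex. Either repair works, but the case $\ell=1$ must be addressed explicitly.
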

\begin{proof}
For~(a), let $W$ and $U$ be of Types  $(tr_{j,k})$ and $(c_\ell')$ defined by the data 
$(\Re^*,W_1,W_2)$ and $(\Re,U',I')$\,. Then we can assume that $\Re=\Re^*$\,, cf.\ the 
proof of Corollary~\ref{co:type(c_k,c_l)}.
Therefore, the condition $W\bot U$ implies that 
\begin{align}\label{eq:orthogonal_3}
&0=\g{v_1}{u-\i\,I' u}=\g{v_1}{u}\;,\\ 
\label{eq:orthogonal_4}
&0=\g{\i\,v_2}{I'u+\i\,u}=\g{v_2}{u}
\end{align}
for all $v_1\in W_1$\,, $v_2\in W_2$ and $u\in U'$\,. Thus
$W_1$\,, $W_2$ and $U'$ are mutually orthogonal subspaces of $\Re$\,.
In particular, the linear space $W$ is contained in the orthogonal complement of $\C\,U'$\,.
We hence see by the $\frakh_U$-invariance of $W$ that the latter 
would be complex according to Lemma~\ref{le:type(c_k')}~(b), a contradiction.

For~(b) and~(c), according to Lemma~\ref{le:type(c_k')}~(b), the 1-dimensional 
subspace $\R u$ of $W^\bot$ is $\frakh_W$ invariant if and only if $k=1$ and $u\in \bar W$\,.
\end{proof}

\bigskip
\begin{lemma}\label{le:type(tr_k')}
Let $W$ be of Type $(tr_k')$ determined by the data $(\Re,W',I',W_0')$\,.
\begin{enumerate}
\item The Lie algebra $\frakh_W$ is given by $\Menge{A\in\fraku(W',I')}{A(W_0')\subset W_0'}$\,.
\item An $\frakh_W$-invariant subspace of $W^\bot$ is
  contained in the orthogonal complement of the complex space $\C\,W'$\,,
  belongs to a distinguished family of $k$-dimensional totally real subspaces
  of $\C\,W'\cap W^\bot$ -- which 
can be parameterized by the real projective space
  $\R\rmP^2$ (for $k\geq 3$) or the complex projective space $\C\rmP^2$ (for
  $k=2$) -- or is a direct sum of such spaces.
\item Let $A\in\so(\Re)$, $a\in\R$ be given and set $B:=a\, I'+A$\,. Then $B\in\so(\Re)$ holds and the linear map $a\,J^N+A$ leaves 
$W$ invariant if and only if $B(W_0')\subset W_0'$ and $B\, I'v=I'B\, v$ for all $v\in W_0'$\,.
\end{enumerate}
\end{lemma}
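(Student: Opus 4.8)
The plan is to handle the three parts with the explicit curvature formula~\eqref{eq:crg}, exploiting throughout the defining feature of Type $(tr_k')$: since $W_0'$ is a real form of $(W',I')$, it is Lagrangian, so $\g{I'u}{v}=0$ for all $u,v\in W_0'$. For part~(a) I would begin from identity~\eqref{eq:type(c_k')_1}, which reads $R^N_{u-\i I'u,\,v-\i I'v}=-2\,\g{I'u}{v}\,J^N-u\wedge v-I'u\wedge I'v$. Restricting $u,v$ to $W_0'$, the Lagrangian condition annihilates the $J^N$-term, so the generators of $\frakh_W$ are precisely the endomorphisms $u\wedge v+I'u\wedge I'v$ with $u,v\in W_0'$. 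A direct verification (once more using $\g{I'u}{v}=0$) shows each of these commutes with $I'$, preserves $W_0'$, and restricts on $W_0'$ to $u\wedge v$. Since $A\mapsto A|_{W_0'}$ is a linear isomorphism from $\Menge{A\in\fraku(W',I')}{A(W_0')\subset W_0'}$ onto $\so(W_0')$, and the maps $u\wedge v$ span $\so(W_0')$, this identifies $\frakh_W$ with the asserted algebra. It is worth emphasizing that this is exactly where Type $(tr_k')$ departs from Type $(c_k')$: there the $J^N$-term survives and yields the extra summand $\R(I'+k\,J^N)$, whereas here $\frakh_W\cong\so(k)$.

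Part~(c) is a direct computation. Using $J^N(v-\i I'v)=I'v+\i v$ and the fact that $A\in\so(\Re)$ acts complex-linearly with respect to $J^N$, I would evaluate $(a\,J^N+A)(v-\i I'v)$ for $v\in W_0'$ and read off its real and imaginary parts as $Bv$ and $a\,v-A\,I'v$, where $B:=a\,I'+A$. Requiring the image to lie in $W=\Menge{w-\i I'w}{w\in W_0'}$ forces $Bv\in W_0'$ for all $v\in W_0'$ together with $a\,v-A\,I'v=-I'Bv$; expanding $-I'Bv$ with $I'^2=-\Id$ on $W'$ shows the second condition is equivalent to $B\,I'v=I'B\,v$ on $W_0'$. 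The assertion $B\in\so(\Re)$ is immediate, since $I'$ (extended by zero on the orthogonal complement of $W'$ in $\Re$) and $A$ are both skew.

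Part~(b) is the substantial one, and the idea is to determine the $\frakh_W$-module structure of $W^\bot$ and then apply the Schur-type parameterization of invariant subspaces recorded before Example~\ref{ex:CPm}. Because $\frakh_W$ acts complex-linearly with respect to $J^N$ and annihilates $(\C\,W')^\bot$, I would first split $W^\bot=(\C\,W')^\bot\oplus(W^\bot\cap\C\,W')$ with trivial action on the first summand. The space $\C\,W'=W'\oplus\i\,W'$ has real dimension $4k$ and decomposes as four copies of the standard $\so(W_0')\cong\so(k)$-module, namely $W_0'$, $I'W_0'$, $\i\,W_0'$ and $\i\,I'W_0'$; here $W$ sits diagonally in $W_0'\oplus\i\,I'W_0'$ as the graph of the equivariant map $v\mapsto-\i\,I'v$. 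Hence $W^\bot\cap\C\,W'$ is the orthogonal direct sum $\bar W\oplus I'W_0'\oplus\i\,W_0'$ of the three remaining, mutually isomorphic copies. Since the vector representation $\R^k$ of $\so(k)$ is absolutely irreducible for $k\geq 3$ but splits over $\C$ into two non-isomorphic lines for $k=2$, the irreducible $\frakh_W$-invariant subspaces of this triple carry a parameterization by $\R\rmP^2$ when $k\geq 3$ and by $\C\rmP^2$ when $k=2$. A final short computation, again invoking $\g{I'u}{v}=0$, verifies that each such $k$-dimensional subspace is totally real, and a general invariant subspace of $W^\bot$ is a direct sum of a subspace of $(\C\,W')^\bot$ with such pieces.

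The step I expect to require the most care is part~(b): keeping the two complex structures apart -- $I'$ on $W'$ and $J^N$ on $T_pN$ -- while pinning down the three equivariant isomorphisms onto the standard $\so(k)$-module, and correctly separating the behaviour over $\R$ from that over $\C$ so as to read off $\R\rmP^2$ versus $\C\rmP^2$. The remaining verifications (commutation with $I'$, orthogonality of the three copies, total reality) are routine consequences of the Lagrangian identity $\g{I'u}{v}=0$ together with the orthogonality of $\Re$ and $\i\,\Re$.
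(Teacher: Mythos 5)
Your proposal is correct and follows essentially the same route as the paper: part (a) via the specialization of~\eqref{eq:type(c_k')_1} to the Lagrangian subspace $W_0'$, part (b) via the decomposition of $\C\,W'\cap W^\bot$ into the three mutually isomorphic $\so(W_0')$-modules $\bar W$, $I'(W_0')$, $\i\,W_0'$ together with the Schur-type parameterization from Section~\ref{se:definition_of_cip} (where your criterion -- absolute irreducibility of the standard $\so(k)$-module for $k\geq 3$ versus its splitting over $\C$ for $k=2$ -- is exactly the paper's reason for $\R\rmP^2$ versus $\C\rmP^2$, the paper merely making the $k=2$ complex structure explicit via $\tilde I$ because it is needed later), and part (c) by the same real/imaginary-part computation that the paper packages as a reduction to $a=0$ using $J^N|_W=I'|_W$.
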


\begin{proof}
For~(a), we use that the curvature endomorphism $R^N_{u-\i\,I' u,v-\i\,I' v}$ is given by
  $-u\wedge v -I'u\wedge I'v$ for all $u,v\in W_0'$ according to~\eqref{eq:type(c_k')_3}. 
For~(b), in order to avoid any confusion in case $k=2$ (see below), 
we will temporarily drop the notation $\i\, v$ for $J^Nv$ with $v\in T_pN$\,. Thus, set $\lambda_0(v):=v+J^N\,I' v$\,,
$\lambda_1(v):=J^N\, v$ and $\lambda_2(v):=I' v$ for all $v\in W_0'$\,. Then
$\lambda_i$ is an isomorphism of $\frakh_W$-modules defined from $W_0'$ onto $\bar W$\,, $J^N(W_0')$ and $I'(W_0')$\,,
respectively. Therefore,  
\begin{equation}\label{eq:CW'cap W^bot=}
(W'\oplus J^N(W'))\cap W^\bot=\bar W\oplus J^N(W_0')\oplus I'(W_0')
\end{equation} is an orthogonal decomposition into three irreducible, pairwise
equivalent $\frakh_W$-modules each being isomorphic to $W_0'$\,. 
Moreover, we note that $\frakh_W|_{W_0'}=\so(W_0')$\,. 
Hence the linear space $W_0'$ is an irreducible $\so(W_0')$-module even over $\C$ for $k\geq 3$\,.
For  $k=2$\,, let $\{e_1,e_2\}$ be an orthonormal basis of $W_0'$ 
and consider the Hermitian structure on $W'$ given by 
\begin{equation}\label{eq:tildeI}
\tilde I:=e_1\wedge e_2+I' e_1\wedge I' e_2\;. 
\end{equation} 
Then $\tilde I$ extends to $W'\oplus J^N(W')$ (via complexification by $J^N$) such that 
$\frakh_W\subset\fraku(W'\oplus J^N(W'),\tilde I)$\,. 
Further, then $\tilde I(W_0')\subset W_0'$ and $\lambda_i$ commutes with $\tilde I$ for $i=0,1,2$\,.
Therefore, as was mentioned in Section~\ref{se:definition_of_cip}, 
there exists $(c_0:c_1:c_2)\in\bbK\rmP^2$ with $\bbK=\R$ (for $k\geq 3$) or $\bbK=\C$ (for $k=2$) such that
\begin{equation}\label{eq:lambda(W_0')=Menge}
U=\Menge{c_0\,v+c_2\,I' v +J^N\,(c_0 \,I' v+c_1\,v)}{v\in W_0'}
\end{equation}
(where in case $k=2$ multiplication with the complex numbers $c_i$ is now defined via $\tilde I$). 
Part~(b) follows. 

For~(c): since $W$ is totally real and the complexification $W\oplus\i W$ is of Type $(c_k')$ 
defined by the data  $(\Re,W',I')$\,, we  have $J^N|_{W}=I'|_W$ in accordance with~\eqref{eq:I'=pmi}. 
In particular, the linear map $J^N-I'$ leaves $W$ invariant, which reduces the question to the case $a=0$\,.
It remains to determine those $A\in\so(\Re)$ which leave the linear space
$W_0'$ invariant and satisfy $A\,I' v=I' A\,v$ for all $v\in W_0'$\,,
i.e.\ those for which $A(W')\subset W'$\,, $A|_{W'}\in\fraku(W')$ and $A(W_0')\subset W_0'$ holds. 
This proves our result.
\end{proof}

\bigskip
\begin{corollary}\label{co:type(tr_i',tr_kl)}
Let $W$ and $U$ be of Types $(\tr_j')$ 
and $(\tr_{k,\ell})$  defined by the data $(\Re,W',I',W_0')$ and $(\Re^*,U_1,U_2)$\,,
respectively. If $\Re=\Re^*$ and the linear space $U_1\oplus U_2$ 
is contained in the orthogonal complement of $W'$\,, then  $(W,U)$ is an orthogonal curvature
invariant pair. Every orthogonal curvature invariant pair of Type $(\tr_j',\tr_{k,\ell})$ can be obtained in this way.
\end{corollary}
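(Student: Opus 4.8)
The plan is to use the reduction, valid because both $W$ and $U$ are already curvature invariant (being among the types in Theorem~\ref{th:ci}), that $(W,U)$ is an orthogonal curvature invariant pair precisely when $W\bot U$ together with the two invariance conditions isolated by~\eqref{eq:cip1} and~\eqref{eq:cip2}: $U$ is $\frakh_W$-invariant and $W$ is $\frakh_U$-invariant. Throughout I would exploit the two dual support statements. By Lemma~\ref{le:type(tr_k')}(a), $\frakh_W\subset\so(\Re)$ is supported on $W'$ (it vanishes on the orthogonal complement of $W'$ in $\Re$), with $\frakh_W|_{W_0'}=\so(W_0')$ acting irreducibly since $j\geq 2$; dually, by Lemma~\ref{le:type(tr_kl)}(a), $\frakh_U=\so(U_1)\oplus\so(U_2)$ is supported on $U_1\oplus U_2\subset\Re^*$. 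Every element of $\so(\Re)$ resp.\ $\so(\Re^*)$ acts on $T_pN$ componentwise with respect to the corresponding real form.

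For the sufficiency direction, assume $\Re=\Re^*$ and $U_1\oplus U_2\bot W'$. Writing a typical element of $W$ as $v-\i\,I'v$ with $v\in W_0'\subset W'$ and one of $U$ as $u_1+\i\,u_2$ with $u_1\in U_1$, $u_2\in U_2$, the inner product equals $\g{v}{u_1}-\g{I'v}{u_2}$, which vanishes because $U_1,U_2\bot W'\ni v,I'v$; hence $W\bot U$. Since $\frakh_W$ is supported on $W'$ and $U_1\oplus U_2\bot W'$, every $A\in\frakh_W$ annihilates $U_1$ and $U_2$, so $\frakh_W|_U=0$ and $U$ is trivially $\frakh_W$-invariant; dually $\frakh_U$ is supported on $U_1\oplus U_2\bot W'\supset W_0',I'(W_0')$, so $\frakh_U|_W=0$ and $W$ is $\frakh_U$-invariant. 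By~\eqref{eq:cip1},\eqref{eq:cip2} this shows $(W,U)$ is an orthogonal curvature invariant pair.

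For the necessity direction, suppose $(W,U)$ is an orthogonal curvature invariant pair of this type. The first—and most delicate—step is to arrange $\Re=\Re^*$. Here one cannot simply rotate the data as in the proof of Corollary~\ref{co:type(c_k,c_l)}, since a space of type $(tr_j')$ is totally real and is \emph{not} invariant under multiplication by $e^{\i\varphi}$. Instead, writing $\Re=e^{\i\varphi}\Re^*$, I would expand the orthogonality relations coming from $W\bot U$ (in the style of the proof of Corollary~\ref{co:type(tr_ij,tr_kl)}) and combine them with the $\frakh_W$-invariance of $U$; using that a space of type $(tr_{k,\ell})$ admits only the two real forms $\Re^*$ and $\i\,\Re^*$ inside $\scrU$, this should force $\varphi\in\frac{\pi}{2}\Z$. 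Replacing $\Re^*$ by $\i\,\Re^*$—which interchanges $U_1$ and $U_2$—if necessary, we may then assume $\Re=\Re^*$.

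Once $\Re=\Re^*$, the orthogonality $W\bot U$ applied as above yields only half of what is wanted, namely $U_1\bot W_0'$ and $U_2\bot I'(W_0')$; the remaining orthogonalities are the heart of the matter. Because $U=U_1\oplus\i\,U_2$ is $\frakh_W$-invariant and $\frakh_W\subset\so(\Re)$ acts componentwise, both $U_1$ and $U_2$ are $\frakh_W$-invariant subspaces of $\Re$. Projecting $U_1$ orthogonally onto $W'$ gives an $\frakh_W$-invariant subspace of $W'$ which, by $U_1\bot W_0'$, lies in $I'(W_0')$; since $\frakh_W$ acts irreducibly there, this projection is either $\{0\}$ or all of $I'(W_0')$. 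The case $I'(W_0')\subset U_1$ is excluded by the dual invariance: either $\dim U_1=k<j$ rules it out on dimensional grounds, or, for $k\geq 2$, a nonzero $B\in\so(I'(W_0'))\subset\so(U_1)\subset\frakh_U$ would send $v-\i\,I'v\in W$ to the purely imaginary vector $-\i\,B(I'v)$, which lies in $W$ only if it is zero—contradicting the $\frakh_U$-invariance of $W$. Hence $U_1\bot W'$, and symmetrically (using $U_2\bot I'(W_0')$ and excluding $W_0'\subset U_2$) $U_2\bot W'$, so $U_1\oplus U_2\bot W'$. The main obstacle is exactly this reduction to $\Re=\Re^*$ together with the exclusion of the ``twisted'' $\frakh_W$-invariant subspaces inside $\C\,W'$ recorded in~\eqref{eq:lambda(W_0')=Menge} and~\eqref{eq:CW'cap W^bot=}; both are resolved only by using the symmetry of the curvature invariant pair condition, i.e.\ by bringing in the $\frakh_U$-invariance of $W$ alongside the $\frakh_W$-invariance of $U$.
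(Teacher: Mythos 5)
There is a genuine gap in your reduction to $\Re=\Re^*$, which you single out as ``the first---and most delicate---step'' and then handle by asserting that the orthogonality relations together with the $\frakh_W$-invariance of $U$ ``should force $\varphi\in\frac{\pi}{2}\Z$''. That intermediate claim is false, and the step as you describe it cannot succeed: $\varphi$ is not an invariant of the pair $(W,U)$, because a space of Type $(tr_j')$ --- unlike one of Type $(tr_{k,\ell})$ --- admits a description of the \emph{same} type with respect to \emph{every} rotated real form. Your stated reason for rejecting the standard normalization (``$W$ is totally real and not invariant under multiplication by $e^{\i\varphi}$'') confuses invariance of the subspace with re-describability of its defining data. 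Concretely, if $W$ is defined by $(\Re,W',I',W_0')$, then it is also defined by $(e^{\i\varphi}\Re,\ e^{\i\varphi}W',\ I',\ e^{\i\varphi}W_0'(-\varphi))$ with $W_0'(-\varphi)=\Menge{\cos(\varphi)v-\sin(\varphi)I'v}{v\in W_0'}$: setting $w:=\cos(\varphi)v-\sin(\varphi)I'v$ one computes $w-\i I'w=e^{-\i\varphi}(v-\i I'v)$, so $e^{\i\varphi}w-\i I'(e^{\i\varphi}w)=v-\i I'v$. Hence the correct move is the opposite of the one you propose: rotate the data of $W$ to match the (rigid) real form $\Re^*$ of $U$, after which the reduction is immediate. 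This is exactly what the paper does; any attempt to instead pin down $\varphi$ modulo $\frac{\pi}{2}$ is doomed, since for a pair satisfying the conclusion one may present $W$ via $e^{\i\varphi}\Re^*$ for arbitrary $\varphi$.

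Once $\Re=\Re^*$ is granted, the rest of your argument is essentially sound and runs parallel to the paper's. Your sufficiency computation is correct. In the necessity direction, splitting $U=U_1\oplus\i U_2$ into $\frakh_W$-invariant subspaces of $\Re$, using $U_1\bot W_0'$ and $U_2\bot I'(W_0')$ to locate their projections to $W'$ inside the irreducible modules $I'(W_0')$ and $W_0'$, and excluding $I'(W_0')\subset U_1$ (resp.\ $W_0'\subset U_2$) via the $\frakh_U$-invariance of $W$ is in substance the paper's argument: the paper's contradiction comes from $R^N_{I'u,I'v}=-I'u\wedge I'v\in\frakh_U$ failing the criterion of Lemma~\ref{le:type(tr_k')}(c), which is the same computation as your nonzero $B\in\so(I'(W_0'))\subset\frakh_U$ mapping $v-\i I'v$ to a purely imaginary vector. (One small point worth making explicit: passing from ``the projection of $U_1$ onto $W'$ equals $I'(W_0')$'' to ``$I'(W_0')\subset U_1$'' needs Schur's Lemma, using that $I'(W_0')$ and the trivially-acted-on complement $(\C W')^\bot\cap\Re$ are non-isomorphic $\frakh_W$-modules, so no diagonal graphs occur there.)
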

\begin{proof}
Obviously, the pairs $(W,U)$ mentioned above satisfy $W\bot U$\,.
Further, the fact that these are curvature invariant pairs 
is verified by means of Lemmas~\ref{le:type(tr_kl)} and~\ref{le:type(tr_k')}, Parts~(a) and~(c). 
Conversely, let us see that our conditions are also necessary:

suppose that $(W,U)$ is an orthogonal curvature invariant pair. Note that $W$ is defined also by the data
$(e^{\i \varphi}\,\Re,e^{\i \varphi}\,W',I', e^{\i \varphi}\,W_0'(-\varphi))$ 
with $W_0'(-\varphi):=\Menge{\cos(\varphi)\,v-\sin(\varphi)\, I'v}{v\in W_0'}$
for every $\varphi\in \R$\,, hence we can assume that $\Re=\Re^*$\,. 
Since $U$ is $\frakh_W$-invariant, there exists a decomposition $U=U^\#\oplus
\tilde U$ into $\frakh_W$-invariant subspaces $U^\#\subset \C\,W'$  and $\tilde U\subset (\C\,W')^\bot$ 
according to Lemma~\ref{le:type(tr_k')}~(b)\,. We claim that the only possibilities are $U^\#=\{0\}$\,, 
$U^\#=\i W_0'$\,, $U^\#=I'(W_0')$ or  $U^\#=I'(W_0')\oplus \i W_0'$\,:

first, the condition $W\bot U$ implies that
$0=\g{u}{v-\i\,I'v}=\g{u}{v}
$ for all $u\in U_1$ and $v\in W_0'$\,. Hence $U_1\subset W_0'^\bot$\,, thus $U_1\cap W'\subset I'(W_0')$\,. 
Similarly, we can show that $U_2\cap W'\subset W_0'$\,.  Thus,
\begin{equation}\label{eq:type(tr_i',tr_kl)}
U^\#=U\cap \C\,W'=U_1\cap W'\oplus\i(U_2\cap W')\subset I'(W_0')\oplus \i W_0'\;.
\end{equation}
Since~\eqref{eq:CW'cap W^bot=} gives a decomposition of $\C W'\cap W^\bot$ 
into irreducible $\frakh_W$-modules, we conclude that 
$U_1\cap W'\in \{\{0\}\,, I'(W_0')\}$ and $U_2\cap W'\in\{\{0\}\,,W_0'\}$\,.
Our claim follows.

Next, we claim that $U^\#=\{0\}$\,: 

assume, by contradiction, that $I'(W_0')\subset U$\,. 
Since $\dim(W_0')\geq 2$\,, 
there exists an orthonormal pair of vectors $u,v\in W_0'$\,. Then $\{I'u,I'v\}\subset U\cap\Re=U_1$\,, hence
$A:=R^N_{I'u,I'v}$ leaves $W$ invariant since $(W,U)$ is a curvature invariant pair. Further, 
by means of~\eqref{eq:crg}, we have $A=-I'u\wedge I'v$\,. It follows, 
in particular, that $A\in\so(\Re)$ and $A|_{W_0'}=0$\,. 
Therefore, applying Lemma~\ref{le:type(tr_k')}~(c) (with $a=0$), 
we obtain that $A=0$ (since $W_0'$ is a real form of $(W',I')$), a contradiction.
A similar argument shows that neither $\i W_0'$ is contained in
$U$\,. We conclude that $U^\#=\{0\}$\,, i.e. $U\subset (\C\,W')^\bot$\,. 
Clearly, this implies that $U_1\oplus U_2\bot W'$\,, which finishes our proof.
\end{proof}

Spaces of Type $(tr_k')$ are neither 1-dimensional nor do they contain 
any complex subspaces. Hence Lemma~\ref{le:type(c_k')}~(b) implies:

\bigskip
\begin{corollary}\label{co:type(tr_k',c_j')}
There are no orthogonal curvature invariant pairs $(W,U)$ of Type  $(tr_k',c_\ell')$\,.
\qed
\end{corollary}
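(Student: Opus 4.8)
The plan is to argue by contradiction, exploiting the asymmetry between the two factors: of the two types involved, it is the $(c_\ell')$-side whose orthogonal complement admits the clean description of invariant subspaces provided by Lemma~\ref{le:type(c_k')}~(b), so I would read off the constraints from the $U$-side rather than the $W$-side. Suppose, then, that $(W,U)$ were an orthogonal curvature invariant pair with $W$ of Type $(tr_k')$ (hence totally real with $\dim W=k\geq 2$) and $U$ of Type $(c_\ell')$ defined by data $(\Re^*,U',J')$. Since $(W,U)$ is a curvature invariant pair, \eqref{eq:cip2} gives $R^N(U\times U\times W)\subset W$, which means precisely that the Lie algebra $\frakh_U$ leaves $W$ invariant; and orthogonality gives $W\subset U^\bot$. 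Thus $W$ is an $\frakh_U$-invariant subspace of $U^\bot$, and the heart of the argument is to feed this into Lemma~\ref{le:type(c_k')}~(b) applied to $U$.

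For $\ell\geq 2$, that lemma asserts that every $\frakh_U$-invariant subspace of $U^\bot$ is complex, which immediately contradicts the fact that $W$, being of Type $(tr_k')$, is totally real and nonzero. For $\ell=1$ the lemma instead describes such a subspace as contained in $\bar U\oplus S$, where $\bar U$ is a complex line on which $\frakh_U$ acts trivially and $S\subset(\C\,U')^\bot$ is complex. Here I would note that a totally real space contains no nonzero complex subspace, which forces $S=\{0\}$ and hence $W\subset\bar U$; but a totally real subspace of the complex line $\bar U$ has real dimension at most one, contradicting $k\geq 2$. In either case no such pair $(W,U)$ can exist, which is the assertion of Corollary~\ref{co:type(tr_k',c_j')}.

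I do not expect a genuine obstacle, since once one decides to invoke Lemma~\ref{le:type(c_k')}~(b) from the $U$-side the whole argument is conceptual rather than computational. The one point that deserves care is exactly the case $\ell=1$: there the $\frakh_U$-invariant subspaces of $U^\bot$ need only be \emph{contained in} the complex line $\bar U$ rather than equal to it, and this is precisely the loophole that produces the genuine $(c_1',tr_1)$ pairs of Corollary~\ref{co:type(c_k')}~(c). The reason that loophole is closed in the present situation is that a space of Type $(tr_k')$ is neither one-dimensional nor does it contain any complex subspace, so the combination $\dim W=k\geq 2$ together with total reality rules out both branches of the lemma simultaneously.
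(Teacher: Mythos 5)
Your argument is correct and is essentially the paper's own proof, which likewise disposes of this case in one line by observing that a space of Type $(tr_k')$ is neither $1$-dimensional nor contains a complex subspace and then invoking Lemma~\ref{le:type(c_k')}~(b) from the $(c_\ell')$-side. Your more detailed treatment of the $\ell=1$ branch (and the remark about why the $(c_1',tr_1)$ loophole closes here) is exactly the content the paper leaves implicit.
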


\bigskip
\begin{corollary}\label{co:type(tr_k',tr_l')}
Let $W$ and $U$ be of Types $(\tr_k')$ and $(\tr_\ell')$ defined 
by the data $(\Re,W',I',W_0')$ and $(\Re^*,U',J',U_0')$\,,
respectively. Further, in case $k=2$\,, let $\{e_1,e_2\}$ be an orthonormal basis of $W_0'$
and let $\tilde I$ be the Hermitian structure of $W'$ defined by~\eqref{eq:tildeI}.

If $\Re=\Re^*$ and one of the following conditions holds, 
then $(W,U)$ is an orthogonal curvature invariant pair:
\begin{itemize}
\item
we have $U'\bot W'$\,;
\item $k=\ell\geq 3$\,, $U'=W'$\,, $I'=J'$ and $U_0'=I'(W_0')$\,;
\item $k=\ell\geq 3$\,, $U'=W'$\,,  $I'=-J'$ and $U_0'=\exp(\theta\, I')(W_0')$ for some
  $\theta\in\R$\,\,;
\item  $k=\ell=2$\,,  $U'=W'$ and there exists some $\tilde J\in\SU(W',\tilde I)\cap\so(W')$ such that 
$U_0'=\tilde J(W_0')$ and $J'=\tilde J\circ I'\circ\tilde J^{-1}$\,.
\end{itemize} 
Moreover, every orthogonal curvature invariant pair of Type $(\tr_k',\tr_\ell')$ can be obtained in this way.
\end{corollary}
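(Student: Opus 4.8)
The plan is to follow the template of the preceding corollaries and split the statement into sufficiency (each listed configuration is an orthogonal curvature invariant pair) and necessity (there are no others), with sufficiency a direct verification and necessity carrying the real content. For sufficiency I would, in each of the four cases, first check $W\bot U$ — immediate when $W'\bot U'$, and when $W'=U'$ automatic since then $U$ lands inside $\C\,W'\cap W^\bot$ — and then invoke Lemma~\ref{le:type(tr_k')}: part~(b) shows that $U$ is $\frakh_W$-invariant (either $U\subset(\C\,W')^\bot$ in the orthogonal case, or $U$ is one of the distinguished totally real subspaces of $\C\,W'\cap W^\bot$ in the diagonal cases), while the statement with the roles of $W$ and $U$ exchanged shows that $W$ is $\frakh_U$-invariant. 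Together with the curvature invariance of $W$ and $U$, this is precisely~\eqref{eq:cip1} and~\eqref{eq:cip2}.

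For necessity I would first reduce to $\Re=\Re^*$ exactly as in the proof of Corollary~\ref{co:type(tr_i',tr_kl)}, using that $W$ (and likewise $U$) is also described by a rotated datum $(e^{\i\varphi}\Re,\ldots)$. Since $(W,U)$ is a curvature invariant pair, $U$ is $\frakh_W$-invariant, so Lemma~\ref{le:type(tr_k')}~(b) yields an orthogonal splitting $U=U^\#\oplus\tilde U$ with $U^\#$ a sum of distinguished $k$-dimensional totally real subspaces of $\C\,W'\cap W^\bot$ and $\tilde U\subset(\C\,W')^\bot$. The decisive dichotomy is whether $U^\#=\{0\}$.

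If $U^\#=\{0\}$, then $U\subset(\C\,W')^\bot$; passing to the $\Re$-components of $U$ and of $\i\,U$ shows that $U_0'$ and $J'(U_0')$ both lie in the orthogonal complement of $W'$ inside $\Re$, hence $U'\bot W'$, which is the first listed case. If $U^\#\neq\{0\}$, I would argue that $\tilde U=\{0\}$ and $U'=W'$, so that $U=U^\#$ is a single distinguished subspace and $k=\ell$: here I would combine the symmetric role of $W$ (it is $\frakh_U$-invariant, giving $W=W^\#\oplus\tilde W$) with the total reality of $U$ and a dimension count against the decomposition~\eqref{eq:CW'cap W^bot=} into three mutually isomorphic $\frakh_W$-modules, which rules out $U^\#$ being a sum of two or three distinguished subspaces. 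Once $U=\lambda_c(W_0')$ with $U'=W'$, I would substitute into the parametrization~\eqref{eq:lambda(W_0')=Menge}, separate off the $\Re$-part to read off $U_0'$ and the relation defining $J'$, and thereby extract the admissible triples $(c_0:c_1:c_2)$: for $k\geq 3$ the projective parameter is real, forcing $I'=\pm J'$ and giving $U_0'=I'(W_0')$ (Case~2) or $U_0'=\exp(\theta\,I')(W_0')$ (Case~3), while for $k=2$ the parameter lies in $\C\rmP^2$ with scalars acting through $\tilde I$ from~\eqref{eq:tildeI}, producing the extra freedom encoded by $\tilde J\in\SU(W',\tilde I)\cap\so(W')$ (Case~4).

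The main obstacle is the exceptional dimension $k=\ell=2$. There the distinguished totally real subspaces of $\C\,W'\cap W^\bot$ are parameterized not by $\R\rmP^2$ but by $\C\rmP^2$, with the complex scalars acting via the auxiliary Hermitian structure $\tilde I$, so the bookkeeping that produced $I'=\pm J'$ for $k\geq 3$ must be redone with $\tilde I$-linear combinations; translating the resulting admissible parameters into the clean assertion that $U_0'=\tilde J(W_0')$ and $J'=\tilde J\,I'\,\tilde J^{-1}$ for some $\tilde J\in\SU(W',\tilde I)\cap\so(W')$ is the delicate computation, and it is also the point at which one must confirm that both invariance conditions ($U$ under $\frakh_W$ and $W$ under $\frakh_U$) hold simultaneously. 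A secondary difficulty is the rigorous exclusion of $\tilde U\neq\{0\}$ and of higher-dimensional $U^\#$ in the non-orthogonal case, which I would settle through the total-reality constraint noted above.
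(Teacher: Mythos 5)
Your plan follows the paper's proof quite closely: the same reduction to $\Re=\Re^*$, the same use of Lemma~\ref{le:type(tr_k')}~(b) to split $U=U^\#\oplus\tilde U$, the same dichotomy on $U^\#$, and the same extraction of the projective parameter $(c_0:c_1:c_2)$ with the $k=2$ case handled through $\tilde I$ and $\SU(W',\tilde I)$. Two points deserve attention. First, in the sufficiency direction you declare $W\bot U$ ``automatic'' in the diagonal cases because $U$ lands in $\C\,W'\cap W^\bot$; but that presupposes that $\tilde J(W)$ (for general $\tilde J\in\SU(W',\tilde I)\cap\so(W')$) really is one of the distinguished subspaces of the form~\eqref{eq:lambda(W_0')=Menge}, which is exactly the nontrivial content here. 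The paper instead verifies curvature invariance by showing $\frakh_U=\frakh_W$ via conjugation (\eqref{eq:frakh_W=frakh_U1}--\eqref{eq:frakh_W=frakh_U3}) and then checks $W\bot\tilde J(W)$ by an explicit computation using the normal form~\eqref{eq:gij_2} of elements of $\SU(2)\cap\su(2)$ and the skew-symmetry of $\tilde J$; some such computation is unavoidable and your outline currently waves it away. Second, for the conclusion that $U^\#\neq\{0\}$ forces $U$ to be a \emph{single} distinguished subspace, the clean mechanism is the normalization $\ell\leq k$ (legitimate because the list of conditions is symmetric in $W$ and $U$), after which $\dim(U)=\ell\leq k=\dim(U^\#)$ settles it; an appeal to ``total reality'' alone does not obviously exclude a sum of two distinguished subspaces when $\ell>k$ is not excluded. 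With these two repairs your argument coincides with the paper's.
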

\begin{proof}
In the one direction, in order to see that the given pairs $(W,U)$ are actually curvature
invariant, we proceed as follows: 

the case $U'\bot W'$ is handled by means of
Lemma~\ref{le:type(tr_k')},~(a) and~(c). In the other cases, we have $U=J^N(W)$\,, 
$U=\exp(-\theta\, I')(\bar W)$ or $U=\tilde J(W)$\,, respectively.
If $U=\i W$\,, then 
\begin{equation}\label{eq:frakh_W=frakh_U1}
\frakh_U=\Menge{J^N\circ A\circ J^N}{A\in\frakh_W}=\frakh_W\;,
\end{equation}
where the first equality is straightforward and the second uses that $J^N$ commutes with any curvature endomorphism of $T_pN$\,.
If $U=\exp(-\theta\, I')(\bar W)$\,, then 
\begin{equation}\label{eq:frakh_W=frakh_U2}
\frakh_U=\Menge{\exp(-\theta\, I')\circ A\circ\exp(\theta\, I')}{A\in\frakh_{\bar W}}=\frakh_{\bar W}=\frakh_W\;,
\end{equation}
since $\frakh_W=\frakh_{\bar W}\subset \fraku(W',I')$ according to Lemma~\ref{le:type(tr_k')}~(a). If $k=2$\,, 
then  $\frakh_W=\R\,\tilde I$ according to 
Lemma~\ref{le:type(tr_k')}~(a) and~\eqref{eq:tildeI}, hence, with $U=\tilde J(W)$\,,
\begin{equation}\label{eq:frakh_W=frakh_U3}
\frakh_U=\Menge{\tilde J\circ A\circ \tilde J}{A\in\frakh_W}=\R\, \tilde J\circ \tilde I\circ
\tilde J\stackrel{\tilde J\in\SU(W',\tilde I)}{=}\R\,\tilde I=\frakh_W\;.
\end{equation}
Moreover, if $\frakh_W=\frakh_U$\,, then $(W,U)$ 
is a curvature invariant pair (by the curvature invariance of both $W$ and $U$). This shows that
the pairs in question are actually curvature invariant pairs.

It remains to verify that $U \bot W$\,. This is straightforward in case $U'\bot W'$\,. Further, we have 
$W\bot \i W$ (since $W$ is totally real) and $e^{-\i \theta}\,\bar W\bot W$ for any $\theta$ 
(since even $\C\,\bar W\bot \C W$\,, see Corollary~\ref{co:type(c_k',c_l')}).
If $k=2$\,, then $f_1:=e_1$ and $f_2:=I' e_1$ defines a Hermitian basis of
$(W',\tilde I)$\,. Consider the complex matrix $(g_{ij})$ defined by
\begin{equation}\label{eq:gij_1}
g_{ij}:=\g{f_i}{\tilde J\,f_j}+\i\,\g{\tilde I\, f_i}{\tilde J\,f_j}\;:
\end{equation}
Then $(g_{ij})$ belongs to $\SU(2)\,\cap\,\su(2)$\,, hence there exist $t\in\R$ and $w\in\C$
with $t^2+|w|^2=1$ such that 
\begin{equation}\label{eq:gij_2}
\left ( \begin{array}{cc}
g_{11}&g_{12}\\
g_{21}&g_{22}
\end{array}
\right )=
\left ( \begin{array}{cc}
\i\,t &-\bar w\\
w&-\i\,t
\end{array}
\right )
\end{equation} 
holds. Using the skew-symmetry of $\tilde J$ and~\eqref{eq:gij_2}, we calculate
\begin{align}\label{eq:letzte_Gleichung}
&\g{e_i-J^N\,I' e_i}{\tilde J(e_i-J^N\,I' e_i)}=\g{e_i}{\tilde J\,e_i}+\g{I' e_i}{\tilde J\,I' e_i}=0+0=0\
\text{for}\ i=1,2\;,\\
\label{eq:allerletzte_Gleichung}
&\g{e_2-J^N\,I' e_2}{\tilde J(e_1-J^N\,I' e_1)}=\g{\tilde I\,f_1}{\tilde J\,f_1}+\g{\tilde I\,f_2}{\tilde J\, f_2}=\Im(g_{11}+g_{22})=0\;,\\
\label{eq:allerallerletzte_Gleichung}
&\g{e_1-J^N\,I' e_1}{\tilde J(e_2-J^N\,I' e_2)}=-\g{e_2-J^N\,I' e_2}{\tilde J(e_1-J^N\,I' e_1)}=0\;.
\end{align} 
This shows that $W\bot \tilde J(W)$\,.

In the other direction, let $(W,U)$ be an orthogonal 
curvature invariant pair of Type $(\tr_k',\tr_\ell')$  defined by the data $(\Re,W',I',W_0';\Re^*,U',J',U_0')$\,. 
Then we can assume that $\Re=\Re^*$ (cf.\ the proof of
Corollary~\ref{co:type(tr_i',tr_kl)}). Clearly, we can also suppose that $\ell\leq k$\,. 
Therefore, since $U$ is $\frakh_W$-invariant with $\dim(U)\leq k$\,, either $U\bot \C\,W'$  
or there exists $(c_0:c_1:c_2)\in\bbK\rmP^2$ with $\bbK=\R$ (for $k\geq 3$) or $\bbK=\C$ (for $k=2$) 
such that $U$ is given by r.h.s.\ of~\eqref{eq:lambda(W_0')=Menge} according to  Lemma~\ref{le:type(tr_k')}~(b).

Suppose that $U\bot \C\,W'$\,. Then
\begin{align}\label{eq:orthogonal_1}
&0=\g{u-J^N\,I' u}{v}=\g{u}{v}\;,\\
\label{eq:orthogonal_2}
&0=\g{u-J^N\,I' u}{J^N\,v}=-\g{I' u}{v}
\end{align}
for all $u\in U_0'$ and $v\in W'$\,, i.e.\ we obtain that $W'\bot U'$\,. 

We are left with the case that there exists $(c_0:c_1:c_2)\in\bbK\rmP^2$ such that $U$ is given by r.h.s.\ 
of~\eqref{eq:lambda(W_0')=Menge}. In particular, then $U\subset \C\, W'$\,, 
i.e. $U_0'\subset \Re\cap \C\,W'=W'$ and $J'(U_0')\subset W'$\,, hence $U'=U_0'\oplus J'(U_0') =W'$\,. 
Furthermore, we claim that here $\frakh_W=\frakh_U$\,:
 
given $A\in\frakh_U$\,, 
by means of Lemma~\ref{le:type(tr_k')}~(a), we have, in particular, $A\in\so(W')$\,.
Further, we have $A(W)\subset W$ since $(W,U)$ is assumed to be a curvature invariant pair.
Thus, we obtain from Lemma~\ref{le:type(tr_k')}~(c) (with $a=0$) that 
$A\in\fraku(W,I')$ and $A(W_0')\subset W_0'$\,. Then $A\in\frakh_W$ 
again by means of  Lemma~\ref{le:type(tr_k')}~(a). This shows 
that $\frakh_U\subset \frakh_W$ holds. The other inclusion is proved in a similar way. 
This gives our claim.

For $k\geq 3$\,, we claim that $U=\i W$  or $U=\e^{\i \theta}\,\bar W$ 
for some $\theta\in\R$\,: 

taking real and imaginary parts in~\eqref{eq:lambda(W_0')=Menge}, we obtain that 
\begin{equation}\label{eq:71}
u:=c_0\,v+c_2\,I' v
\end{equation}
belongs to $U_0'$ for every $v\in W_0'$ and
\begin{equation}
\label{eq:81}
J'u=-c_1\,v-c_0 \,I'v\;.
\end{equation}
Moreover, any $u\in U_0'$ can be uniquely obtained from some $v\in W_0'$ via~\eqref{eq:71}.
Further, assume that $v$ is a unit vector. Therefore, preparing our notation already for the case $k=2$ (see below), 
\begin{align}\label{eq:reelle_Koeffizienten1}
&\lvert c_0\rvert^2+\lvert c_2\rvert^2\,\stackrel{\eqref{eq:71}}{=}\,\lvert u\rvert^2=\lvert J' u\rvert^2
 \stackrel{\eqref{eq:81}}{=}\lvert c_0\rvert^2+\lvert c_1\rvert^2\;,\\
\label{eq:reelle_Koeffizienten2}
&0=\g{u}{J'\, u}=-\bar c_0\,c_1-\bar c_2\, c_0\;.
\end{align}
Then $c:=\lvert c_0\rvert^2+\lvert c_1\rvert^2$ does not vanish 
(otherwise $c_0=c_1=c_2=0$ according to~\eqref{eq:reelle_Koeffizienten1} which is not allowed).
Thus, we can assume that $c=1$ (since we consider only the ratio $(c_0:c_1:c_2)$).
Then~\eqref{eq:reelle_Koeffizienten1} becomes
\begin{equation}\label{eq:reelle_Koeffizienten3}
\lvert c_0\rvert^2+\lvert c_2\rvert^2=\lvert c_0\rvert^2+\lvert c_1\rvert^2=1\;
\end{equation}
Therefore, by means of~\eqref{eq:reelle_Koeffizienten2},\eqref{eq:reelle_Koeffizienten3}, 
the matrix $(g_{ij})$ defined by
\begin{equation}\label{eq:gij_3}
\left ( \begin{array}{cc}
g_{11}&g_{12}\\
g_{21}&g_{22}
\end{array}
\right ):=\left ( \begin{array}{cc}
c_0 &-c_1\\
c_2&-c_0
\end{array}
\right )
\end{equation}
belongs $\rmO(2)$\,. If $g\in\SO(2)$\,, then $c_0=0$ and $c_1=c_2=\pm 1$\,. 
Then~\eqref{eq:71},\eqref{eq:81} imply that $U_0'=I'(W_0')$ and $J'=I'$\,.
Otherwise, there exists $\theta\in\R$ 
such that $c_0=\cos(\theta)$ and $c_2=-c_1=\sin(\theta)$\,, hence $J'=-I'$ and 
$U_0'=\Menge{\cos(\theta)+\sin(\theta)\,I'v}{v\in W_0'}$ 
according to~\eqref{eq:71},\eqref{eq:81}. This finishes the proof for $k\geq 3$\,. 

For $k=2$\,, we first recall that $\tilde I$ equips the linear space $W'$ with a second Hermitian structure such that $\tilde I(W_0')\subset W_0'$ and $I'$ belongs to $\rmU(W',\tilde I)$\,.
Now it is straightforward by means of~\eqref{eq:71},\eqref{eq:81} that also $\tilde I(U_0')\subset U_0'$ and $J'\in\rmU(W',\tilde I)$\,. 
Then it follows on the analogy of~\eqref{eq:reelle_Koeffizienten1}-\eqref{eq:reelle_Koeffizienten3}
that the complex matrix $(g_{ij})$ defined by~\eqref{eq:gij_3} belongs $\rmU(2)$\,. Moreover, 
since our considerations depend only on the complex ratio $(c_0:c_1:c_2)$\,, 
we can even assume that $(g_{ij})$ belongs $\SU(2)$\,.
Then necessarily $c_0=-\bar c_0$ and $c_1=\bar c_2$\,, hence $(g_{ij})$ takes 
the form~\eqref{eq:gij_2} which implies that 
$(g_{ij})\in\SU(2)\,\cap\,\su(2)$\,. Further, recall that $f_1:=e_1$ and $f_2:=I' e_1$ defines a Hermitian basis of
$(W',\tilde I)$\,. Thus, we obtain a unique element of $\SU(W',\tilde I)\cap\so(W')$
via $\tilde J f_i:=g_{1i}\,f_1+g_{2i}\,f_2$\,. 
%In particular, the identity $\tilde J^2:=\tilde J\circ \tilde J=-\Id$ holds. 
Then, using the previous and~\eqref{eq:71},\eqref{eq:81}, we conclude that $U_0'=\tilde J(W_0')$ and $\tilde J\circ I' =J'\circ\tilde J$\,. 
The details of this part of the proof are left to the reader.
% comparing with~\eqref{eq:71},~\eqref{eq:81} and~\eqref{eq:gij_3}, we have 
% \begin{align}
% \label{eq:tildeJ_e1_gleich_u}
% &\tilde J\, e_1\stackrel{\eqref{eq:gij_3}}{=}c_0\, f_1+c_2\, f_2\stackrel{\eqref{eq:71}}{=}u\;,\\
% \label{eq:Jprime_tildeJ_gleich_tildeJ_Jprime}
% &J'\tilde J\, e_1\stackrel{\eqref{eq:71}}{=}J' u\stackrel{\eqref{eq:81}}{=}\tilde J f_2=\tilde J\, I'e_1\,.
% \end{align}
% Since $\tilde I(U_0')=U_0'$ (see above) and, moreover both $J'$ and $\tilde J$ belong to $\fraku(W',\tilde I)$\,, 
% we thus obtain from~\eqref{eq:tildeJ_e1_gleich_u} that $\tilde J\,v\in U_0'$ 
% and from~\eqref{eq:Jprime_tildeJ_gleich_tildeJ_Jprime} that $J'\tilde J\, v=\tilde J\, I'v$ for $v\in\{e_1,e_2\}_\R=W_0'$\,.
% Then also
% $$
% J'\tilde J\, I'v=J'J'\tilde J\, v=-\tilde J\, v=\tilde J\, I'I'v\;,
% $$
% i.e. $J'\tilde J\, v=\tilde J\, I'v$ automatically holds for all $v\in U_0'\oplus I'(U_0')=W'$ 
% and therefore $J'=\tilde J\circ I'\circ \tilde J^{-1}$ (since $\tilde J^2=-\Id$)\,.
\end{proof}

\bigskip
\begin{corollary}\label{co:type(tr_k',line)}
Let $W$ and $U$ be of Types $(tr_k')$
and $(tr_1)$ defined by the data $(\Re,W',I',W_0')$ and a unit vector $u$ of $T_pN$\,,
respectively. The pair $(W,U)$ is an orthogonal curvature invariant pair if and only if $u$ belongs to $(\C\,W')^\bot$\,.
\qed
\end{corollary}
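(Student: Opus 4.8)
The plan is to deduce the statement directly from Lemma~\ref{le:type(tr_k')}, reusing the reduction already exploited for pairs of Type $(tr_{k,\ell},tr_1)$ in the proof of Corollary~\ref{co:type(tr_ij,line)}. First I would record the general principle that governs every pair whose second member is a line: if $U=\R u$ is of Type $(tr_1)$, then $U$ is automatically curvature invariant and $R^N(U\times U\times W)\subset W$ holds trivially because $R^N_{u,u}=0$. Moreover, since each curvature endomorphism $R^N_{x,y}$ with $x,y\in W$ is skew-symmetric, the inclusion $R^N_{x,y}u\in\R u$ forces $R^N_{x,y}u=0$. Hence $(W,U)$ is an orthogonal curvature invariant pair if and only if $u\in W^\bot$ and $\frakh_W$ annihilates $u$; that is, $\R u$ is a one-dimensional $\frakh_W$-invariant subspace of $W^\bot$ on which $\frakh_W$ acts trivially.

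Second, I would locate the set of such vectors by combining the description of the $\frakh_W$-invariant subspaces of $W^\bot$ in Lemma~\ref{le:type(tr_k')}~(b) with the explicit shape of $\frakh_W$ in part~(a). Splitting $u=u_1+u_2$ with $u_1\in \C W'\cap W^\bot$ and $u_2\in(\C W')^\bot$, one uses that the orthogonal decomposition $\C W'\cap W^\bot=\bar W\oplus J^N(W_0')\oplus I'(W_0')$ consists of three copies of $W_0'$ on which $\frakh_W$ acts through the standard representation $\frakh_W|_{W_0'}=\so(W_0')$. Since $k=\dim(W_0')\geq 2$, this representation admits no nonzero fixed vector, so $\frakh_W u_1=0$ forces $u_1=0$. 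On the complementary summand $\frakh_W$ acts trivially. Therefore, for $u\in W^\bot$, the condition $\frakh_W u=0$ is equivalent to $u\in(\C W')^\bot$.

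Finally, for the converse inclusion I would note that $W=\Menge{v-\i\,I'v}{v\in W_0'}\subset \C W'$, whence $(\C W')^\bot\subset W^\bot$; thus every $u\in(\C W')^\bot$ is simultaneously orthogonal to $W$ and annihilated by $\frakh_W$, which produces an orthogonal curvature invariant pair by the first step. Combining the two directions yields exactly the asserted criterion. I do not expect a genuine obstacle here: the only point requiring care is the representation-theoretic observation that $\frakh_W$ fixes no nonzero vector of $\C W'\cap W^\bot$, and this is immediate from $\frakh_W|_{W_0'}=\so(W_0')$ together with $k\geq 2$; everything else is bookkeeping already assembled in Lemma~\ref{le:type(tr_k')}.
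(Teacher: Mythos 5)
Your proposal is correct and follows exactly the route the paper intends (the corollary is stated with a bare \qed precisely because it reduces, as in the proof of Corollary~\ref{co:type(tr_ij,line)}, to the observation that the pair is curvature invariant iff $u\in W^\bot$ and $\frakh_W$ annihilates $u$, combined with Lemma~\ref{le:type(tr_k')}). Your representation-theoretic step — that $\frakh_W|_{W_0'}=\so(W_0')$ with $k\geq 2$ fixes no nonzero vector of $\C\,W'\cap W^\bot=\bar W\oplus J^N(W_0')\oplus I'(W_0')$ — is exactly the content of part~(b) of that lemma, so nothing is missing.
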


\bigskip
\begin{lemma}\label{le:type(ex_3)}
Let $W$ be of Type $(ex_3)$ defined by the data $(\Re,\{e_1,e_2\})$\,.
% and extend the orthonormal system $\{e_1,e_2\}$ to an orthonormal basis $\{e_1,\ldots, e_n\}$ of $\Re$
\begin{enumerate}
\item  The Lie algebra $\frakh_W$ is the linear space which is generated by $J^N +e_1\wedge e_2$\,.
\item A subspace of $W^\bot$ is $\frakh_W$-invariant if and only if it is the
  1-dimensional space $\R(e_2-\i\,e_1)$\,, a complex subspace 
of the orthogonal complement of $\{e_1,e_2\}_\C$\,, or a sum of such spaces. 
\item Let $A\in\so(\Re)$ and $a\in\R$ be given. The linear map $a\,J^N+A$ leaves 
$W$ invariant if and only if $A-a\,e_1 \wedge e_2$ vanishes on $\{e_1,e_2\}_\R$\,.
\end{enumerate}
\end{lemma}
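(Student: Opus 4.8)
The plan is to read everything off the curvature formula~\eqref{eq:crg}, exactly as in the proofs of Lemmas~\ref{le:type(c_k)}--\ref{le:type(tr_k')}. Throughout I abbreviate $w_1:=e_1-\i\,e_2$ and observe that the three spanning vectors of $W$ are $w_1$\,, $\i\,w_1=e_2+\i\,e_1$ and $\bar w_1=e_1+\i\,e_2$ (the bar denoting complex conjugation with respect to $\Re$ as in Lemma~\ref{le:type(c_k')}), so that $W=\C\,w_1\oplus\R\,\bar w_1\subset\{e_1,e_2\}_\C$\,. For~(a), I would evaluate the three curvature endomorphisms $R^N_{w_1,\i w_1}$\,, $R^N_{w_1,\bar w_1}$ and $R^N_{\i w_1,\bar w_1}$ using~\eqref{eq:crg}. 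A short bookkeeping of real and imaginary parts gives $R^N_{w_1,\i w_1}=-2\,(J^N+e_1\wedge e_2)$ while the other two vanish; since these three endomorphisms span $\frakh_W$ (see~\eqref{eq:La1}), Part~(a) follows.

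For~(b), put $B:=J^N+e_1\wedge e_2$\,, the generator of $\frakh_W$\,. First I would record the orthogonal splitting $W^\bot=\R(e_2-\i\,e_1)\oplus(\{e_1,e_2\}_\C)^\bot$\,, using that $W$ has real codimension one in $\{e_1,e_2\}_\C$ and that $e_2-\i\,e_1$ is orthogonal to each of $w_1,\i w_1,\bar w_1$\,. A direct computation shows $B(e_2-\i\,e_1)=0$\,, and since $e_1\wedge e_2$ annihilates every vector of $\Re$ orthogonal to $e_1,e_2$\,, the endomorphism $B$ restricts to $J^N$ on $(\{e_1,e_2\}_\C)^\bot$\,. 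Thus $B|_{W^\bot}$ is skew-symmetric with kernel exactly $\R(e_2-\i\,e_1)$ and acts invertibly, namely as the complex structure $J^N$\,, on $(\{e_1,e_2\}_\C)^\bot$\,. As in the Schur-type discussion of Section~\ref{se:definition_of_cip}, every $\frakh_W$-invariant subspace then decomposes as an invariant subspace of the kernel (that is, $\{0\}$ or $\R(e_2-\i\,e_1)$) plus a $J^N$-invariant, i.e.\ complex, subspace of $(\{e_1,e_2\}_\C)^\bot$\,, which is precisely the asserted list.

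For~(c), I would note that the images $(a\,J^N+A)(w_1)$\,, $(a\,J^N+A)(\i w_1)$ and $(a\,J^N+A)(\bar w_1)$ depend only on $a$ and on the two vectors $A\,e_1,A\,e_2$\,. Since $W\subset\{e_1,e_2\}_\C$\,, invariance first forces $A\,e_1,A\,e_2\in\{e_1,e_2\}_\R$\,; skew-symmetry of $A$ on $\Re$ then gives $A\,e_1=q\,e_2$ and $A\,e_2=-q\,e_1$ for some $q\in\R$\,. Finally, the membership criterion for the totally real space $W$ — an element $x+\i\,y$ of $\{e_1,e_2\}_\C$ lies in $W$ if and only if $\g{x}{e_2}=\g{y}{e_1}$ — applied to the image of $\bar w_1$ forces $q=a$\,. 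Conversely, if $A\,e_1=a\,e_2$ and $A\,e_2=-a\,e_1$\,, i.e.\ $A-a\,e_1\wedge e_2$ vanishes on $\{e_1,e_2\}_\R$\,, then the three images are computed to be $2a\,\i w_1$\,, $-2a\,w_1$ and $0$\,, all lying in $W$\,; this proves~(c).

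All of these computations are routine and parallel to the earlier lemmas; the only point requiring a little care is the bookkeeping of real and imaginary parts in~\eqref{eq:crg}, together with the correct membership criterion for $W$ in Part~(c). The latter is exactly the feature that distinguishes the totally real space $W$ from its complexification of Type $(c_k')$ and is what ultimately pins down the relation $q=a$\,.
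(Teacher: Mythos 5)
Your proof is correct and follows essentially the same route as the paper: all three parts are read off from the curvature formula~\eqref{eq:crg} by elementary linear algebra, and your computations (the generator $-2(J^N+e_1\wedge e_2)$, the splitting $W^\bot=\R(e_2-\i\,e_1)\oplus(\{e_1,e_2\}_\C)^\bot$ with $B$ vanishing on the first summand and acting as $J^N$ on the second, and the constraint $q=a$ in part~(c)) all check out. The only cosmetic difference is that the paper obtains~(a) and~(b) by writing $W=\tilde W\oplus\R(e_1+\i\,e_2)$ with $\tilde W$ of Type $(c_1')$ and noting $R^N_{x_1,x_3}=R^N_{x_2,x_3}=0$, so that Lemma~\ref{le:type(c_k')} applies directly, and in~(c) first reduces to $a=0$ before solving the linear constraints, whereas you recompute everything from scratch and use an explicit membership criterion for $W$.
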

\begin{proof}
Consider the Hermitian structure $I':=e_1\wedge e_2$ on $W':=\{e_1,e_2\}_\R$ and put 
$x_1:=e_1-\i\,e_2$\,, $x_2:=e_2+\i\,e_1$ and $x_3:=e_1+\i\,e_2$\,. A straightforward calculation shows that $R^N_{x_1,x_3}=R^N_{x_2,x_3}=0$\,.
Further, let $\tilde W$ be the curvature invariant space of Type $(c_1')$ defined by $(\Re,W',I')$\,.
Thus $W=\tilde W\oplus\R\,x_3$\,, hence $\frakh_W=\frakh_{\tilde W}$\,. Now Part~(a) follows from Lemma~\ref{le:type(c_k')}~(a) (with $k=1$). 
Clearly, the intersection $\C\, W'\cap W^\bot$ is given by $\R(e_2-\i\,e_1)$\,. Thus Part~(b) follows from Lemma~\ref{le:type(c_k')}~(b) (with $k=1$). 
For~(c), since $J^N +I'$ leaves $W$ invariant (by means of~(a) and since $W$ is curvature invariant),
we can assume that $a=0$\,. If $A$ leaves $W$ invariant, then $A\,x_1=A\,e_1-\i\,A\,e_2$ is necessarily a linear combination of $x_2$ and
$x_3$\,, say $A\,x_1=c\,x_2+d\,x_3$\,. It follows that 
$$d=\g{c\,x_2+d\,x_3}{e_1}=\g{A\,x_1}{e_1}=\g{A\,e_1-\i\,A\,e_2}{e_1}=\g{A\,e_1}{e_1}=0\;,$$ 
hence $A\,x_1=c\,x_2$\,, i.e. $A\,e_1=c\,e_2$ and $A\,e_2=-c\,e_1$\,. 
Thus 
\[
W\ni A\,x_3=A\,e_1+\i\,A\,e_2=c(e_2-\i\,e_1)\in W^\bot\;,
\] 
hence $A\, x_3\in W\cap W^\bot=\{0\}$\,. It follows that $c=0$\,. This implies that $A\,e_1=A\,e_2=0$ which proves our claim.
\end{proof}

\bigskip
\begin{corollary}\label{co:type(ex_3,line)}
Let $W$ and $U$ be of Types $(ex_3)$ and $(tr_1)$ defined 
by the data $(\Re,\{e_1,e_2\})$ and a unit vector $u$ of $T_pN$\,, respectively.
Then $(W,U)$ is an orthogonal curvature invariant pair 
if and only if $u=\pm\, 1/\sqrt 2\, (\e_2-\i\,e_1)$\,.
\qed
\end{corollary}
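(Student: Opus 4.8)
The plan is to reduce the statement to the general criterion for orthogonal curvature invariant pairs whose second member is of Type $(tr_1)$, and then to read off the answer from the module structure already recorded in Lemma~\ref{le:type(ex_3)}. Recall, exactly as in the proof of Corollary~\ref{co:type(tr_ij,line)}, that for a one-dimensional $U=\R u$ the pair $(W,U)$ is an orthogonal curvature invariant pair if and only if $u\in W^\bot$ and $\frakh_W$ annihilates $u$: indeed, the conditions~\eqref{eq:cip2} are automatic since $R^N_{u,u}=0$, while~\eqref{eq:cip1} reduces to $\frakh_W\,u\subset\R u$; and for a skew-symmetric operator an invariant real line is necessarily a kernel line (if $A u=\lambda u$ then $\lambda\,\lvert u\rvert^2=\g{A u}{u}=0$, so $\lambda=0$). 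Thus $\R u$ must be a one-dimensional $\frakh_W$-invariant, equivalently $\frakh_W$-annihilated, subspace of $W^\bot$.

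First I would invoke Lemma~\ref{le:type(ex_3)}~(b), which lists the $\frakh_W$-invariant subspaces of $W^\bot$ as $\R(e_2-\i\,e_1)$, the complex subspaces of $(\{e_1,e_2\}_\C)^\bot$, and their sums. Among these I must isolate those that are one-dimensional over $\R$. A nonzero complex subspace of $(\{e_1,e_2\}_\C)^\bot$ has real dimension at least two and contains no $\frakh_W$-invariant real line: by Lemma~\ref{le:type(ex_3)}~(a) the generator $J^N+e_1\wedge e_2$ restricts to $J^N$ on that complement, so $\frakh_W$ acts there through $\R\,J^N$, i.e.\ via multiplication by the imaginary unit, which fixes no nonzero vector. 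Hence no admissible $u$ can arise from the complex part, and the only one-dimensional $\frakh_W$-invariant subspace of $W^\bot$ is $\R(e_2-\i\,e_1)$.

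Finally I would pin down $u$ by normalization. Since $\{e_1,e_2\}$ is an orthonormal system in the real form $\Re$ and $\Re\bot\i\,\Re$, one has $\lvert e_2-\i\,e_1\rvert^2=\lvert e_2\rvert^2+\lvert e_1\rvert^2=2$, so the unit vectors spanning this line are exactly $u=\pm\,\tfrac{1}{\sqrt2}(e_2-\i\,e_1)$. As a sanity check one may verify the annihilation directly: $(J^N+e_1\wedge e_2)(e_2-\i\,e_1)=(e_1+\i\,e_2)+(-e_1-\i\,e_2)=0$, using $J^N=\i$ together with $(e_1\wedge e_2)(e_2)=-e_1$ and $(e_1\wedge e_2)(e_1)=e_2$.

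I do not expect a genuine obstacle: all the real content is already packaged into Lemma~\ref{le:type(ex_3)}, so the corollary is essentially a restatement of its part~(b) in the language of curvature invariant pairs. The only two points requiring a little care are the passage from ``$\frakh_W$-invariant line'' to ``$\frakh_W$-annihilated line'' (which is precisely where skew-symmetry enters) and the normalizing constant $1/\sqrt2$, which comes from the fact that $e_2-\i\,e_1$ splits into two orthonormal real and imaginary components.
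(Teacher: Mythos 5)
Your proposal is correct and follows exactly the route the paper intends: the corollary is stated with an omitted proof precisely because it reduces, as in Corollary~\ref{co:type(tr_ij,line)}, to the criterion that $\R u$ be an $\frakh_W$-annihilated line in $W^\bot$, and Lemma~\ref{le:type(ex_3)}~(b) identifies $\R(e_2-\i\,e_1)$ as the unique such line. Your two points of care (skew-symmetry turning invariance into annihilation, and the $1/\sqrt2$ normalization) are exactly right, and the direct verification $(J^N+e_1\wedge e_2)(e_2-\i\,e_1)=0$ checks out.
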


\bigskip
\begin{corollary}\label{co:type(ex_3,other)}
There do not exist any orthogonal curvature invariant pairs of Types $(ex_3,c_k')$\,, 
$(ex_3,tr_k')$\,, $(ex_3,tr_{k,\ell})$ and $(ex_3,ex_3)$\,.
\end{corollary}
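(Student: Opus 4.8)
The plan is to combine the two defining conditions~\eqref{eq:cip1},~\eqref{eq:cip2} of an orthogonal curvature invariant pair with the explicit description of $\frakh_W$ and its invariant subspaces from Lemma~\ref{le:type(ex_3)}. So let $(W,U)$ be an orthogonal curvature invariant pair with $W$ of Type $(ex_3)$ defined by $(\Re,\{e_1,e_2\})$; note that then $W\subset\{e_1,e_2\}_\C$. The first condition~\eqref{eq:cip1} forces $U$ to be an $\frakh_W$-invariant subspace of $W^\bot$, so by Lemma~\ref{le:type(ex_3)}(b) I may write $U=\epsilon\,\R(e_2-\i\,e_1)\oplus U_c$ with $\epsilon\in\{0,1\}$ and $U_c$ a complex subspace of $\{e_1,e_2\}_\C^\bot$. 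The whole point is that this normal form is too rigid to realize any of the four types, each failing for a different reason.

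First I would dispose of the two totally real cases $(ex_3,tr_{k,\ell})$ and $(ex_3,tr_k')$ simultaneously. A totally real space contains no nonzero complex subspace, so the complex summand must vanish, $U_c=\{0\}$, whence $U\subset\R(e_2-\i\,e_1)$ and $\dim(U)\le1$. This contradicts $\dim(U)\ge2$, which holds both for $(tr_{k,\ell})$ (as $k+\ell\ge2$) and for $(tr_k')$ (as $k\ge2$); no computation is needed.

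For $(ex_3,c_k')$ the first condition is not enough, since every complex subspace of $\{e_1,e_2\}_\C^\bot$ is automatically $\frakh_W$-invariant; here I would invoke the second condition~\eqref{eq:cip2}, that $W$ is $\frakh_U$-invariant. As $U$ is complex it cannot meet the totally real line, so $\epsilon=0$ and $U=U_c\subset\{e_1,e_2\}_\C^\bot$; consequently its defining subspace $U'$ also lies in $\{e_1,e_2\}_\C^\bot$. By Lemma~\ref{le:type(c_k')}(a), $\frakh_U=\su(U')\oplus\R(I'+k\,J^N)$, and since $\su(U')$ and $I'$ are supported on $\C\,U'$, which is orthogonal to $\{e_1,e_2\}_\C\supset W$, they annihilate $W$. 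Hence $\frakh_U|_W=\R\,J^N|_W$, so $\frakh_U$-invariance of $W$ would force $J^N(W)\subset W$; but $W$ is not a complex subspace (for instance $J^N(e_1+\i\,e_2)=-(e_2-\i\,e_1)\in W^\bot$), a contradiction.

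The remaining case $(ex_3,ex_3)$ is the main obstacle, being the only one that requires a genuine computation with the real forms. Here $\dim(U)=3$ is odd while $U_c$ has even real dimension, so necessarily $\epsilon=1$ and $U_c$ is a single complex line, $U=\R(e_2-\i\,e_1)\oplus U_c$. Writing $U=\{f_1-\i\,f_2,f_2+\i\,f_1,f_1+\i\,f_2\}_\R$ with $\{f_1,f_2\}$ orthonormal in its real form $\Re^*$, I would check that $\R(f_1+\i\,f_2)$ is exactly the set of $\ell\in U$ with $J^N\ell\bot U$ (the same characterization applied to $U=\R(e_2-\i\,e_1)\oplus U_c$ singles out $\R(e_2-\i\,e_1)$), so that $\R(f_1+\i\,f_2)=\R(e_2-\i\,e_1)$ and, taking orthogonal complements inside $U$, $U_c=\C(f_1-\i\,f_2)$. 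Since $\Re^*=e^{\i\theta}\Re$ lies in the same circle $\scrU$, conjugation with respect to $\Re^*$ is $v\mapsto e^{2\i\theta}\bar v$ (bar denoting conjugation with respect to $\Re$), and $f_1-\i\,f_2$ is the $\Re^*$-conjugate of $f_1+\i\,f_2$; this gives $U_c=\C(e_2+\i\,e_1)\subset W$. But $U_c\subset U\subset W^\bot$, so $U_c\subset W\cap W^\bot=\{0\}$, contradicting $\dim_\C(U_c)=1$ and completing the proof.
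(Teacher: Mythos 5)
Your proof is correct and follows essentially the same route as the paper: both rest on Lemma~\ref{le:type(ex_3)}~(b) to split $U$ into (at most) a totally real line $\R(e_2-\i\,e_1)$ plus a complex piece in $\{e_1,e_2\}_\C^\bot$, dispose of the totally real types by the dimension count $\dim(U)\le 1$, rule out $(c_k')$ because the $\frakh_U$-invariance of $W$ would force $W$ to be complex, and reach a contradiction in the $(ex_3,ex_3)$ case. The only cosmetic difference is in that last case, where the paper first normalizes $\Re=\Re^*$ and compares the two decompositions directly, while you identify the summands intrinsically via the condition $J^N\ell\bot U$ and use conjugation in the circle $\scrU$ to place $U_c$ inside $W$ --- the same computation in slightly different clothing.
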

\begin{proof}
Let $W$ be of Type $(ex_3)$ defined by the data  $(\Re,\{e_1,e_2\})$ and $U$ be a subspace of $W^\bot$ 
such that $(W,U)$ is a curvature invariant pair. Recall that the $\frakh_W$-invariance of $U$ implies that 
there is the splitting $U=\tilde U\oplus U^\#$ 
into a totally real space $U^\#\subset \R(e_2-\i\, e_1)$ 
and a complex subspace $\tilde U$ of the orthogonal complement of $\{e_1,e_2\}_\C$
according to Lemma~\ref{le:type(ex_3)}~(b).

Hence, if $U$ is of Type $(c_k')$ defined by the data $(\Re^*,U',I')$\,, then $U^\#=\{0\}$ 
(since $U$ is complex) and thus $U\bot\{e_1,e_2\}_\C$\,. 
Further, we can assume that $\Re^*=\Re$\,. Thus $\{e_1,e_2\}_\R\bot U'$ (see~\eqref{eq:g}). 
Therefore, we obtain that $W\bot \C\,U'$ and whence the $\frakh_U$-invariant space $W$ is complex according to Lemma~\ref{le:type(c_k')}~(b), which is not given. 

Furthermore, if $U$ is of Type $(tr_k')$ or $(tr_{k,\ell})$\,, 
then $\tilde U=\{0\}$ (since $U$ is totally real) and hence $U$ is at most 1-dimensional, which is not given.

If $U$ is of Type $(ex_3)$\,, too, defined 
by  $(\Re^*,\{f_1,f_2\})$\,, then $U$ is defined also by
$(e^{\i\varphi}\,\Re^*,\{f_1(\varphi),f_2(\varphi)\})$ with
  $f_1(\varphi):=e^{\i\varphi} (\cos(\varphi) f_1+\sin(\varphi)f_2)$ and 
$f_2(\varphi):=e^{\i\varphi} (-\sin(\varphi) f_1+\cos(\varphi)f_2)$\,. Hence we
can assume that $\Re=\Re^*$\,. Further, an orthogonal decomposition $U=U^\#\oplus\tilde U$ into a totally real
space $U^\#$ and a complex space $\tilde U$ is unique (if it exists). We conclude that
 $U^\#=\R(\i\,f_2+f_1)$ and $\tilde U=\{f_1-\i\,f_2\,, f_2+\i\,f_1\}_\R$\,.
 Thus, on the one hand, $\{f_1,f_2\}_\R\bot \{e_1,e_2\}_\R$\,. On the other hand,
$\i\,f_2+f_1=\pm (e_2-\i\,e_1)$\,, a contradiction.
\end{proof}

\bigskip
\begin{lemma}\label{le:type(ex_2)}
Let $W$ be of Type $(ex_2)$ defined by the data $(\Re,\{e_1,e_2,e_3\})$\,.
\begin{enumerate}
\item The Lie algebra $\frakh_W$ is the linear space which is generated by
$J^N +e_1 \wedge e_2 +\sqrt{3}\,e_2\wedge e_3$\,. 
\item A subspace $U$ of $W^\bot$ is $\frakh_W$-invariant if and only
 if it is the complex space $\C(-e_1+\sqrt{3}\,e_3+2\,\i\,
 e_2)$\,, belongs to a distinguished family 
of (real) 2-dimensional subspaces of the linear space \begin{equation}\label{eq:pmi}
\{2\,e_2+\i(-3\,e_1+1/\sqrt 3\,e_3),e_1+5/\sqrt{3}\,e_3-2\,\i\,e_2\}_\R\oplus\{e_1,e_2,e_3\}_\C^\bot\;,\end{equation}
or is a sum of such spaces.
\item Let $A\in\so(\Re)$ and $a\in\R$\,. 
The linear map $a\,J^N+A$ leaves $W$ invariant if and only if  $A-a(e_1 \wedge
e_2+\sqrt{3}\,e_2\wedge e_3)$ vanishes on $\{e_1,e_2,e_3\}_\R$\,.
\end{enumerate}
\end{lemma}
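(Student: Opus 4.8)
The plan is to mirror the three-part pattern of the preceding lemmas, using crucially that $\dim(W)=2$. Write $w_1:=2\,e_1+\i\,e_2$, $w_2:=e_2+\i(e_1+\sqrt3\,e_3)$, so that $W=\{w_1,w_2\}_\R$, and abbreviate $H:=J^N+e_1\wedge e_2+\sqrt3\,e_2\wedge e_3$. For Part~(a), since $W$ is two-dimensional the Lie algebra $\frakh_W$ is spanned by the single endomorphism $R^N_{w_1,w_2}$; reading off $\Re(w_1)=2\,e_1$, $\Im(w_1)=e_2$, $\Re(w_2)=e_2$, $\Im(w_2)=e_1+\sqrt3\,e_3$ and substituting into~\eqref{eq:crg}, a short computation gives $R^N_{w_1,w_2}=-H$, which is Part~(a). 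The same data shows $H\,w_1=w_2$ and $H\,w_2=-w_1$, so that $H$ preserves $W$ (as it must, $W$ being curvature invariant) and acts there with eigenvalues $\pm\i$.

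For Part~(b), the point is that $e_1\wedge e_2+\sqrt3\,e_2\wedge e_3\in\so(\Re)$ commutes with $J^N$, so $H$ is $J^N$-complex-linear and real-skew-symmetric, i.e.\ $H\in\fraku(T_pN,J^N)$, and may be diagonalized over $\C$. The restriction of $e_1\wedge e_2+\sqrt3\,e_2\wedge e_3$ to $\{e_1,e_2,e_3\}_\R$ has eigenvalues $0,\pm2\,\i$, so $H$ has $\C$-eigenvalues $3\,\i$, $-\i$ and $\i$, with eigenspaces $E_{3\i}=\C(-e_1+\sqrt3\,e_3+2\,\i\,e_2)$, $E_{-\i}=\C(e_1-\sqrt3\,e_3+2\,\i\,e_2)$ and $E_\i=\C(\sqrt3\,e_1+e_3)\oplus\{e_1,e_2,e_3\}_\C^\bot$. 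From $H\,w_1=w_2$, $H\,w_2=-w_1$ we get $W\subset E_{-\i}\oplus E_\i$; since eigenspaces for distinct eigenvalues are orthogonal this yields $E_{3\i}\subset W^\bot$ and $W^\bot=E_{3\i}\oplus\big((E_{-\i}\oplus E_\i)\cap W^\bot\big)$. I would then identify the second summand with the space~\eqref{eq:pmi}: a dimension count gives real dimension $2n-4$, and one checks that the two displayed generators together with $\{e_1,e_2,e_3\}_\C^\bot$ lie in $W^\bot$ and are orthogonal to $E_{3\i}$. Finally, as $\frakh_W$-modules $E_{3\i}$ is irreducible of ``rotation speed'' $3$ while~\eqref{eq:pmi} is the isotypic component of speed $1$; these are non-isomorphic, so by the Schur-type discussion of Section~\ref{se:definition_of_cip} every $\frakh_W$-invariant subspace of $W^\bot$ is the sum of its intersection with $E_{3\i}$ (which is $\{0\}$ or $E_{3\i}$) and a sum of the two-dimensional rotation planes inside~\eqref{eq:pmi}. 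This is Part~(b).

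For Part~(c), I would use Part~(a) to reduce to $\so(\Re)$: writing $a\,J^N+A=a\,H+B$ with $B:=A-a(e_1\wedge e_2+\sqrt3\,e_2\wedge e_3)\in\so(\Re)$, and noting that $a\,H$ preserves $W$, the map $a\,J^N+A$ preserves $W$ if and only if $B$ does. It then remains to show $B(W)\subset W$ forces $B|_{\{e_1,e_2,e_3\}_\R}=0$, exactly as in Lemma~\ref{le:type(ex_3)}~(c): demanding that $B\,w_1=2\,B e_1+\i\,B e_2$ and $B\,w_2=B e_2+\i(B e_1+\sqrt3\,B e_3)$ lie in $\{w_1,w_2\}_\R$ gives a small linear system whose only solution, once the skew-symmetry of $B$ is invoked, is $Be_1=Be_2=Be_3=0$; the converse is immediate. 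I expect the only real obstacle to be the bookkeeping in Part~(b)---pinning down the eigenspaces of $H$ in the paper's $\wedge$-convention and, above all, verifying that the speed-$1$ isotypic part of $W^\bot$ is precisely the not-very-transparent space~\eqref{eq:pmi}; the curvature computation in~(a) and the module-theoretic argument in~(b), as well as all of~(c), are routine.
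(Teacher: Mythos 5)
Your proposal is correct and follows essentially the same route as the paper: compute $R^N_{w_1,w_2}=-H$ via~\eqref{eq:crg} for (a), diagonalize the single generator $H$ over $\C$ (eigenvalues $3\i,-\i,\i$ with the stated eigenspaces, so that $W^\bot$ splits into the speed-$3$ line and the speed-$1$ part~\eqref{eq:pmi} on which $H^2=-\Id$) for (b), and reduce to $B\in\so(\Re)$ with $B(W)\subset W$ forcing $B|_{\{e_1,e_2,e_3\}_\R}=0$ for (c). The computations you defer (the eigenspace bookkeeping and the small linear system in (c)) all check out.
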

\begin{proof}
For~(a), set $x_1:=2\,e_1+\i\,e_2$ and $x_2:=e_2+\i(e_1+\sqrt{3}\,e_3)$\,.
A straightforward calculation shows that the curvature endomorphism 
$R^N_{x_1,x_2}$ is given by $-J^N-e_1 \wedge e_2 -\sqrt{3}\,e_2\wedge e_3$\,. 

For~(b), we first verify that the 
eigenvalues of $A:=R^N_{x_1,x_2}$ (seen as a complex-linear endomorphism of
$T_pN$) are given by $\{\i, -\i,-3\,\i\}$\,. The complex eigenspace for the eigenvalue $-3\i$ is a subspace of $W^\bot$\,,
given by $\C(-e_1+\sqrt{3}\,e_3+\i\,2\,e_2)$\,. Furthermore, we have  $A^2=-\Id$  on the $(2n-4)$-dimensional 
subspace of $T_pN$ which is given by~\eqref{eq:pmi}, i.e.\ the linear map $A$
defines a second complex structure on~\eqref{eq:pmi}. This proves~(b). 

For~(c): since $W$ is curvature
invariant, the endomorphism $J^N +e_1 \wedge e_2 +\sqrt{3}\,e_2\wedge
e_3$ leaves $W$ invariant. This reduces the problem to the case $a=0$\,. If
$A(W)\subset W$\,, then $A\,x_1=c\,x_2$ and $A\,x_2=-c\,x_1$ for some $c\in \R$ (since $A$
is skew-symmetric and $\left\Vert x_1\right\Vert=\sqrt{5}=\left\Vert x_2\right\Vert$\,).
Considering the action of $A$ on the real and imaginary parts of $x_1$ and $x_2$\,, respectively, this implies that $A\,e_2=-2\,c\,e_1$
and  $A\,e_2=c\,(e_1+\sqrt{3}\,e_3)$\,, a contradiction unless $c=0$\,. 
Thus $A\,x_1=A\,x_2=0$ and hence $A|_{\{e_1,e_2,e_3\}_\R}=0$ since $A\in\so(\Re)$\,. This  finishes the proof.
\end{proof}

\bigskip
\begin{corollary}
If $W$ is of Type $(ex_2)$\,, then there are no orthogonal
curvature invariant pairs $(W,U)$ at all.
\end{corollary}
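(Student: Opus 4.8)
The plan is to run the candidate pair $(W,U)$ through both defining requirements at once: $U$ must be $\frakh_W$-invariant, which is controlled by Lemma~\ref{le:type(ex_2)}(b), and --- since $(W,U)$ is a pair exactly when $(U,W)$ is one --- the algebra $\frakh_U$ must leave $W$ invariant, which is controlled by Lemma~\ref{le:type(ex_2)}(c). Writing $E:=\{e_1,e_2,e_3\}_\R$ and $B:=J^N+e_1\wedge e_2+\sqrt 3\,e_2\wedge e_3$, part~(c) says that an element $a\,J^N+A$ of $\rho(\frakk)$ (with $A\in\so(\Re)$) preserves $W$ precisely when $A$ and $a\,(e_1\wedge e_2+\sqrt 3\,e_2\wedge e_3)$ agree on $E$; equivalently, such an element acts on $E$ as the multiple $B|_E$ of its own $J^N$-component. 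By~\eqref{eq:crg} and~\eqref{eq:symmetric_space} each generator $R^N_{u,u'}$ of $\frakh_U$ has this form, with $A=-\Re u\wedge\Re u'-\Im u\wedge\Im u'$ and $J^N$-component $a=\g{\Re u'}{\Im u}-\g{\Re u}{\Im u'}$, so a necessary condition for $(W,U)$ to be a pair is that for all $u,u'\in U$
\[
\bigl(\Re u\wedge\Re u'+\Im u\wedge\Im u'\bigr)\big|_E=-a\,\bigl(e_1\wedge e_2+\sqrt 3\,e_2\wedge e_3\bigr)\big|_E .
\]
This identity, together with the constraint that $U$ itself be curvature invariant, is the obstruction I intend to exploit.

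By Lemma~\ref{le:type(ex_2)}(a) one has $\frakh_W=\R\,B$, and by part~(b) the operator $B$ has no real eigenvector on $W^\bot$: it acts with eigenvalue $3\,\i$ on the complex line $\C(-e_1+\sqrt 3\,e_3+2\,\i\,e_2)$ and as the complex structure $-R^N_{x_1,x_2}$ on~\eqref{eq:pmi}, where $x_1,x_2$ are the vectors from the proof of Lemma~\ref{le:type(ex_2)}. Since $B^2$ then takes the two distinct values $-9$ and $-1$, there is no $\frakh_W$-invariant line (ruling out Type $(tr_1)$) and every $\frakh_W$-invariant $U$ splits orthogonally as $U_C\oplus U_\natural$ with $U_C\in\{0,\C(-e_1+\sqrt 3\,e_3+2\,\i\,e_2)\}$ and $U_\natural$ a subspace of~\eqref{eq:pmi} invariant under that second complex structure. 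If $U$ is moreover $J^N$-complex --- the only possibility for Types $(c_k)$ and $(c_k')$ --- I would eliminate it by feeding $J^N\in\frakh_U$ (for $(c_k)$) or $I'+k\,J^N$ (for $(c_k')$, see Lemma~\ref{le:type(c_k')}(a)) into the displayed identity, which fails since $B|_E\neq 0$; for $(c_k)$ one may instead invoke Corollary~\ref{co:type(tr_ij,c_k)} applied to $(U,W)$.

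The genuinely new situation is that $U$ is one of the non-complex Types $(tr_{k,\ell})$, $(tr_k')$, $(ex_3)$, $(ex_2)$. Then $U_\natural$ is not $J^N$-complex, and since the second complex structure coincides with $J^N$ on $\{e_1,e_2,e_3\}_\C^\bot$, the space $U_\natural$ must project nontrivially onto the two-dimensional ``mixed'' plane $P\subset\{e_1,e_2,e_3\}_\C$ spanned by the explicit vectors $v_1,v_2$ of~\eqref{eq:pmi}; a short computation with~\eqref{eq:crg} confirms $R^N_{x_1,x_2}$ maps $P$ into itself, so $P$ is genuinely $\frakh_W$-invariant and $U$ contains a suitable line $\ell$. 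The key point is that the left-hand side of the identity, restricted to $E$, depends only on the $P$-projections of its arguments (the real and imaginary parts of vectors of $\{e_1,e_2,e_3\}_\C^\bot$ are orthogonal to $E$), and for $v_1,v_2$ it equals $4\,B|_E$, forcing $a=-4$, whereas in fact $a=\tfrac83$ --- an immediate contradiction whenever $P\subseteq U$. This already disposes of $(ex_2,ex_3)$ and $(ex_2,ex_2)$, which are not covered by the earlier corollaries.

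The step I expect to be the main obstacle is the tilted subcase, where $\ell$ leans into $\{e_1,e_2,e_3\}_\C^\bot$. Here the $E$-shadow still pins the value $a=-4$, but the $\frakh_U$-condition of Lemma~\ref{le:type(ex_2)}(c) \emph{alone} admits a one-parameter family of tilts, so it does not immediately close the argument; one must additionally use that $U$ is a curvature invariant subspace of its declared type, which controls the interaction of $J^N$ with $U$ and is incompatible with a nonzero $a$. Carrying out this combination carefully --- verifying $R^N_{x_1,x_2}v_1=-v_2$ so that a clean spanning pair is available, and then eliminating the tilted members type by type through $(tr_{k,\ell})$, $(tr_k')$, $(ex_3)$, $(ex_2)$ --- is the delicate, computational heart of the proof; everything else is bookkeeping driven by Lemmas~\ref{le:type(ex_2)} and~\ref{le:type(c_k')}.
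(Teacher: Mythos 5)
Your skeleton is the paper's: run both invariance conditions through Lemma~\ref{le:type(ex_2)}, and the numerical obstruction you isolate for $P:=\{v_1,v_2\}_\R$ is exactly the paper's computation $R^N_{\tilde u_1,\tilde u_2}=\tfrac83\,J^N-4(e_1\wedge e_2+\sqrt3\,e_2\wedge e_3)$, whose incompatibility with Lemma~\ref{le:type(ex_2)}(c) ($-4\neq\tfrac83$) kills the case $U=P$. But the proof is not complete, and you say so yourself: the ``tilted subcase'' is precisely where the content lies, and it is not a fringe case. By Lemma~\ref{le:type(ex_2)}(b) the generic non-complex $\frakh_W$-invariant subspace of~\eqref{eq:pmi} is a $2$-plane that mixes $P$ with $\{e_1,e_2,e_3\}_\C^\bot$, so your contradiction ``whenever $P\subseteq U$'' does not yet touch the types $(tr_{k,\ell})$, $(tr_k')$, or even $(ex_2)$ itself for $n>3$. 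Note also that the cross terms $\Re v_1\wedge\Re w$ with $w\bot\{e_1,e_2,e_3\}_\C$ do \emph{not} vanish as operators on $E=\{e_1,e_2,e_3\}_\R$ (they map $E$ into $\R\,\Re w+\R\,\Im w$), and the normalization of a tilted generator changes both $a$ and the $E$-block, so even the assertion that the ``$E$-shadow still pins $a=-4$'' needs an argument. In the paper the elimination of tilting for $(ex_2,ex_2)$ is done by a length comparison of $f_i^\top\wedge f_j^\top$-tensors forcing $f_i\in E$, and for $(tr_k')$ by a rank argument on a curvature endomorphism of $\frakh_U$ that must vanish on $E$; neither of these appears in your outline.

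Two further specific points. First, your claim that the $P\subseteq U$ computation ``already disposes of $(ex_2,ex_3)$'' is unsupported: for $U$ of Type $(ex_3)$ you have not shown $P\subseteq U$, and the clean route there is the reverse invariance --- $W$ would be a $2$-dimensional $\frakh_U$-invariant subspace of $U^\bot$, hence complex by Lemma~\ref{le:type(ex_3)}(b), which it is not; the same argument handles $(ex_2,c_k')$ via Lemma~\ref{le:type(c_k')}(b), and is more robust than feeding $I'+k\,J^N$ into your identity (your ``fails since $B|_E\neq0$'' is not a proof as stated, since one must actually rule out $I'|_E=k(e_1\wedge e_2+\sqrt3\,e_2\wedge e_3)|_E$). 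Second, you are missing the short argument for $(tr_{k,\ell})$ and $(tr_1)$: the generator of $\frakh_W$ has nonzero $J^N$-component, and by Lemma~\ref{le:type(tr_kl)}(c) no such element preserves a totally real space of Type $(tr_{i,j})$; this avoids the tilted analysis entirely for those types. As it stands the proposal establishes the corollary only when $U$ is a line, is complex, or contains $P$; the remaining cases are the bulk of the work and are left as an announced intention rather than an argument.
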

\begin{proof}
Let $\Re\in\scrU$ and an orthonormal system $\{e_1,e_2,e_3\}$ of $\Re$ be 
given such that $W$ is spanned by $x_1:=2\,e_1+\i\,e_2$ 
and $x_2:= e_2+\i(e_1+\sqrt{3}\,e_3)$\,. Suppose further, by contradiction,
that there exists some curvature invariant subspace $U$ of $T_pN$ such that
$(W,U)$ is an orthogonal curvature invariant pair.

For Type $(c_k,ex_2)$\,, see Corollary~\ref{co:type(tr_ij,c_k)}.
If $U$ is of Type $(c_k')$ or $(ex_3)$\,, then $W$ is a 2-dimensional
$\frakh_U$-invariant subspace of $U^\bot$ but not a complex subspace of $T_pN$ according to Lemma~\ref{le:type(ex_3)}~(c).
However, this is not possible, because of Parts~(b) of Lemmas~\ref{le:type(c_k')} and~\ref{le:type(ex_3)}, respectively.

Now suppose that $U$ is of Type $(tr_{i,j})$ determined by the data $(\Re^*,U_1,U_2)$\,. 
Using Lemmas~\ref{le:type(tr_kl)}~(c) and~\ref{le:type(ex_2)}~(a), we see that
$\frakh_W(U)\subset U$ does not hold.

Similarly, the case that $U$ is of Type $(tr_1)$ can not occur.

Suppose that $U$ is of Type $(tr_k')$ determined by the quadruple
$(\Re^*,U',I',U_0')$\,. Then we can assume that $\Re=\Re^*$\,. 
Using Lemma~\ref{le:type(tr_k')}~(b), the fact that $W$ is 2-dimensional
linear subspace of $T_pN$ which is invariant under $\frakh_U$  implies that
either $W\subset\C\,U'^\bot$ or $W$ is a 2-dimensional
$\frakh_U$-invariant subspace of $\C\,U'$\,. 

In the first case, we have $\g{\Re(x_i)}{u}=\g{\Im(x_i)}{u}=0$ for all $u\in U'$ and $i=1,2$\,. With $i=1$\,, it follows
that $\g{e_1}{u}=\g{e_2}{u}=0$\,, then the previous with $i=2$ implies that also
$\g{e_3}{u}=0$ for all $u\in U'$\,. Thus Lemma~\ref{le:type(ex_2)}~(a) and the fact
that $\frakh_W(U)\subset U$ show that $U$ is a complex subspace of $T_pN$\,, a contradiction.

In the second case, we have $\dim(U_0')=2$\,, hence $\dim(U')=4$\,. Further, both $\Re(x_i)$ and $\Im(x_i)$
belong to $U'$ for $i=1,2$\,. Thus we conclude that $\{e_1,e_2,e_3\}\subset U'$\,. 
Let $\{u_1,u_2\}$ be an orthonormal basis of $U_0'$\,. According to Lemma~\ref{le:type(tr_k')}, 
the curvature endomorphism $R^N_{u_1-\i\,I' u_1,u_2-\i\,I' u_2 }$ is given by $A:=-u_1\wedge u_2-I' u_1\wedge I' u_2$\,.
Hence, since  $(W,U)$ is a curvature invariant pair, we obtain that $A(W)\subset W$\,. 
Using Lemma~\ref{le:type(ex_2)}~(c) (with $a=0$), we obtain that $A$ vanishes on $\{e_1,e_2,e_3\}_\R$\,. 
Therefore, since $A\in\so(U')$\,, the rank of $A$ would be at most one, which is not possible unless
$A=0$\,, a contradiction.

Consider the case that $U$ is of Type $(ex_2)$\,, too.
Then there exists some $\Re^*\in\scrU$ and an orthonormal 
system $\{f_1,f_2,f_3\}$ of $\Re^*$ such that $U$ is spanned by $u_1:=2\,f_1
+\i\,f_2$ and $u_2:=f_2+\i(f_1+\sqrt{3}\, f_3)$\,. 
Let $\varphi$ be chosen such that $e^{\i \varphi}\,\Re^*=\Re$\,.
In accordance with Lemma~\ref{le:type(ex_2)},
the curvature endomorphism $R_{1,2}:=R^N_{u_1,u_2}$ is given by $-J^N +A$ with
$A:=-f_1\wedge f_2 -\sqrt{3}\,f_2\wedge f_3$\,. 
We decompose $f_i=f_i^\top+f_i^\bot$ such that $f_i^\top\in e^{-\i \varphi}\{e_1,e_2,e_3\}_\R$ and $f_i^\bot\bot e^{-\i \varphi}\{e_1,e_2,e_3\}_\R$\,.  
Since $R_{1,2}(W)\subset W$\,,  Lemma~\ref{le:type(ex_2)}~(c) (with $a=-1$) 
shows that $$e_1 \wedge e_2+\sqrt{3}\,e_2\wedge e_3
=f_1^\top\wedge f_2^\top+\sqrt{3}\,f_2^\top\wedge f_3^\top$$
(both sides seen as elements of $\fraku(T_pN)$). 
Comparing the length of the tensors on the left and right hand side above, we
see that $$\lvert f_1^\top\rvert=\lvert f_2^\top\rvert=\lvert f_3^\top\rvert=1,$$
i.e. $e^{\i \varphi}f_i\in \{e_1,e_2,e_3\}_\R$ for $i=1,2,3$\,. Hence we can assume that $n=3$\,. 
Since $U$ is $\frakh_W$-invariant but not complex, it follows from  Lemma~\ref{le:type(ex_2)}~(b) that $U$
is the linear space spanned by  $\tilde u_1:=2\,e_2+\i(-3\,e_1+1/\sqrt 3\,e_3)$ and $\tilde u_2:=e_1+5/\sqrt{3}\,e_3-2\,\i\,e_2$\,.
A short calculation shows that the curvature endomorphism  $R^N_{\tilde u_1,\tilde u_2}$ is given by $8/3\, J^N -4(e_1\wedge
e_2 +\sqrt{3}\,e_2\wedge e_3)$\,. Thus we obtain that  $\frakh_U$ does not leave $W$ invariant. Hence $(W,U)$
is not a curvature invariant pair.
\end{proof}

\subsection{Integrability of the curvature invariant pairs of $\rmG^+_2(\R^{n+2})$}
\label{se:integrability}
Let $(W,U)$ be an orthogonal curvature invariant pair of $\rmG^+_2(\R^{n+2})$ such that $\dim(W)\geq 2$\,.
It remains the question whether $(W,U)$ or $(U,W)$ is integrable. 
By means of a case by case analysis of the possible pairs (see Table~\ref{table}), 
we will show that the answer is ``no'' unless $V:=W\oplus U$ is curvature invariant.

Let $\frakk$ denote the isotropy Lie algebra of $N:=\rmG^+_2(\R^{n+2})$ and 
$\rho:\frakk\to \so(T_pN)$ be the linearized isotropy representation. 
Recall that $\rho(\frakk)=\R\, J^N\oplus \so(\Re)$\,.
Further, by definition, the Lie algebra $\frakk_V$ is the maximal subalgebra of $\frakk$ such that
$\rho(\frakk_V)|_V$ is a subalgebra of $\so(V)$\,, see~\eqref{eq:frakk_V}.

\paragraph{Type ${\bf (c_k,c_\ell)}$}
Let $W$ and $U$ be of Types $(c_{k})$ and $(c_{\ell})$ 
defined by the data $(\Re, W_0)$ and $(\Re^*,U_0)$\,, respectively. If $(W,U)$ is a curvature invariant pair, 
then the only possibility is $\Re=\Re^*$ and $W_0\bot U_0$\,. Then $V$ is curvature invariant of 
Type $(c_{k+\ell})$ defined by the data $(\Re, W_0\oplus U_0)$\,.

\paragraph{Type ${\bf (tr_{i,j},tr_{k,\ell})}$}
Let $W$ and $U$ be of Types $(tr_{i,j})$ and $(tr_{k,\ell})$
defined by the data $(\Re,W_1,W_2)$ and $(\Re^*,U_1,U_2)$\,, respectively.
Let $\varphi$  be chosen such that  $\Re=e^{\i \varphi}\,\Re^*$\,. Substituting, if necessary, 
$\i\,\Re^*$ for $\Re^*$\,, we can assume that $\varphi\in[-\pi/4,\pi/4]$\,.

\begin{itemize}
\item Case $i=j=1$\,. Suppose that $(W,U)$ is integrable and let $M$ be 
a parallel submanifold through $p$ such that $T_pM=W$ and $\bot^1_pM=U$\,.
Since the sectional curvature of $W$ vanishes, according to Corollary~\ref{co:circles} there exists a 
simply connected totally geodesic submanifold $\bar M\subset N$\,, a Riemannian splitting $\bar M=M_1\times M_2$ 
and extrinsic circles $C_i\subset M_i$ such that $M=C_1\times C_2$\,. 
Recall that $\rmG_2^+(\R^{4})\cong \C\rmP^1\times\C\rmP^1$ whereas 
the symmetric space $\rmG_2^+(\R^{n+2})$ is irreducible if 
$n\geq 3$ since then its root-system is of Type $B_n$ (see~\cite{sebastian1})\,. 
Further, any rank-one symmetric space is irreducible, too. 
Therefore, according to Theorem~\ref{th:ci}, the only possibilities are $\bar M=\rmS^k\times\rmS^\ell$ ($k,\ell\geq 2$)
or $\bar M=\C\rmP^1\times\C\rmP^1$\,. 
In the first case, applying reduction of the codimension to each factor, we can even assume that $k=\ell=2$.
Therefore, $\dim(\bar M)=4$ and hence $V=T_p\bar M$ is curvature invariant  of Type $(tr_{k,\ell})$ or $(c_2)$\,.
\qed

In the remaining cases, at least one of the indices $\{i,j\}$ is strictly greater than $1$ 
and hence (possibly after substituting $\i\,\Re$ for $\Re$), 
we can suppose that $i\geq 2$\,. Then we have to consider the possibilities 
$\Re=\Re^*$ and $W_1=U_2$\,, or $W_1\bot e^{\i \varphi}\,U_2$\,.

\item 
Case $i=\ell\geq 2$\,, $\Re=\Re^*$ and $W_1=U_2$\,. Here we have 
$W_2\bot U_1$\,, or $W_2=U_1$\,, or $j=k=1$\,.
In case $W_2=U_1$\,, the linear space $V$ is curvature invariant of Type $c_{k+\ell}$ 
defined by $(\Re,U_1\oplus W_1)$\,. Otherwise, we claim that $\rho(\frakk_V)|_V\cap\so(V)_-=\{0\}$\,:

let $a\in\R$\,, $B\in\so(\Re)$\,, set $A:=a\, J^N+B$ and suppose that $A(V)\subset V$ and $A|_V\in\so(V)_-$ holds. 
Then $A(W)\subset U$ and $A(U)\subset W$\,.
We aim to show that $A=0$\,. Let $v_2\in W_2$\,. Thus $A\,\i\,v_2\in U$\,. 
It follows that $a\,v_2\in U_1$ and $B\, v_2\in U_2$\,. 
In the same way, 
$a\,u_1\in W_2$ and $B\, u_1\in W_1$ for all $u_1\in U_1$\,. 
Hence $a=0$\,, since $W_2=U_1$ would be a different case. Further, setting $V_i:=W_i\oplus U_i$ for $i=1,2$\,, we have $A|_{V_i}\in\so(V_i)_-$\,.
Since the maps $\so(V_1)_-\to \Hom(W_1, U_1), A\mapsto A|_{W_1}$ and $\so(V_2)_-\to \Hom(U_2, W_2), A\mapsto A|_{U_2}$ 
both are linear isomorphisms according to~\eqref{eq:iso1}, 
for the vanishing of $A$ it suffices to show that $A|_{W_1}=0$ and $A|_{U_2}=0$\,:

on the one hand, $A(W_1)=A(U_2)\subset W_2$\,.
On the other hand, $A(W_1)\subset U_1$ since $A\in\so(V_1)_-$\,. Hence $A(W_1)\subset W_2\cap U_1$\,. 
Further, the linear space $W_2\cap U_1$ is trivial if $W_2\bot U_1$ or if $j=k=1$ and $W_2\neq U_1$\,.
Therefore, $A|_{W_1}=0$ unless $W_2=U_1$\,.
Similar considerations show that also $A|_{U_2}=0$ unless $W_2=U_1$\,. This establishes our claim.

Assume, by contradiction, that $(W,U)$ is integrable but $W_2\neq U_1$\,. Thus $\rho(\frakk_V)|_V\cap\so(V)_-=\{0\}$\,.
Therefore, according to Corollary~\ref{co:dec}, there exists a symmetric bilinear map $h:W\times W\to W^\bot$ whose image spans $U$ 
and which satisfies~\eqref{eq:dec2}. Note, the Lie algebra $\frakh$~\eqref{eq:La2} is given by $\so(W_1)\oplus \so(W_2)\oplus\so(U_1)$\,. 
Then $\so(W_2)\oplus\so(U_1)$ is the direct sum representation on
$\i W_2\oplus U_1$ and $\so(W_1)$ acts diagonally on $W_1\oplus \i W_1$ (i.e. $A(v_1+\i\,v_2)=A\, v_1+\i\,A\,v_2$ 
for all $A\in \so(W_1)$ and $(v_1,v_2)\in W_1\times W_2$).
In particular, each of the linear spaces $W_1,\, \i W_1,\, U_1$ and $\i\,W_2$ is an $\frakh$-module and 
the induced $\frakh$-action on both $W_1$ and $\i\,W_1$ is non-trivial and irreducible (since $i=\ell\geq 2$).
Therefore, Schur's Lemma implies that $\Hom_\frakh(W,U)\subset\Hom_\frakh(W_1,\i\,W_1)\oplus\Hom_\frakh(\i W_2,U_1)$\,.  
We conclude from the previous that $h(x,y)\in \i\,W_1$  and $h(x,\i\,z)\in U_1$ for all $x\in W$\,, $y\in W_1$  and $z\in W_2$\,, hence 
\begin{align}\label{eq:extrinsic_product_1}
&h(W_1\times \i W_2)=h(\i W_2\times W_1)\subset U_1\cap \i\,W_1=\{0\}\;,\\
\label{eq:extrinsic_product_2}
&\i\,W_1=\Spann{h(x,x)}{x\in W_1}\ \text{and}\ U_1=\Spann{h(x,x)}{x\in \i W_2}\;.
\end{align} 
We claim that $W_2=\{0\}$\,:

let $x,y\in W_1\times W_2$\,. Then $R^N_{x,\i\, y}=0$ (by the condition $W_1\bot W_2$\,, see~\eqref{eq:crg}) and hence~\eqref{eq:cond2} (with $k=1$) 
yields
\begin{equation}\label{eq:extrinsic_product_3}
0=[\fetth_{x},R^N_{x,\i\,y}|_V]=R^N_{h(x,x),\i\,y}|_V+R^N_{x,h(x,\i\,y)}|_V\stackrel{\eqref{eq:extrinsic_product_1}}{=}R^N_{h(x,x),\i\,y}|_V+0\;.
\end{equation}
Thus, we have $R^N_{h(x,x),\i\,y}=0$ for all $(x,y)\in W_1\times W_2$ according to Lemma~\ref{le:cfl}\,. 
Therefore, 
\begin{equation}\label{eq:extrinsic_product_4}
0=R^N_{\i\,x,\i\, y}\stackrel{\eqref{eq:crg}}{=}y\wedge x
\end{equation} for all $(x,y)\in W_1\times W_2$ according to~\eqref{eq:extrinsic_product_2}.
Thus~\eqref{eq:extrinsic_product_4} implies that $x=0$ or $y=0$ by the condition $W_1\bot W_2$\,. 
Since $W_1\neq\{0\}$\,, this gives our claim.
 
But then also  $U_1=\{0\}$ by means of~\eqref{eq:extrinsic_product_2}, i.e. $W=W_1$ and $U=\i W_1$ which implies that $V$ 
is curvature invariant of Type $(c_i)$ defined by $(\Re,W_1)$\,.

\item  Case $i\geq 2$ and $W_1\bot e^{\i\varphi}\,U_2$\,. The remaining possibilities are $\varphi=0$ and $W_2=U_1$\,, or $W_2\bot e^{\i\varphi}\,U_1$\,, 
or $j=k=1$\,. In case $\varphi=0$ and $W_2\bot U_1$\,, we obtain that $V$ is curvature 
invariant of Type $(tr_{i+k,j+l})$ defined by $(\Re,W_1\oplus U_1,W_2\oplus U_2)$\,. 
Otherwise, we claim that $\rho(\frakk_V)|_V\cap\so(V)_-=\{0\}$\,:

let $a\in\R$\,, $B\in\so(\Re)$ be given such that $A:=a\, J^N+B$ satisfies $A(V)\subset V$ and $A|_V\in\so(V)_-$\,. 
Thus $A\,v\in U$ for every unit vector $v\in W_1$\,, i.e. $A\,v=u_1+\i\, u_2$
for suitable $u_1\in U_1$ and $u_2\in U_2$\,. Since
$$e^{\i \varphi} (A\, v)=e^{\i \varphi} (a\,\i\, v+ B\,v)=a\,\i\,\cos(\varphi)\,v 
-a\,\sin(\varphi)\,v+\cos(\varphi)\, B\,v+\i\,\sin(\varphi)\,B\,v\;,$$ 
we see that 
\begin{align}\label{eq:case_C_1}
&e^{\i \varphi}\,u_1=\Re(e^{\i \varphi} (A\,v))=-a\,\sin(\varphi)\,v+\cos(\varphi) \,B\,v\;,\\
\label{eq:case_C_2}
&e^{\i \varphi}\,u_2=\Im(e^{\i \varphi} (A\,v))=a\,\cos(\varphi)\,v +\sin(\varphi)\, B\,v\;.
\end{align}
The condition $W_1\bot e^{\i \varphi}\,U_2$ implies that
$$
0=\g{v}{e^{\i \varphi}\,u_2}\stackrel{\eqref{eq:case_C_2}}
=a\,\cos(\varphi)\g{v}{v}+\sin(\varphi)\g{B\, v}{v}=a\,\cos(\varphi),
$$
since $v$ is a unit vector and $B\in\so(\Re)$\,. Thus $a=0$\,, because $\varphi\in [-\pi/4,\pi/4]$\,. Therefore,
$A=B\in\so(\Re)$ anyway and, in particular, we have 
\begin{align*}&\cos(\varphi)\, A\,v\stackrel{\eqref{eq:case_C_1}}{=}e^{\i \varphi}\,u_1\;,\\
&\sin(\varphi)\, A\,v\stackrel{\eqref{eq:case_C_2}}{=}e^{\i \varphi}\,u_2\;.\end{align*}
We conclude that 
$$
0=\g{u_1}{u_2}=\g{e^{\i \varphi}\,u_1}{e^{\i \varphi}\,u_2}=\sin(\varphi)\,\cos(\varphi)\,\g{A\,v}{A\,v}\;.
$$
Hence $\varphi= 0$ or $A\,v=0$ for all $v\in W_1$\,. In the same way, 
we can show that $\varphi= 0$ or $A\,v=0$ for all $v\in W_2$\,.
By means of~\eqref{eq:iso1}, we conclude that $A|_V=0$ unless $\varphi= 0$\,. 

In case $\varphi=0$\,, set $V_1:=W_1\oplus U_1$ and $V_2:= W_2\oplus U_2$\,.
Note that $A|_{V_1}\in\so(V_1)_-$ and $A|_{V_2}\in\so(V_2)_-$\,.
Recall that we suppose that $W_1\bot U_2$ holds but that the condition $W_2\bot U_1$ fails.
Hence $W_2=U_1$ unless $j=k=1$\,. 
If $W_2=U_1$\,, then $A(U_1)=A(W_2)\subset W_1\cap U_2=\{0\}$ and hence $A|_{V_1}=A|_{V_2}=0$ 
according to~\eqref{eq:iso1}, as in the previous case. 
If $j=k=1$, then, using that $W_1\bot U_2$\,,
$$
\g{A\, v_1}{v_2}=-\g{v_1}{A\, v_2}=0
$$ for all $v_1\in W_1$ and $v_2\in W_2$\,. Further, by our assumptions, the linear form $\g{v_2}{\cdot }$ 
defines an isomorphism $U_1\to\R$ for every $v_2\in W_2$ which is not equal to zero. 
Thus  we conclude that $A|_{W_1}=0$ and hence $A|_{V_1}=0$\,, since 
$\so(V_1)_-\to \Hom(W_1,U_1), A\mapsto A|_{W_1}$ is a linear isomorphism according to~\eqref{eq:iso1}. 
For the same reason, $A|_{U_2}=0$ and hence $A|_{V_2}=0$\,. 
We conclude that $A|_V=0$\,. This establishes our claim.
 
Assume, by contradiction, that $(W,U)$ is integrable but at least one of the conditions $\varphi=0$ or $W_2\bot
U_1$ fails. We have just seen that this implies that $\rho(\frakk_V)|_V\cap\so(V)_-=\{0\}$\,. 
Thus, there exists a symmetric bilinear map $h:W\times W\to U$  
whose image spans $U$ and which satisfies~\eqref{eq:dec2}.
Note, the Lie algebra $\frakh$ defined in~\eqref{eq:La2} 
is given by $\so(W_1)\oplus \so(W_2)\oplus\so(U_1)\oplus\so(U_2)$ acting as a direct sum
representation on $W_1\oplus \i W_2\oplus U_1\oplus \i\,U_2$ 
where  $\so(W_1)$ acts non-trivially and irreducibly anyway (since $i\geq 2$). 
Therefore, by means of Schur's Lemma, $\Hom_\frakh(W_1,U)=\{0\}$\,, i.e. $\Hom_\frakh(W,U)\subset \Hom(\i W_2,U)$\,. 
If $j\neq 1$\,, then we even have $\Hom_\frakh(W,U)=\{0\}$\,, 
hence $h=0$ which is not possible.  Otherwise, if $j=1$\,, we thus see that
$h(x,y)=h(y,x)=0$ for all $x\in W_1$ and $y\in W$\,, i.e. $h(W\times W)=h(\i W_2\times \i W_2)$ 
which spans a 1-dimensional space, a contradiction (since
$k+\ell\geq 2$).\qed
\end{itemize}

\paragraph{Type $\mathbf{(tr_{k,\ell},tr_1)}$}
Suppose that $W$ is of Type $(tr_{k,\ell})$ defined by the data $(\Re,W_1,W_2)$ and $U$ is spanned by a unit vector $u$\,.  
\begin{itemize}
\item Case $k=\ell=1$\,. Similar as for Type $(tr_{1,1},tr_{1,1})$\,, if  $(W,U)$ is integrable, 
then $M$ is the Riemannian product of an extrinsic circle and a geodesic line in
a simply connected totally geodesic Riemannian product $\bar M=\C\rmP^1\times\R$ 
such that $V=T_p\bar M$ is of Type $(tr_{2,1})$ or $(ex_3)$\,. \qed

\item
Case $k\leq \ell$ with $\ell \geq 2$\,. Let us write $u=u_1+\i\,u_2$ with $u_1,u_2\in\Re$\,. 
Then we have $u_1\bot W_1\oplus W_2$ and $u_2\bot W_2$\,.
Further, if $u_2=0$ or if $u_1=0$ and $u_2\,\bot W_1$\,, then $W\oplus U$ is curvature invariant of 
Type $(tr_{2,\ell})$ or $(tr_{1,\ell+1})$ defined by the triples $(\Re,W_1\oplus U,W_2)$ 
or $(\Re,W_1,W_2\oplus \i\,U)$\,, respectively.
Otherwise, we claim that the linear space $\rho(\frakk_V)|_V\cap \so(V)_-$ is trivial:

let $a\in\R$ and $B\in\so(\Re)$ be given and suppose that $A:=a\,J^N\oplus B$ 
satisfies $A(V)\subset V$ and $A|_V\in\so(V)_-$\,. Then there exists a linear form $\lambda:W_2\to \R$ such that 
\begin{equation}\label{eq:case_trkell_tr1_1}
\forall y\in W_2:A\, \i\, y=-a\,y+ \i\,B\,y=\lambda(y)(u_1+\i\, u_2)\;.
\end{equation}
Comparing the real parts of the last equation, we obtain that $-a\,y=\lambda(y)\,u_1$ 
for all $y\in W_2$\,, hence $a=0$ (since $\ell\geq 2$)\,.
Thus there  exists a linear form $\mu:W_1\to \R$ such that 
\begin{equation}\label{eq:case_trkell_tr1_2}
\forall x\in W_1:B\,x=\mu(x)(u_1+\i\, u_2)\;.
\end{equation}
Comparing the imaginary parts of the last equation and recalling that $u_2\neq0$\,, we obtain that $B|_{W_1}=0$\,. 
Suppose now, by contradiction,  that there exists $y\in W_2$ with $B\,y\neq 0$\,. 
Then $\lambda(y)\neq 0$ and hence $u_1=0$ by means of~\eqref{eq:case_trkell_tr1_1}. Further, 
\begin{equation}\label{eq:case_trkell_tr1_3}
0=\g{y}{B\,x}=-\g{B\,y}{x}=-\lambda(y)\,\g{u_2}{x}
\end{equation}
for all $x\in W_1$\,. Since $\lambda(y)\neq 0$\,, we obtain that $u_2$ 
belongs to the orthogonal complement of $W_1$\,, i.e.\ 
we have shown that $u_1=0$ and $u_2\,\bot W_1$\,, which is a different case.
This proves our claim.

Assume that neither the case $u_2=0$ nor the case $u_1=0$ and $u_2\,\bot W_1$ holds but, 
by contradiction, that there exists an integrable symmetric bilinear map $h:W\times W\to U$ whose image spans $U$\,.
Note, the Lie algebra $\frakh$ from Corollary~\ref{co:dec} is given by $\so(W)$ 
acting irreducibly and non-trivially on $\i W_2$ (since $\ell\ge 2$) 
and trivially on $U$\,. Hence $\Hom_\frakh(W,U)\subset\Hom(W_1,U)$\,.
By means of~\eqref{eq:dec2}, we obtain that
\begin{align}\label{eq:case_trkell_tr1_4}
&h(W_1\times\i W_2)=h(\i W_2\times W_1)=h(\i W_2\times\i W_2)=0\;,\\
&\label{eq:case_trkell_tr1_5}
U=\Spann{h(x,x)}{x\in W_1}\;.
\end{align}
Further, we have $R^N_{x,\i\,y}=0$ for  every $(x,y)\in W_1\times W_2$\,. 
Therefore, on the one hand, Eq.~\ref{eq:cond2} (with $k=1$) yields 
$R^N_{h(x,x),\i\,y}|_V=0$ and thus $R^N_{h(x,x),\i\,y}=0$ for all $(x,y)\in W_1\times W_2$ according to Lemma~\ref{le:cfl}\,.
It follows that $0=R^N_{u,\i\,y}=-\g{u_1}{y}\, J^N-u_2\wedge y$ for all $y\in W_2$ which implies that $u_2=0$ 
(since $\ell\geq 2$)\,, a contradiction.\qed
\end{itemize}

\paragraph{Type $\mathbf{(c_k',c_\ell')}$}
Let $W$ and $U$ be of Types $(c_k')$ and
$(c_\ell')$ defined by the data $(\Re,W',I')$ and $(\Re^*,U',J')$ with $\Re=\Re^*$\,. If $U'=W'$ and $J'=-I'$\,, 
then $U=\bar W$ and $V=W\oplus\bar W=\C\,W'$ is curvature invariant
of Type~$(c_{2k})$\,. If $W'\bot U'$\,, then $V$ is curvature invariant of Type $(c_{k+\ell}')$
defined by $(\Re,W'\oplus U',I'\oplus J')$\,.\qed

\paragraph{Type $\mathbf{(c_1',tr_1)}$}
Let $W$ and $U$ be of Types $(c_1')$ and $(tr_1)$\,, respectively, with $U\subset \bar W$\,.
The action of $\frakh_W$ on $W$ is given by $\so(W)$ and hence $W$ is an irreducible $\frakh_W$-module 
(see Lemma~\ref{le:type(c_k')}~(a)). Therefore, if $(W,U)$ is integrable, 
then the linear space $W\oplus U$ is curvature invariant
according to Proposition~\ref{p:sph}.\qed

\paragraph{Type $\mathbf{(c_k,c_\ell')}$}
Let $W$ and $U$ be of Types $(c_k)$ and $(c_\ell')$ determined by the data $(\Re,W_0)$ and  $(\Re^*,U',I')$\, respectively. 
Suppose further that  $\Re=\Re^*$ and $W_0\bot U'$ holds.
Let $u\in U'$ be a unit vector and consider $A:=J^N+u\wedge I' u$\,. Recall that $A\in\frakh_U$ according to~\eqref{eq:type(c_k')_2}.
Further, $A\, (u-\i\,I'u)=2\, (I'u+\i\, u)$ according to~\eqref{eq:I'=pmi} and we have $A|_W=J^N|_W$\,. 
Thus $A^2:=A\circ A$ acts by the scalars $-4$ and $-1$ on the linear spaces $\C(u-\i\,I'u)$ and $W$\,, respectively.
Let $\lambda\in \Hom_\frakh(U,W)$ be given, then
\begin{align*}
-4\,\lambda(u-\i\,I'u)=\lambda(A^2 (u-\i\,I'u))=A^2\,\lambda(u-\i\,I' u)=-\lambda(u-\i\,I'u)\;,\\
-4\,\lambda(I'u+\i\,u)=\lambda(A^2 (I'u+\i\,u))=A^2\,\lambda(I'u+\i\,u)=-\lambda(I'u+\i\,u)\;.
\end{align*}
It follows that $\lambda|_{\C(u-\i\,I'u)}=0$\,. Since $u$ is arbitrary, we conclude that $\Hom_\frakh(U,W)=\{0\}$ and thus $\Hom_\frakh(W,U)=\{0\}$\,, 
too, because of~\eqref{eq:iso3}.
Furthermore, we claim  that $\rho(\frakk_V)|_V\cap\so(V)_-=\{0\}$\,:

let $a\in\R$ and $B\in\so(\Re)$ be given, set $A:=a\,J^N\oplus B$ and suppose that $A(V)\subset V$ and $A|_V\in\so(V)_-$\,. 
If $v$ is a unit vector of $W_0$\,, then $v,\i\,v\in W$ and thus 
$$
0=\g{A\,v}{\i\,v}=a\,\g{\i\,v}{\i\,v}\;,
$$
i.e. $a=0$\,. Hence $A\in\so(\Re)$ and $A\,v$ belongs to $U\cap\Re=\{0\}$\,, i.e. $A\,\i\,v=\i\,A\,v=0$  for all $v\in W_0$\,. 
Therefore, $A|_V=0$ because of~\eqref{eq:iso1}.

Whence, Corollary~\ref{co:dec} implies that neither $(W,U)$ nor $(U,W)$ is integrable.\qed

\paragraph{Type $\mathbf{(tr_{j,k},tr'_\ell)}$}
Let $(W,U)$ be an integrable orthogonal curvature
invariant pair with $W$ and $U$ of Types $(tr_{j,k})$ and $(tr_\ell')$ determined by the data $(\Re,W_1,W_2)$ and $(\Re^*,U',I',U_0')$\,, respectively.
By means of Corollary~\ref{co:type(tr_i',tr_kl)}, we can assume that $\Re=\Re^*$ and that $W_1\oplus W_2$ 
is contained in the orthogonal complement of  $U'$ in $\Re$\,.
We claim that the linear space  $\rho(\frakk_V)|_V\cap\so(V)_-$ is trivial:

let $a\in\R$ and $B\in\so(\Re)$ be given, set $A:=a\,J^N\oplus B$ and suppose that $A(V)\subset V$ and $A|_V\in\so(V)_-$ holds. 
If $v_1\in W_1$\,, then $a\,\i\,v_1$ is the imaginary part of $A\,v_1$\,. Since $A\,v_1\in U$\,, we see that
$A\,v_1=a(I' v_1+\i\,v_1)$\,. In particular, $a\,I' v_1\in U_0'\subset U'$\,. Because $W_1\cap U'=\{0\}$\,, this implies $a=0$\,, i.e. $A$ vanishes on $W_1$\,.
In the same way, we can show that $A$ vanishes on $\i W_2$\,, too. Hence, we see
that $A|_V=0$\,, since~\eqref{eq:iso1} is a linear isomorphism.
This establishes our claim.

Further, according to Lemma~\ref{le:type(tr_k')}~(a), the action of $\frakh_U$ on $U$ is given by $\so(U)$ 
and $\frakh_U$ acts trivially on $(\C\, U')^\bot$\,.
Thus, $\Hom_\frakh(W,U)=\{0\}$\,. Therefore, Corollary~\ref{co:dec} implies that neither $(W,U)$ nor $(U,W)$ is integrable.\qed

\paragraph{Type $\mathbf{(tr_k',tr_\ell')}$}
Let $W$ and $U$ be of Types $(tr_k')$ and
$(tr_\ell')$ defined by the data $(\Re,W',I',W_0')$ and $(\Re^*,U',J',U_0')$\,,
respectively. We can assume that $\Re=\Re^*$\,. 

If $W'$ is orthogonal to $U'$\,, then $W\oplus U$ is curvature invariant of Type $(\tr_{k+\ell}')$
defined by $(\Re,W'\oplus U',I'\oplus J',W_0'\oplus U_0')$\,. If $U=\i W$\,, then $W\oplus U$ 
is curvature invariant of Type $(c_k')$ defined by $(\Re,W',I')$\,.

Suppose that $k=\ell\geq 3$ and $U=e^{-\i\theta}\,\bar W$ for some $\theta\in\R$\,. 
We claim that neither $(W,U)$ nor $(U,W)$ is integrable. Since $W=e^{-\i\theta}\,\bar U$\,, it suffices to prove the first assertion.
In order to explain the idea of our proof, first consider the case 
$\theta=0$\,. Then, the linear space  $V$ is curvature invariant of Type $(c_k)$ defined by $(W',I')$ and
the totally geodesic submanifold $\exp^N(V)$ is isometric to a product $\rmS^k\times\rmS^k$ such that $p=(o,o)$ (where $o$ is some origin of $\rmS^k$) 
such that the linear space  $W$ is given by $\Menge{(x,x)}{x\in T_o\rmS^k}$\,. 
If we assume, by contradiction, that $(W,U)$ is integrable, then the corresponding 
complete parallel submanifold through $p$ would be contained in $\rmS^k\times\rmS^k$ by means of reduction of the codimension and, moreover, it would 
even be a symmetric submanifold of $\rmS^k\times\rmS^k$ according to Corollary~\ref{co:1-full}. 
However, this is not possible, since a symmetric submanifold $M\subset \rmS^k\times\rmS^k$ 
through the point $p=(o,o)$ with $T_pM=\Menge{(x,x)}{x\in T_o\rmS^k}$ 
is totally geodesic according to Theorem~\ref{th:products}.
 In the general case, the linear space $V$ is not curvature invariant, but a similar idea shows that $(W,U)$ is not integrable, as follows.

\bigskip
\begin{definition}\label{de:RH}
Let $A\in\so(W')$ be given. We say that $A$ is real, holomorphic or anti-holomorphic if $A(W_0')\subset W_0'$\,, $A\circ I'=I'\circ A$ 
or $A\circ I'=-I'\circ A$\,, respectively.
 \end{definition}

Consider the linear map $J_\theta$ on $W'\oplus \i W'$ which is given on
$W_0'\oplus \i\,I'(W_0')$ by $J_\theta(v-\i\,I' v):=e^{-\i\theta} (v+\i\,I'
v)$ and $J_\theta(v+\i\,I' v):=-e^{\i\theta} (v-\i\,I' v)$ 
for all $v\in W_0'$ and which is extended to $W'\oplus \i W'$ by $\C$-linearity (note, $W_0'\oplus \i\,I'(W_0')$ is a real form of $W'\oplus \i W'$).

\bigskip
\begin{lemma}\label{le:Iphi}
Let $W$ be of Type $(tr_k')$ defined by the data $(\Re,W',I',W_0')$\,. Set $U:=e^{-\i\theta}\,\bar W$ and $V:=W\oplus U$\,.
\begin{enumerate}
\item
$J_\theta$ is a Hermitian structure on $W'\oplus \i W'$ 
such that $W$ gets mapped onto $U$ and vice versa. In particular, 
$V$ is a complex subspace of $(W'\oplus \i W',J_\theta)$ and  $J_\theta|_V$ belongs to $\so(V)_-$\,.
\item
Let $A\in\so(W')$ and suppose that $A$ is real. As usual, we extend both $A$ and $I'$ to complex linear maps on $W'\oplus \i W'$ via complexification. 
If $A$ is holomorphic, then $A$ commutes with $J_\theta$ for all $\theta\in\R$\,.
If $A$ is anti-holomorphic, then $\exp(\theta\,I')\circ A$ anti-commutes with $J_\theta$ for all $\theta\in\R$\,.
\end{enumerate}
\end{lemma}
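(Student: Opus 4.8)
The plan is to reduce every assertion to the action of the operators in question on the spanning vectors $f_v^-:=v-\i\,I'v$ and $f_v^+:=v+\i\,I'v$ with $v\in W_0'$, which span $W$ and $\bar W$ respectively. Since $W_0'\oplus\i\,I'(W_0')=W\oplus\bar W$ is a real form of $W'\oplus\i W'$ and all the operators in sight ($J_\theta$, $I'$, $A$, and multiplication by $e^{\i\theta}=\exp(\theta J^N)$) are $\C$-linear with respect to $J^N$, any identity holding on these generators holds on all of $W'\oplus\i W'$. First I would record the two facts that drive the whole computation: by~\eqref{eq:I'=pmi} one has $I'f_v^-=J^N f_v^-=\i\,f_v^-$ and $I'f_v^+=-\i\,f_v^+$, so that $\exp(\theta I')$ acts on $W$ as multiplication by $e^{\i\theta}$ and on $\bar W$ as multiplication by $e^{-\i\theta}$; that is, $\exp(\theta I')f_v^-=e^{\i\theta}f_v^-$ and $\exp(\theta I')f_v^+=e^{-\i\theta}f_v^+$. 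Together with the defining relations $J_\theta f_v^-=e^{-\i\theta}f_v^+$ and $J_\theta f_v^+=-e^{\i\theta}f_v^-$, these are the only inputs needed.

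For part (a), applying $J_\theta$ twice gives $J_\theta^2 f_v^-=e^{-\i\theta}J_\theta f_v^+=-e^{-\i\theta}e^{\i\theta}f_v^-=-f_v^-$, and likewise $J_\theta^2 f_v^+=-f_v^+$, so $J_\theta^2=-\Id$ by $\C$-linearity. For orthogonality I would note that the $f_v^-$, $f_v^+$ form an orthogonal system (the cross terms vanish because $W_0'\bot I'(W_0')$ and because real and imaginary parts are mutually orthogonal), that $W\bot\bar W$, and that $J_\theta$ carries each complex line $\C f_v^-$ isometrically onto $\C f_v^+$ and vice versa by multiplication with the unit numbers $e^{\mp\i\theta}$; these lines give an orthogonal decomposition of $W'\oplus\i W'$, so $J_\theta$ preserves the metric and is thus a Hermitian structure. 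Finally the defining relations yield $J_\theta(W)=e^{-\i\theta}\bar W=U$ and $J_\theta(U)=e^{-\i\theta}J_\theta(\bar W)=-W=W$, whence $V=W\oplus U$ is $J_\theta$-invariant (a complex subspace) and, since $J_\theta$ interchanges $W$ and $U$, we get $J_\theta|_V\in\so(V)_-$.

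For part (b) both assertions are $\C$-linear operator identities, so again it suffices to test them on the generators. The relevant action of $A$ is $A f_v^\pm=f_{Av}^\pm$ in the holomorphic case and $A f_v^-=f_{Av}^+$, $A f_v^+=f_{Av}^-$ in the anti-holomorphic case (both with $Av\in W_0'$, which is where ``$A$ real'' enters). In the holomorphic case $A$ preserves the generator structure and commutes with the scalars $e^{\pm\i\theta}$, so $A J_\theta f_v^\pm=J_\theta A f_v^\pm$ is immediate. In the anti-holomorphic case set $B:=\exp(\theta I')A$ and compute the two orders on $f_v^-$: one finds $J_\theta B f_v^-=J_\theta(e^{-\i\theta}f_{Av}^+)=-f_{Av}^-$, whereas $B J_\theta f_v^-=\exp(\theta I')A(e^{-\i\theta}f_v^+)=e^{-\i\theta}\exp(\theta I')f_{Av}^-=f_{Av}^-$, so that $J_\theta B f_v^-+B J_\theta f_v^-=0$; the computation on $f_v^+$ is identical. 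Hence $B$ anti-commutes with $J_\theta$.

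The one place that needs genuine care — and the only real obstacle — is keeping the two ``rotations'' apart: multiplication by $e^{\i\theta}=\exp(\theta J^N)$ and the rotation $\exp(\theta I')$ agree on $W$ but differ by the sign of the exponent on $\bar W$. It is precisely this mismatch ($\exp(\theta I')=e^{\i\theta}$ on $W$ but $=e^{-\i\theta}$ on $\bar W$) that converts the factor $e^{-\i\theta}$ produced by $J_\theta$ into $e^{+\i\theta}$ after one passes $A$ across it, producing the sign that distinguishes anti-commutation in the anti-holomorphic case from commutation in the holomorphic one. I would therefore state the two multiplication formulas explicitly at the outset and track the exponents symbol by symbol, rather than absorbing them prematurely into $\exp(\theta I')$.
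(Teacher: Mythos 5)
Your proof is correct and follows essentially the same route as the paper: both reduce every assertion to the action on the generators $v\mp\i\,I'v$ of the real form $W\oplus\bar W$, using~\eqref{eq:I'=pmi} to turn $I'$ into $\pm\i$ on $W$ and $\bar W$ and hence $\exp(\theta\,I')$ into $e^{\pm\i\theta}$. The only cosmetic difference is that the paper packages the bookkeeping through the factorization $J_\theta=\exp(\theta\,I')\circ J$ and the quaternionic triple $\{I,J,K\}$ with $K\,v=-\i\,\bar v$ (so that ``$A$ real'' becomes ``$A$ commutes with $K$''), whereas you track the exponentials by hand; the underlying computations coincide.
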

\begin{proof}
Let $\{e_1,\ldots,e_k\}$ be an orthonormal basis of $W_0'$ and set $v_i:=1/\sqrt{2} (e_i-\i\,I' e_i)$\,. 
Then $\{v_1,\ldots, v_k,\bar v_1,\ldots\bar v_k\}$ is a Hermitian basis of $W'\oplus \i W'$\,. 
We define a unitary map $J$ on $W'\oplus \i W'$ via $J(v_i):=\bar v_i$ and $J(\bar v_i):=-v_i$\,. 
Further, set $I:=I'$ and  $K:=I\circ J$\,. Then $I^2=J^2=-\Id$ and $I\circ J=-J\circ I$ by means of~\eqref{eq:I'=pmi}\,, i.e.\ the usual quaternionic relations hold. Furthermore, $K\, v=-\i\,\bar v$ for all $v\in W'\oplus\i W'$\,.
Note that \begin{equation}\label{eq_Jphi}
J_\theta=\exp(\theta\, I)\circ  J=J\circ \exp(-\theta\, I)=\exp(\theta/2\, I)\circ  J\circ \exp(-\theta/2\, I)\;.
\end{equation}
It follows that $J_\theta$ defines another complex structure on $W'\oplus \i W'$\,. 
Since $W=\{v_1,\ldots,v_k\}_\R$ and $\bar W=\{\bar v_1,\ldots,\bar v_k\}_\R$\,, we see from~\eqref{eq:I'=pmi} that $J_\theta(W)= e^{-\i\theta}\,\bar W$
and $J_\theta(\bar W)=\e^{\i\theta}\,W$\,, i.e. $J_\theta(V)=V$ and $J_\theta\in\so(V)_-$\,. This proves the first part of the lemma. 
Moreover, if $A\in\so(W')$ is holomorphic or anti-holomorphic, then $A$ commutes 
or anti-commutes with $I$ on $W'\oplus \i W'$\,, respectively. 
If $A$ is additionally real, then the same is true for $J$ instead of $I$\,: 

in fact, since $A$ is real, we have $A(\bar v)=\overline{A\,v}$ for all $v\in W'$\,,
hence $A\circ K=K\circ A$ on $W'\oplus \i W'$ and thus 
\begin{equation}\label{eq:A_circ_J=pm_J_circ_A}
A\circ  J =A\circ K\circ I=K\circ A\circ I=\pm\, K\circ I\circ A=\pm\, J\circ A\;,
\end{equation} 
where the sign $\pm$ is chosen according
to whether $A$ is holomorphic ($+$) or anti-holomorphic ($-$)\,. Our claim follows.

Therefore, if $A$ is real and holomorphic, then $A$ commutes with both $J$ and $I$\,, hence $A$ commutes also with $J_\theta$ according to~\eqref{eq_Jphi}.
Suppose that $A\in\so(W')$ is real and anti-holomorphic. Using~\eqref{eq_Jphi} and~\eqref{eq:A_circ_J=pm_J_circ_A} (with the negative sign),
we have \begin{align*}
&J_\theta\circ \exp(\theta\, I)\circ
 A\stackrel{\eqref{eq_Jphi}}{=}J\circ \exp(-\theta\, I)\circ\exp(\theta\, I)\circ A=J\circ A\\
&=-A\circ J=-A\circ\exp(-\theta\, I)\circ J_\theta=-\exp(\theta\, I)\circ A\circ J_\theta\;.
\end{align*}
Since $I=I'$ on $W'\oplus \i W'$\,, we see that $\exp(\theta\, I')\circ A$  anti-commutes with $J_\theta$\,.
\end{proof}

\bigskip
\begin{lemma}\label{le:kv}
Let $W$ be of Type $(tr_k')$ defined by the data $(\Re,W',I',W_0')$\,. Set $U:=e^{-\i\theta}\,\bar W$ 
for some $\theta\in\R$ and $V:=W\oplus U$\,.
\begin{enumerate}
\item The linear map \begin{equation}\label{eq:map}
F:W_0'\oplus W_0'\to V, (v,u)\mapsto 1/2[v-\i\,I' v+J_\theta(v-\i\,I' v)+ u-\i\,I' u-J_\theta(u-\i\,I' u)]
\end{equation}
is an isometry such that the linear spaces 
$\Menge{(v,v)}{v\in W_0'}$ and $\Menge{(v,-v)}{v\in W_0'}$ get identified with  $W$ and $U$\,, respectively.
\item By means of~\eqref{eq:map}, the direct sum Lie algebra $\so(W_0')\oplus\so(W_0')$  gets identified with 
the Lie algebra $\rho(\frakk_V)|_V$ such that $(A,A)\in\rho(\frakk_V)|_V\cap\so(V)_+$ and $(A,-A)\in\rho(\frakk_V)|_V\cap\so(V)_-$ for every $A\in\so(W_0')$\,.

\item
The complex structure $J_\theta|_V$ commutes with every element 
of $\rho(\frakk_V)|_V\cap\so(V)_+$ whereas it anti-commutes with every element of $\rho(\frakk_V)|_V\cap\so(V)_-$\,.
\end{enumerate} 
\end{lemma}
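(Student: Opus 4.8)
The plan is to handle the three parts in order, using the isometry $F$ from~\eqref{eq:map} to transport everything to the model $W_0'\oplus W_0'$, on which $\so(W_0')\oplus\so(W_0')$ acts block-diagonally. For part~(a) I would first evaluate $F$ on the diagonal and the anti-diagonal: a direct substitution gives $F(v,v)=v-\i\,I'v$ and $F(v,-v)=J_\theta(v-\i\,I'v)$, so by the definition of $W$ and by Lemma~\ref{le:Iphi}(a) (which yields $J_\theta(W)=U$) the diagonal and the anti-diagonal are carried onto $W$ and $U$, respectively. These two subspaces of $W_0'\oplus W_0'$ are orthogonal and are mapped to the mutually orthogonal spaces $W$ and $U=e^{-\i\theta}\bar W$ (their orthogonality follows from $\C\,\bar W\perp\C\,W$, cf.\ Corollary~\ref{co:type(c_k',c_l')}), so it remains to check that $F$ preserves lengths on each piece. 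On the diagonal this is the identity $\lvert v-\i\,I'v\rvert^2=\lvert v\rvert^2+\lvert I'v\rvert^2=2\lvert v\rvert^2=\lvert(v,v)\rvert^2$ (using $\Re\perp\i\,\Re$), and on the anti-diagonal it follows from the same identity together with the orthogonality of $J_\theta$. Hence $F$ is an isometry.

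Part~(b) is the heart of the lemma. Writing $\Phi(C,A')$ for the image under $F$ of the block-diagonal map acting by $C$ on the first and $A'$ on the second $W_0'$-summand, I would prove $\rho(\frakk_V)|_V=\Phi(\so(W_0')\oplus\so(W_0'))$, with $\Phi(A,A)\in\so(V)_+$ and $\Phi(A,-A)\in\so(V)_-$. The inclusion ``$\supseteq$'' I obtain by exhibiting generators. The real holomorphic maps constituting $\frakh_W=\frakh_U$ (Lemma~\ref{le:type(tr_k')}(a) and~\eqref{eq:frakh_W=frakh_U2}) commute with $\i=J^N$ and preserve both $W$ and $U$, hence restrict to $\Phi(C,C)\in\so(V)_+$. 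For each real anti-holomorphic $A\in\so(W')$ preserving $W_0'$, the element $\exp(\theta\,I')\circ A\in\so(\Re)$ carries $W$ into $U$ and $U$ into $W$ and restricts to $\Phi(A,-A)\in\so(V)_-$; the key point is that $\exp(\theta\,I')$ acts on $\bar W$ as multiplication by $e^{-\i\theta}$, since $I'=-\i$ on $\bar W$ by~\eqref{eq:I'=pmi}, so that $\exp(\theta\,I')(\bar W)=U$. For ``$\subseteq$'' I take a general $X=a\,\i+B$ with $B\in\so(\Re)$ and $\rho(X)(V)\subset V$, split $B=B_h+B_a$ into its holomorphic and anti-holomorphic parts (Definition~\ref{de:RH}), and translate the conditions $X(W)\subset V$ and $X(U)\subset V$ into linear conditions on $a,B_h,B_a$ by expanding in $\C\,W'=W'\oplus\i\,W'$ and using that $V$ consists of the vectors $x-\i\,I'x+y+\i\,I'y$ with $x\in W_0'$ and $y\in U_0'$. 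The computation forces $a=0$, forces $B_h$ to be a real holomorphic map (an element of $\frakh_W$) and forces $B_a=\exp(\theta\,I')A$ for some real anti-holomorphic $A$; the restriction $X|_V$ is then exactly $\Phi(C+A,C-A)$, which exhausts $\so(W_0')\oplus\so(W_0')$.

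Part~(c) follows immediately once~(b) is in hand. Under that identification $\rho(\frakk_V)|_V\cap\so(V)_+$ consists of restrictions of real holomorphic maps, which commute with $J_\theta$ by Lemma~\ref{le:Iphi}(b), whereas $\rho(\frakk_V)|_V\cap\so(V)_-$ consists of maps $\exp(\theta\,I')\circ A$ with $A$ real anti-holomorphic, which anti-commute with $J_\theta$ by the same lemma.

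I expect the reverse inclusion in~(b) to be the main obstacle. The generators giving ``$\supseteq$'' are produced quickly, but excluding extra elements of $\frakk_V$ requires the explicit bookkeeping in $\C\,W'$, and in particular the step that forces the central parameter $a$ to vanish: this is exactly where the hypothesis $U=e^{-\i\theta}\bar W$ enters, through the appearance of the term $2a\,I'w$ which lies in $I'(W_0')\perp W_0'$ and therefore must be zero. The verifications in~(a) and~(c) are routine by comparison.
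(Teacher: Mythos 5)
Your proposal takes essentially the same route as the paper's proof: the same evaluation of $F$ on the diagonal and anti-diagonal for (a), the same generators in (b) (holomorphic real extensions $\hat A$ of $A\in\so(W_0')$ for the $\so(V)_+$ part, and $\exp(\theta\,I')$ composed with anti-holomorphic real extensions $\tilde A$ for the $\so(V)_-$ part), and the same appeal to Lemma~\ref{le:Iphi} for (c). The one imprecise point is your predicted mechanism for forcing $a=0$ in the reverse inclusion of (b): for $\sin\theta\neq 0$ the real parts of vectors of $U=e^{-\i\theta}\,\bar W$ are $\cos\theta\,v+\sin\theta\,I'v$ and so do have components in $I'(W_0')$, hence ``the $a$-term lies in $I'(W_0')\bot W_0'$'' is not by itself conclusive; the paper instead sets $B':=a\,I'+B$, observes $A=B'$ on $W+\i W$ and $A=B'+2a\,J^N$ on $U$, shows $B'(U)\subset V$ via (anti-)holomorphy of $B'$, and then kills $a$ using that $J^N(U)=\i\,U$ is orthogonal to $V$ (to $U$ because $U$ is totally real, and to $W$ because $\C\,\bar W\bot\C\,W$). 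If you run your bookkeeping in $\C\,W'$ with that orthogonality in place of the one you quoted, the argument closes as in the paper.
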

\begin{proof}
For~(a): we have $F(v,v)=v-\i\,I'v\in W$ and $F(v,-v)=J_\theta(v-\i\,I'v)=e^{-\i\theta} (v+\i\,I' v)\in \e^{-\i\theta}\,\bar W=U$\,. 
Since $\dim(W)=\dim(U)=\dim(W_0')$\,, we conclude that $F$ is actually a linear isometry onto $V$ with the properties described above.

For~(b): given $A\in\so(W_0')$\,, we associate therewith linear maps $\hat A$ and $\tilde A$ on $W'$ 
defined by $\hat A(v+I'v):=A\,v+I'A\,v$ and $\tilde A(v+I' v):=A\, v-I' A\,v$ for every $v\in W_0'$\,. By definition,
both $\hat A$ and $\tilde A$ are real, further, $\hat A$ is holomorphic
whereas $\tilde A$ is anti-holomorphic. Furthermore, we consider the second splitting $V=V_1\oplus V_2$ 
with $V_1=\Menge{v-\i\,I' v+J_\theta(v-\i\,I' v)}{v\in W_0'}$ and $V_2=\Menge{v-\i\,I' v-J_\theta(v-\i\,I' v)}{v\in
W_0'}$\,. Note that both $V_1$ and $V_2$ are naturally isomorphic to $W_0'$\,. Hence, 
this splitting induces a monomorphism of Lie algebras $\so(W_0')\oplus\so(W_0')\hookrightarrow \so(V)$\,.
We claim that this monomorphism is explicitly given by 
\begin{equation}\label{eq:(A,B)}
(A,B)\mapsto 1/2[(\widehat{A+B})+\exp(\theta\, I')\circ(\widetilde{A-B})]
\end{equation} 

for each $A\in\so(W_0')$ we have
\begin{align*}
&1/2 (\hat A+\exp(\theta\, I')\circ\tilde A)(v-\i\,I'v+ J_\theta(v-\i\,I'v))
=1/2 [(A\,v-\i\,I'A\,v)+e^{-\i\theta}(A\,v+\i\,I'A\,v)\\
&+e^{-\i\theta}(A\,v+\i\,I'A\,v)+e^{\i\theta}e^{-\i\theta}(A\,v-\i\,I'A\,v)]=(A\,v-\i\,I'A\,v)+e^{-\i\theta}(A\,v+\i\,I'A\,v)\;,\\
&1/2 (\hat A+\exp(\theta\, I')\circ\tilde A)(v-\i\,I'v- J_\theta (v-\i\,I' v)=1/2[(A\,v-\i\,I'
A\,v)-e^{-\i\theta}(A\,v+\i\,I'A\,v)+e^{-\i\theta}(A\,v+\i\,I'A\,v)\\
&-e^{\i\theta}e^{-\i\theta}(A\,v-\i\,I'A\,v)]=0
\end{align*}
for all $v\in W_0'$\,. This establishes our claim in case $B=0$\,. For $A=0$\,, a similar calculation works.

Further, we claim that in this way $\so(W_0')\oplus\so(W_0')\cong
\rho(\frakk_V)|_V$ such that $(A,A)\cong\hat A\in\rho(\frakk_V)|_V\cap\so(V)_+$ 
and $(A,-A)\cong\exp(\theta\, I')\circ\tilde A\in\rho(\frakk_V)|_V\cap\so(V)_+$\,.

For ``$\subseteq$'': we have $\hat A(v\pm\, \i\,I'v)=A\,v\pm\, \i\,I'A\,v$ for all $v\in W_0'$ and $A\in\so(W_0')$\,, thus $\hat A$ maps $W$ to $W$
and $U$ to $U$\,. Further, $\hat A\in\so(\Re)$\,. 
This shows that $\hat A\in\rho(\frakk_V)|_V\cap\so(V)_+$\,. Furthermore, $\tilde A (v\pm\,
\i\,I'v)=A\,v\mp\i\, I' A\,v$ and $I'(v\pm\,\i\,I'v)=\mp\i(v\pm\,\i\, I'v)$ for all $v\in
W_0'$\,, thus $\exp(\theta\, I')\circ \tilde A$ maps $W$ to $U$ and vice versa. 
Finally, note that $\exp(\theta\, I')\circ \tilde A$ is in fact skew-symmetric since 
\[
(\exp(\theta\, I')\circ \tilde A)^*=\tilde A^*\circ\exp(\theta\, I')^*=-\tilde A\circ \exp(-\theta\, I')=-\exp(\theta\, I')\circ \tilde A\;.
\]
Hence $\exp(\theta\, I')\circ\tilde A \in\rho(\frakk_V)|_V\cap\so(V)_-$\,.

For ``$\supseteq$'': conversely, let some $A\in\rho(\frakk_V)|_V$ be given. 
We will distinguish the cases $A\in\so(V)_+$ and $A\in\so(V)_-$\,.
Anyway, we have $A=a\,J^N|_V+B|_V$ with $B\in\so(W')$ and $a\in\R$\,. Set $B':=a\,I'|_V+B|_V $\,. 
Note that $I'=J^N$ on $W+\i W$ and $I'=-J^N$ on $\bar W+\i\bar W$\,, hence 
\begin{align}\label{eq:B1}
&A=B'\ \text{on}\ W+\i W\;,\\
\label{eq:B2}
&A=B'+2\,a\,J^N\ \text{on both $\bar W$ and $U$}\;.
\end{align}  
If $A\in\so(V)_+$\,, then $A(W)\subset W$ and hence we conclude from~\eqref{eq:B1} that $B'$ is real and holomorphic; 
thus $B'=C$ with $C:=B'|_{W_0'}$\,.
In particular, $B'(\bar W)\subset \bar W$ and hence $B'(U)\subset U$\,.
Thus $a=0$ because of~\eqref{eq:B2} and since $J^N(U)\subset U^\bot$ where $U^\bot$ denotes the orthogonal complement of $U$ in $T_pN$\,. 

If  $A\in\so(V)_-$\,, then $A$ maps $W$ to $U$ and vice versa, hence $B'(W)\subset U$ 
because of~\eqref{eq:B1}, thus $e^{\i\theta}B'(W)\subset \bar W$ which shows 
that the linear endomorphism $C:=\exp(-\theta\, I')\circ B'$ is real and anti-holomorphic on $W'$\,.
Hence $B'$ is anti-holomorphic, too. Therefore, also $\exp(-\theta\, I')(B'(\bar W))\subset W$\,, thus $B'(U)\subset W$\,. 
We conclude that $a=0$ according to~\eqref{eq:B2} (since 
$J^N(U)\subset \bar W+\i\bar W\subset W^\bot$). This establishes our claim. Part~(b) follows.

For~(c), recall that $\hat A$ commutes with $J_\theta|_V$ whereas $\tilde A$
anti-commutes with $J_\theta|_V$ according to Lemma~\ref{le:Iphi} for every $A\in\so(W_0')$\,.
Hence, the result is a consequence of Part~(b) and~\eqref{eq:(A,B)}.
\end{proof}

\bigskip
\begin{corollary}\label{co:kgeq3}
Suppose that $W$ is of Type $(tr_k')$ with $k\geq 3$\,. The curvature invariant pair
$(W,\e^{-\i\theta}\,\bar W)$ is not integrable.
\end{corollary}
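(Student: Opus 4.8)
The plan is to reduce to Naitoh's Theorem~\ref{th:products}, exactly as in the warm-up case $\theta=0$ recalled above, but with the curvature invariance of $V$ (which fails for $\theta\neq 0$) replaced by the product-type structure on $V$ supplied by Lemmas~\ref{le:Iphi} and~\ref{le:kv}. So assume, for a contradiction, that $(W,h)$ is an integrable $2$-jet with $T_pM=W$ and first normal space $\Spann{h(x,y)}{x,y\in W}=U:=e^{-\i\theta}\,\bar W$, and put $V:=W\oplus U$. Recall that for $\theta=0$ the space $V$ is curvature invariant of Type $(tr_{k,k})$, so $\exp^N(V)\cong\rmS^k\times\rmS^k$, the submanifold $M$ is $1$-full in it and hence symmetric by Corollary~\ref{co:1-full}, whence totally geodesic by Theorem~\ref{th:products} because $W$ is the tangent space of the diagonal; this forces $h=0$, a contradiction.

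For general $\theta$ I would first pin down the second fundamental form. Since $(W,h)$ is integrable, Theorem~\ref{th:dec} gives $\fetth_x=A_x+B_x$ with $A_x\in\rho(\frakk_V)|_V\cap\so(V)_-$ and $B_x\in Z(\frakg)\cap\so(V)_-$ for every $x\in W$. Here $\frakh=\frakh_W|_V\subseteq\frakg$ acts, by Lemma~\ref{le:type(tr_k')}(a) and Lemma~\ref{le:kv}(b), on both $W$ and $U$ as the standard module of $\so(W_0')$; since $k\geq 3$ this module is irreducible of real type, so $\Hom_\frakh(W,U)\cong\R$. Combining the equivalence~\eqref{eq:iso2} with the fact that $J_\theta|_V$ centralises $\frakh$ (Lemma~\ref{le:kv}(c)), I get $Z(\frakh)\cap\so(V)_-=\R\,J_\theta|_V$, and hence $Z(\frakg)\cap\so(V)_-\subseteq\R\,J_\theta|_V$. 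Therefore $\fetth_x=A_x+\beta(x)\,J_\theta|_V$ for some linear form $\beta\colon W\to\R$, so the whole $2$-jet is described by the pair $(A_\bullet,\beta)$ with values in $(\rho(\frakk_V)|_V\cap\so(V)_-)\oplus\R\,J_\theta|_V$.

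Next I would transport this data along the isometry $F$ of Lemma~\ref{le:kv}(a), which identifies $(V,W,U,\rho(\frakk_V)|_V,J_\theta|_V)$ with the model $\bigl(\R^k\oplus\R^k,\ \text{diagonal},\ \text{antidiagonal},\ \so(k)\oplus\so(k),\ \text{the factor-swap}\bigr)$. This is precisely the isotropy data of a totally geodesic $\rmS^k\times\rmS^k$ at a point of a diagonally embedded $\rmS^k$; under it $W$ is curvature invariant, $R^N_{x,y}|_V\in\frakh$ agrees with the product curvature for $x,y\in W$ (cf.~\eqref{eq:type(c_k')_3}), and $J_\theta|_V$ is the complex structure of the product that exchanges its two factors. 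The aim is to conclude that $(W,h)$ is \emph{also} an integrable $2$-jet for the genuine product $\rmS^k\times\rmS^k$, with $W$ the tangent space of the diagonal and $U$ the full normal space; granting this, Corollary~\ref{co:1-full} makes the associated complete parallel submanifold symmetric in $\rmS^k\times\rmS^k$, and Theorem~\ref{th:products} forces it to be totally geodesic (a symmetric submanifold with diagonal tangent space cannot be a nontrivial product), so again $h=0$, contradicting $\dim(U)=k\geq 1$. Since $W=e^{-\i\theta}\,\bar U$, the pair $(U,W)$ is of the same form and is excluded by the same argument.

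The main obstacle is exactly this transfer step. For $\theta\neq 0$ the space $V$ is not $R^N$-curvature invariant, so $R^N$ does not restrict to the product curvature on all of $\Lambda^2V$; in particular $R^N_{\xi,y}$ with $\xi\in U$, $y\in W$ generally leaves $V$, whereas the product curvature never does. What makes the reduction go through is that the integrability conditions of Theorem~\ref{th:integrable}, namely semi-parallelity~\eqref{eq:sp} and~\eqref{eq:cond2}, are imposed only for $x,y,z\in W$ and $v\in V$, and therefore involve $R^N$ solely through $R^N_{x,y}|_V\in\frakh$ together with the actions of $\frakh$, of the $\fetth_x$, and of $J_\theta|_V$ on $V$ -- all of which Lemmas~\ref{le:Iphi} and~\ref{le:kv} identify with the corresponding product data. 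Carefully checking that~\eqref{eq:sp} and~\eqref{eq:cond2} see nothing beyond this common data (so that $R^N$-integrability of the jet is equivalent to its integrability in $\rmS^k\times\rmS^k$) is where the real work lies; it is what legitimises replacing the ambient space by a true product and invoking Naitoh.
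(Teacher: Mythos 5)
Your overall strategy is the paper's: transport the $2$-jet via the isometry $F$ of Lemma~\ref{le:kv} to the model $\rmS^k\times\rmS^k$ with $W$ the diagonal, then invoke Corollary~\ref{co:1-full} and Theorem~\ref{th:products} to force $h=0$. But there is a genuine gap at exactly the point you flag as ``where the real work lies'', and the missing ingredient is not a routine verification: you must first prove that the component $B_x\in Z(\frakg)\cap\so(V)_-$ in the decomposition $\fetth_x=A_x+B_x$ of Theorem~\ref{th:dec} vanishes, i.e.\ that your linear form $\beta$ is identically zero. Without this, the transfer fails. Indeed, in the model the factor-swap $J_\theta|_V$ does \emph{not} belong to $\tilde\rho(\tilde\frakk)=\so(T_o\rmS^k)\oplus\so(T_o\rmS^k)$, so if $\fetth_x$ retains a $\beta(x)\,J_\theta|_V$ term you cannot conclude $\tilde\fetth_x\in\tilde\rho(\tilde\frakk)$, and condition~\eqref{eq:cond2} for the product is then not ``implicitly given''. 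You would have to check it by hand, and its right-hand side $R^N((\fetth_x\cdot)^k y\wedge z)v$ involves curvature endomorphisms $R^N_{\xi,z}$ and $R^N_{\xi,\eta}$ with $\xi,\eta\in U$; for $\theta\neq 0$ these are not determined by $R^N_{x,y}|_V\in\frakh$ together with the $\frakh$-, $\fetth$-, and $J_\theta$-actions on $V$ (they need not even preserve $V$), so your claim that the integrability conditions ``see nothing beyond this common data'' is false as stated.

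The paper closes this gap with a separate argument before the transfer: since $\frakh_W|_V\subset\rho(\frakk_V)|_V\cap\so(V)_+$, Lemma~\ref{le:kv}~(c) shows $J_\theta|_V$ commutes with $R^N_{x,y}|_V$ and anti-commutes with each $A_z$, hence anti-commutes with $[R^N_{x,y}|_V,A_z]$. If some $B_x\neq 0$, then (by your own computation $Z(\frakg)\cap\so(V)_-\subset Z(\frakh)\cap\so(V)_-=\R\,J_\theta|_V$) $J_\theta|_V$ would lie in $Z(\frakg)$ and so would also commute with $[R^N_{x,y}|_V,A_z]\in\frakg$; since $J_\theta|_V$ is a complex structure this forces $[R^N_{x,y}|_V,\fetth_z]=0$ for all $x,y,z$, whence $\fetth_z\in Z(\frakh)\cap\so(V)_-=\R\,J_\theta|_V$ for all $z$. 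But $\fetth$ is injective or zero by Proposition~\ref{p:invariant} and~\eqref{eq:Kernfetth}, and a $k$-dimensional space with $k\geq 3$ cannot inject into a line, so $\fetth=0$, a contradiction. Only after establishing $B_x=0$, so that $\fetth_x=A_x\in\rho(\frakk_V)|_V$ corresponds to an element of the model's isotropy algebra, does the reduction to $\rmS^k\times\rmS^k$ and Naitoh's theorem go through. Your write-up sets up all the right objects but omits this step, which is the heart of the proof.
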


\begin{proof}
Let $W$ be defined by the data $(\Re,W',I',W_0')$\,, set  $U:=e^{-\i\theta}\,\bar W$  and $V:=W\oplus U$\,. 
Assume, by contradiction, that $(W,U)$ is integrable and let $\frakg$ be the subalgebra 
of $\so(V)$ described in Theorem~\ref{th:dec}. Recall that there exist $A_x\in\rho(\frakk_V)|_V\cap\so(V)_-$ and 
$B_x\in Z(\frakg)\cap\so(V)_-$  such that $\fetth_x=A_x + B_x$ for every $x\in W$\,. 
We claim that here $B_x=0$\,:

for this, recall that $\frakh_W|_V$ is a subalgebra of
$\rho(\frakk_V)|_V\cap\so(V)_+$ (see~\eqref{eq:cip1} and~\eqref{eq:symmetric_space}). 
Therefore $[J_\theta|_V,R^N_{x,y}|_V]=0$ for all $x,y\in W$ and
$A_x\circ J_\theta|_V=-J_\theta|_V\circ A_x$ according to Lemma~\ref{le:kv}~(c). 
It follows, on the one hand, that $J_\theta|_V$ anti-commutes with $[R^N_{x,y}|_V,A_z]$ for all $x,y,z\in W$\,. 
Assume, by contradiction, that $B_x\neq 0$ for some $x\in W$\,. We claim that
this implies, on the other hand, that $J_\theta|_V$ commutes with $[R^N_{x,y}|_V,A_z]$ (which is
not possible unless $[R^N_{x,y}|_V,A_z]=0$\,, since $J_\theta|_V$ is a complex structure):

consider the Lie algebra $\frakh$ defined in~\eqref{eq:La2}. By means of
Lemma~\ref{le:type(tr_k')}~(a), we have
\begin{equation}\label{eq:equality}
\frakh=\frakh_W|_V=\Menge{A\in \fraku(W',I')}{A(W_0')\subset W_0'}\;.
\end{equation} 
Hence $\frakh$ acts on $W$ and $U$ via $\so(W)$ and $\so(U)$\,, respectively. 
Thus $W$ and $U$ both are irreducible $\frakh$-modules. Further, recall that $J_\theta|_V\in \so(V)_-$ according to Lemma~\ref{le:Iphi}.
Hence, by the above, $J_\theta|_V\in Z(\frakh)\cap\so(V)_-$\,.
Moreover, since $k\geq 3$\,, Schur's Lemma shows that $\Hom_{\frakh}(W,U)$ is at most a 
1-dimensional space. Therefore, because of~\eqref{eq:iso2}, 
the linear space $Z(\frakh)\cap\so(V)_-$ is spanned by $J_\theta$\,. 
Further, since $\frakh\subset\frakg$ (cf.\ the proof of Corollary~\ref{co:dec}), 
we have $B_x\in Z(\frakg)\cap\so(V)_-\subset Z(\frakh)\cap\so(V)_-$\,.
Hence there
exists $0\neq b\in\R$ with $J_\theta|_V=b\,B_x\in Z(\frakg)$\,. 
Thus, since $[R^N_{x,y}|_V,A_z]\in\frakg$\,, we
see that $J_\theta|_V$ commutes with $[R^N_{x,y}|_V,A_z]$\,. This gives our claim. 

Therefore, we conclude that
$[R^N_{x,y}|_V,\fetth_z]=[R^N_{x,y}|_V,\fetth_z-B_z]=[R^N_{x,y}|_V,A_z]=0$ for
all $x,y,z\in W$\,. As remarked above, this shows that $\fetth_z\in\R\, J_\theta$
for all $z\in W$\,. But this would imply that $\fetth=0$ since $\fetth$ is injective 
or zero according to~\eqref{eq:Kernh},~\eqref{eq:Kernfetth} and  Proposition~\ref{p:invariant}, a contradiction.

Thus $B_x=0$\,, i.e. $\fetth_x=A_x\in \rho(\frakk_V)|_V$ for all $x\in W$\,. Let
us choose some  $o\in\rmS^k$\,, a linear isometry $f:T_o\rmS^k\to W_0'$ and consider the
Riemannian  product $\tilde N:=\rmS^k\times\rmS^k$ whose curvature tensor will
be denoted by $\tilde R$\,. On the analogy of~\eqref{eq:map},
$$
F:T_{(o,o)}\tilde N\to T_pN, (x,y)\mapsto 1/2[f(x)+f(y)-\i\,I'(f(x)+f(y))+J_\theta (f(x)-f(y)-\i\,I'(f(x)-f(y)))]
$$
is an isometry onto $V$ such that $\Menge{F^{-1}\circ A|_V\circ F}{A\in\rho(\frakk_V)}$ 
is the direct sum Lie algebra $\so(T_o\rmS^k)\oplus \so(T_o\rmS^k)$\,. 
Note, the latter is the image $\tilde\rho(\tilde\frakk)$ of the linearized isotropy representation  of $\tilde N$\,. 
Put $\tilde W:=F^{-1}(W)$\,, $\tilde h:=F^{-1}\circ h\circ F\times F$ and
$\tilde U:=\Spann{\tilde h(u,v)}{u,v\in\tilde W}$\,. Then $\tilde U=F^{-1}(U)$
and hence $T_{(o,o)}\tilde N=\tilde W\oplus \tilde U$ holds. 
Furthermore, we claim that $(\tilde W,\tilde h)$ is an integrable 2-jet in $T_{(o,o)}\tilde N$\,:

Let $\iota:\rmS^k\to\rmS^k\times\rmS^k, p\mapsto (p,p)$\,. Then
$T_{(o,o)}\iota(\rmS^k)=\Menge{(x,x)}{x\in T_o\rmS^k}$\,, hence $F(T_{(o,o)}\iota(\rmS^k))=W$\,, 
i.e. $T_{(o,o)}\iota(\rmS^k)=\tilde W$\,. Further, on the one hand, we have 
\begin{align*}
R^N(F(x,x),F(y,y),F(z,z))&=R^N(f(x)-\i\,I'f(x),f(y)-\i\,I'f(y),f(z)-\i\,I'f(z))\\
&=-f(x)\wedge f(y)\,f(z)+\i(I'f(x)\wedge I'f(y))I'f(z)
\end{align*}
for all $x,y,z\in T_o\rmS^k$ according to Lemma~\ref{le:type(tr_k')}~(a). 
On the other hand, 
\begin{align*}
F(x\wedge y\,z, x\wedge y\,z)&= f(x)\wedge f(y)\,f(z) - \i\,I'f(x)\wedge f(y)\,f(z)\\
&= f(x)\wedge f(y)\,f(z) - \i\,I'f(x)\wedge I'f(y)\,I'f(z)\;.\end{align*}
This shows that $F\circ \tilde R^N_{(x,x),(y,y)}|_{\tilde  W}=R^N_{F(x,x),F(y,y)}\circ F|_{\tilde  W}$\,. 
Furthermore,~\eqref{eq:cip1},~\eqref{eq:symmetric_space} and Lemma~\ref{le:kv}~(b) show 
that $\tilde R^N_{(x,x),(y,y)}$ and $F^{-1}\circ R^N_{F(x,x),F(y,y)}\circ F$ both belong to $\tilde\rho(\tilde\frakk)_+$\,,
i.e.\ there exist $A,B\in\so(T_o\rmS^k)$
with  $\tilde R^N_{(x,x),(y,y)}=A\oplus A$ and $F^{-1}\circ R^N_{F(x,x),F(y,y)}\circ F=B\oplus B$\,.
Thus, since the direct sum endomorphism $A\oplus A$ is uniquely determined by its restriction to $\tilde W$ for every $A\in\so(T_o\rmS^k)$\,,
we conclude that  $F\circ \tilde R^N_{(x,x),(y,y)}=R^N_{F(x,x),F(y,y)}\circ F$\,. Therefore,
$\tilde W $ is curvature invariant and $\tilde h$ is semi-parallel in $\tilde N$\,. Moreover,
since  $\fetth_x\in \rho(\frakk_V)|_V$ for all $x\in W$\,, we have $\tilde
\fetth_x\in \tilde\rho(\tilde\frakk)$ for all $x\in \tilde
W$ which shows that Eq.~\ref{eq:cond2} for $(\tilde W,\tilde h)$ is 
implicitly given for all $k$\,. Hence, by means of Theorem~\ref{th:integrable}, we obtain 
that $(\tilde W,\tilde h)$ is an integrable 2-jet in $\tilde N$\,. 

Thus, there exists a complete parallel submanifold $\tilde M\subset \tilde N$
through $(o,o)$ whose 2-jet is given by $(\tilde
W,\tilde h)$\,. The fact that $T_{(o,o)}\tilde N=\tilde W\oplus \tilde U$ 
holds implies that $\tilde M$ is 1-full in $\tilde N$\,, i.e.\ extrinsically symmetric according to
Corollary~\ref{co:1-full}. Further, since $\tilde M$ is tangent to $\iota(\rmS^k)$ at $(o,o)$\,, there do not 
exist submanifolds $\tilde M_1\subset \rmS^k$ and $\tilde M_2\subset
\rmS^k$ such that $\tilde M=\tilde M_1\times\tilde M_2$\,. 
Therefore, by means of Theorem~\ref{th:products}, $\tilde M$ is
totally geodesic, i.e. $h=0$\,, a contradiction.
\end{proof}

Suppose that  $W$ is of Type $(tr_2')$ defined by the data $(\Re,W',I', W_0')$\,. 
Let $\{e_1,e_2\}$ be an orthonormal basis of $W_0'$ and $\tilde I$ be defined according to~\eqref{eq:tildeI}.
Further, let $\tilde J\in\SU(W',\tilde I)\cap\so(W')$ be given and set $U:=\tilde J(W)$\,. 
We will show that neither $(W,U)$ nor $(U,W)$ is integrable unless $V:=W\oplus U$ is curvature invariant. 
First, we claim that it suffices to prove the first assertion:

recall that here $U$ is also of Type $(tr_2')$\,, defined by the triple $(\Re, U',U_0',J')$ 
with $U':=W'$\,, $J':=\tilde J\circ I'\circ\tilde J^{-1}$ and $U_0':=\tilde J(W_0')$\,. Further,
\[
\tilde I=\tilde J\circ\tilde I\circ\tilde J^{-1}\stackrel{\eqref{eq:tildeI}}{=}\tilde J\, e_1\wedge \tilde J\, e_2
+\tilde J\, I'e_1\wedge \tilde J\,I'e_2=
\tilde J\, e_1\wedge \tilde J\, e_2+J'\tilde J\,e_1\wedge J'\tilde J\,e_2\;.
\]
Hence, since $\{\tilde J\, e_1,\tilde J\, e_2\}$ is an orthonormal basis of $U_0'$\,, 
the Hermitian structure $\tilde I$ may also be defined on the analogy of~\eqref{eq:tildeI}
via the triple $(U_0',\{\tilde J\, e_1,\tilde J\, e_2\},J')$\,. 
Further, we have $U=\tilde J(W)$\,, hence also $W=\tilde J(U)$ (since $\tilde J^2=-\Id$)\,. This proves the claim. 

\bigskip
\begin{lemma}\label{le:kv2}
Suppose that  $k=2$ and $W$ is of Type $(tr_k')$ defined by the data $(\Re,W',I', W_0')$\,. 
Let $\{e_1,e_2\}$ be an orthonormal basis of $W_0'$ and let $\tilde I$ be defined according to~\eqref{eq:tildeI}.
Further, let $\tilde J\in\SU(W',\tilde I)\cap\so(W')$ be given, set $U:=\tilde J(W)$ and $V:=W\oplus U$\,.
Then we have $\rho(\frakk_V)|_V\cap\so(V)_-=\R \tilde J|_V$ unless $\tilde J=\pm\, I'$\,.
\end{lemma}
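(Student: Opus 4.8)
The plan is to establish the two inclusions separately, the easy one by hand and the hard one by a reduction followed by an explicit computation in $\so(W')$. For the inclusion $\R\,\tilde J|_V\subseteq\rho(\frakk_V)|_V\cap\so(V)_-$ I would simply verify that $\tilde J|_V$ lies in the right-hand side. Since $\tilde J\in\SU(W',\tilde I)\cap\so(W')\subseteq\so(\Re)$ (extend $\tilde J$ by zero on the orthogonal complement of $W'$ in $\Re$), we have $\tilde J\in\rho(\frakk)$ because $\rho(\frakk)=\R\,J^N\oplus\so(\Re)$. Moreover $\tilde J$ preserves $V$: by definition $\tilde J(W)=U$, and $\tilde J(U)=\tilde J^2(W)=-W=W$ since $\tilde J^2=-\Id$ and $W$ is a real subspace. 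The same computation shows that $\tilde J|_V$ interchanges $W$ and $U$, so $\tilde J\in\frakk_V$ and $\tilde J|_V\in\so(V)_-$.

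For the reverse inclusion I would take an arbitrary $A=a\,J^N+B$ with $a\in\R$, $B\in\so(\Re)$, $A(V)\subseteq V$ and $A|_V\in\so(V)_-$, and aim to show $A|_V\in\R\,\tilde J|_V$. The first step is a reduction of $B$: since $V\subseteq\C\,W'$ and $A(W)\subseteq U\subseteq\C\,W'$, while $J^N$ already preserves $\C\,W'$, for each $w=v-\i\,I'v\in W$ the vector $B\,w=B\,v-\i\,B\,I'v$ must lie in $\C\,W'=W'\oplus\i\,W'$. Comparing real and imaginary parts and using $W'=W_0'\oplus I'(W_0')$ this forces $B(W')\subseteq W'$, so only $B|_{W'}\in\so(W')$ is relevant and I may assume $B\in\so(W')$.

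The heart of the argument is then a linear-algebra computation inside $\so(W')$. I would use the splitting $\so(W')=\su(2)_+\oplus\su(2)_-$ into self-dual and anti-self-dual parts and recall that $\tilde I$ is self-dual whereas $I'$ and $\tilde J$ are unit elements of the anti-self-dual part (indeed $\su(W',\tilde I)=\su(2)_-$, which is exactly why $\tilde J\in\SU(W',\tilde I)\cap\so(W')$ is an anti-self-dual complex structure). Writing $B=B_++B_-$ accordingly, and using $J^N|_W=I'|_W$ together with $U=\tilde J(W)$, the two swap conditions $A(W)\subseteq U$ and $A(U)\subseteq W$ translate into operator identities on $W_0'$ and on $U_0'=\tilde J(W_0')$ relating $a$, $B_\pm$, $I'$ and $\tilde J$. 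These become most transparent in the quaternionic model $W'\cong\bbH$, in which $\su(2)_+$ and $\su(2)_-$ act by left and right multiplication by imaginary quaternions and $I',\tilde J$ appear as right multiplications $R_{\imath_1},R_{\imath_2}$ with $\imath_1,\imath_2$ unit imaginary, the hypothesis $\tilde J\neq\pm I'$ meaning $\imath_2\neq\pm\imath_1$. Solving the system, I expect the self-dual part $B_+$ to be forced to vanish and the pair $(a,B_-)$ to be cut down to the line $\R\,\tilde J$ with $a=0$, giving $A|_V\in\R\,\tilde J|_V$.

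Finally I would account for the exceptional case and identify the obstacle. When $\tilde J=\pm I'$ one has $U=\tilde J(W)=\pm I'(W)=\i\,W$ (using $J^N|_W=I'|_W$), so $V=W\oplus\i\,W$ is $J^N$-complex; then $J^N$ itself preserves $V$ and $J^N|_V\in\so(V)_-$, producing the extra solution $a=1$, $B=0$ and enlarging $\rho(\frakk_V)|_V\cap\so(V)_-$, which is why this case must be excluded. The main obstacle is precisely the bookkeeping of the preceding step: one must impose \emph{both} swap conditions simultaneously together with the constraint that the auxiliary vector produced by $A(W)\subseteq U$ actually lands in the real form $W_0'$, and then verify that the resulting linear system in $(a,B)$ collapses to a one-parameter family exactly under the assumption $\tilde J\neq\pm I'$. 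Isolating this dimension drop, and checking that no self-dual component of $B$ can survive, is where the real work lies.
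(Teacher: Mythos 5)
Your architecture is the same as the paper's: the inclusion $\R\,\tilde J|_V\subseteq\rho(\frakk_V)|_V\cap\so(V)_-$ is verified exactly as you do it (using $\tilde J^2=-\Id$), and for the converse the paper also immediately discards the part of $B$ acting outside $W'$, working with $\tilde A:=a\,I'+B|_{W'}$ and using $J^N|_W=I'|_W$ to absorb $a\,J^N$ into an element of $\so(W')$ on $W$; your explicit justification that $A(W)\subseteq U\subseteq\C\,W'$ forces $B(W')\subseteq W'$ is a correct (and welcome) way to make that legitimate. Your quaternionic picture is also accurate: $I'$ and $\tilde J$ are indeed two unit elements of the anti-self-dual $\su(2)$, and the excluded case $\tilde J=\pm I'$ is exactly the case where they are parallel, which is the right way to see why $V$ then becomes the complex (Type $(c_2')$) space and $J^N|_V$ furnishes an extra element of $\so(V)_-$.

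The genuine gap is that the one step carrying the entire content of the lemma is not performed: you write that you ``expect'' the linear system to collapse to $\R\,\tilde J$, and you yourself flag that ``isolating this dimension drop \dots is where the real work lies.'' That work is not routine and is not done. For comparison, the paper executes it as follows: from $A(W)\subseteq U=\tilde J(W)$ it deduces that $C:=\tilde J\circ\tilde A$ is real and holomorphic (preserves $W_0'$ and commutes with $I'$), and skew-symmetry of $\tilde A$ turns into the fixed-point equation $\tilde J\circ C=C^*\circ\tilde J$ on the four-dimensional algebra $\mathrm{RH}=\{\Id,\tilde I\}_\R\oplus\{\sigma,\sigma\circ\tilde I\}_\R$; the $\tilde I$-commuting part contributes only $\R\,\Id$, and the $\tilde I$-anticommuting part contributes solutions precisely when $\sigma\circ\tilde J\circ\sigma=\tilde J$, which an explicit $2\times2$ matrix computation shows happens iff $\tilde J=\pm I'$. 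Until you exhibit the analogous computation in your model (showing that non-parallelism of the two imaginary quaternions kills every solution outside $\R\,\tilde J$, including the whole self-dual part of $B$), the lemma is asserted rather than proved. A second, smaller inaccuracy: the correct conclusion is not $a=0$ but $A|_V=-c\,\tilde J|_V$ with $a\,(J^N-I')|_V=0$; the paper obtains the latter from the fact that $a\,(J^N|_V-I'|_V)$ lies in $\so(V)_-$ and vanishes on $W$, together with the injectivity of the restriction map $\so(V)_-\to\Hom(W,U)$. Your system should be set up so that this degenerate direction is handled, rather than expecting the coefficient $a$ itself to be forced to zero.
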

\begin{proof}
First, we claim that $\tilde J\in\rho(\frakk_V)$ and $\tilde J|_V\in\rho(\frakk_V)|_V\cap\so(V)_-$\,: 

we have $\tilde J\in\so(W')\subset\so(\Re)\subset\rho(\frakk)$\,. 
Further, $\tilde J(W)=U$ and $U=\tilde J(W)$\,, hence $\tilde J(V)=V$ and $\tilde J|_V\in\so(V)_-$\,. This gives our claim.

Conversely, let $A\in\rho(\frakk_V)$ be given. We aim to show that $A$ 
is a multiple of $\tilde J|_V$ unless $\tilde J=\pm\, I'$\,.
Let $a\in\R$ and $B\in\so(\Re)$ such that $A:=a\,J^N+B$ satisfies $A(V)\subset V$ and $A|_V\in\so(V)_-$\,.
With $\tilde A:=a\,I'+B|_{W'}$\,, we have $\tilde A\in\so(\Re)$ and $A|_W=\tilde A|_W$ according to~\eqref{eq:I'=pmi}\,, 
hence 
\begin{equation}\label{eq:tildeA_1}
\tilde A (v-\i\, I'v)\in \Menge{\tilde J\,x-\i\tilde J\,x}{x\in W_0'}
\end{equation} 
for all $v\in W_0$\,. Thus, using~\eqref{eq:tildeA_1} and passing to real and imaginary parts, 
we conclude that $\tilde A|_{W'}\in\so(W')$ such that $\tilde A (W_0')\subset \tilde J(W_0')$\,.  
In particular, the endomorphism $C:=\tilde J\circ \tilde A$ on $W'$ is real and holomorphic
(see Definition~\ref{de:RH}). Further,  $\tilde A^*=-\tilde A$ which implies that 
\begin{equation}\label{eq:solution}
\tilde J\circ C=C^*\circ \tilde J\;.
\end{equation}
We claim that $C=c\,\Id$ for some $c\in\R$ or $\tilde J=\pm\, I'$\,:

for this, let $\mathrm{RH}$ denote the algebra of real and holomorphic maps on $W'$\,. 
Note, $\tilde I$ is real and holomorphic, hence there is the splitting $\mathrm{RH}=\mathrm{RH}_+\oplus \mathrm{RH}_-$ 
with 
\begin{align*}
&\mathrm{RH}_+:=\Menge{A\in \mathrm{RH}}{A\circ\tilde I=\tilde I\circ A}\;,\\
&\mathrm{RH}_-:=\Menge{A\in \mathrm{RH}}{A\circ\tilde I=-\tilde I\circ A}\;.
\end{align*} 
Then $\mathrm{RH}_+=\{\Id,\tilde I\}_\R$ and $\mathrm{RH}_-=\{\sigma,\sigma\circ \tilde
I\}_\R$\,, where $\sigma$ denotes the conjugation of $(W',\tilde I)$ with respect to the real form $\{e_1,I'e_1\}_\R$\,. 
Further, consider the involution on  $\End(W')$ defined by $C\mapsto -\tilde J\circ C^*\circ \tilde J$\,. This
map preserves both $\mathrm{RH}_+$ and $\mathrm{RH}_-$ and its
fixed points in $\mathrm{RH}$ are the solutions to~\eqref{eq:solution}. 
It follows that a solution to~\eqref{eq:solution} with $C\in \mathrm{RH}$ decomposes as
$C=C_++C_-$ such that $C_\pm\in \mathrm{RH}_\pm$ and $C_\pm$ is a solution to~\eqref{eq:solution}, too.

Then we have $\tilde J\circ \tilde I=\tilde I\circ \tilde J=-\tilde
I^*\circ\tilde J$ since $\tilde I$ is skew-symmetric and commutes with $\tilde J$\,.
Hence  a solution to~\eqref{eq:solution} with $C\in \mathrm{RH}_+$ is given only if $C$ is a multiple of $\Id$\,.
If $C\in \mathrm{RH}_-$ is a solution to~\eqref{eq:solution}, then $C\circ \tilde I$ is a
solution to this equation, too, since $$\tilde J\circ C\circ \tilde
I=C^*\circ\tilde J\circ \tilde I=C^*\circ\tilde I\circ \tilde J=(I^*\circ C)^*\circ \tilde J=(-\tilde I\circ C)\circ \tilde J=C\circ \tilde I\circ \tilde J\;.$$
Thus, since  $\mathrm{RH}_-$ is invariant under multiplication from the right by $\tilde I$\,, 
the intersection of the solution space to~\eqref{eq:solution} with
$\mathrm{RH}_-$ is either trivial or all of $\mathrm{RH}_-$\,. Hence, to finish the proof of our claim, 
it suffices to show that $C:=\sigma$ is not a solution to~\eqref{eq:solution}
unless $\tilde J=\pm\, I'$\,:

for this, recall that there exist $t\in\R$ and $w\in \C$ with $t^2+|w|^2=1$ 
such that the matrix of $\tilde J$ with respect to the Hermitian basis $\{e_1,I'e_1\}$ of $(W',\tilde I)$ 
is given by Eqs.~\eqref{eq:gij_1}-\eqref{eq:gij_2}. Clearly, $\sigma=\sigma^{-1}$ and $\sigma^*=\sigma$\,. Hence, if~\eqref{eq:solution} holds
for $C:=\sigma$\,, then $\sigma\circ\tilde J\circ \sigma=\tilde J$\,, i.e.
\begin{align}
\left ( \begin{array}{cc}
\i\,t&-\bar w\\
w&-\i\,t
\end{array}
\right )=
\left ( \begin{array}{cc}
-\i\,t&- w\\
\bar w& \i\,t
\end{array}
\right )\;. 
\end{align}
Thus $t=0$ and $w=\pm\, 1$\,, i.e. $\tilde J=\pm\, I'$\,. 

This proves our claim. Therefore, if $\tilde J\neq \pm\, I'$\,, then $\tilde A=-c\,\tilde J$\,. 
Hence $A|_V=-c\,\tilde J|_V+ a (J^N|_V-I'|_V)$\,.
It remains to show that $a (J^N|_V-I'|_V)=0$\,: 

we have $J^N|_W=I'|_W$ and $a (J^N|_V-I'|_V)=A|_V+c\,\tilde J|_V\in\so(V)_-$\,. 
Since $\so(V)_-\to\Hom(W,U), A\mapsto A|_W$ is an isomorphism,  $a (J^N|_V-I'|_V)=0$\,. This finishes our proof.
\end{proof}

\bigskip
\begin{corollary}\label{co:kis2}
In the situation of Lemma~\ref{le:kv2}, 
the curvature invariant pair $(W,U)$ is not integrable unless $V$ is a curvature invariant subspace of $T_pN$\,.
\end{corollary}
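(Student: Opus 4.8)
The plan is to assume that $(W,U)$ is integrable while $V$ fails to be curvature invariant, and to deduce that $h=0$, a contradiction.

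First I would record the relevant structure. If $V$ is already curvature invariant there is nothing to prove, so assume it is not; then $\tilde J\neq\pm\,I'$, since otherwise $U=\tilde J(W)=\i\,W$ and $V=\C\,W$ would be complex, hence curvature invariant. Thus Lemma~\ref{le:kv2} applies and gives $\rho(\frakk_V)|_V\cap\so(V)_-=\R\,\tilde J|_V$. Suppose, for a contradiction, that $(W,U)$ is integrable, realised by a $2$-jet $(W,h)$ with $h\neq 0$ (for $h=0$ would force $U=\{0\}$). By Theorem~\ref{th:dec} each $\fetth_x$ splits as $\fetth_x=A_x+B_x$ with $A_x\in\rho(\frakk_V)|_V\cap\so(V)_-=\R\,\tilde J|_V$ and $B_x\in Z(\frakg)\cap\so(V)_-$.

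The decisive feature for $k=2$ is that the acting algebra is abelian. By Lemma~\ref{le:type(tr_k')}(a) and~\eqref{eq:tildeI} one has $\frakh:=\frakh_W|_V=\R\,\tilde I|_V$, and $R^N_{y,z}|_V\in\R\,\tilde I|_V$ for all $y,z\in W$. Since $\tilde J$ commutes with $\tilde I$ (because $\tilde J\in\SU(W',\tilde I)$) and $B_x$ centralises $\frakg$, we get $[\fetth_x,R^N_{y,z}]|_V=[A_x,R^N_{y,z}]|_V=0$; hence every generator~\eqref{eq:generators} with $k\geq 1$ vanishes on $V$, so $\frakg=\frakh=\R\,\tilde I|_V$ and therefore $\fetth_x\in Z(\frakh)\cap\so(V)_-$ for every $x$. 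By~\eqref{eq:iso1} and~\eqref{eq:iso2} this centraliser is isomorphic to $\Hom_\frakh(W,U)$, a single $\tilde I$-complex line, i.e.\ a real $2$-plane; in particular each $\fetth_x$ commutes with $\tilde I$, so $h(x,\tilde I\,y)=\tilde I\,h(x,y)$. Combined with the symmetry of $h$ this pins $h$ down to a two-parameter family: fixing a unit $x_0\in W$, writing $W=\{x_0,\tilde I\,x_0\}_\R$ and $\eta:=h(x_0,x_0)\in U$, one obtains $h(x_0,\tilde I\,x_0)=\tilde I\,\eta$ and $h(\tilde I\,x_0,\tilde I\,x_0)=-\eta$. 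Note that this does \emph{not} yet contradict the injectivity of $\fetth$, in contrast with the case $k\geq 3$ of Corollary~\ref{co:kgeq3}, so the centraliser bookkeeping alone is insufficient.

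This is the main obstacle, and I would resolve it with condition~\eqref{eq:cond2}. Since $[\fetth_x,R^N_{y,z}]|_V=0$, the case $k=1$ of~\eqref{eq:cond2} collapses to
\[
R^N_{h(x,y),z}\,v+R^N_{y,h(x,z)}\,v=0\qquad(x,y,z\in W,\ v\in V),
\]
where the individual mixed terms $R^N_{\xi,w}$ with $\xi\in U=\tilde J(W)$, $w\in W$ need \emph{not} preserve $V$ once $V$ is not curvature invariant. Substituting the explicit $h$ above and evaluating these mixed endomorphisms by means of the explicit curvature tensor~\eqref{eq:crg}, the choices $(x,y,z)=(x_0,x_0,\tilde I\,x_0)$ and $(x,y,z)=(\tilde I\,x_0,x_0,\tilde I\,x_0)$ reduce the displayed identity to $c_2\,D|_V=0$ and $c_1\,D|_V=0$ respectively, where $c_1,c_2$ are the coordinates of $\eta$ in the basis $\{\tilde J\,x_0,\tilde J\tilde I\,x_0\}$ of $U$ and
\[
D:=R^N_{\tilde J(\tilde I\,x_0),\,\tilde I\,x_0}-R^N_{x_0,\,\tilde J\,x_0}.
\]
The remaining mixed contributions cancel because $R^N_{\tilde J u,\,w}+R^N_{u,\,\tilde J w}=[\tilde J,R^N_{u,w}]$ and $R^N_{x_0,\tilde I\,x_0}=-\tilde I$ commutes with $\tilde J$. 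The crux is then the explicit verification that $D|_V\neq 0$, and this is exactly where the hypothesis $\tilde J\neq\pm\,I'$ (equivalently, the failure of $V$ to be complex) enters, through the $J^N$- and off-diagonal parts of the mixed curvatures that are absent in the complex case $U=\i\,W$. Granting $D|_V\neq 0$ we conclude $c_1=c_2=0$, i.e.\ $\eta=0$ and hence $h=0$, the desired contradiction. I expect this curvature computation — tracking how the terms $R^N_{\tilde J w,\,w'}$ fail to lie in $\so(V)$ — to be the only genuinely laborious step; the alternative of transporting $(W,h)$ to a totally geodesic $\rmS^2\times\rmS^2$ and invoking Naitoh's Theorem~\ref{th:products}, as in Corollary~\ref{co:kgeq3}, is blocked here because the $\tilde I\tilde J$-component of $\fetth_x$ prevents $\fetth_x$ from lying in $\rho(\frakk_V)|_V$.
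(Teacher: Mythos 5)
Your argument reproduces the paper's proof in all essentials: the reduction to the case $\tilde J\neq\pm\,I'$, the use of Theorem~\ref{th:dec} together with Lemma~\ref{le:kv2} to force $\fetth_x$ into the two\--dimensional space $Z(\frakh)\cap\so(V)_-\cong\Hom_\frakh(W,U)$ (the paper phrases this as $\fetth_x\in\{J,K\}_\R$ for its quaternionic basis $\{I,J,K\}$ of $\su(V,\tilde I)$), and then condition~\eqref{eq:cond2} with $k=1$, whose left\--hand side vanishes. Your two scalar equations $c_1\,D|_V=c_2\,D|_V=0$ are exactly the paper's equation~\eqref{eq:[K,tilde I]}: indeed $D=R^N(\tilde J\,y\wedge y+\tilde J\,z\wedge z)=-R^N(K\,y\wedge z+y\wedge K\,z)$ up to the identification of your coordinates with the paper's choice of $x$ with $\fetth_x=K$. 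So the route is the same, not a different one.

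The one genuine gap is that you leave the decisive step -- $D|_V\neq 0$ whenever $\tilde J\neq\pm\,I'$ -- as a conjecture (``granting $D|_V\neq 0$''), and this is precisely the computation the paper actually carries out. It is short, so you should include it: writing $D=c\,J^N+A$ with $A\in\so(\Re)$ and using~\eqref{eq:crg}, the real part is
\[
A=e_1\wedge\tilde J\,e_1+I'e_1\wedge\tilde J\,I'e_1+e_2\wedge\tilde J\,e_2+I'e_2\wedge\tilde J\,I'e_2=2\,\tilde J\;,
\]
since $\{e_1,e_2,I'e_1,I'e_2\}$ is an orthonormal basis of $W'$ and $\sum_i f_i\wedge\tilde J f_i=2\,\tilde J$ for $\tilde J\in\so(W')$. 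If $D|_V=0$, then already $D|_W=0$; since $J^N|_W=I'|_W$ by~\eqref{eq:I'=pmi}, the endomorphism $c\,I'+2\,\tilde J$ of $\so(W')$ vanishes on $W$, hence on $\Re(W)\oplus\Im(W)=W_0'\oplus I'(W_0')=W'$. As $\tilde J$ is an isometry of $W'$ this forces $c=\mp 2$ and $\tilde J=\pm\,I'$, the desired contradiction. With this inserted, your proof is complete and coincides with the paper's.
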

\begin{proof}
Note, if $\tilde J= \pm\, I'$\,, then $V:=W\oplus U$ is curvature invariant of Type $(c_2')$ defined by $(\Re,W',I')$\,.
Otherwise, if $\tilde J\neq \pm\, I'$\,, then we will show that $(W,U)$ is not integrable. 
Assume, by contradiction, that $(W,U)$ is integrable but $\tilde J\neq \pm\, I'$\,. Thus, 
there exists an integrable symmetric bilinear map 
$h:W\times W\to U$ such that $U=\Spann{h(x,y)}{x,y\in W}$\,.  Further,
let $\{I,J,K\}$ be a quaternionic basis of $\su(V,\tilde I)$ 
defined as follows: set $I|_W:=\tilde I|_W$\,, $I|_U:=-\tilde I|_U$\,, $J:=\tilde J|_V$ and $K:=I\circ J$\,.
Since $\tilde I$ commutes with $\tilde J$\,, we have $I\circ J=-J\circ I$ and then the usual
quaternionic relations hold, i.e. $I^2=J^2=K^2=-\Id$\,, $J\circ K=-K\circ J=I$
and  $K\circ I=-I\circ K=J$\,. We claim that $\fetth_x\in\{J,K\}_\R$ for all $x\in W$\,:

note, the set $\{\tilde I,I,J,K\}$ is a basis of $\fraku(V,\tilde I)$ and
$I,\tilde I\in\so(V)_+$ whereas $J,K\in\so(V)_-$\,, hence
\begin{equation}\label{eq:fraku_is_JK}
\fraku(V,\tilde I)\cap\so(V)_-=\{J,K\}_\R\;.
\end{equation}
Moreover, recall that $\frakh_W=\R\,\tilde I$ according to Lemma~\ref{le:type(tr_k')}.
Therefore, by virtue of Theorem~\ref{th:dec}, there exist $A_x\in\rho(\frakk_V)|_V\cap\so(V)_-$ 
and $B_x\in\fraku(V,\tilde I)\cap\so(V)_-$ with $\fetth_x=A_x+B_x$\,. Furthermore, since $\tilde J\neq \pm\, I'$\,,
we have  $\rho(\frakk_V)|_V\cap\so(V)_-=\R \tilde J$ as a consequence of Lemma~\ref{le:kv2}. 
Thus both $A_x$ and $B_x$ belong to $\fraku(V,\tilde I)$\,, 
hence $\fetth_x\in\fraku(V,\tilde I)\cap\so(V)_-$ for all $x\in W$\,, too.
The claim follows by means of~\eqref{eq:fraku_is_JK}.

Further, $\frakh_W$ acts on $W$ via $\so(W)$ which implies that 
$\fetth:W\to\so(V)_-$ is injective according to~\eqref{eq:Kernh},~\eqref{eq:Kernfetth} and  Proposition~\ref{p:invariant}.
Hence there exists some $x\in W$ with $\fetth_x=K$\,. Furthermore, set $y:=e_1-\i\, I'e_1$\,, $z:=e_2-\i\, I'e_2$ 
and recall that $R^N_{y,z}=-e_1\wedge e_2-I'e_1\wedge I'e_2=-\tilde I$\,. Therefore, 
with $x,y,z$ chosen as above, Eq.~\eqref{eq:cond2} with $k=1$ means that 
\begin{equation}\label{eq:[K,tilde I]}
[K,\tilde I]=-R^N(K\,y\wedge z+y\wedge K\,z)|_V\;.
\end{equation}
Note that l.h.s.\ of the last Equation vanishes. In order to evaluate r.h.s.\ of~\eqref{eq:[K,tilde I]}, 
note that $z=\tilde I\,y$\,, hence 
$$K\,y=I\,J\,y=-J\, I\,y=-\tilde I\,\tilde J\,y
=-\tilde J\,\tilde I\,y=-\tilde J\,z\;,$$ 
thus $z=J\,K\,y=-K\,J\,y$ and 
$K\,z=J\,y=\tilde J\,y$\,, which gives $$K\,y\wedge z+y\wedge K\,z=z\wedge \tilde J\,z+y\wedge \tilde J\,y\;.$$ 
Let $c\in\R$ and $A\in\so(\Re)$ be given such that
$R^N(K\,y\wedge z+y\wedge K\,z)=c\, J^N+A$\,. Using Eq.~\eqref{eq:crg}, the real part $A$ is given as follows,
\begin{align*}
A=\tilde J\,e_1\wedge e_1+\tilde J\, I'e_1\wedge I'e_1+\tilde J\,e_2\wedge e_2+\tilde J\,I'e_2\wedge I'e_2=-2\,\tilde J\;,
\end{align*}
(the last equality uses that $\{e_1,e_2,I'e_1,I'e_2\}$ is an orthonormal basis of $W'$ and that $\tilde J\in\so(W')$).
Therefore, since r.h.s.\ of~\eqref{eq:[K,tilde I]} vanishes, we conclude that $2\,\tilde J=c\, J^N$ on $V$\,.
Hence $c=\pm 2$ (since both $\tilde J$ and $J^N|_{\C\,W'}$ are isometries of $\C\,W'$), i.e. $\tilde J=\pm\, J^N$ on $V$\,. In particular, 
$\mp\, J^N+\tilde J$ vanishes on $W$\,. With $B:=\tilde J\mp\, I'$ and $a:=\mp$\,, Lemma~\ref{le:type(tr_k')}~(c) 
in combination with~\eqref{eq:I'=pmi} implies that $B$ vanishes identically on $W'$\,. This shows that $\tilde J=\pm\, I'$\,, 
a contradiction.
\end{proof}

\paragraph{Type $\mathbf{(tr_k',tr_1)}$}
Let $(W,U)$ be an integrable orthogonal curvature
invariant pair of Types $(tr_k',tr_1)$\,. Since the action of $\frakh_W$ on $W$ is given by $\so(W)$ (see
Lemma~\ref{le:type(tr_k')}~(a)), Proposition~\ref{p:sph} shows that here the linear space
$W\oplus U$ is curvature invariant.\qed

\paragraph{Type $\mathbf{(ex_3,tr_1)}$} Let $W$ and $U$ be  of Types $(ex_3)$ and $(tr_1)$ defined by 
the data $(\Re,\{e_1,e_2\})$ and a unit vector $u\in T_pN$\,,  respectively. 
Then $u=\pm\, 1/\sqrt 2 (e_2-\i\,e_1)$ and  the linear space $W\oplus U$ is 
curvature invariant of Type $(c_2)$  defined by the data $(\Re,\{e_1,e_2\}_\R)$\,.\qed

\paragraph{}ACKNOWLEDGMENTS.\ \ \ The author
wants to point out that his understanding of parallel 
submanifolds with curvature isotropic tangent spaces of maximal possible dimension greatly 
benefited from a conversation with Jost Eschenburg from Augsburg.

\bibliographystyle{amsplain}

\end{document}